\newcommand{\Dha}{D(H_{\halpha},\tilde{\mu})}
\newcommand{\Dhb}{D(H_{\hbeta},\tilde{\mu})}
\newcommand{\Da}{D(H_{\alpha},\tilde{\mu})}
\newcommand{\Db}{D(H_{\beta},\tilde{\mu})}
\newcommand{\rotimes}{\rtensor{\beta}{\tilde \mu}{\alpha}}
\newcommand{\sotimes}{\rtensor{\halpha}{\tilde\mu}{\beta}}
\newcommand{\rwotimes}{\rtensor{\alpha}{\tilde \mu}{\hbeta}}
\newcommand{\swotimes}{\rtensor{\beta}{\tilde\mu}{\alpha}}
\newcommand{\E}{E_{\psi}^{\dag}}
\newcommand{\F}{E_{\phi}^{\dag}}
\newcommand{\kE}[1]{|\E\rangle_{#1}}
\newcommand{\kF}[1]{|\F\rangle_{#1}}
\newcommand{\bF}[1]{\langle\F|_{#1}}
\newcommand{\op}{\mathrm{op}}
\newcommand{\co}{\mathrm{co}}
\newcommand{\ev}{\mathrm{ev}}
\theoremstyle{definition} 
\newtheorem{remark}{Remark}[subsection]
\newtheorem{remarks}[remark]{Remarks}
\newtheorem{examples}[remark]{Examples} 
\theoremstyle{plain}
\newtheorem{definition}[remark]{Definition}
\newtheorem{theorem}[remark]{Theorem}
\newtheorem{proposition}[remark]{Proposition}
\newtheorem{corollary}[remark]{Corollary}
\newtheorem{lemma}[remark]{Lemma}
\newtheorem*{assumption*}{Assumption}
\newcommand{\mfrak}[1]{\mathfrak{#1}}
\newcommand{\frakN}{\mfrak{N}}
\newcommand{\frakA}{\mathfrak{A}}
\newcommand{\complex}{\mathbb{C}}
\newcommand{\reals}{\mathbb{R}}
\newcommand{\integers}{\mathbb{Z}}
\DeclareMathOperator{\Id}{id}
\DeclareMathOperator{\lspan}{span}
\newcommand{\smalldiagram}{}
\newcommand{\gtimes}{\stackrel{\Gamma}{\otimes}}
\newcommand{\todot}{\tilde\otimes}
\newcommand{\otimesB}{\underset{B}{\otimes}}
\newcommand{\hDelta}{\hat{\Delta}}
\newcommand{\frakb}{\mathfrak{b}}
\newcommand{\Lt}[1]{\triangleleft}
\newcommand{\Rt}[1]{\triangleright}
\newcommand{\halpha}{\widehat{\alpha}}
\newcommand{\hbeta}{\widehat{\beta}}
\newcommand{\hA}{\hat{A}} \newcommand{\ha}{\hat{a}}
\newcommand{\rtensor}[3]{ {_{#1}\! \underset{#2}{\otimes}\!
{}_{#3}}}
\newcommand{\tensor}[1]{\underset{#1}{\otimes}}
\newcommand{\btensor}{\underset{\mu}{\otimes}}
\newcommand{\otimesm}{\tensor{\tilde{\mu}}}
\newcommand{\fibre}[3]{ {_{#1}\! \underset{#2}{\ast}\!
{}_{#3}}}
\newcommand{\frange}{\rtensor{E_{\psi}^{\dag}}{\frakb}{E_{\phi}^{\dag}}}
\newcommand{\fwsource}{\rtensor{E_{\psi}^{\dag}}{\frakb}{E_{\phi}^{\dag}}}
\newcommand{\fwrange}{\rtensor{E_{\phi}^{\dag}}{\frakb}{E_{\phi}}}
\newcommand{\tl}{\ensuremath \olessthan}
\newcommand{\tr}{\ensuremath \ogreaterthan}
\newcommand{\Hmu}{K} 
\newcommand{\Hnu}{H}
\newcommand{\wHone}{\Hnu \swotimes \Hnu \swotimes \Hnu}
\newcommand{\wHtwo}{\Hnu \rwotimes \Hnu \swotimes \Hnu}
\newcommand{\wHthree}{\Hnu \rwotimes \Hnu \rwotimes \Hnu}
\newcommand{\SU}{\mathrm{SU}_{q}(2)}
\newcommand{\frakh}{\mathfrak{h}}
\newcommand{\rA}{{_{r}A}}
\newcommand{\sA}{{_{s}A}}
\newcommand{\Ar}{A_{r}}
\newcommand{\As}{A_{s}}
\title{Measured quantum groupoids \\
  associated to \\ proper dynamical quantum groups}
\author{Thomas Timmermann\footnote{{Supported by the SFB 878
    ``Groups, geometry and actions'' funded by the DFG}} \\
University of Muenster \\ Einsteinstr.\ 62, 48149
  Muenster, Germany \\ timmermt@uni-muenster.de
}
\begin{document}
\maketitle
\begin{abstract}
  Dynamical quantum groups were introduced by Etingof and
  Varchenko in connection with the dynamical quantum
  Yang-Baxter equation, and measured quantum groupoids
  were introduced by Enock, Lesieur and Vallin in their
  study of inclusions of type $\mathrm{II}_{1}$ factors.  In this
  article, we associate to suitable dynamical quantum
  groups, which are a purely algebraic objects, Hopf
  $C^{*}$-bimodules and measured quantum groupoids on the
  level of von Neumann algebras.  Assuming invariant
  integrals on the dynamical quantum group, we first construct a
  fundamental unitary which yields
  Hopf bimodules on the level of $C^{*}$-algebras and von Neumann
  algebras.   Next, we assume properness of the dynamical quantum
  group and lift the integrals to the operator
  algebras. In a subsequent article, this construction shall
  be applied to the dynamical $\SU$ studied by Koelink and
  Rosengren.

\textbf{Keywords:} quantum groupoid, dynamical quantum group, Hopf algebroid

\textbf{MSC 2010:} Primary 46L99, Secondary 81R50, 20G42, 16T25
\end{abstract}

\tableofcontents

\section*{Introduction}

Dynamical quantum groups were introduced by Etingof and
Varchenko as an algebraic framework for the study of the
dynamical quantum Yang-Baxter equation
\cite{etingof:book,etingof:qdybe,etingof:exchange}, a
variant of the Yang-Baxter equation arising in statistical
mechanics.  Every (rigid) solution of this equation has a
naturally associated tensor category of representations
which turns out to be equivalent to the category of
representations of some dynamical quantum group.  In the
case of the basic rational or basic trigonometric solution,
this dynamical quantum group can be regarded as a
quantization of the function algebra on some
Poisson-Lie-groupoid.  In general, it
can be regarded as a quantum groupoid and fits into the theory
of Hopf algebroids developed by Böhm and others
\cite{bohm:algebroids}.


Measured quantum groupoids were introduced by Enock, Lesieur
and Vallin \cite{enock:action,lesieur} to capture
generalized Galois symmetries of certain inclusions of type
$\mathrm{II}_{1}$ factors
\cite{enock:inclusions1,enock-nest:inclusions,nikshych:inclusions}.
Apart from this fundamental example in von Neumann algebra
theory, which was also considered in the algebraic setting
\cite{kadison:inclusions,szlachanyi:inclusions}, and from the
finite case, only few measured quantum groupoids have been
constructed and investigated yet
\cite{lesieur,vallin:matched-groupoids}. 

Up to now, connections between algebraic and
operator-algebraic approaches to quantum groupoids have only
been explored in the finite case
\cite{nikshych:algversions,schauenburg:comparison,vallin:finite}
and in the form of a few examples and constructions that
exist on both levels. The situation is very different in the
area of
quantum groups, where Woronowicz's theory of compact quantum
groups \cite{woronowicz} and van Daele's theory of
multiplier Hopf algebras with integrals
\cite{kustermans:algebraic,daele} form a bridge between the
algebraic and operator-algebraic approaches, combining the
computational convenience of the former with the power and
richness of the latter.

In this article, we associate to suitable dynamical quantum
groups, which are purely algebraic objects, Hopf
$C^{*}$-bimodules and measured quantum groupoids on the
level of von Neumann algebras. The main example of a
dynamical group we have in mind for application  is the dynamical $\SU$
studied by Koelink and Rosengren \cite{koelink:su2}, and in
a subsequent article, we want to study the construction for
this example in detail.

On the dynamical quantum groups, we have to impose several
assumptions.

First, we need a left- and a right-invariant
integral, which correspond to fiber-wise integration on a
groupoid, and a weight on the basis that is suitably
quasi-invariant, such that the resulting total integrals are
faithful, positive, and coincide. In the case of the
dynamical $\SU$, the left- and right-invariant integrals can
be obtained from a Peter-Weyl decomposition due to Koelink
and Rosengren \cite{koelink:su2}, while the quasi-invariant
weight on the basis can be chosen quite freely.  

Second, we assume the dynamical quantum group to be proper,
which is the natural analogue of compactness and unitality
for quantum groupoids, and to possess a specific approximate
unit in the base algebra. The dynamical $\SU$ even is
compact and thus satisfies this condition.

Third, we assume that the quasi-invariant weight on the
basis admits a bounded GNS-con\-struction. Like the first
condition,  this one is very natural.  In the case of
the dynamical $\SU$, the base algebra is formed by all
meromorphic functions on the plane and does not admit any
non-trivial bounded representations. To apply our
construction, one therefore has to change the base and
check that the Peter-Weyl decomposition persists.

Given these assumptions, the measured quantum groupoid is
constructed as follows.

The algebraic GNS-construction, applied to the
total integral on the dynamical quantum group, yields a
Hilbert space of square-integrable functions on the
dynamical quantum group together with a natural
representation by densely defined multiplication operators. To obtain a
$C^{*}$-algebra or von Neumann algebra, one has to show that
these multiplication operators are bounded. To prove this
and to lift the comultiplication to the resulting
$C^{*}$-algebra and von Neumann algebra, we proceed as in
the case of quantum groups \cite{timmermann:buch} and
construct a fundamental unitary which is
pseudo-multiplicative on the level of von Neumann algebras
and  $C^{*}$-algebras in the sense of 
\cite{vallin:pmu}  and
\cite{timmermann:cpmu}, respectively. The general theory of these
unitaries then yields completions of the dynamical quantum
group in the form a Hopf $C^{*}$-bimodule and a Hopf
von-Neumann bimodule, and simultaneously a Pontrjagin dual
in the same form. Finally, we extend the invariant integrals to
the level of operator algebras, using properness of the
dynamical quantum group and standard von Neumann algebra
techniques.

This article is organized as follows.

Section \ref{section:algebra} provides the algebraic
basics on dynamical quantum groups and integration that are
needed for the construction in Section
\ref{section:reduced}.  We first generalize the definition
of a dynamical quantum group or $\mathfrak{h}$-Hopf
algebroid, allowing the base to be non-unital, then consider
left- and right-invariant integrals on the total algebra and
quasi-invariant weights on the basis, and finally construct
a $*$-algebra related to the Pontrjagin dual. The main
result of this section is the existence of a modular
automorphism for the total integral, which follows from a
strong invariance property similarly as in the setting of
multiplier Hopf algebras \cite{daele}.

Section \ref{section:reduced} presents the construction of
the measured quantum groupoid outlined above. It uses Connes
spatial theory, in particular the relative tensor product of
Hilbert modules, and the $C^{*}$-algebraic analogue of that
construction \cite{timmermann:fiber}, and introduces the
necessary concepts along the way when they are needed.

We use standard notation and adopt the following
conventions.
All algebras will be over the ground field $\complex$ and we
do not assume the existence of a unit element.  
Given a vector space $V$  with a subset
$X\subseteq V$, we denote by $\langle X
\rangle\subseteq V$ the linear span and, if $V$ is normed,  by $[X]\subseteq V$ the closed linear span of $X$. 
Inner products on Hilbert spaces will be linear in the
second and anti-linear in the first variable.

\section{Dynamical quantum groups with integrals on the algebraic
  level}

\label{section:algebra}

This section summarizes and develops the basics on dynamical quantum
groups and integration used in this article.  Before turning
to  details, let us outline the main concepts.

A dynamical quantum group is a special quantum groupoid and
as such consists of an algebra $B$ called the basis, an
algebra $A$, commuting inclusions $r,s\colon B^{(\op)} \to
A$, and  a comultiplication, antipode and counit which are
in some sense fibered over  $r$ and $s$. What makes it special is that
the basis $B$ is commutative, that $r(B)$ and $s(B)$ are
 central in $A$ up to a twist which is controlled by
an action of a group $\Gamma$ on $B$ and a bigrading of $A$
by $\Gamma$, and that the target of the comultiplication
is given by a nice monoidal product $A\todot A$.

Integration on a quantum groupoid involves several
ingredients. The analogue of the left- or right-invariance
property of Haar measures on groups, Haar systems on
groupoids, and Haar weights on quantum groups can be
formulated for maps $A\to B$ that are linear with respect to
$r(B)$ or $s(B)$, respectively. To obtain a total
integration $A\to \complex$, such a partial integral $A\to
B$ has to be composed with a suitable functional $B\to
\complex$ that has to be compatible with the action of
$\Gamma$.

Let us now turn to details. We proceed as follows.

From the beginning, we assume all our algebras to possess an
involution but not necessarily a unit. We first recall
terminology concerning non-unital algebras
(\S\ref{subsection:non-unital}), then describe the monoidal
product $A\todot A$ (\S\ref{subsection:monoidal}), and
define dynamical quantum groups or, more precisely,
multiplier $(B,\Gamma)$-Hopf $*$-algebroids
(\S\ref{subsection:axioms}). Afterwards, we introduce and
study integrals (\S\ref{subsection:bimeasured}--\S\ref{subsection:measured}) and prove the existence of a
modular automorphism that controls the deviation of the
total integral from being a. Using integration, we finally
construct the dual $*$-algebra of a multiplier
$(B,\Gamma)$-Hopf $*$-algebroid (\S\ref{subsection:dual}).

\subsection{Preliminaries on non-unital algebras} \label{subsection:non-unital} To handle
non-unital algebras, we  use extra non-degeneracy
assumptions and multiplier algebras \cite[appendix]{daele:0}
which are recalled below.

Let $R$ be an algebra, not necessarily unital. Given a left
$R$-module $M$, we say that $R$ \emph{has local units for
  $M$} if for each finite subset $F \subseteq M$, there
exists some $r \in R$ such that $rm=m$ for all $m\in F$
\cite{daele:tools}.  The corresponding notion for right
$R$-modules is defined similarly. We say that $R$ \emph{has
  local units} if it has local units for $R$, regarded as a
left and as a right $R$-module.

 Let $R$ and $S$ be algebras with local units, let $N$ be an
 $R$-$S$-bimodule and assume that $R$ and $S$ have local
 units for $N$.  A \emph{multiplier} of $N$ is a pair
 $T=(T_{R},T_{S})$, where $T_{R} \colon R\to N$ is a left
 $R$-module map and $T_{S} \colon S\to N$ a right $S$-module
 map satisfying $T_{R}(r)s=rT_{S}(s)$ for all $r\in R,s\in
 S$. Given such a multiplier, we write $rT:=T_{R}(r)$ and
 $Ts:=T_{S}(s)$ for all $r\in R$, $s\in S$. We denote the
 set of all multipliers of $N$ by $M(N)$.
Clearly, $N$ embeds into $M(N)$ and $M(N)$ carries a natural
structure of an $R$-$S$-bimodule that is compatible with this
embedding. 

Regarding $R$ as an $R$-$R$-bimodule, $M(R)$
becomes an algebra via $r(TT'):=(rT)T'$ and $(TT')r:=T(T'r)$,
and $R$ embeds into $M(R)$ as an essential ideal. If $R$ is
a $*$-algebra, then so is $M(R)$, where
$rT^{*}r'=(r'{}^{*}Tr^{*})^{*}$ for all $r,r'\in R,T\in
M(R)$. 

The bimodule $N$ becomes an
$M(R)$-$M(S)$-bimodule via $r'(rns)s':=(r'r)n(ss')$ for all
$r'\in M(R),r\in R,n\in N,s\in S,s'\in M(S)$, and $M(N)$ is
an $M(R)$-$M(S)$-bimodule via $r (r'Ts') := ((rr')T)s'$ and
$(r'Ts')s = r'(T(s's))$ for all $r\in R, r'\in M(R), T\in
M(N), s\in S, s'\in M(S)$.

A homomorphism $\pi \colon R \to M(S)$
is {\em non-degenerate} if $\langle \pi(R)S\rangle =
S=\langle S\pi(R)\rangle$; in that case, it extends uniquely
to a homomorphism $M(R) \to M(S)$ which is
again denoted by $\pi$ (see \cite{daele:0}).

\subsection{The category of $(B,\Gamma)^{\ev}$-algebras} \label{subsection:monoidal}

Let $B$ be a  commutative $*$-algebra with
local units, let $\Gamma$ be a group that acts on $B$ on
the left, and let $e \in \Gamma$ be the unit.

A {\em $(B,\Gamma)$-module} is a $\Gamma$-graded
$B$-bimodule $V = \bigoplus_{\gamma \in \Gamma} V_{\gamma}$
for which $B$ has local units, where each $V_{\gamma}$ is a
$B$-bimodule and $v b = \gamma(b)v$ for all $v \in
V_{\gamma}, b \in B,\gamma \in \Gamma$.  A {\em morphism} of
$(B,\Gamma)$-modules $V$ and $W$ is a morphism of
$\Gamma$-graded $B$-bimodules.

  A {\em $(B,\Gamma)$-algebra} is a $\Gamma$-graded
  $*$-algebra $A = \bigoplus_{\gamma \in \Gamma} A_{\gamma}$
  which has local units in $A_{e}$ and is equipped with a
  $*$-homomor\-phism $B \to M(A)$ that turns $A$ into a
  $(B,\Gamma)$-module.  Such a $(B,\Gamma)$-algebra is {\em
    proper} if $B$ maps into $A$.

  Given a $(B,\Gamma)$-algebra $A$ and $\gamma\in \Gamma$,
  we denote by $M(A)_{\gamma} \subseteq M(A)$ the space of
  all multipliers $T\in M(A)$ satisfying $TA_{\gamma'} \subseteq
  A_{\gamma\gamma'}$ and $A_{\gamma'}T\subseteq
  A_{\gamma'\gamma}$ for all $\gamma'\in \Gamma$.  

A {\em
    morphism} of $(B,\Gamma)$-algebras $A$ and $C$ is a
  non-degenerate, $B$-linear $*$-homomorphism $\pi \colon A
  \to M(C)$ satisfying $\pi(A_{\gamma}) \subseteq
  M(C)_{\gamma}$ for all $\gamma\in \Gamma$.  Such a
  morphism is {\em proper} if it maps $A$ into $C$.

Using the extension of non-degenerate homomorphisms to
multipliers, one defines the composition of morphisms and
checks that $(B,\Gamma)$-algebras form a category.  

The tensor product $B\otimes B$ is a 
$*$-algebra with local units and a natural action of
$\Gamma\times \Gamma$. Replacing $(B,\Gamma)$ by
$(B,\Gamma)^{\ev}:=(B\otimes B,\Gamma\times \Gamma)$ in the
definition above, we obtain the category of all {\em
  $(B,\Gamma)^{\ev}$-algebras}.  

Let $A$ be a $(B,\Gamma)^{\ev}$-algebra. We call an element
$x\in A$ \emph{homogeneous} and write
$\partial_{x}=\gamma$, $\bar\partial_{x}=\gamma'$ if $x \in A_{\gamma,\gamma'}$ for some
$\gamma,\gamma' \in \Gamma$.  Thus,
$\partial_{x}\partial_{y} = \partial_{xy}$, $\bar\partial_{x}\bar\partial_{y} = \bar\partial_{xy}$ and
$\partial_{x^{*}} = \partial_{x}^{-1}$,
$\bar \partial_{x^{*}} = \bar \partial_{x}^{-1}$ for all
homogeneous $x,y \in A$.  Define $r=r_{A}\colon B\to M(A)$
and $s=s_{A}\colon B\to M(A)$ by
$r(b)a = (b\otimes 1)a$ and $s(b)a=(1\otimes b)a$ for all
$a\in A$, $b\in B$.  We write $\rA,\Ar,\sA,\As$ if we
 consider $A$ as a  $B$-module via left or right
 multiplication via $r$ or $s$, respectively.

Clearly, $B$ is a $(B,\Gamma)$-algebra and $B\otimes B$ is a
$(B,\Gamma)^{\ev}$-algebra with respect to the trivial
gradings.  Every $(B,\Gamma)$-algebra $A$ can be regarded as
a $(B,\Gamma)^{\ev}$-algebra, where $A_{(\gamma,\gamma)} =
A_{\gamma}$ and $A_{(\gamma,\gamma')} = 0$ whenever
$\gamma\neq \gamma'$, and $(b \otimes b')a=bb'a$ for all
$b,b'\in B$, $a\in A$. Conversely, every
$(B,\Gamma)^{\ev}$-algebra $A$ can be considered as a
$(B,\Gamma)$-algebra via $r \colon B\to M(A)$ and the
grading given by $A_{\gamma}:=\bigoplus_{\gamma'}
A_{\gamma,\gamma'}$, or via $s \colon B\to M(A)$ and the
grading given by $A_{\gamma'}:=\bigoplus_{\gamma}
A_{\gamma,\gamma'}$. We write $(A,r)$ and $(A,s)$,
respectively, to denote the resulting $(B,\Gamma)$-algebras.

Denote by $B\rtimes \Gamma$ the crossed product for the
action of $\Gamma$ on $B$, that is,
the universal algebra containing $B$ and $\Gamma$ such that
$e=1_{B}$ and $b\gamma \cdot b'\gamma' = b\gamma(b')
\gamma\gamma'$ for all $b,b'\in B$, $\gamma,\gamma' \in
\Gamma$. This is a $(B,\Gamma)$-algebra with respect to the
natural inclusion $B \to B\rtimes \Gamma$ and the involution
and grading given by $(b\gamma)^{*} = \gamma^{-1}b^{*}$ and
$(B\rtimes \Gamma)_{\gamma} = B\gamma$ for all $b\in B$,
$\gamma\in \Gamma$.


The \emph{fiber product} of $(B,\Gamma)^{\ev}$-algebras $A$
and $C$ is defined as follows. The
subalgebra
\begin{align*}
  A \stackrel{\Gamma}{\otimes} C &:=
  \sum_{\gamma,\gamma',\gamma'' \in \Gamma}
  A_{\gamma,\gamma'} \otimes C_{\gamma',\gamma''} \subset A
  \otimes C
\end{align*}
is a $(B,\Gamma)^{\ev}$-algebra, where $\partial_{a\otimes
  c} = \partial_{a}$, $\bar \partial_{a\otimes c}=\bar\partial_{c}$ for all $a\in
A$, $c\in C$ and $(r\times s)(b\otimes b') = r_{A}(b)
\otimes s_{C}(b')$ for all $b,b'\in B$.  Let $I\subseteq
M(A\gtimes C)$ be the ideal generated by $\{s_{A}(b)\otimes
1 - 1\otimes r_{C}(b) : b\in B\}$. Then the quotient
\begin{align*}
  A
\todot C:=A\gtimes C/(I (A\gtimes C))
\end{align*}
is a $(B,\Gamma)^{\ev}$-algebra again, called the fiber
product of $A$ and $C$.  Write $a\todot c$ for the image of
an element $a\otimes c$ in $A\todot C$.

The assignment $(A,C) \mapsto A\todot C$ is functorial,
associative and unital. Indeed, for all morphisms of $(B,\Gamma)^{\ev}$-algebras
  $\pi^{1} \colon A^{1}\to C^{1}$, $\pi^{2}\colon A^{2}\to
  C^{2}$, there exists a morphism
  \begin{align} \label{eq:bgf-functorial} \pi^{1}
    \tilde\otimes \pi^{2} \colon A^{1} \tilde\otimes A^{2}
    \to C^{1} \tilde\otimes C^{2}, \quad a_{1} \todot a_{2}
    \mapsto \pi^{1}(a_{1}) \todot \pi^{2}(a_{2});
  \end{align}
  for all $(B,\Gamma)^{\ev}$-alge\-bras $A,C,D$, there
  exists an isomorphism
  \begin{align} \label{eq:bg-associative} (A \tilde\otimes
    C) \tilde\otimes D \to A \tilde\otimes (C \tilde\otimes
    D), \quad (a \todot c) \todot d \mapsto a \todot (c
    \todot d),
  \end{align}
  and for each $(B,\Gamma)^{\ev}$-algebra $A$, there exist
  isomorphisms
  \begin{align} \label{eq:bg-unital} (B\rtimes \Gamma)
    \tilde\otimes A \to A, \ b\gamma \todot a \mapsto r(b)a,
    \qquad A \tilde\otimes (B\rtimes \Gamma) \to A, \ a
    \todot b\gamma \mapsto s(b)a.
  \end{align}
  These isomorphisms are compatible in a natural sense and
  endow the category of  $(B,\Gamma)^{\ev}$-algebras with
  a monoidal structure. From now on, we shall use them
  without further notice.

  The category of $(B,\Gamma)^{\ev}$-algebras carries
  automorphisms $(-)^{\op}$ and $(-)^{\co}$ such that for
  each $(B,\Gamma)$-algebra $A$ and each morphism $\phi
  \colon A\to C$, we have $A^{\co}=A$ as an algebra,
  $A^{\op}$ is the opposite $*$-algebra of $A$, that is, the
  same vector space with the same involution and reversed
  multiplication, and
\begin{align}
  (A^{\op})_{\gamma,\gamma'} &=
  A_{\gamma^{-1},\gamma'{}^{-1}} \text{ for all }
  \gamma,\gamma'\in \Gamma, & r_{A^{\op}} &= r_{A}, &
  s_{A^{\op}} &= s_{A}, & \phi^{\op} &= \phi, \label{eq:bg-op}
  \\
  (A^{\co})_{\gamma,\gamma'} &= A_{\gamma',\gamma}
  \text{ for all } \gamma,\gamma'\in \Gamma, &
  r_{A^{\co}} &= s_{A}, & s_{A^{\co}} &= r_{A}, &
  \phi^{\co} &= \phi. \label{eq:bg-co}
\end{align}
These automorphisms are involutive and commute, that is,
\begin{align*}
  (-)^{\op} \circ (-)^{\op} &= \Id, &
  (-)^{\co} \circ (-)^{\co} &= \Id, & (-)^{\op} \circ
(-)^{\co} = (-)^{\co} \circ (-)^{\op}.
\end{align*}
Furthermore, they are compatible with the monoidal structure as
follows. Given $(B,\Gamma)$-algebras $A,C$, there exist
isomorphisms $(A \todot C)^{\op} \to A^{\op} \todot
C^{\op}$ and $ (A \todot C)^{\co} \to C^{\co} \todot
A^{\co}$ given by $a \todot c \mapsto a \todot c$ and $a
\todot c \mapsto c \todot a$, respectively.  Moreover,
$(B\rtimes \Gamma)^{\co} = B\rtimes \Gamma$, there exists
an isomorphism $S^{B\rtimes \Gamma} \colon B\rtimes \Gamma
\to (B\rtimes \Gamma)^{\op}$, $b\gamma \mapsto \gamma^{-1}
b$, and all of these isomorphisms and the isomorphisms 
in \eqref{eq:bg-associative} and \eqref{eq:bg-unital} are compatible in a natural
sense.

\subsection{Multiplier $(B,\Gamma)$-Hopf $*$-algebroids}
\label{subsection:axioms}

We shall work with variants of the $\frakh$-Hopf algebroids
and $(B,\Gamma)$-Hopf $*$-algebroids considered in
\cite{etingof:qdybe,koelink:su2} and \cite{timmermann:free},
respectively, where the basis need no longer be
unital. These variants consist of a
$(B,\Gamma)^{\ev}$-algebra and a comultiplication, counit
and antipode, which will be introduced one after the
other. To quickly proceed to the main part of this article,
we postulate all the usual properties of these maps as
axioms and leave a study of the axiomatics for later.

Given a $(B,\Gamma)^{\ev}$-algebra $A$, we denote by
$\tilde M(A \todot A) \subseteq M(A\todot A)$ the set of all
 $T\in M(A\todot A)$ for which all products of the form
 \begin{align*}
  T(x \todot
1_{M(A)}), && (x \todot 1_{M(A)})T, & &
T (1_{M(A)} \todot y), && (1_{M(A)} \todot y)T
 \end{align*}
where $x\in A_{\gamma,e},y\in
A_{e,\gamma},\gamma\in \Gamma$, lie in $A  \todot A$. Evidently, $\tilde M(A
\todot A)$ is a $*$-subalgebra of $M(A\todot A)$.
\begin{definition}
  A \emph{comultiplication} on a $(B,\Gamma)^{\ev}$-algebra
  $A$ is a morphism $\Delta$ from $A$ to $A \todot A$
  satisfying $\Delta(A) \subseteq \tilde M(A \todot A)$ and
  $(\Delta \tilde\otimes \Id) \circ \Delta = (\Id
  \tilde\otimes \Delta) \circ \Delta$.  A {\em (proper)
    multiplier $(B,\Gamma)$-$*$-bial\-gebroid} is a (proper)
  $(B,\Gamma)^{\ev}$-algebra with a comultiplication.  A
  {\em morphism} of multiplier
  $(B,\Gamma)$-$*$-bial\-gebroids $(A,\Delta_{A})$,
  $(B,\Delta_{B})$ is a morphism $\phi$ from $A$ to $B$
  satisfying $\Delta_{B} \circ \phi = (\phi \todot \phi)
  \circ \Delta_{A}$.
\end{definition}

Let $(A,\Delta)$ be a multiplier $(B,\Gamma)$-$*$-bialgebroid.
We adopt the Sweedler notation and write
\begin{align*}
  \Delta(a) &=\sum a_{(1)} \todot
a_{(2)}, & (\Delta \todot \Id)(\Delta(a)) &=\sum a_{(1)} \todot a_{(2)} \todot
a_{(3)} = (\Id \todot \Delta)(\Delta(a))
\end{align*}
and so on for each $a\in A$. In general, $a_{(1)}$ and
$a_{(2)}$ do not stand for elements of $A$ because
$\Delta(a)$ need not lie in $A \todot A$, but only in
$\tilde M(A \todot A)$.  Therefore, this notation requires
extra care; see \cite{daele:0,daele:tools} for a detailed
explanation in the context of multiplier Hopf algebras. 

We shall need to form products of the form $\Delta(x)(1 \otimes y)$ or
$(y \otimes 1)\Delta(x)$ when $\partial_{y} \neq e$ or
$\bar \partial_{y} \neq e$, respectively,  which are defined as
follows.  The multiplication on $A \otimes A$ induces a
canonical $A \todot A$-$A \otimes A$-bimodule structure on
$\sA \otimesB \rA$, and a canonical $A \otimes A$-$A \todot
A$-bimodule structure on $\As \otimesB \Ar$.  We thus obtain
natural maps $_{s}M(A) \otimesB {_{r}M(A)}\to M(\sA \otimesB
\rA)$ and $M(A)_{s} \otimesB M(A)_{r} \to M(\As \otimesB
\Ar)$ and  define
\begin{align*}
  T_{1} &\colon \As \otimesB \sA \to \sA \otimesB \rA, \quad x
  \otimesB y \mapsto \Delta(x)(1 \otimesB y) = \sum x_{(1)}
  \otimesB x_{(2)}y, \\ T_{2} &\colon
  {A_{r} \otimesB {_{r}A}} \to \As \otimesB \Ar, \quad x \otimesB y
  \mapsto (x\otimesB 1)\Delta(y) =\sum xy_{(1)} \otimesB y_{(2)}.
\end{align*}
Similarly, one can define  the maps
\begin{align*}
 T_{3} &\colon \sA \otimesB \As \to \As \otimesB \Ar, \ x
  \otimesB y \mapsto (1 \otimesB y)\Delta(x), & T_{4} &\colon
  \rA \otimesB \Ar \to \sA \otimesB \rA, \ x \otimesB y \mapsto
  \Delta(y)(x \otimesB 1).  
\end{align*}
\begin{definition}
    A \emph{counit} for a multiplier
  $(B,\Gamma)$-$*$-bialgebroid $(A,\Delta)$ is a proper
  morphism of $(B,\Gamma)^{\ev}$-algebras $\epsilon \colon A
  \to B\rtimes \Gamma$ satisfying $(\epsilon \tilde\otimes
  \Id) \circ \Delta = \Id_{A} = (\Id \tilde\otimes \epsilon)
  \circ \Delta$.
\end{definition}
Let $(A,\Delta)$ be a multiplier
$(B,\Gamma)$-$*$-bialgebroid with counit
$\epsilon$.  Using the linear maps
\begin{align*}
  \sharp &\colon B\rtimes \Gamma \to B, \
  \sum_{\gamma}b_{\gamma} \gamma \mapsto \sum_{\gamma}
  b_{\gamma}, & \flat &\colon B\rtimes \Gamma \to B, \
  \sum_{\gamma} \gamma b_{\gamma} \mapsto
  \sum_{\gamma}b_{\gamma},
\end{align*}
we define $\epsilon^{\sharp},\epsilon^{\flat}\colon A\to B$
by $\epsilon^{\sharp} :=\sharp \circ \epsilon$ and
$\epsilon^{\flat} := \flat \circ \epsilon$.  Define 
$m_{r}\colon \Ar \otimesB \rA \to A$ and $m_{s} \colon
\As\otimesB \sA \to A$ by $\sum_{i} x_{i}\otimesB y_{i}
\mapsto \sum_{i} x_{i}y_{i}$.
\begin{remarks}
  \begin{enumerate}
  \item Clearly, $\epsilon(A_{\gamma,\gamma'}) \subseteq (B\rtimes
    \Gamma)_{\gamma,\gamma'} = 0$ whenever $\gamma,\gamma'
    \in \Gamma$ and $\gamma\neq \gamma'$.
  \item If $\epsilon'$ is a counit as well, then $\epsilon =
    \epsilon \circ (\Id \todot \epsilon') \circ \Delta=
    \epsilon' \circ (\epsilon \todot \Id) \circ \Delta =
    \epsilon'$.
  \item The condition $(\epsilon \tilde\otimes \Id) \circ
    \Delta = \Id_{A} = (\Id \tilde\otimes \epsilon) \circ
    \Delta$ is equivalent to the relations
    \begin{align*}
      \sum r( \epsilon^{\sharp}(x_{(1)}))x_{(2)}y&=xy  = \sum xy_{(1)}
      s( \epsilon^{\flat}(y_{(2)})) \quad \text{for all
      }x,y\in A,
    \end{align*}
    and hence to commutativity of the diagrams
    \begin{align*}
      \xymatrix@!=2pt{ && \sA \otimesB \rA
        \ar[rrd]^(0.7){\epsilon^{\sharp} \otimesB \Id}
        & & \\
        \As \otimesB \sA \ar[rru]^(0.4){T_{1}}
        \ar[rrrr]_{m_{s}} && &&
        A, } &&
      \xymatrix@!=2pt{ && \As \otimesB \Ar
        \ar[rrd]^(0.7){\Id \otimesB \epsilon^{\flat}}
        & & \\
        \Ar \otimesB \rA \ar[rru]^(0.4){T_{2}}
        \ar[rrrr]_{m_{r}} && &&
        A.}
    \end{align*}
    Furthermore, this condition is equivalent to the relations
    \begin{align*}
      \sum xy_{(2)}r(\epsilon^{\flat}(y_{(1)})) = xy = \sum
      s(\epsilon^{\sharp}(x_{(2)}))x_{(1)}y \quad \text{for
        all } x,y\in A.
    \end{align*}
  \end{enumerate}
\end{remarks}
 
The definition of the antipode involves the isomorphism
\begin{align*}
  \sigma_{A,A}
\colon (A \todot A)^{\co,\op} \to A^{\co,\op} \todot
A^{\co,\op}, \quad x\todot y \mapsto y\todot x.
\end{align*}
\begin{definition} \label{definition:antipode}
  An \emph{antipode} for a multiplier
  $(B,\Gamma)$-$*$-bialgebroid $(A,\Delta)$ with counit
  $\epsilon$ is an isomorphism $S\colon A \to A^{\co,\op}$
  of $(B,\Gamma)^{\ev}$-algebras that makes the following
  diagrams commute:
  \begin{gather*}
    \xymatrix@R=15pt@C=25pt{ \As \otimesB \sA \ar[r]^{T_{1}}
      \ar[d]_(0.55){\epsilon^{\flat} \otimesB \Id}& \sA
      \otimesB \rA
      \ar[d]^(0.55){S\otimesB \Id} \\
      A & \ar[l]^{m_{r}} \Ar\otimesB \rA }, \qquad
    \xymatrix@R=15pt@C=25pt{ \Ar \otimesB \rA \ar[r]^{T_{2}}
      \ar[d]_(0.55){\Id \otimesB \epsilon^{\sharp}}& \As
      \otimesB \Ar
      \ar[d]^(0.55){\Id\otimesB S} \\
      A & \ar[l]^{m_{s}} \As\otimesB \sA }, \\
    \xymatrix@R=15pt@C=25pt{ A \ar[rr]^{S} \ar[d]_{\Delta}&&
      A^{\co,\op}
      \ar[d]^{\Delta^{\co,\op}} \\
      A \todot A \ar[r]^(0.35){S\todot S} & A^{\co,\op}
      \todot A^{\co,\op} & \ar[l]_(0.45){\sigma_{A,A}} (A
      \todot A)^{\co,\op}.  }
  \end{gather*}
  A \emph{multiplier $(B,\Gamma)$-Hopf $*$-algebroid} is a
  multiplier $(B,\Gamma)$-$*$-bialgebroid with counit and
  antipode.
\end{definition}
\begin{examples}
  \begin{enumerate}
  \item The tensor product $B \otimes B$ is a multiplier $(B,\Gamma)$-Hopf
    $*$-algebroid, where $\Delta(b \otimes b') = (b \otimes
    1) \todot (1 \otimes b')$, $ \epsilon(b\otimes b') =
    bb'$, $S(b \otimes b') = b' \otimes b$ for
    all $b,b'\in B$.  
  \item The crossed product $B \rtimes \Gamma$ is a
    multiplier $(B,\Gamma)$-Hopf $*$-algebroid, where
    $\Delta(b \gamma) = b\gamma \todot \gamma = \gamma
    \todot b\gamma$, $\epsilon=\Id$ and
    $S(\gamma b) = b\gamma^{-1}$ for all $b\in B,\gamma\in
    \Gamma$.
\end{enumerate}
\end{examples}
Given an antipode $S$ on a multiplier $(B,\Gamma)$-$*$-bialgebroid
$(A,\Delta)$ and an element $a\in A$, we shall henceforth
always regard $S(a)$ as an element of $A$ and not of
$A^{\co,\op}$.
\begin{remarks} \label{remarks:antipode} Let
  $(A,\Delta,\epsilon,S)$ be a multiplier $(B,\Gamma)$-Hopf
  $*$-algebroid.
  \begin{enumerate}
  \item In Sweedler notation, commutativity of the diagrams
    in Definition \ref{definition:antipode}  amounts to
    \begin{gather}
      \begin{aligned}
        \sum S(x_{(1)})x_{(2)}y &= s(\epsilon^{\flat}(x))y,
        & \sum xy_{(1)}S(y_{(2)}) &=
        xr(\epsilon^{\sharp}(y)) & \text{for all } x,y\in A,
      \end{aligned} \\ \label{eq:antipode-sigma}
      \sum S(x_{(1)}) \todot S(x_{(2)}) = \sum S(x)_{(2)}
      \todot S(x)_{(1)} \quad \text{for all } x\in A.
    \end{gather}
  \item If $S'$ is an antipode as well, then $S'=S$ because
    for all $x,y,z\in A$,
    \begin{align*}
      xS(y)z = S(yS^{-1}(x))z &= \sum S(s(
      \epsilon^{\sharp}(y_{(2)}))y_{(1)}S^{-1}(x))z  \\
      &= \sum S(y_{(2)}S^{-1}(x))
      r(\epsilon^{\sharp}(y_{(2)})) z \\ &= \sum
      S(y_{(1)}S^{-1}(x)) y_{(2)} S'(S'{}^{-1}(z)y_{(3)}) 
      =xS'(y)z.
    \end{align*}
  \end{enumerate}
\end{remarks}
For every multiplier $(B,\Gamma)$-Hopf $*$-algebroid, the
maps $T_{1}$ and $T_{2}$ defined above are bijections.
\begin{proposition} \label{proposition:galois} Let
  $(A,\Delta)$ be a multiplier $(B,\Gamma)$-$*$-bialgebroid.
  If $(A,\Delta)$ has a counit $\epsilon$ and an antipode
  $S$, then the maps $T_{1},T_{2},T_{3},T_{4}$ are bijective
  and for all $x,y\in A$,
  \begin{align*}
    T_{1}^{-1}(x\otimesB
  y)  &= \sum x_{(1)}\otimesB S(S^{-1}(y)x_{(2)}), &
  T_{2}^{-1}(x\otimesB y) &= \sum S(y_{(1)}S^{-1}(x))
  \otimesB y_{(2)}, \\
  T_{3}^{-1}(x \otimesB y) &= \sum x_{(1)} \otimesB S^{-1}(x_{(2)}S(y)), &
  T_{4}^{-1}(x \otimesB y) &= \sum S^{-1}(S(x)y_{(1)})
  \otimesB y_{(2)}.
  \end{align*}
\end{proposition}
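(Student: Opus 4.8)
The plan is to verify directly that the four displayed formulas define two-sided inverses of $T_1,T_2,T_3,T_4$, carrying out the work for $T_1$ and deducing the other three by symmetry. First I would make sense of the candidate maps. For $T_1$, the expression $\sum x_{(1)}\otimesB S(S^{-1}(y)x_{(2)})$ is to be read as $(\Id\otimesB S)$ applied to $(1\otimesB S^{-1}(y))\Delta(x)=\sum x_{(1)}\otimesB S^{-1}(y)x_{(2)}$; since $S^{-1}(y)\in A$ and products of the form $\Delta(x)(1\otimes z)$, $(1\otimes z)\Delta(x)$ with $z\in A$ of arbitrary degree were given a meaning before the statement via the maps ${}_sM(A)\otimesB{}_rM(A)\to M(\sA\otimesB\rA)$ and their variants, this is a well-defined element of a relative tensor product. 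One then checks that the resulting assignment is $B$-balanced and compatible with the $\Gamma\times\Gamma$-grading, hence a well-defined linear map between the stated relative tensor products, and similarly for the other three formulas.

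Next I would check $T_1\circ T_1^{-1}=\Id$ and $T_1^{-1}\circ T_1=\Id$ by a computation in Sweedler notation using only coassociativity of $\Delta$, the fact that $S$ is an algebra anti-automorphism with inverse $S^{-1}$, the antipode identities $\sum S(x_{(1)})x_{(2)}y=s(\epsilon^{\flat}(x))y$ and $\sum xy_{(1)}S(y_{(2)})=xr(\epsilon^{\sharp}(y))$ of Remark~\ref{remarks:antipode}(i), the counit identities recorded after the definition of the counit, and the $S^{-1}$-versions of the antipode identities. For instance, by coassociativity and anti-multiplicativity of $S^{-1}$ one gets $T_1^{-1}\circ T_1(x\otimesB y)=\sum x_{(1)}\otimesB S\big(S^{-1}(y)\,S^{-1}(x_{(3)})x_{(2)}\big)$, after which $\sum S^{-1}(x_{(3)})x_{(2)}$ is absorbed into $r(B)$ by the $S^{-1}$-antipode identity, $S(r(\cdot))=s(\cdot)$ is used, and a counit identity returns $x\otimesB y$; the computation of $T_1\circ T_1^{-1}$ is parallel. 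The required $S^{-1}$-versions — such as $\sum S^{-1}(w_{(2)})w_{(1)}\in r(B)$ and $\sum w_{(2)}S^{-1}(w_{(1)})\in s(B)$ — are obtained by substituting $S^{-1}(w)$ for $w$ in Remark~\ref{remarks:antipode}(i) and using $\Delta(S^{-1}(w))=\sum S^{-1}(w_{(2)})\todot S^{-1}(w_{(1)})$, which is \eqref{eq:antipode-sigma} read for $S^{-1}$.

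The assertions for $T_2,T_3,T_4$ I would then obtain either by an entirely analogous computation, or by reducing to $T_1$: conjugating with the involution of $A$ relates $T_1$ to $T_3$, and conjugating further with the flip $\sigma_{A,A}$ and the antipode (again via \eqref{eq:antipode-sigma}) relates $T_2$ to $T_1$ and $T_4$ to $T_3$, transporting the inverse formulas along the way.

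The main obstacle is the bookkeeping involved: one must keep careful track of the $\Gamma\times\Gamma$-grading and of the induced twists $v\,r(b)=r(\partial_v(b))\,v$, $v\,s(b)=s(\bar\partial_v(b))\,v$ both when sliding elements of $r(B)$ and $s(B)$ across the relative tensor products (using the identification $s_A(b)\todot 1=1\todot r_A(b)$) and when matching $\epsilon^{\sharp}$ against $\epsilon^{\flat}$ in the counit identities, and one must make sure that each intermediate Sweedler expression genuinely lies in the relevant relative tensor product rather than only in an ambient multiplier algebra, so that coassociativity and the module-structure identities are legitimately applicable. Once this is organised in the style of the multiplier Hopf algebra calculus of \cite{daele,daele:tools}, the remaining verifications are routine.
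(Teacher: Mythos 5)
Your proposal is correct and follows essentially the same route as the paper: one checks that the displayed formulas give well-defined maps and then verifies, in Sweedler notation with covering elements $u\otimes v$, $u,v\in A_{e,e}$, that they are two-sided inverses of $T_{1}$ (the other three cases being analogous), using anti-multiplicativity of $S$, the antipode identities of Remark \ref{remarks:antipode} and the counit identities. The only cosmetic difference is in the middle of the computation: the paper simplifies $S(S^{-1}(y)x_{(3)})=S(x_{(3)})y$ directly and then applies the standard antipode and counit identities, which avoids the detour through the $S^{-1}$-versions of these identities (and the matching of $\epsilon^{\sharp}$ with $\epsilon^{\flat}\circ S$) that you derive from \eqref{eq:antipode-sigma}.
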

\begin{proof}
  We only prove the assertion concerning $T_{1}$.  One first checks
  that the formula given for $T_{1}^{-1}$ yields a
  well-defined map $T_{1}' \colon \sA \otimesB \rA \to
  \As\otimesB \sA$, and then that for all $x,y\in A$ and
  $u,v \in A_{e,e}$,
  \begin{align*}
    (u \otimes v) \cdot (T_{1}\circ T'_{1})(x\otimesB y) &=
    \sum ux_{(1)} \otimesB vx_{(2)} S(S^{-1}(y)x_{(3)}) \\
    &= \sum ux_{(1)} \otimesB vx_{(2)}S(x_{(3)})y \\
    &= \sum ux_{(1)} \otimesB vr(\epsilon^{\sharp}(x_{(2)}))
    y = \sum us( \epsilon^{\sharp}(x_{(2)}))x_{(1)} \otimesB
    vy =
    ux\otimesB vy, \\
    (u \otimes v) \cdot (T'_{1}\circ T_{1})(x\otimesB y) &=
    \sum ux_{(1)} \otimesB vS(S^{-1}(x_{(3)}y)x_{(2)}) \\
    &=
    \sum ux_{(1)} \otimesB vS(x_{(2)})x_{(3)}y \\
    &=\sum ux_{(1)} \otimesB vs(\epsilon^{\flat}(x_{(2)}))y =
    \sum ux_{(1)}s(\epsilon^{\flat}(x_{(2)})) \otimesB vy = ux
    \otimesB vy. \qedhere
  \end{align*}
\end{proof}
As in the case of multiplier bialgebras or Hopf algebroids,
this result should have a converse.

\subsection{Bi-measured multiplier
  $(B,\Gamma)$-$*$-bialgebroids} \label{subsection:bimeasured}
We now introduce the main objects of this article ---
multiplier $(B,\Gamma)$-Hopf $*$-algebroids equipped with
certain integrals. In \S\ref{section:reduced}, we shall
construct completions of such objects in the form of
measured quantum groupoids. 

As on a groupoid, integration on a multiplier
$(B,\Gamma)$-$*$-bialgebroid $(A,\Delta)$
proceeds in stages. First, one needs partial integrals
$\phi,\psi \colon A \to B$ with suitable left or right
invariance properties, and second a suitable weight $\mu
\colon B\to \complex$ that is compatible with the action of
$\Gamma$.  The results in \cite{koelink:su2} suggest that
dynamical quantum groups that are compact in a suitable
sense even possess a bi-invariant integral $h\colon A\to
B\otimes B$ that can be obtained from a Peter-Weyl
decomposition of $A$. 

We first focus on the weight $\mu$ and the bi-integral $h$,
and discuss left and right integrals in the next subsection.

Let us briefly recall some terminology.  Let $C$ be a
 $*$-algebra with local units. A linear map $\mu\colon C\to
\complex$ is {\em faithful} if $\mu(Cc)=0$
implies $c=0$, and {\em positive} if
$\mu(c^{*}c)\geq 0$ for all $c\in C$. Assume that $\mu$ is
positive.  Then $\mu$ is $*$-linear, because positivity of
$\phi((b+c)^{*}(b+c))$ and $\phi((b+ic)^{*}(b+ic))$ implies $\mu(b^{*}c)=\overline{\phi(c^{*}b)}$ for all
$b,c\in C$, and faithful
as soon as $\mu(c^{*}c) \neq 0$ whenever $c\neq 0$.
\begin{definition} \label{definition:modular-qinvariant} A
  \emph{weight} for $(B,\Gamma)$ is a faithful, positive
  linear map $\mu \colon B \to \complex$ that is {\em
    quasi-invariant with respect to $\Gamma$} in the sense
  that for each $\gamma \in \Gamma$, there exists some
  $D_{\gamma} \in M(B)$ such that $\mu(\gamma(b D_{\gamma}))
  = \mu(b)$ for all $b \in B$.
\end{definition}
\begin{remark}
  \label{remark:modular-cocycle} Let $\mu$ be a weight
  for $(B,\Gamma)$. Then
  \begin{enumerate}
  \item each $D_{\gamma}$ is uniquely determined and
    self-adjoint,
\item $ D_{\gamma\gamma'} = \gamma'{}^{-1}(D_{\gamma})
  D_{\gamma'}$ and  $
  1=\gamma^{-1}(D_{\gamma^{-1}})D_{\gamma}$ for all
  $\gamma,\gamma' \in \Gamma$,
\item $\mu(\gamma^{-1}(b)c) =
  \mu(b\gamma(c)D_{\gamma^{-1}}^{-1}))) =
  \mu(b\gamma(cD_{\gamma}))$ for all $b,c\in B,\gamma \in
  \Gamma$.
\end{enumerate}
Indeed, i) and ii) follow easily from the fact that $\mu$ is
faithful and the relations $\mu(\gamma(bD_{\gamma}^{*})) =
\overline{\mu(\gamma(D_{\gamma}b^{*}))} =
\overline{\mu(b^{*})} = \mu(b)$ and
$\mu(\gamma(\gamma'(bD_{\gamma\gamma'}))) = \mu(b) =
\mu(\gamma'(bD_{\gamma'})) =
\mu(\gamma(\gamma'(bD_{\gamma'})D_{\gamma}))$.    
\end{remark}

  Let
$(A,\Delta)$ be a multiplier $(B,\Gamma)$-$*$-bialgebroid.

The following definition is inspired by the notion of a Haar
functional introduced in \cite{koelink:su2}.
\begin{definition}\label{definition:bi-integral}
  A \emph{bi-integral} on  $(A,\Delta)$ is a morphism of
  $(B,\Gamma)^{\ev}$-modules $h\colon A \to B\otimes B$
  satisfying $\Delta(\ker h)(1 \todot A_{e,e}) \subseteq
  \ker h \todot A$ and $\Delta(\ker h)(A_{e,e} \todot 1)
  \subseteq A \todot \ker h$.  If $(A,\Delta)$ is proper and
  $h(r(b)s(b'))=b\otimes b'$ for all $b,b'\in B$, we call
  such a bi-integral {\em normalized}.
\end{definition}
\begin{lemma} \label{lemma:bi-integral} Let $(A,\Delta)$ be
  proper and let $h$ be a normalized bi-integral on $(A,\Delta)$.
  \begin{enumerate}
  \item $ (\Id \todot m_{B}\circ h) \circ \Delta = h =
    (m_{B}\circ h\todot \Id) \circ \Delta$, where
    $m_{B}\colon B\otimes B\to B$ denotes the
    multiplication.
  \item If $h'$ is a normalized bi-integral on $(A,\Delta)$,
    then $h'=h$.
  \item If $(A,\Delta,\epsilon,S)$ is a proper multiplier
    $(B,\Gamma)$-Hopf $*$-algebroid, then $h \circ S =\sigma_{B}
    \circ h$, where $\sigma_{B} \colon B\otimes B \to
      B\otimes B$ denotes the flip $b\otimes c\mapsto
      c\otimes b$.
  \end{enumerate}
\end{lemma}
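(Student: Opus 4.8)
The strategy is to mimic the standard argument that a Haar integral on a compact quantum group is two-sided, using the defining invariance property of the bi-integral together with the counit axioms.

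\textbf{Step 1 (part i).} Set $\Phi := (\Id \todot m_{B}\circ h) \circ \Delta$ and view $\Phi$ as a map $A \to A \todot (B\otimes B) \cong A$ after identifying $B\otimes B$ with a corner. I would first show $\Phi = h$ by testing against $\ker h$ and against $r(b)s(b')$ separately. Since $h$ is a morphism of $(B,\Gamma)^{\ev}$-modules, $A = \ker h \oplus \langle r(B)s(B)\rangle$ in the relevant sense (using properness and normalization: $h(r(b)s(b')) = b\otimes b'$, so $h$ admits the section $b\otimes b' \mapsto r(b)s(b')$ whose composite with $h$ is the identity on $B\otimes B$). On $r(b)s(b')$ one computes $\Delta(r(b)s(b')) = r(b)\cdot\mathbf{1} \todot \mathbf{1}\cdot s(b')$ (using the base-change relation $s_{A}(b)\otimes 1 \equiv 1 \otimes r_{A}(b)$ modulo $I$), so $(\Id\todot m_{B}\circ h)$ applied to this gives $r(b) \cdot (m_{B}\circ h)(s(b')) = r(b)s(b') = h^{-1}(b\otimes b')$-representative, whence $\Phi(r(b)s(b')) = b\otimes b' = h(r(b)s(b'))$ after re-identification. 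On $\ker h$: by the defining property $\Delta(\ker h)(1\todot A_{e,e}) \subseteq \ker h \todot A$, so $(\Id \todot m_B\circ h)$ kills it (the left leg lands in $\ker h$), giving $\Phi(\ker h) = 0 = h(\ker h)$. Hence $\Phi = h$; the other identity is symmetric, using the second inclusion in Definition~\ref{definition:bi-integral} and $m_B\circ h \todot \Id$.

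\textbf{Step 2 (part ii).} Given two normalized bi-integrals $h, h'$, apply part (i) to each. Using the morphism property of $h'$ and normalization, $(m_{B}\circ h' \todot \Id)\circ \Delta = h'$; composing instead with $h$ on the other leg and playing $h$ against $h'$ as in the quantum group case, one gets $h = (m_B\circ h \todot \Id)\circ \Delta = (\text{something symmetric in } h,h')$. Concretely: $h'(a)$, computed via part~(i) for $h'$, equals $(\Id \todot m_B\circ h')\Delta(a)$; now insert part~(i) for $h$ into the first leg to get a two-sided expression $(m_B\circ h \todot m_B \circ h')(\text{id}\todot\Delta)\Delta(a)$ which by coassociativity and the same manipulation with the roles of $h,h'$ reversed equals $h(a)$. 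The key point making this rigorous in the non-unital multiplier setting is that all the expressions occurring lie in $A\todot A$ after multiplying by elements of $A_{e,e}$ on the appropriate side, so Sweedler notation is legitimate here; one inserts local units and removes them at the end.

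\textbf{Step 3 (part iii).} Define $h'' := \sigma_B \circ h \circ S^{-1}$ (or $\sigma_B\circ h\circ S$; one checks which direction). Using that $S$ is an isomorphism $A \to A^{\co,\op}$ intertwining $\Delta$ with $\sigma_{A,A}\circ\Delta^{\co,\op}\circ S$ (the third diagram of Definition~\ref{definition:antipode}), that $S$ exchanges $r$ and $s$ (it is a morphism of $(B,\Gamma)^{\ev}$-algebras with $r_{A^{\co,\op}} = s_A$, $s_{A^{\co,\op}} = r_A$), and that $S$ maps $A_{e,e}$ to $A_{e,e}$, one verifies that $h\circ S$ again satisfies the bi-integral invariance identities in Definition~\ref{definition:bi-integral} — the two inclusions get swapped, which is exactly why the flip $\sigma_B$ appears — and that $h\circ S(r(b)s(b')) = h(s(b)r(b')) = b'\otimes b$, so $\sigma_B\circ h\circ S$ is normalized. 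Then part~(ii) forces $\sigma_B\circ h\circ S = h$, i.e.\ $h\circ S = \sigma_B\circ h$.

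\textbf{Main obstacle.} The technical heart is Step~1: establishing the splitting $A = \ker h \oplus \langle r(B)s(B)A_{e,e}\rangle$ rigorously in the non-unital, multiplier framework, and making sense of all four composites $(\Id\todot m_B\circ h)\circ\Delta$ etc.\ as honest maps into $B\otimes B$ despite $\Delta(a)$ living only in $\tilde M(A\todot A)$. One must check that $m_B\circ h$ applied to the second leg of $\Delta(a)$ — which a priori is a multiplier — actually produces an element of $B$, using that $h$ is $B$-bilinear and that $\Delta(a)(1\todot x) \in A\todot A$ for $x\in A_{e,e}$; local units then let one strip the auxiliary $x$. Once this bookkeeping is in place, the algebra is routine and parallel to the multiplier Hopf algebra case of \cite{daele,daele:tools}.
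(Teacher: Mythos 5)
Your Steps 1 and 3 are essentially the paper's own argument: part (i) is proved via the decomposition $A=\ker h+r(B)s(B)$ (normalization provides the section $b\otimes b'\mapsto r(b)s(b')$, properness puts it inside $A$), the defining inclusions of a bi-integral kill $\Delta(\ker h)$ after slicing, and the value on $r(b)s(b')$ is computed directly; part (iii) is obtained, exactly as in the paper, by checking that $\sigma_{B}\circ h\circ S$ is again a normalized bi-integral and invoking (ii).

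The genuine gap is Step 2. The quantum-group uniqueness trick does not transfer, because $h$ and $h'$ are $B\otimes B$-valued rather than scalar-valued. If you slice both legs of $\Delta(a)$ with $m_{B}\circ h$ and $m_{B}\circ h'$, the two evaluations of the resulting expression are $m_{B}(h(a))$ and $m_{B}(h'(a))$, so you only obtain $m_{B}\circ h=m_{B}\circ h'$, which does not imply $h=h'$: knowing $m_{B}\bigl((b\otimes b')v\bigr)=m_{B}\bigl((b\otimes b')w\bigr)$ for all $b,b'\in B$ does not force $v=w$ in $B\otimes B$ (already $v-w=x\otimes 1-1\otimes x$ over $B=\complex[x]$ is annihilated by all such maps). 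If instead you keep the middle leg and compare the two bracketings of $(m_{B}h\todot\Id\todot m_{B}h')$ applied to $(\Delta\todot\Id)\Delta(a)=(\Id\todot\Delta)\Delta(a)$, you get an identity of the shape $\sum h(a_{(1)})\bigl(1\otimes m_{B}h'(a_{(2)})\bigr)=\sum\bigl(m_{B}h(a_{(1)})\otimes 1\bigr)h'(a_{(2)})$, in which $h$ and $h'$ remain entangled; there is no analogue of the normalization $h'(1)=1$ that lets you strip one of them off, so the claimed conclusion that the symmetric expression ``equals $h(a)$'' is unjustified. The correct route --- the paper's --- uses the decomposition you already set up in Step 1: for $x\in\ker h$, normalization of $h$ gives $h'(x)=h\bigl((r\otimes s)(h'(x))\,s(\cdot)\bigr)$ after inserting local units from $B\otimes B$; rewriting the argument by part (i) for $h'$ as $\sum x_{(1)}s\bigl(m_{B}(h'(x_{(2)}\cdots))\bigr)$ and using the defining inclusion $\Delta(\ker h)(1\todot A_{e,e})\subseteq\ker h\todot A$ together with $B\otimes B$-linearity of $h$ shows this vanishes, so $\ker h\subseteq\ker h'$; since both bi-integrals are normalized and $A=\ker h+r(B)s(B)$, it follows that $h=h'$. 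You should replace Step 2 by this kernel argument (which also repairs Step 3, since (iii) relies on (ii)).
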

\begin{proof}
  i) We only prove the first equation.  Let $\omega\colon
  (A,r) \to B$ be a morphism of $(B,\Gamma)$-modules sending
  $I:=\ker h$ to $0$. Then
  \begin{align*} 
    (\Id \todot \omega)(\Delta(I))A_{e,e} =
    (\Id \otimesB \omega)(\Delta(I)(A_{e,e} \todot 1))
    \subseteq     (\Id \otimesB \omega)(A \otimesB I)  = 0
  \end{align*}
  and hence $(\Id \todot \omega)(\Delta(I)) = 0$.
  Moreover, if $b,b',b'' \in B$ and $u\in
  A_{e,e}$, then
  \begin{align*}
    (\Id \todot \omega)(\Delta(r(b)s(b')))s(b'')u &= (\Id
    \otimesB \omega)(r(b)s(b'')u \otimesB s(b'))
 =
    r(b) s(\omega(s(b')r(b'')))u.
  \end{align*}
  For $\omega=m_{B} \circ h$, these calculations imply
  for all $a\in I$ and $b,b' \in B$  \begin{align*}
    (\Id \todot m_{B}\circ h)(\Delta(a)) &= 0 = h(a), & (\Id
    \todot m_{B}\circ h)(\Delta(r(b)s(b'))) &=r(b)s(b')=
    h(r(b)s(b')).
  \end{align*}
  Since $A = I + r(B)s(B)$, we can conclude $ (\Id \todot
  m_{B}\circ h) \circ \Delta = h$.

    ii) Let $x \in \ker h$ and choose $u,u'\in B\otimes B$
    such that $u(1\otimes m_{B}(u'))h'(x)=h'(x)$. Then
    \begin{align*}
      h'(x) = h(u
      h'(x)s(m_{B}(u')))  =
      \sum h(u x_{(1)}s(m_{B}(h'(x_{(2)}u')))) = 0
    \end{align*}
    because $\sum ux_{(1)} \otimes x_{(2)}u' \in u(\ker h)
    \otimesB A$. Thus, $\ker h \subseteq \ker h'$. Since $h$
    and $h'$ are normalized and $\ker h + B\otimes B = A$,
    we can conclude $h=h'$.
 
    iii) One easily verifies that $\sigma_{B} \circ h \circ
    S$ is a normalized bi-integral. By ii), it equals $h$.
\end{proof}

\begin{definition} \label{definition:bi-measured}
  A proper multiplier $(B,\Gamma)$-$*$-bialgebroid
  $(A,\Delta)$ is \emph{bi-measured} if it is
  equipped with a normalized bi-integral $h\colon A\to
  B\otimes B$ and a weight $\mu$ for $(B,\Gamma)$ such that
  $\nu:=(\mu \otimes \mu)\circ h$ is faithful and positive.  
\end{definition}
\begin{remark} \label{remark:bi-measured}
  Given a bi-measured proper multiplier $(B,\Gamma)$-Hopf
  $*$-algebroid as above, $h$ is evidently faithful, and
  also $*$-linear.  To see this, note that $(\mu
  \otimes \mu)(h(a^{*})(b\otimes c)) = \nu(a^{*}r(b)s(c)) =
  \overline{\nu(s(c^{*})r(b^{*})a)} = \overline{(\mu \otimes
    \mu)((b\otimes c)^{*}h(a))} = (\mu\otimes
  \mu)(h(a)^{*}(b\otimes c))$ for all $a\in A,b,c\in B$.
\end{remark}
\subsection{Left and right
  integrals} \label{subsection:integrals} For large parts of
this article, the multiplier $(B,\Gamma)$-Hopf
$*$-algebroids under consideration need not be equipped with
a bi-integral, but only with left and right integrals
$\phi,\psi$.  The definition of these integrals involves
slice maps of the following form.  

Let $(A,\Delta)$ be a multiplier
$(B,\Gamma)$-$*$-bialgebroid and let $\phi \colon (A,r) \to
B$ be a morphism of $(B,\Gamma)$-modules. Then there exists
a unique linear map $\Id \todot \phi\colon \tilde M(A \todot
A) \to M(A)$ such that
\begin{align*}
  ((\Id \todot \phi)(T))a &= (\Id \otimesB \phi)(T(a
  \otimes 1)), & a((\Id \todot \phi)(T)) &= (\Id \otimesB
  \phi)((a \otimes 1)T)
\end{align*}
for all $T\in \tilde M(A\todot A)$ and $a\in A$, where we
regard $T(a \otimes 1)$ and $(a \otimes 1)T$ as elements of
$\sA \otimesB \rA$ and $\As \otimesB \Ar$, respectively.  In
the case $T=\Delta(x)$ for some $x \in A$,
\begin{align} \label{eq:slice-delta}
  (\Id \todot \phi)(\Delta(x))a &= \sum
  s(\phi(x_{(2)}))x_{(1)}a,  &
a  (\Id \todot \phi)(\Delta(x)) &= \sum
ax_{(1)} s(\phi(x_{(2)})).
\end{align}
Likewise, every morphism $\psi \colon (A,s) \to B$ yields a
slice map $\psi \todot \Id \colon \tilde M(A\todot A) \to
M(A)$.
\begin{definition} \label{definition:algebra-integrals} A
  \emph{left integral} on $(A,\Delta)$ is a morphism
  $\phi\colon (A,r) \to B$ satisfying $(\Id \todot \phi)
  \circ \Delta = r\circ \phi$. A \emph{right integral}
  on $(A,\Delta)$ is a morphism $\psi\colon (A,s) \to B$
  satisfying $(\psi \todot \Id) \circ \Delta = s\circ \psi$.
\end{definition}
\begin{remarks} \label{remarks:algebra-integrals}
  \begin{enumerate}
  \item In Sweedler notation, the invariance conditions on
    $\phi$ and $\psi$ become
    \begin{align*}
     \sum s(\phi(x_{(2)}))x_{(1)}a &= r(\phi(x))a, &
     \sum ax_{(2)}r(\psi(x_{(1)})) &= as(\psi(x))
   \quad \text{for all } a,x\in A.
    \end{align*}
  \item If $(A,\Delta,\epsilon,S)$ is a $(B,\Gamma)$-Hopf
    $*$-algebroid, then the map $\phi \mapsto \phi \circ S$
    gives a bijection between left and right integrals on
    $(A,\Delta)$. This follows easily from
    \eqref{eq:antipode-sigma}.
  \item If $\phi$ is a left integral, then also
    $\phi(-s(b))$ is left integral for each $b\in
    B$. Likewise, if $\psi$ is a right integral, then also
    $\psi(-r(b))$ is a right integral for each $b\in B$.
  \end{enumerate}
\end{remarks}
We shall frequently use the following strong invariance
relations:
\begin{proposition} \label{proposition:integral-strong-invariance}
Assume that $(A,\Delta,\epsilon,S)$ is a $(B,\Gamma)$-Hopf
$*$-algebroid.  Then
  \begin{enumerate}
  \item  $(\Id \otimesB \phi)((1\todot z)\Delta(x)) =S((\Id
  \otimesB \phi)(\Delta(z)(1\todot x)))$ for every left
  integral $\phi$ and all $x,z\in A$;
\item $(\psi \otimesB \Id)(\Delta(x)(z\todot 1)) = S((\psi
  \otimesB \Id)((x\todot
  1)\Delta(z))$ for every right
  integral $\psi$ and all $x,z\in A$.
  \end{enumerate}
\end{proposition}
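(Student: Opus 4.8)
The strategy is to reduce each strong invariance relation to the Galois-type identities from Proposition~\ref{proposition:galois} together with the counit and antipode axioms, exactly as in the multiplier Hopf algebra case \cite{daele}. Since the two statements are related by the symmetry $(-)^{\co,\op}$ and the bijection $\phi \leftrightarrow \phi \circ S$ of Remarks~\ref{remarks:algebra-integrals}~ii), I would prove i) in detail and then indicate that ii) follows by applying i) to the Hopf $*$-algebroid $(A^{\co,\op}, \Delta^{\co,\op})$ and using $S^{\co,\op} = S^{-1}$; alternatively one gives a parallel direct computation.

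For i), fix $x, z \in A$ and work inside $M(A)$, testing both sides against an arbitrary $a \in A_{e,e}$ on the right (and, by a symmetric argument, on the left), since elements of $A_{e,e}$ act non-degenerately. Writing out the left-hand side in Sweedler notation, $(\Id \otimesB \phi)\big((1 \todot z)\Delta(x)\big) = \sum x_{(1)} s(\phi(z x_{(2)}))$. The key move is to insert the counit via the antipode relation $\sum S(x_{(1)}) x_{(2)} y = s(\epsilon^{\flat}(x)) y$ from Remarks~\ref{remarks:antipode}~i): one rewrites $z x_{(2)} = \sum s(\epsilon^{\flat}(x_{(2)})) \cdot (\text{something}) $ is not quite it --- rather, one expands $\Delta(z)(1 \todot x)$ on the other side, applies $\phi$, and uses left invariance $(\Id \todot \phi)\circ \Delta = r \circ \phi$ in the form $\sum s(\phi(z_{(2)} x_{(2)})) z_{(1)} x_{(1)} = r(\phi(zx))$ together with the expressions for $T_1^{-1}$ and $T_3^{-1}$ to move the antipode across. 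Concretely, one recognises $(1 \todot z)\Delta(x)$ as (essentially) $T_3(z \otimesB x)$ up to legs, applies $S$ under the slice, and uses $\sum S(x_{(1)}) \todot S(x_{(2)}) = \sum S(x)_{(2)} \todot S(x)_{(1)}$ from \eqref{eq:antipode-sigma} to flip the coproduct of $S(x)$, after which left invariance collapses one leg.

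The cleanest route is probably: apply $S$ to the right-hand side, use that $S$ is an anti-comultiplicative isomorphism to turn $\Delta(z)(1 \todot x)$ into $\sigma$ of $\Delta(S^{-1}(\cdot))$, reduce via Proposition~\ref{proposition:galois} (the formula for $T_1^{-1}$) to an expression in which only $\Delta(x)$ and a single factor $z$ appear, and then recognise the result as the left-hand side; the counit axioms enter precisely when a composite $\sum S(x_{(1)}) x_{(2)}$ or $\sum x_{(1)} S(x_{(2)})$ appears and gets replaced by a value of $\epsilon^{\flat}$ or $\epsilon^{\sharp}$. Throughout one must be careful that all Sweedler legs are covered by local units and that each product such as $\Delta(x)(1 \todot z)$ with $\partial_z \neq e$ is interpreted via the maps $T_1, \dots, T_4$ and their extensions to multipliers, as set up in \S\ref{subsection:axioms}. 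The main obstacle is bookkeeping: making sure every formal manipulation of $\Delta(x)$, $S(x)$ and the slice maps genuinely lives in $A \todot A$ or $\tilde M(A \todot A)$ and that the $(B,\Gamma)$-gradings (the conditions $\partial, \bar\partial$) match up so that the fiber-product relations $s_A(b) \otimes 1 = 1 \otimes r_A(b)$ may be used — this is where the dynamical/non-unital setting differs from the classical computation and where care is needed, but it introduces no new idea beyond tracking degrees.
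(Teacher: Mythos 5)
Your plan names the right ingredients (counit, antipode relations, invariance, the maps $T_{1},\dots,T_{4}$) and the overall shape agrees with the paper's argument, which is a short direct Sweedler computation; but as written there is a genuine gap: the identity is never actually derived. At the one point where you attempt the decisive manipulation you concede it ``is not quite it'' and then fall back on two alternative outlines, each hedged with ``probably'', neither of which is carried out. The step you are missing (for i)) is the following chain: starting from $\sum x_{(1)}s(\phi(zx_{(2)}))$, insert the counit on $z$ (not on $x$) in the form $z=\sum z_{(2)}r(\epsilon^{\flat}(z_{(1)}))$ inside the argument of $\phi$; transport $r(\epsilon^{\flat}(z_{(1)}))$ across the balanced tensor product to $s(\epsilon^{\flat}(z_{(1)}))$ acting on $x_{(1)}$; expand $s(\epsilon^{\flat}(z_{(1)}))y=\sum S(z_{(1)})z_{(2)}y$ using Remarks \ref{remarks:antipode} i); and finally apply left invariance of $\phi$ to $\Delta(z)\Delta(x)=\Delta(zx)$, which collapses $\sum z_{(2)}x_{(1)}s(\phi(z_{(3)}x_{(2)}))$ to $r(\phi(zx))$ and leaves $\sum S(z_{(1)})r(\phi(z_{(2)}x))$, i.e.\ the right-hand side. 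Neither Proposition \ref{proposition:galois} nor \eqref{eq:antipode-sigma} is needed; until some such explicit chain is exhibited (together with the usual Sweedler/local-unit bookkeeping you rightly flag), the proposal remains a plan rather than a proof.

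A secondary problem concerns your reduction of ii) to i): it rests on the claim $S^{\co,\op}=S^{-1}$, but in the Hopf-algebra paradigm it is the opposite or the co-opposite alone whose antipode is $S^{-1}$, while the combined co-opposite-opposite has antipode $S$; moreover, in the present framework $S$ is by definition an isomorphism $A\to A^{\co,\op}$, so identifying the antipode of $(A^{\co,\op},\Delta^{\co,\op})$ and checking that a right integral on $A$ becomes a left integral there are verifications, not one-liners. The paper sidesteps this by running the analogous computation for $\psi$ directly: insert $z=\sum s(\epsilon^{\sharp}(z_{(2)}))z_{(1)}$, pull out $r(\epsilon^{\sharp}(z_{(2)}))=\sum z_{(2)}S(z_{(3)})$, and apply right invariance, yielding $\sum s(\psi(xz_{(1)}))S(z_{(2)})$. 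Either fix the symmetry argument or give this second computation explicitly.
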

\begin{proof}
Using Sweedler notation, we calculate
  \begin{align*}
    \sum x_{(1)}s(\phi(zx_{(2)})) &= \sum
    x_{(1)}s(\phi(z_{(2)}r(\epsilon^{\flat}(z_{(1)}))x_{(2)})) \\
    & = \sum
    s(\epsilon^{\flat}(z_{(1)}))x_{(1)}s(\phi(z_{(2)}x_{(2)}))
    \\
    &= \sum S(z_{(1)})z_{(2)}x_{(1)}s(\phi(z_{(3)}x_{(2)}))
    = \sum S(z_{(1)})r(\phi(z_{(2)}x))
\end{align*}
and
\begin{align*}
    \sum r(\psi(x_{(1)}z)x_{(2)} &= \sum
    r(\psi(x_{(1)}s(\epsilon^{\sharp}(z_{(2)}))z_{(1)})) x_{(2)} \\
    &= \sum r(\psi(x_{(1)}z_{(1)}))x_{(2)}r(\epsilon^{\sharp}(z_{(2)})) \\
    &= \sum r(\psi(x_{(1)}z_{(1)}) x_{(2)}z_{(2)}S(z_{(3)})
    = s(\psi(xz_{(1)}))S(z_{(2)}). \qedhere
\end{align*}
\end{proof}

Normalized bi-integrals yield left and right integrals as follows:
\begin{lemma} \label{lemma:integral-biintegral} Assume that
  $(A,\Delta)$ is proper, $h$ is a normalized bi-integral on
  $(A,\Delta)$, and $\mu \colon B\to \complex$ is
  linear. Then $\phi:=(\Id \otimes \mu) \circ h$ and $\psi
  :=(\mu \otimes \Id) \circ h$ are a left and a right
  integral, respectively, and $\phi\circ S^{\pm 1} = \psi$.
\end{lemma}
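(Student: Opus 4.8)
The plan is to verify the three claims — left invariance of $\phi$, right invariance of $\psi$, and the antipode relation $\phi\circ S^{\pm1}=\psi$ — directly from the defining properties of a normalized bi-integral collected in Lemma~\ref{lemma:bi-integral}, slicing its bi-invariance identity with the functional $\mu$. The key observation is that $h$ is a morphism of $(B,\Gamma)^{\ev}$-modules landing in $B\otimes B$, so $\phi=(\Id\otimes\mu)\circ h\colon A\to B$ is automatically a morphism of $(B,\Gamma)$-modules for the $r$-module structure (the first leg of $h$ remembers the $r_A$-action), and symmetrically $\psi$ is a morphism for the $s$-module structure. This takes care of the ``morphism'' half of Definition~\ref{definition:algebra-integrals}; only the invariance equations remain.

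First I would establish left invariance of $\phi$. Starting from part~i) of Lemma~\ref{lemma:bi-integral}, namely $(\Id\todot m_{B}\circ h)\circ\Delta = h$, I apply $\Id\todot\mu$ in the appropriate sense: for $x,a\in A$ one computes, using \eqref{eq:slice-delta} and the module-map property of $h$,
\begin{align*}
  (\Id\todot\phi)(\Delta(x))a &= \sum s(\phi(x_{(2)}))x_{(1)}a
  = \sum s\bigl(\mu(h(x_{(2)})_{[1]})\, h(x_{(2)})_{[2]}\bigr)x_{(1)}a,
\end{align*}
and one recognizes that summing $x_{(1)}\otimes m_B(h(x_{(2)}))$ reproduces $m_B\circ h$ applied after $\Delta$, which by Lemma~\ref{lemma:bi-integral}(i) equals $h(x)$; contracting the second leg against $\mu$ then gives exactly $r(\phi(x))a$. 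The argument for $\psi$ is the mirror image, using the second equation of Lemma~\ref{lemma:bi-integral}(i), $(m_{B}\circ h\todot\Id)\circ\Delta = h$, and the $s$-module structure. One should be slightly careful that these manipulations take place inside $M(A)$ and only become honest $A$-valued identities after multiplying by an element of $A$ or $A_{e,e}$, exactly as in the proof of Lemma~\ref{lemma:bi-integral}; I would phrase the verification by testing against such elements, as is done there.

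Finally, the antipode relation. Since $(A,\Delta,\epsilon,S)$ is a proper multiplier $(B,\Gamma)$-Hopf $*$-algebroid, Lemma~\ref{lemma:bi-integral}(iii) gives $h\circ S = \sigma_{B}\circ h$ with $\sigma_B$ the flip on $B\otimes B$. Composing on the left with $\Id\otimes\mu$ yields $\phi\circ S = (\Id\otimes\mu)\circ\sigma_{B}\circ h = (\mu\otimes\Id)\circ h = \psi$, and composing instead with $\mu\otimes\Id$ and using $S^{-1}$ in place of $S$ (equivalently, applying Lemma~\ref{lemma:bi-integral}(iii) to $S^{-1}$, which is again an antipode, or noting $h\circ S^{-1}=\sigma_B\circ h$ follows by applying $\sigma_B$ and $S$ to the same identity) gives $\phi\circ S^{-1}=\psi$ as well, hence $\phi\circ S^{\pm1}=\psi$.

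I do not expect a serious obstacle here; the content is entirely in Lemma~\ref{lemma:bi-integral}, which has already been proved. The only point demanding a little care is the bookkeeping with multiplier-valued slice maps: the identities \eqref{eq:slice-delta} live in $M(A)$, and one must consistently multiply by elements of $A$ (or $A_{e,e}$) before invoking Lemma~\ref{lemma:bi-integral}(i), just as in the proofs above. Once that is handled, all three assertions follow by routine slicing.
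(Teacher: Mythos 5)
Your handling of the morphism property and of $\phi\circ S^{\pm1}=\psi$ (slicing Lemma \ref{lemma:bi-integral} iii) with $\mu$) is correct and is exactly what the paper does for that part. The gap is in the invariance step. Lemma \ref{lemma:bi-integral} i) is the identity $(\Id\todot m_{B}\circ h)\circ\Delta=h$, in which the two legs of $h(x_{(2)})$ have already been multiplied together by $m_{B}$; the multiplier you need to compute, $(\Id\todot\phi)(\Delta(x))$, i.e.\ $\sum s(\phi(x_{(2)}))x_{(1)}a$ with $\phi=(\Id\otimes\mu)\circ h$, applies a genuinely different map to the second leg, and after the slice there is no leg left to ``contract against $\mu$''. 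Knowing $(\Id\todot\omega)\circ\Delta$ for the single choice $\omega=m_{B}\circ h$ does not determine it for $\omega=\phi$; what your step implicitly uses is an ``unmultiplied'' two-legged invariance of $h$ (keeping the two tensor factors of $h(x_{(2)})$ separate), which is stated nowhere and whose proof is precisely the argument you are bypassing. (A small symptom: your display writes $\mu(h(x_{(2)})_{[1]})\,h(x_{(2)})_{[2]}$, which is $\psi(x_{(2)})$, not $\phi(x_{(2)})$.) Your closing remark about testing against elements of $A_{e,e}$ addresses only the multiplier bookkeeping, not this substantive point.

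The repair is the paper's route: rerun the \emph{proof} of Lemma \ref{lemma:bi-integral} i), not its statement, with $\omega:=\phi$. That proof uses only that $\omega$ is a morphism of $(B,\Gamma)$-modules $(A,r)\to B$ vanishing on $\ker h$, and both properties hold for $\phi$. The defining condition $\Delta(\ker h)(A_{e,e}\todot 1)\subseteq A\todot\ker h$ from Definition \ref{definition:bi-integral} then gives $(\Id\todot\phi)(\Delta(a))=0=r(\phi(a))$ for $a\in\ker h$; the explicit computation there gives $(\Id\todot\phi)(\Delta(r(b)s(b')))=r(b\mu(b'))=r(\phi(r(b)s(b')))$; and normalization yields $A=\ker h+r(B)s(B)$, so $(\Id\todot\phi)\circ\Delta=r\circ\phi$ by linearity, with the mirror argument for $\psi$. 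So the essential input is the kernel condition defining a bi-integral, not the already-sliced identity of Lemma \ref{lemma:bi-integral} i).
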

\begin{proof}
Repeating the proof of Lemma \ref{lemma:bi-integral} i) with
$\omega:=\phi=(\Id \otimes \mu)\circ h$, we find
\begin{align*}
    (\Id \todot \phi)(\Delta(a)) &= 0 = r(\phi(a)), & (\Id
    \todot \phi)(\Delta(r(b)s(b'))) &=r(b\mu(b')) =
    \phi(r(b)s(b'))
  \end{align*}
  for all $a\in \ker h$ and $b,b' \in B$.  Since $A = (\ker h) +
  r(B)s(B)$, we can conclude $(\Id \todot \phi) \circ \Delta
  = r\circ \phi$. The assertion on $\psi$ follows
  similarly, and the last equation follows from  Lemma
  \ref{lemma:bi-integral} iii).
\end{proof}

\subsection{Measured multiplier
  $(B,\Gamma)$-$*$-bialgebroids} \label{subsection:measured} 

Much of the ensuing material applies not only to
bi-measured proper multiplier $(B,\Gamma)$-Hopf
$*$-algebroids but also to  the following class of objects.
\begin{definition} \label{definition:measured}
  A a multiplier $(B,\Gamma)$-$*$-bialgebroid
  $(A,\Delta)$ is \emph{measured} if it is
  equipped with a left integral $\phi$, a right integral
  $\psi$, and a weight $\mu$ for $(B,\Gamma)$ such that
  $\nu:=\mu\circ \phi$ and $\nu^{-1}:=\mu \circ \psi$ are
  faithful, positive, and coincide, and $\psi(A)=B=\phi(A)$.
\end{definition}
\begin{remarks} \label{remarks:measured}
  \begin{enumerate}
  \item Given a measured multiplier $(B,\Gamma)$-Hopf
    $*$-algebroid as above, the maps $\phi$ and $\psi$ are
    $*$-linear. This can be seen from a similar argument as in  Remark \ref{remark:bi-measured}.
  \item If $(A,\Delta,\epsilon,S,h,\mu)$ is a bi-measured
    proper multiplier $(B,\Gamma)$-Hopf $*$-algebroid and
    $\phi=(\mu \otimes \Id)\circ h$ and $\psi=(\Id \otimes
    \mu) \circ h$, then
    $(A,\Delta,\epsilon,S,\phi,\psi,\mu)$ is a measured
    multiplier $(B,\Gamma)$-Hopf $*$-algebroid by Lemma
    \ref{lemma:integral-biintegral}. In that case,  $\phi
    \circ S^{\pm 1} = \psi$ and $\nu \circ S = \nu$  by Lemma
    \ref{lemma:bi-integral} iii).
  \item One could probably drop the assumption
    $\nu=\nu^{-1}$ and assume the existence of an invertible
    multiplier $\delta$ such that $\nu^{-1}(a)=\nu(a\delta)$
    for all $a\in A$. In the applications we have in mind,
    in particular, in the bi-measured case, the stricter
    assumption above is satisfied.
  \end{enumerate}
\end{remarks}

Till the end of this subsection, let
$(A,\Delta,\epsilon,S,\phi,\psi,\mu)$ be a measured
multiplier $(B,\Gamma)$-Hopf $*$-algebroid.
Define $D,\bar D\colon A\to A$ by
\begin{align} \label{eq:modular-d}
D(a) &= r(D_{\partial_{a}^{-1}})a
    =
    ar(D_{\partial_{a}}^{-1}),   &
    \bar D(a) &=s(
    D_{\bar \partial_{a}^{-1}})a = as(
    D_{\bar \partial_{a}}^{-1}) &&\text{for all } a\in A.
\end{align}
\begin{lemma} \label{lemma:modular-d} $D$ and $\bar D$ both
  are algebra and $(B,\Gamma)^{\ev}$-module automorphisms of
  $A$, and satisfy
  \begin{gather*}
    \begin{aligned}
      (D \todot \Id) \circ \Delta &= \Delta \circ D, & (\Id
      \todot \bar D) \circ \Delta &= \Delta \circ \bar D, &
      (\bar D \todot \Id) \circ \Delta &= (\Id \todot D)
      \circ \Delta,
    \end{aligned} \\
    \begin{aligned}
D\circ \bar D &= \bar D \circ D, &      S \circ D &= \bar D^{-1} \circ S, & S \circ \bar D &=
      D^{-1} \circ S, & * \circ D &= D^{-1} \circ *, &
      *\circ \bar D &= \bar D^{-1} \circ *.
    \end{aligned}
\end{gather*}
\end{lemma}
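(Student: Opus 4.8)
The strategy is to exploit the fact that $D$ and $\bar D$ are built from the multipliers $D_\gamma$ by multiplication with $r(D_\gamma^{\pm1})$ or $s(D_\gamma^{\pm1})$, together with the cocycle identities from Remark~\ref{remark:modular-cocycle} and the way the grading interacts with $\Delta$, $S$ and $*$. Since $r$ and $s$ map into $M(A)$ and the maps $a\mapsto r(b)a$, $a\mapsto s(b)a$ are by definition $(B,\Gamma)^{\ev}$-module maps, the only thing that makes $D$ and $\bar D$ nontrivial is that the multiplier used depends on the degree $\partial_a$ (resp.\ $\bar\partial_a$). The first step is to record the two equivalent formulas in \eqref{eq:modular-d}, i.e.\ that $r(D_{\gamma^{-1}})a = a r(D_\gamma^{-1})$ for $a\in A_{\gamma,\gamma'}$; this follows from centrality of $r(B)$ up to the twist by $\Gamma$, precisely from $r(b)a = (b\otimes 1)a$ and $a r(b) = a(b\otimes 1) = (\gamma(b)\otimes 1)a = r(\gamma(b))a$, combined with $1 = \gamma^{-1}(D_{\gamma^{-1}})D_\gamma$ from Remark~\ref{remark:modular-cocycle}~ii). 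The analogous statement for $\bar D$ and $s$ is identical with $\bar\partial$ in place of $\partial$.

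**Multiplicativity and bijectivity.** For $a\in A_{\gamma,\gamma'}$, $a'\in A_{\delta,\delta'}$ we have $aa'\in A_{\gamma\delta,\gamma'\delta'}$, so $D(aa') = r(D_{(\gamma\delta)^{-1}})aa'$. Using $D_{(\gamma\delta)^{-1}} = D_{\delta^{-1}\gamma^{-1}} = \gamma(D_{\delta^{-1}})D_{\gamma^{-1}}$ (Remark~\ref{remark:modular-cocycle}~ii) and pushing $r(D_{\gamma^{-1}})$ past $a$ via the twist as in the previous step, one gets $D(aa') = r(\gamma(D_{\delta^{-1}}))r(D_{\gamma^{-1}})aa' = r(D_{\gamma^{-1}})a\,r(D_{\delta^{-1}})a' = D(a)D(a')$; here one uses $r(\gamma(b))a = a r(b)$ and that $r$ is central enough. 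Compatibility with the $(B,\Gamma)^{\ev}$-module structure is immediate since left multiplication by $r(D_{\partial_a^{-1}})$ commutes with left and right multiplication by $r(B)$ and $s(B)$ and preserves each homogeneous component. Bijectivity follows because the inverse is $a\mapsto r(D_{\partial_a})a$, using $1=\gamma^{-1}(D_{\gamma^{-1}})D_\gamma$ again; the same for $\bar D$ with $s$.

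**The six identities.** For $D\circ\bar D = \bar D\circ D$: both sides send $a\in A_{\gamma,\gamma'}$ to $r(D_{\gamma^{-1}})s(D_{\gamma'^{-1}})a$, and $r(B)$ and $s(B)$ commute inside $M(A)$. For $(D\todot\Id)\circ\Delta = \Delta\circ D$: if $a\in A_{\gamma,\gamma'}$ then $\Delta(a)\in \tilde M(A\todot A)$ is supported in bidegrees where the first leg has degree with first index $\gamma$; writing $\Delta(a) = \sum a_{(1)}\todot a_{(2)}$ with $\partial_{a_{(1)}}\partial_{a_{(2)}}$-bookkeeping, one has $\partial_{a_{(1)}} = \partial_a = \gamma$ componentwise because $\Delta$ is a morphism of $(B,\Gamma)^{\ev}$-algebras with $\partial_{a\todot c}=\partial_a$; hence $(D\todot\Id)\Delta(a) = (r(D_{\gamma^{-1}})\otimes 1)\Delta(a) = r_{A\todot A}(D_{\gamma^{-1}})\Delta(a) = \Delta(r(D_{\gamma^{-1}})a) = \Delta(D(a))$, using that $\Delta$ is $B$-linear via $r$. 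The identity $(\Id\todot\bar D)\circ\Delta = \Delta\circ\bar D$ is dual, via $s$ and $\bar\partial_{a\todot c} = \bar\partial_c$. For $(\bar D\todot\Id)\circ\Delta = (\Id\todot D)\circ\Delta$: the left side multiplies $\Delta(a)$ by $s_A(D_{\gamma'^{-1}})\otimes 1$ where $\gamma'=\bar\partial_a$; but $s_A\otimes 1$ restricted to $A\todot A$ equals $1\otimes r_C$ by definition of the fiber product quotient (the ideal $I$ is generated by $s_A(b)\otimes 1 - 1\otimes r_C(b)$), and on $\Delta(a)$ one has $\bar\partial_{a_{(1)}} = \partial_{a_{(2)}}$ by coassociativity/compatibility, so $1\otimes r(D_{\gamma'^{-1}})$ acting picks up exactly $D$ on the second leg. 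For $S\circ D = \bar D^{-1}\circ S$: $S\colon A\to A^{\co,\op}$ sends $A_{\gamma,\gamma'}$ to $(A^{\co,\op})_{\gamma,\gamma'} = A_{\gamma'^{-1},\gamma^{-1}}$ (combining \eqref{eq:bg-op} and \eqref{eq:bg-co}), and $S$ is $B$-linear with $r_{A^{\co,\op}} = s_A$; so $S(D(a)) = S(r(D_{\gamma^{-1}})a) = r_{A^{\co,\op}}(D_{\gamma^{-1}})S(a) = s_A(D_{\gamma^{-1}})S(a)$, while $\bar D^{-1}(S(a)) = s(D_{\bar\partial_{S(a)}})S(a) = s(D_{(\gamma^{-1})^{-1}})S(a)$... wait, here one must track that $\bar\partial_{S(a)} = \gamma^{-1} = \partial_a^{-1}$, so $\bar D^{-1}(S(a)) = s(D_{\bar\partial_{S(a)}})S(a)$ and one needs $D_{\gamma^{-1}} = D_{\bar\partial_{S(a)}}$ with the sign of the exponent matching — this is where the $\pm$ inversions in the $D$, $\bar D$ definitions do their work, and it comes down to $1 = \gamma^{-1}(D_{\gamma^{-1}})D_\gamma$ again. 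The remaining identities $S\circ\bar D = D^{-1}\circ S$, $*\circ D = D^{-1}\circ *$, $*\circ\bar D = \bar D^{-1}\circ *$ are entirely analogous, the last two using that $\partial_{a^*} = \partial_a^{-1}$, that $r,s$ are $*$-homomorphisms, and that each $D_\gamma$ is self-adjoint (Remark~\ref{remark:modular-cocycle}~i).

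**Main obstacle.** The only genuinely delicate point is the bookkeeping of which $\Gamma$-degrees appear in the legs of $\Delta(a)$ — i.e.\ justifying $\partial_{a_{(1)}}$-type statements rigorously in the multiplier setting where $a_{(1)}, a_{(2)}$ are not honest elements — and making sure the exponent signs in the cocycle relation $D_{\gamma\gamma'} = \gamma'^{-1}(D_\gamma)D_{\gamma'}$ are threaded correctly through $S$ (which inverts degrees) in the identities $S\circ D = \bar D^{-1}\circ S$ and $S\circ\bar D = D^{-1}\circ S$. Both are handled by testing against $x\todot 1_{M(A)}$ and $1_{M(A)}\todot y$ with $x,y$ homogeneous of the appropriate degrees, so that all expressions land in $A\todot A$ and the degree-counting becomes a statement about honest elements; everything else is routine manipulation with the relations already recorded.
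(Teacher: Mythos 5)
Your proposal is correct and follows essentially the same route as the paper, which likewise reduces everything to direct computations with the cocycle relations of Remark~\ref{remark:modular-cocycle}, the graded commutation rule $a\,r(b)=r(\partial_a(b))a$ (and its $s$-analogue), the behaviour of $\partial,\bar\partial$ under $\Delta$, $S$ and $*$, and, for the mixed identity, the defining relation $s(b)\otimes 1\equiv 1\otimes r(b)$ of the fiber product. Two small points to tidy up: in $(\bar D\todot\Id)\circ\Delta=(\Id\todot D)\circ\Delta$ the relevant degree is the internal one, $\bar\partial_{a_{(1)}}=\partial_{a_{(2)}}$ (not $\bar\partial_a$, as you first write, though you use the correct matching afterwards), and in $S\circ D=\bar D^{-1}\circ S$ your intermediate formulas ignore the $\Gamma$-twist when commuting $s(D_{\partial_a^{-1}})$ past the homogeneous element $S(a)$ --- written out, both sides equal $S(a)\,s(D_{\partial_a^{-1}})=s(D_{\partial_a}^{-1})\,S(a)$, which is exactly the one-line computation the paper records.
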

\begin{proof}
  The maps $D$ and $\bar D$ are bijective because
  $D_{\gamma}$ is invertible for each $\gamma \in
  \Gamma$.  The remaining assertions follow from
  straightforward calculations, for example,
 \begin{gather*}
     D(xy) = r(D_{\partial_{xy}^{-1}})xy =
     r(D_{\partial_{x}^{-1}}\partial_{x}(D_{\partial_{y}^{-1}}))xy
     =r (D_{\partial_{x}^{-1}})xr(D_{\partial_{y}^{-1}})y
     =D(x)D(y), \\ 
     S(D(x)) = S(r(D_{\partial_{x}}^{-1})x) =
     S(x)s(D_{\bar \partial_{S(x)}}) = \bar
     D^{-1}(S(x)), \\ 
  D(x)^{*}=x^{*}r(D^{*}_{\partial_{x}^{-1}}) = x^{*}
   r(D_{\partial_{x^{*}}}) = D^{-1}(x^{*}) \quad \text{for
     all } x,y\in A. \qedhere
  \end{gather*}
\end{proof}
\begin{lemma} \label{lemma:modular-bimodule} Let $\omega \in
  \{\phi,\psi,\nu\}$.
  \begin{enumerate}
  \item $\omega (A_{\gamma,\gamma'}) = 0$ whenever
    $(\gamma,\gamma') \neq (e,e)$.
  \item $\omega(r(b)s(b')a)=\omega(ar(b)s(b'))$ for all $a\in A$,
    $b,b' \in B$.
  \item $\omega(D(a)a') = \omega(aD^{-1}(a'))$ and
    $\omega(\bar D(a)a') = \omega(a\bar D^{-1}(a'))$ for all
    $a,a'\in A$.
  \end{enumerate}
\end{lemma}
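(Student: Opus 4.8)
The plan is to prove part~(i) first --- it carries essentially all the content --- and then deduce parts~(ii) and~(iii) from it by short arguments with the $\Gamma\times\Gamma$-grading. Within part~(i) the formal half is immediate: since $\phi\colon(A,r)\to B$ is a morphism of $(B,\Gamma)$-modules, $B$ carries the trivial grading, and the grading on $(A,r)$ is $A_{\gamma}=\bigoplus_{\gamma'}A_{\gamma,\gamma'}$, preservation of the grading forces $\phi(A_{\gamma,\gamma'})=0$ whenever $\gamma\neq e$, and symmetrically $\psi(A_{\gamma,\gamma'})=0$ whenever $\gamma'\neq e$. The step I expect to be the real obstacle is the complementary vanishing of $\phi$ on $A_{e,\gamma'}$ for $\gamma'\neq e$ (and, symmetrically, of $\psi$ on $A_{\gamma,e}$ for $\gamma\neq e$), which the grading alone cannot give. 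Here the idea is to bring in the remaining hypotheses: for $x\in A_{e,\gamma'}$ with $\gamma'\neq e$ and any $b\in B$, the multiplier $r(b)$ is homogeneous of degree $(e,e)$, so $r(b)x\in A_{e,\gamma'}$ and hence $\psi(r(b)x)=0$ by the formal half just proved; then $\nu=\nu^{-1}$, that is $\mu\circ\phi=\mu\circ\psi$, together with the left $B$-linearity $\phi(r(b)x)=b\,\phi(x)$, yields $\mu\bigl(b\,\phi(x)\bigr)=\mu\bigl(\psi(r(b)x)\bigr)=0$ for every $b\in B$, so $\phi(x)=0$ by faithfulness of $\mu$. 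The case of $\psi$ is the mirror image with $s$ in place of $r$, and part~(i) for $\nu=\mu\circ\phi$ is then immediate.

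For part~(ii), decompose $a=\sum_{\gamma,\gamma'}a_{\gamma,\gamma'}$ into homogeneous components. Because $r(b)$ and $s(b')$ have degree $(e,e)$, both $r(b)s(b')a_{\gamma,\gamma'}$ and $a_{\gamma,\gamma'}r(b)s(b')$ lie in $A_{\gamma,\gamma'}$, so by part~(i) only the $(e,e)$-component contributes to $\omega(r(b)s(b')a)$ and to $\omega(a\,r(b)s(b'))$. On that component the twist is trivial, i.e.\ $a_{e,e}r(b)=r(b)a_{e,e}$ and $a_{e,e}s(b')=s(b')a_{e,e}$, and $r(B)$ commutes with $s(B)$; hence $r(b)s(b')a_{e,e}=a_{e,e}r(b)s(b')$, which gives the claim. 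In particular $\omega(r(b)c)=\omega(cr(b))$ and $\omega(s(b)c)=\omega(cs(b))$ for all $c\in A$, $b\in B$.

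For part~(iii), I would first observe that $\omega\circ D=\omega$ and $\omega\circ\bar D=\omega$. Indeed $D$ and $\bar D$ preserve the $\Gamma\times\Gamma$-grading --- by \eqref{eq:modular-d} they act on $A_{\gamma,\gamma'}$ as left multiplication by $r(D_{\gamma^{-1}})$, respectively $s(D_{\gamma'{}^{-1}})$, which are of degree $(e,e)$ --- so by part~(i) it suffices to check the identity on $A_{e,e}$; there $D=\bar D=\Id$ because $D_e=1$, and $D_e=1$ follows from $D_e=D_e^2$ and the invertibility of $D_e$, both contained in Remark~\ref{remark:modular-cocycle}(ii) taken with $\gamma=\gamma'=e$. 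Granting this, and using that $D$ is an algebra automorphism (Lemma~\ref{lemma:modular-d}), we get for all $a,a'\in A$ that $\omega(D(a)a')=\omega\bigl(D(a)\,D(D^{-1}(a'))\bigr)=\omega\bigl(D(a\,D^{-1}(a'))\bigr)=\omega(a\,D^{-1}(a'))$, and the identical computation with $\bar D$ in place of $D$ gives the second identity. Thus all three parts rest on part~(i) together with, for part~(iii), the observation $D_e=1$; everything else is routine bookkeeping with the grading and the multiplier algebra.
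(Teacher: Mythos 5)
Your proof is correct, and part~(i) --- the substantive part --- follows exactly the paper's route: the grading-preservation of $\phi\colon(A,r)\to B$ and $\psi\colon(A,s)\to B$ kills $A_{\gamma,\gamma'}$ when the relevant index is $\neq e$, and the complementary vanishing comes from $\mu\circ\phi=\mu\circ\psi$ together with faithfulness of $\mu$, which is precisely what the paper's terse ``use the fact that $\mu$ is faithful'' amounts to. For (ii) and (iii) you deviate slightly, in a way that is equally valid and arguably cleaner: the paper proves (ii) for $\nu$ by a direct computation using $B$-bilinearity of $\phi,\psi$ and commutativity of $B$, and then passes to $\phi,\psi$ by a second appeal to faithfulness of $\mu$, whereas you reduce via (i) to the $(e,e)$-component, where $r(b)s(b')a_{e,e}=a_{e,e}r(b)s(b')$ holds already on the level of elements, so no further faithfulness argument is needed and all three $\omega$ are treated uniformly. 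Likewise for (iii): the paper's ``immediate from \eqref{eq:modular-d} and i)'' rests on the observation that on the components surviving (i) (those with $\partial_{a'}=\partial_a^{-1}$, resp.\ $\bar\partial_{a'}=\bar\partial_a^{-1}$) one has $D(a)a'=a\,r(D_{\partial_a}^{-1})a'=aD^{-1}(a')$ as elements, while you instead establish $\omega\circ D=\omega$ (using $D_e=1$, which you justify correctly from Remark~\ref{remark:modular-cocycle}) and combine it with multiplicativity of $D$ from Lemma~\ref{lemma:modular-d}; the two arguments use the same ingredients and either is acceptable.
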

\begin{proof}
  i) For $\omega=\nu$, the assertion follows from the
  relation $\ker \phi + \ker \psi \subseteq \ker \nu$.  To
  obtain the assertion for $\omega=\phi,\psi$, use the fact
  that $\mu$ is faithful.

  ii) Let $a \in A$ and $b,b' \in B$. Then $\nu(r(b)a) =
  \mu(b\phi(a)) = \mu(\phi(a)b) = \nu(ar(b))$ and similarly
  $\nu(s(b')a)=\nu(as(b'))$.  To obtain the assertion for
  $\omega=\phi,\psi$, use the fact that $\mu$ is faithful
  again.

  iii) This follows immediately equation
  \eqref{eq:modular-d} and i).
\end{proof}

We shall now show that $\nu=\mu\circ \phi$ has a modular
automorphism and thus satisfies an algebraic variant of the
KMS-condition. Let us briefly recall this concept.

Let $C$ be a $*$-algebra with local units and a faithful,
positive, linear map $\omega \colon C\to \complex$.  A
\emph{modular automorphism} for $\omega$ is a bijection
$\theta_{\omega} \colon C\to C$ satisfying $\omega(cc') =
\omega(c'\theta_{\omega}(c))$ for all $c,c' \in C$.  If it
exists, a modular automorphism $\theta_{\omega}$ for
$\omega$ is uniquely determined, an algebra automorphism,
and satisfies $\omega\circ \theta_{\omega} =\omega$ and
$\theta_{\omega} \circ \ast \circ \theta_{\omega} \circ \ast
= \Id_{C}$. This follows easily from the relations
\begin{align*}
  \omega(z\theta_{\omega}(xy))&=\omega(xyz)
  =\omega(yz\theta_{\omega}(x))
  =\omega(z\theta_{\omega}(x)\theta_{\omega}(y)), \\ 
  \omega(yx) &= \overline{\omega(x^{*}y^{*})} =
  \overline{\omega(y^{*}\theta_{\omega}(x^{*}))} =
  \omega(\theta_{\omega}(x^{*})^{*}y) =
  \omega(y\theta_{\omega}(\theta_{\omega}(x^{*})^{*})),
\end{align*}
where $x,y,z\in C$.

As before, let $(A,\Delta,\epsilon,S,\phi,\psi,\mu)$ be a measured
multiplier $(B,\Gamma)$-Hopf $*$-algebroid.
\begin{theorem} \label{theorem:modular}
  \begin{enumerate}
  \item There exists a modular automorphism $\theta$ for
    $\nu$.
  \item $\theta$ is a $(B,\Gamma)^{\ev}$-module
    automorphism of $A$.
  \item If $\nu \circ S = \nu$, then
    $\theta \circ S = S \circ \theta^{-1}$.
  \end{enumerate}
\end{theorem}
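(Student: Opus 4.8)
The strategy is the standard one for producing a modular automorphism for a Haar-type functional on a Hopf algebroid, adapted from the multiplier Hopf algebra case \cite{daele}: use the strong invariance relations of Proposition \ref{proposition:integral-strong-invariance} together with the hypothesis $\nu = \nu^{-1}$, i.e.\ $\mu\circ\phi = \mu\circ\psi$, to rewrite $\nu(xy)$ as $\nu(y\,\cdot)$ applied to an explicit transformation of $x$, and then check that this transformation is bijective.

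\textbf{Step 1 (construction).} For part i), I would start from the identity $\nu(xy) = \nu(x\psi\text{-twisted})$ by combining the left-invariance of $\phi$ and right-invariance of $\psi$. Concretely, write $\nu(xy) = \mu(\phi(xy))$ and use $\phi$'s invariance to pull the antipode in via Proposition \ref{proposition:integral-strong-invariance} i): $(\Id \otimesB \phi)((1\todot x)\Delta(y)) = S\big((\Id\otimesB\phi)(\Delta(x)(1\todot y))\big)$, together with the counit relations to collapse one leg. Applying $\mu$ and using that $\nu$ is tracial up to $D,\bar D$ (Lemma \ref{lemma:modular-bimodule} ii)–iii)), one arrives at an expression of the form $\nu(xy) = \nu(y\,\theta(x))$ where $\theta$ is built from $S^{2}$ (or $S^{-1}\circ(\text{conjugation})$), the modular maps $D,\bar D$ of Lemma \ref{lemma:modular-d}, and possibly the modular automorphism of $\mu$ on the base (which exists since $\mu$ is faithful and positive on a commutative $*$-algebra, hence automatically has trivial or abelian modular structure — this point needs care, but commutativity of $B$ makes $\mu$ a trace, so there is no base contribution). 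The bijectivity of $\theta$ then follows because each constituent — $S$, $D$, $\bar D$ — is bijective by Lemma \ref{lemma:modular-d}; uniqueness and the algebra-automorphism property come for free from the general remarks on modular automorphisms recalled just before the theorem.

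\textbf{Step 2 (module-automorphism property).} For part ii), I would use Lemma \ref{lemma:modular-bimodule} i), which says $\nu$ vanishes off the $(e,e)$-component, to deduce that $\theta$ must preserve the $\Gamma\times\Gamma$-grading: if $\theta(a)$ had a component in some $A_{\gamma,\gamma'}$ with $(\gamma,\gamma')\neq(e,e)$, testing against a suitable $y\in A_{\gamma^{-1},\gamma'^{-1}}$ via $\nu(y\theta(a))=\nu(a y)$ and the grading constraint on $a$ would give a contradiction with faithfulness. For $B$-linearity, one uses Lemma \ref{lemma:modular-bimodule} ii): $\nu(r(b)s(b')a\cdot y) = \nu(a\cdot y r(b)s(b')) = \nu(a \cdot r(b)s(b') y)$ (moving $r(b)s(b')$ through using that these are central-up-to-twist and the relation from ii)), which forces $\theta(r(b)s(b')a) = r(b)s(b')\theta(a)$ after using $\nu(y\theta(r(b)s(b')a)) = \nu(r(b)s(b')a\cdot y)$ and cancelling via faithfulness. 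This is essentially bookkeeping once the grading behaviour is pinned down.

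\textbf{Step 3 ($S$-compatibility).} For part iii), assuming $\nu\circ S = \nu$, I would compute $\nu(S(x)S(y))$ two ways: directly via the defining property of $\theta$, giving $\nu(S(y)\theta(S(x)))$; and by applying $\nu\circ S = \nu$ to $\nu(S(yx)) = \nu(yx)$ — wait, more carefully, using $S$ an anti-homomorphism, $S(x)S(y) = S(yx)$, so $\nu(S(x)S(y)) = \nu(S(yx)) = \nu(yx) = \nu(x\theta(y))= \nu(S(x\theta(y)))= \nu(S(\theta(y))S(x))$. On the other hand $\nu(S(x)S(y)) = \nu(S(y)\theta(S(x)))$. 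Comparing $\nu(S(\theta(y))S(x)) = \nu(S(y)\theta(S(x)))$ for all $x,y$ and using faithfulness twice (together with surjectivity of $S$) yields $\theta(S(y)) = S(\theta^{-1}(y))$, i.e.\ $\theta\circ S = S\circ\theta^{-1}$.

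\textbf{Main obstacle.} The delicate part is Step 1: getting the \emph{exact} formula for $\theta$ and verifying it is well-defined as a map $A\to A$ (not merely $A\to M(A)$), since all the identities pass through multiplier algebras and the Sweedler legs $x_{(1)},x_{(2)}$ are formal. One must carefully justify each manipulation using the slice-map formalism of \S\ref{subsection:integrals} and the hypotheses $\psi(A)=B=\phi(A)$ to stay inside $A$. I expect the final formula to read something like $\theta(a) = \bar D\big(D^{-1}(S^{2}(a))\big)$ up to adjusting by a base-modular term, but pinning down the precise twist — and checking it is consistent with the three compatibility relations of Lemma \ref{lemma:modular-d} — is where the real work lies; everything after that is extracting consequences from faithfulness of $\nu$.
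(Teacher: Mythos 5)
Your Steps 2 and 3 are essentially sound: the grading argument via Lemma \ref{lemma:modular-bimodule} i) and faithfulness is a valid (and slightly more abstract) alternative to the paper's, which instead reads off homogeneity from an explicit formula, and your $B$-linearity and $S$-compatibility computations are exactly the paper's. The problem is Step 1, which is where the entire content of part i) lies, and there your proposal has a genuine gap. You do not construct $\theta$; you sketch a hope that strong invariance plus the counit relations will yield a closed formula of the shape $\theta \approx \bar D\circ D^{-1}\circ S^{2}$ (up to a base twist). No such closed formula is available, and the paper never produces one: the only relation of this kind it proves is the much weaker $\Delta\circ\theta_{D} = (S^{2}\todot\theta_{D})\circ\Delta$ (Proposition \ref{proposition:modular-delta}), exactly as in van Daele's multiplier Hopf algebra setting, where the modular automorphism of the Haar functional is not $S^{2}$ composed with grading twists. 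So the route you describe would stall at the point you yourself flag as ``where the real work lies''.

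What the paper actually does is an implicit, two-parameter spanning argument. First it establishes three exchange identities between $\phi$ and $\psi$ (its equations \eqref{eq:modular-1}--\eqref{eq:modular-3}), which crucially use both the hypothesis $\mu\circ\phi=\mu\circ\psi$ and the quasi-invariance cocycle $D_{\gamma}$ of $\mu$ via Remark \ref{remark:modular-cocycle} iii); your remark that commutativity of $B$ means ``there is no base contribution'' underestimates this -- the cocycle enters all three identities and is the reason $D,\bar D$ appear at all. Then, for $c,d\in A$ it sets $a=\sum \bar D\bigl(s(\psi(dS(c_{(2)})))c_{(1)}\bigr)$ and $a'=\sum d_{(2)}r\bigl(\phi(D(S(d_{(1)}))\bar D(c))\bigr)$ and shows $\nu(za)=\nu(a'z)$ for all $z$, alternating the two strong invariance relations of Proposition \ref{proposition:integral-strong-invariance} with the three exchange identities. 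Finally, bijectivity of $\bar D$, $S$, $T_{1}$ and the relation $\langle s(\psi(A))A\rangle=A$ show that the elements of the form $a$ (and likewise those of the form $a'$) span $A$, so $a\mapsto a'$ determines a bijection $\theta$ with $\nu(az)=\nu(z\theta(a))$, well defined and unique by faithfulness of $\nu$. Your proposal contains neither the pair $(a,a')$ nor the spanning step, and without them the existence claim in i) is not proved; also note that invoking Lemma \ref{lemma:modular-bimodule} ii)--iii) to say $\nu$ is ``tracial up to $D,\bar D$'' is close to circular, since those statements only cover $r(B)s(B)$ and the $D,\bar D$-twists, not general elements -- the general statement is precisely the theorem.
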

\begin{proof}
i) The proof repeatedly uses  strong invariance of $\phi$ and
$\psi$, and closely follows \cite{daele}, where the
corresponding result was obtained for multiplier Hopf
algebras. We proceed in three steps. 

  {\em Step 1. }  Repeatedly using Remark
  \ref{remark:modular-cocycle} iii), we
  find that for all $x,x',y,y' \in A$,
  \begin{align} 
    \bar \partial_{x'} &= \bar \partial_{y'}^{-1} &
    \Rightarrow&& \nu^{-1}(ys(\psi(xx'))y') &= \mu(\psi(yy')
    \bar \partial_{y'}^{-1}(\psi(xx'))) \nonumber  \\ 
    &&&& &= \mu(\psi(xx')\bar \partial_{y'}(\psi(yy')
    D_{\bar \partial_{y'}})) = \nu^{-1}(xs(\psi(y y'))\bar
    D(x')),   \label{eq:modular-1} 
\\ \nonumber
    \bar \partial_{x} &=\partial_{y'} &
    \Rightarrow&&
    \nu(yr(\psi(xx'))y') &=
    \mu(\phi(yy') \partial_{y'}^{-1}(\psi(xx'))) \\
    &&&&      &= \mu(\partial_{y'}(\phi(yy')D_{\partial
      y'})\psi(xx'))  =\nu^{-1}(xs(
    \phi(D(y)y'))x'),  \label{eq:modular-2} 
\\ \nonumber
    \partial_{x} &= \partial_{y}^{-1} &\Rightarrow &&
    \nu(yr(\phi(xx'))y') &=
    \mu(\partial_{y}(\phi(xx'))\phi(yy') ) \\
    &&&& &=
    \mu(\partial_{y}^{-1}(\phi(yy')D_{\partial_{y}^{-1}})\phi(xx'))
    = \nu(x r(\phi(D(y)y'))x').  \label{eq:modular-3}
  \end{align}
  
{\em Step 2. } Let $c,d \in A$ and
  \begin{align} \label{eq:modular-4}
 a &= \sum \bar D(s(\psi(d
  S(c_{(2)})))c_{(1)}) \in A, & a' &= \sum d_{(2)}r(\phi(D(S(
  d_{(1)}))\bar D(c))) \in A.
  \end{align}
Then the equations above and Proposition
\ref{proposition:integral-strong-invariance} imply 
  \begin{align*}
    \nu(za) &=  \sum \nu(z\bar D(s(
    \psi(d S(c_{(2)})))c_{(1)})) &&   \\
    &= \sum \nu(ds( \psi( zc_{(1)}))S(c_{(2)})) & &
    \text{(Equation \eqref{eq:modular-1})}
    \\
    &= \sum \nu( dr(\psi(z_{(1)}c))z_{(2)}) &&
    \text{(Proposition
      \ref{proposition:integral-strong-invariance}) }
    \\
    &= \sum \nu(z_{(1)}s( \phi( D( d)z_{(2)}))c) &&
    \text{(Equation \eqref{eq:modular-2})}
    \\
    &= \sum \nu(S(D(d_{(1)}))r(\phi( d_{(2)}z)) c) &&
    \text{(Proposition
      \ref{proposition:integral-strong-invariance}) }
    \\
    &= \sum \nu( S(d_{(1)})r(\phi(
    d_{(2)}z)) \bar D(c)) 
&& \text{(use $S\circ D = \bar D^{-1} \circ S$ and
    \ref{lemma:modular-bimodule} iii))}\\
    &= \sum \nu( d_{(2)}r(\phi(D( S( d_{(1)}))\bar D(c)))z)=
    \nu(a'z). && \text{(Equation \ref{eq:modular-3})}
  \end{align*}
  
  {\em Step 3. } Using bijectivity of the maps $\bar D,S,
  T_{1}$ and the relation $\langle s(\psi(A))A\rangle=A$,
  one finds that all elements of the form like $a$ in
  \eqref{eq:modular-4} span $A$. A similar argument shows
  that the same is true for elements of the form like
  $a'$. Hence, there exists a bijection $\theta \colon A \to
  A$ such that $\nu(az)=\nu(z\theta(a))$ for all $a \in A$,
  and uniqueness of such a bijection follows from
  faithfulness of $\nu$.

  ii) We first show that $\theta$ respects the
  grading. Let $c,d\in A$ be homogeneous. Then the element
  $a$ in \eqref{eq:modular-4} is homogeneous as well, with
  grading given by $\partial_{a}=\partial_{c}$ and
  $\bar \partial_{a} = \bar \partial_{d}$ because
  $\psi(dS(c_{(2)}))=0$ unless
  $\bar \partial_{d}=\partial_{c_{(2)}}=\bar\partial_{c_{(1)}}$,
  and similarly $a'$ in \eqref{eq:modular-4} is homogeneous
  with the same degree like $a$. To see that $\theta$ is
  $B\otimes B$-linear, use the relation
  $\nu(y\theta(r(b)s(b')x)) =
  \nu(r(b)s(b')xy)=\nu(xyr(b)s(b'))=\nu(yr(b)s(b')\theta(x))$,
where $x,y\in A$ and $b,b'\in B$, and faithfulness
  of $\nu$.

  iii) If $\nu \circ S = \nu$, then we have $\nu(y\theta(S(x))) =
  \nu(S(x)y) = \nu(S^{-1}(y)x) =
  \nu(\theta^{-1}(x)S^{-1}(y)) = \nu(yS(\theta^{-1}(x)))$
  for all $x,y \in A$.
\end{proof}
 Define $\theta_{D},\theta_{\bar D},\theta_{D,\bar
  D}\colon A\to A$ by  
\begin{align*}
  \theta_{D}&:=\theta \circ D^{-1} = D^{-1} \circ \theta, &
  \theta_{\bar D}&:=\theta \circ \bar D^{-1} =\bar D^{-1}
  \circ \theta, &
\theta_{D,\bar D} := \theta \circ D^{-1} \circ \bar D^{-1}.
\end{align*}
\begin{proposition} \label{proposition:modular-integrals}
  \begin{enumerate}
  \item $\phi \circ \theta = \phi$ and
    $\phi(xy)=\partial_{x}(\phi(y\theta_{D}(x)))$ for all
    $x,y \in A$.
  \item $\psi \circ \theta =
    \psi$ and $\psi(xy) =\bar \partial_{x}(\psi(y\theta_{\bar
      D}(x)))$ for all $x,y \in A$.
  \item $h \circ \theta = h$ and $h(xy) = (\partial_{x}
    \otimes \bar \partial_{x})(h(y\theta_{D,\bar D}(x)))$ for
    all $x,y \in A$ if $h$ is a bi-invariant integral and
    $\nu = (\mu \otimes \mu) \circ h$.
 \end{enumerate}
\end{proposition}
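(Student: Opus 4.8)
The plan is to treat all three statements by one mechanism: reduce each identity to one that holds after pairing with $\mu$ (or with $\mu\otimes\mu$ in part iii)) and then strip the functional off using faithfulness. The ingredients are the defining relation $\nu(xy)=\nu(y\theta(x))$; the facts from Theorem \ref{theorem:modular} that $\theta$ is a $(B,\Gamma)^{\ev}$-module automorphism respecting the grading, so that $\theta$ commutes with $D$ and $\bar D$ and $r(b)s(b')\theta(a)=\theta(r(b)s(b')a)$; the equality $\nu\circ\theta=\nu$ recalled before that theorem; the quasi-invariance identities of Remark \ref{remark:modular-cocycle}; the grading-twist relations $ar(b)=r(\partial_a(b))a$ and $as(b)=s(\bar\partial_a(b))a$ for homogeneous $a$; the $r$- and $s$-linearity of $\phi$ and $\psi$, respectively the $(B,\Gamma)^{\ev}$-linearity of $h$; and faithfulness of $\mu$ (and of $\mu\otimes\mu$ on $B\otimes B$, which holds since $\mu$ is faithful and positive).

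First I would prove the invariance equations $\phi\circ\theta=\phi$, $\psi\circ\theta=\psi$ and $h\circ\theta=h$. For $\phi$: for $b\in B$ and $a\in A$,
\begin{align*}
  \mu\big(b\,\phi(\theta(a))\big)=\mu\big(\phi(r(b)\theta(a))\big)=\mu\big(\phi(\theta(r(b)a))\big)=\nu(r(b)a)=\mu\big(b\,\phi(a)\big),
\end{align*}
and faithfulness of $\mu$ forces $\phi(\theta(a))=\phi(a)$. Replacing $r$ by $s$ gives the statement for $\psi$, and replacing $r(b)$ by $r(b)s(b')$ and $\mu$ by $\mu\otimes\mu$ gives it for $h$.

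For the twisted relations I would fix a homogeneous $x$ (for which $\partial_x$ is defined) and an arbitrary $y$. By Lemma \ref{lemma:modular-bimodule} i), $\phi$, $\psi$ and $h$ kill every homogeneous component other than the $(e,e)$-one, so both sides of each identity vanish unless $\partial_y=\partial_x^{-1}$ and $\bar\partial_y=\bar\partial_x^{-1}$; assuming this, $y\theta(x)$ has degree $(e,e)$. For $\phi$ and $b\in B$, the modular relation, $(B,\Gamma)^{\ev}$-linearity of $\theta$ and the twist relation for $y$ give
\begin{align*}
  \mu\big(b\,\phi(xy)\big)=\nu(r(b)xy)=\nu\big(y\,\theta(r(b)x)\big)=\nu\big(y\,r(b)\,\theta(x)\big)=\nu\big(r(\partial_x^{-1}(b))\,y\theta(x)\big)=\mu\big(\partial_x^{-1}(b)\,\phi(y\theta(x))\big),
\end{align*}
while, writing $\theta_D(x)=D^{-1}(\theta(x))=\theta(x)r(D_{\partial_x})$, moving $r(D_{\partial_x})$ past the degree-$(e,e)$ element $y\theta(x)$, and using Remark \ref{remark:modular-cocycle} together with commutativity of $B$,
\begin{align*}
  \mu\big(b\,\partial_x(\phi(y\theta_D(x)))\big)=\mu\big(\partial_x^{-1}(b)\,\phi(y\theta_D(x))\,D_{\partial_x}^{-1}\big)=\mu\big(\partial_x^{-1}(b)\,\phi(y\theta(x))\big).
\end{align*}
Comparing the two lines and using faithfulness of $\mu$ yields $\phi(xy)=\partial_x(\phi(y\theta_D(x)))$. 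The identity for $\psi$ follows verbatim with $(r,\partial,D)$ replaced by $(s,\bar\partial,\bar D)$, and the one for $h$ by running the same computation in both tensor legs simultaneously, with $\theta_{D,\bar D}(x)=\theta(x)r(D_{\partial_x})s(D_{\bar\partial_x})$ and $\mu\otimes\mu$ in place of $\mu$.

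I expect the main obstacle to be purely organizational: keeping track of which element of $\Gamma$ twists which argument and which power of the cocycle $D_\gamma$ is produced at each step, so that the contributions cancel exactly; once this bookkeeping is set up, every individual step is one of the identities listed above.
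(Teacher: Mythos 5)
Your proposal is correct and takes essentially the same route as the paper: the paper also pairs with $b\in B$ via $\mu$, uses the modular relation $\nu(r(b)xy)=\nu(y\theta(r(b)x))$ together with the grading-twist and cocycle identities of Remark \ref{remark:modular-cocycle}, and strips off $\mu$ by faithfulness, obtaining $\phi(y\theta(x))=\partial_x^{-1}(\phi(D(x)y))$, which is your formula after substituting $D^{-1}(x)$ for $x$. As in your plan, the paper treats ii) and iii) as the same computation with $(s,\bar\partial,\bar D)$ resp.\ both tensor legs and $\mu\otimes\mu$.
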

\begin{proof}
 Assertion i) follows from the fact that $\mu$
  is faithful and that for all $x,y\in A$, $b\in B$,
\begin{align*}
  \mu(b\phi(\theta(x))) = \nu(r(b)\theta(x))
  &=\nu(\theta(r(b)x)) = \nu(r(b)x) = \mu(b\phi(x)), \\
  \mu(b \phi(y\theta(x))) = \nu(r(b)y\theta(x))&=
  \nu(xr(b)y) \\
  &=\nu(r(\partial_{x}(bD_{\partial_{x}}))xr(D_{\partial_{x}}^{-1})y)
  \\
  &= \mu(\partial_{x}(bD_{\partial_{x}})\phi(D(x)y)) 
  = \mu(b \partial_{x}^{-1}(\phi(D(x)y))).
  \end{align*}
 Assertions ii) and iii) follow similarly.
\end{proof}
  \begin{proposition} \label{proposition:modular-delta}
Assume that $\As$    is a flat $B$-module. Then $\Delta \circ \theta_{D} = (S^{2} \todot \theta_{D})
    \circ \Delta$.
\end{proposition}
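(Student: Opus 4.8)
The plan is to follow van Daele's argument \cite{daele} for the relation $\Delta\circ\sigma_{\phi}=(S^{2}\otimes\sigma_{\phi})\circ\Delta$ for the modular automorphism of a left invariant integral on a multiplier Hopf algebra, of which the present statement is the dynamical-groupoid counterpart. First one notes that both sides lie in $\tilde M(A\todot A)$: the left-hand one by the definition of a comultiplication applied to $\theta_{D}(a)\in A$, and the right-hand one because $S^{2}$ and $\theta_{D}$ are grading-preserving algebra automorphisms of $A$ (Lemma~\ref{lemma:modular-d}, Theorem~\ref{theorem:modular}), so that $S^{2}\todot\theta_{D}$ extends to an automorphism of $\tilde M(A\todot A)$. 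The first real step is a reduction. Faithfulness of $\nu$ implies that $\phi$ is faithful in the strong sense that $\phi(Aa)=0$ or $\phi(aA)=0$ forces $a=0$ (for the latter use $\nu(ax)=\nu(x\theta(a))$ and bijectivity of $\theta$), and the hypothesis that $\As$ is flat over $B$ upgrades this to faithfulness of second-leg $\phi$-slicing on the relative tensor product, in the sense that $\xi\in\sA\otimesB\rA$ vanishes as soon as all $(\Id\otimesB\phi)\bigl(\xi(1\otimesB c)\bigr)$, $c\in A$, do. Combined with the standard covering argument for multipliers of $A\todot A$ in the non-unital setting (cf.\ \cite{daele:0,daele:tools}), this reduces the claim to the Sweedler-type identity
\[
  \sum\theta_{D}(a)_{(1)}\otimesB\theta_{D}(a)_{(2)}b=\sum S^{2}(a_{(1)})\otimesB\theta_{D}(a_{(2)})b\qquad\text{in }\sA\otimesB\rA
\]
for all $a,b\in A$.

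To prove this identity I would apply the bijection $T_{1}^{-1}$ of Proposition~\ref{proposition:galois}. Its left-hand side is $T_{1}\bigl(\theta_{D}(a)\otimesB b\bigr)$, hence becomes $\theta_{D}(a)\otimesB b$; for the right-hand side, using $\Delta\circ S^{2}=(S^{2}\todot S^{2})\circ\Delta$ (a consequence of \eqref{eq:antipode-sigma}), the explicit formula for $T_{1}^{-1}$, and the anti-multiplicativity of $S$, the claim becomes equivalent to
\[
  \theta_{D}(a)\otimesB b=\sum S^{2}(a_{(1)})\otimesB S^{3}(a_{(2)})\theta_{D}(a_{(3)})\,b\qquad\text{in }\As\otimesB\sA .
\]
To establish the latter I would slice again — against $\psi$ in the first leg (legitimate since $\psi\circ\theta_{D}=\psi$) and against $\phi$ in the second, once more invoking flatness to keep the slicing faithful — and verify the resulting identities between elements of $A$ by iterating the strong invariance relations of Proposition~\ref{proposition:integral-strong-invariance}, the modular relations $\phi\circ\theta=\phi$ and $\phi(xy)=\partial_{x}\bigl(\phi(y\theta_{D}(x))\bigr)$ of Proposition~\ref{proposition:modular-integrals}, and the counit and antipode axioms, in the exact pattern of \cite{daele}. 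The strong invariance relations are what allow the various powers of $S$ to be matched up without needing to know $\phi\circ S^{2}=\phi$, and the $D_{\gamma}$-twists get absorbed using the cocycle identities of Remark~\ref{remark:modular-cocycle} together with the compatibilities of $D$, $\bar D$ with $S$, $r$ and $s$ from Lemma~\ref{lemma:modular-d}.

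The formal skeleton is routine; the actual work, and the main obstacle, lies in the bookkeeping. Unlike the group-like case one has $\theta_{D}\neq S^{2}$ and $\phi\circ S^{2}\neq\phi$ in general, the base $B$ is non-unital, and $r(B)$ and $s(B)$ are central in $A$ only up to the $\Gamma$-action, so every rearrangement of Sweedler legs drags the cocycle $(D_{\gamma})$ along and one must repeatedly check that the partially defined products stay inside $A$ and inside the relevant relative tensor products. The one genuinely new ingredient compared with \cite{daele} is the flatness of $\As$: it is precisely what makes $\phi$-slicing faithful on $\sA\otimesB\rA$ and thereby permits the passage from the sliced identities back to the multiplier identity; over an arbitrary non-unital base this last step would fail.
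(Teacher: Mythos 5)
Your overall frame does match the paper's: use flatness of $\As$ together with the fact that the functionals $a\mapsto\phi(ya)$, $y\in A$, separate the points of $A$, so that the multiplier identity follows once the second leg has been sliced against $\phi$, and then verify the sliced identities using the strong invariance relations (Proposition \ref{proposition:integral-strong-invariance}) and the modular relations (Proposition \ref{proposition:modular-integrals}). But the entire content of the proof is precisely that verification, and you never perform it. The paper proves, for all $x,y\in A$, the identity
\begin{align*}
\sum \theta_{D}(x)_{(1)}\,s(\phi(y\,\theta_{D}(x)_{(2)}))
&= \sum S\bigl(s(\phi(y_{(2)}\theta_{D}(x)))\,y_{(1)}\bigr)
= \sum S\bigl(y_{(1)}s(\phi(xy_{(2)}))\bigr) \\
&= \sum S^{2}\bigl(s(\phi(x_{(2)}y))\,x_{(1)}\bigr)
= \sum S^{2}(x_{(1)})\,s(\phi(y\,\theta_{D}(x_{(2)}))),
\end{align*}
alternating Proposition \ref{proposition:integral-strong-invariance} with the relation $\phi(xy)=\partial_{x}(\phi(y\theta_{D}(x)))$, and only then invokes flatness. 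Your proposal replaces this chain by the assertion that the identities follow "in the exact pattern of \cite{daele}"; since matching the powers of $S$ against $\theta_{D}$ is exactly where the work lies (as you yourself note, $\theta_{D}\neq S^{2}$ and $\phi\circ S^{2}\neq\phi$ here), this is a genuine gap rather than routine bookkeeping.

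Moreover, the intermediate reduction via $T_{1}^{-1}$ does not buy you anything. The reduced identity $\theta_{D}(a)\otimesB b=\sum S^{2}(a_{(1)})\otimesB S^{3}(a_{(2)})\theta_{D}(a_{(3)})b$ is logically equivalent to the original Sweedler identity (both are transported into each other by the bijection of Proposition \ref{proposition:galois}), it still couples $S^{3}(a_{(2)})$ with $\theta_{D}(a_{(3)})$, and it cannot be collapsed by the antipode axiom; any direct proof again requires the same alternation of strong invariance and modular relations, so nothing has been simplified. In addition, the slicing you propose for it is ill-posed as stated: after applying $T_{1}^{-1}$ the element lives in $\As\otimesB\sA$, whose legs are balanced over $s(B)$, whereas $\phi$ is a morphism $(A,s)$... rather $(A,r)\to B$ and is not $s(B)$-linear, so "slicing against $\phi$ in the second leg" is not a well-defined map on $\As\otimesB\sA$ (and the claim $\psi\circ\theta_{D}=\psi$ used to justify the first-leg slice is also unsubstantiated; Proposition \ref{proposition:modular-integrals} gives $\psi\circ\theta=\psi$, not $\psi\circ\theta_{D}=\psi$). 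To close the argument you should drop the $T_{1}^{-1}$ detour and exhibit the sliced identity explicitly, as above.
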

\begin{proof}
  Let $x,y \in A$. Using Sweedler notation, we calculate
  \begin{align*}
    \sum
    \theta_{D}(x)_{(1)}s(\phi(y\theta_{D}(x)_{(2)}))
    &= \sum S(s(\phi(y_{(2)}\theta_{D}(x)))y_{(1)}) &&
    \text{(Proposition
      \ref{proposition:integral-strong-invariance})}
    ) \\
    &= \sum S(s(\partial_{x}^{-1}(\phi(xy_{(2)})))y_{(1)})
    && \text{(Proposition
      \ref{proposition:modular-integrals})} \\
    &= \sum S(y_{(1)}s(\phi(xy_{(2)})))
    &&  \\
    &= \sum S^{2}(s(\phi(x_{(2)}y))x_{(1)}) &&
    \text{(Proposition
      \ref{proposition:integral-strong-invariance})} \\
    &= \sum S^{2}(s(\partial_{x_{(2)}}
    (\phi(y\theta_{D}(x_{(2)}))))x_{(1)}) &&
    \text{(Proposition
      \eqref{proposition:modular-integrals})}\\
    &= \sum S^{2}(x_{(1)})s(\phi(y\theta_{D}(x_{(2)}))).
  \end{align*}
  Since $\As$ is a flat $B$-module and maps of the form $a
  \mapsto \phi(ya)$, where $y \in A$, separate the points of
  $A$, we can conclude $\sum \theta_{D}(x)_{(1)} \todot
  \theta_{D}(x)_{(2)} = \sum S^{2}(x_{(1)}) \todot
  \theta_{D}(x_{(2)})$.
\end{proof}

\subsection{The dual $*$-algebra} \label{subsection:dual}
Let $(A,\Delta,\epsilon,S,\phi,\psi,\mu)$ be a measured
multiplier $(B,\Gamma)$-Hopf $*$-algebroid.  Denote by
$M(A)'$ the dual vector space of $M(A)$ and let
\begin{align*}
  \hA:= \{ \nu(x-) : x\in A\} \subseteq M(A)'
\end{align*}
Then $\hA = \{\nu(-x) :x\in A\}$ by Theorem
\ref{theorem:modular} and for each $\omega \in \hA$, there
exist unique $B$-module maps $_{r}\omega \colon {_{r}M(A)}
\to B$, $\omega_{r} \colon M(A)_{r} \to B$, $_{s}\omega
\colon {_{s}M(A)} \to B$, $\omega_{s} \colon M(A)_{s} \to B$
whose compositions with $\mu$ are equal to $\omega$, because
$\nu=\mu\circ \phi = \mu \circ \psi$ and $\mu$ is faithful.
Using either of these $B$-module maps, one can equip $\hA$
with the structure of a $*$-algebra. We shall choose an
approach that fits well with the duality on the
operator-algebraic level in the next section.

First, we define an abstract Fourier transform
\begin{align*}
  A \to \hat A, \quad x\mapsto \hat x:=\nu(S(x)-).
\end{align*}
Evidently, $ \hat x_{s} = \psi(S(x)-)$ and $\hat x_{r} =
\phi(S(x)-)$, and by Proposition
\ref{proposition:modular-integrals}, $ _{s}\hat{x} =
\psi(-\theta(S(x)))$ and $_{r}\hat{x} =
\phi(-\theta(S(x)))$.  For all $x,a\in A$, we define  a right
convolution 
\begin{align} \label{eq:convolution-1} a \ast\hat x & :=
  \sum a_{(2)}r(\hat x_{s}(a_{(1)})) = \sum a_{(2)}
  r(\psi(S(x)a_{(1)})) \in A.
\end{align}
\begin{remark}
  One could also work with the transform $A \to \hat A$, $x
  \mapsto \check x := \nu(-S(x))$, and the left convolution
  defined by
  \begin{align} \label{eq:convolution-left}
    \check x \ast a &:=  \sum s(_{r}\check
    x(a_{(2)}))a_{(1)} = \sum s(\phi(a_{(2)}S(x)))a_{(1)}
    \in A
    \quad \text{for all } x,a\in A.
  \end{align}
  If $\phi\circ S=\psi$, for
  example, if we are in the bi-measured case (see Remark
  \ref{remarks:measured} ii)),  then
  \begin{align*}
    \widecheck{S(x)}  \ast S(a) = \sum
    s(\phi(S(a)_{(2)}S^{2}(x)))S(a)_{(1)} =
    S(a_{(2)}r(\psi(S(x)a_{(1)}))) = S(a \ast \hat x) \quad
    \text{for all } a,x\in A.
  \end{align*}
\end{remark}
We collect a few useful formulas. First, for all $a,x \in
A$, 
\begin{align} \label{eq:convolution-alt}
a
  \ast\hat x &=  \sum r(\psi(a_{(1)}\theta_{\bar
    D}(x)))a_{(2)},  && (\text{Proposition
    \ref{proposition:modular-integrals}}) \\  
a \ast \hat x &= \sum S^{-1}(r(\psi(S(x)_{(1)}a))S(x)_{(2)})
= \sum x_{(1)}s(\psi(S(x_{(2)})a))
&& (\text{Proposition
  \ref{proposition:integral-strong-invariance}}) \label{eq:convolution-alt-2}
\end{align}
Next, for all $a,x,y\in A$, $b\in B$,
$\gamma,\gamma',\delta,\delta' \in \Gamma$,
\begin{gather}
 \label{eq:convolution-module} 
  \begin{aligned}
    r(b)a \ast \hat{ x} &= a\ast \widehat{s(b)x}, & ar(b)
    \ast \hat{x} &= a\ast \widehat{xs(b)}, \\ s(b)a \ast
    \hat{x} &= s(b)(a\ast \hat x), & as(b) \ast \hat x &=
    (a\ast\hat x)s(b),
  \end{aligned} \\
 \label{eq:convolution-product}
  \begin{aligned}
    (a \ast \hat x) \ast \hat y &= \sum 
    a_{(3)}r(\psi(S(y)a_{(2)}r(\psi(S(x)a_{(1)}))) \\
&= \sum a_{(2)} r(\psi(S(y)x_{(1)}s(\psi(S(x_{(2)})a_{(1)}))))\\
&= \sum a_{(2)}r(\psi(S(x_{(2)}r(\psi(S(y)x_{(1)})))a)) 
    = a \ast \widehat{(x \ast \hat{\!\!\!y\,\,\,})},
  \end{aligned} \\
 \label{eq:convolution-grading}
  A_{\gamma,\gamma'} \ast
    \widehat{A_{\delta,\delta'}}  \subseteq
    \sum_{\gamma''}
    s(\psi(A_{\delta'{}^{-1},\delta^{-1}}A_{\gamma,\gamma''}))A_{\gamma'',\gamma'}
    \subseteq \delta_{\gamma,\delta'} A_{\delta,\gamma'},
\end{gather}
where we used Lemma
 \ref{lemma:modular-bimodule} in the last line.

The $(B,\Gamma)^{\ev}$-algebra structure on $A$
induces the following structure on $\hat A$:
\begin{definition}
  A {\em $(B,\Gamma)^{\ev}$-matrix-algebra} is a
  non-degenerate $*$-algebra $\hA$ equipped with a
  non-degenerate $*$-homomorphism $B\otimes B \to M(\hA)$
  and a direct sum decomposition $\hA =
  \bigoplus_{\gamma,\gamma' \in \Gamma}
  \hA^{\gamma,\gamma'}$ as a vector space such that
  \begin{gather*}
    \begin{aligned}
      \hA^{\gamma,\gamma'} \hA^{\delta,\delta'} &\subseteq
      \delta_{\gamma',\delta} \hA^{\gamma,\delta'}, &
      (\hA^{\gamma,\gamma'})^{*} &= \hA^{\gamma',\gamma}, &
      (B\otimes B)\hA^{\gamma,\gamma'} &\subseteq
      \hA^{\gamma,\gamma'}, & \langle \hA^{e,e}\hA\rangle &=
      \hA,
    \end{aligned} \\
    \begin{aligned}
      (b\otimes b')\ha &= (\gamma^{-1}(b') \otimes
      \gamma(b))\ha, & \ha (b\otimes b') &= \ha
      (\gamma'{}^{-1}(b') \otimes \gamma'(b))
    \end{aligned}
  \end{gather*}
  for all $\gamma,\gamma',\delta,\delta' \in \Gamma$, $\ha
  \in \hA^{\gamma,\gamma'}$, $b,b'\in B$.  Given such an
  algebra, we write $\hat r$ and $\hat s$ for the
  compositions $B\cong B\otimes 1 \to M(\hA)$ and $B\cong
  1\otimes B \to M(\hA)$, and $\delta_{\ha}:=\gamma$ and
  $\bar\delta_{\ha}:=\gamma'$ whenever $\ha \in
  \hA^{\gamma,\gamma'}$.
\end{definition}
\begin{proposition} \label{proposition:dual-algebra} $\hA$
  has a structure of a
  $(B,\Gamma)^{\ev}$-matrix-algebra, where for all $x,y\in
  A$, $b\in B$,
\begin{align*}
\hat r(b) \hat x &= \widehat{xr(b)},
  & \hat x\hat r(b) &= \widehat{xs(b)},
 & \hat s(b) \hat x &= \widehat{r(b)x},
  & \hat x\hat s(b) &= \widehat{s(b)x}, \\
  \hat y \hat x &= \widehat{x \ast \hat y}, & \hat x^{*} &=
  \widehat{S(x)^{*}}, &   \delta_{\hat x}
  &= \partial_{x}, & \bar\delta_{\hat x} &= \bar\partial_{x}.
\end{align*}
\end{proposition}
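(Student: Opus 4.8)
The plan is to transport the $(B,\Gamma)^{\ev}$-algebra structure of $A$ to $\hat A$ along the abstract Fourier transform and then to verify the matrix-algebra axioms by unwinding the convolution identities collected above. The first step is to observe that the Fourier transform $A \to \hat A$, $x \mapsto \hat x = \nu(S(x)-)$, is a linear bijection: it is onto by the definition of $\hat A$ together with bijectivity of $S$ (Proposition \ref{proposition:galois}), and injective because $\nu$ is faithful (if $\nu(S(x)a)=0$ for all $a\in A$ then $S(x)=0$, hence $x=0$). Consequently every formula in the statement \emph{defines} a well-defined operation on $\hat A$, and the proposition reduces to checking the axioms of a $(B,\Gamma)^{\ev}$-matrix-algebra. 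Associativity of the product $\hat y\hat x := \widehat{x\ast\hat y}$ is then immediate from \eqref{eq:convolution-product}: unwinding the definition, $(\hat z\hat y)\hat x = \widehat{x\ast\widehat{(y\ast\hat z)}}$ and $\hat z(\hat y\hat x) = \widehat{(x\ast\hat y)\ast\hat z}$, and these coincide by \eqref{eq:convolution-product}.

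For the grading I would set $\hat A^{\gamma,\gamma'} := \widehat{A_{\gamma,\gamma'}}$, so that $\hat A = \bigoplus_{\gamma,\gamma'}\hat A^{\gamma,\gamma'}$ by the bijectivity just noted and $\delta_{\hat x}=\partial_x$, $\bar\delta_{\hat x}=\bar\partial_x$ by definition. The multiplicative selection rule $\hat A^{\gamma,\gamma'}\hat A^{\delta,\delta'}\subseteq\delta_{\gamma',\delta}\hat A^{\gamma,\delta'}$ is read off directly from \eqref{eq:convolution-grading}; the relation $(\hat A^{\gamma,\gamma'})^{*}=\hat A^{\gamma',\gamma}$ holds because $S$ is a grading-preserving isomorphism $A\to A^{\co,\op}$, so $S(A_{\gamma,\gamma'})=A_{\gamma'{}^{-1},\gamma^{-1}}$ and hence $S(x)^{*}\in A_{\gamma',\gamma}$ for $x\in A_{\gamma,\gamma'}$; and $(B\otimes B)\hat A^{\gamma,\gamma'}\subseteq\hat A^{\gamma,\gamma'}$ holds because $r(b),s(b)\in M(A)_{e,e}$.

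For the $B\otimes B$-action itself, using \eqref{eq:convolution-module}, commutativity of $B$, and the commutation of $r(B)$ with $s(B)$ in $M(A)$, one checks that the four formulas for $\hat r(b)\hat x,\hat x\hat r(b),\hat s(b)\hat x,\hat x\hat s(b)$ define commuting left and right actions of $B$ that assemble into a homomorphism $B\otimes B\to M(\hat A)$, non-degenerate because $r,s\colon B\to M(A)$ are and $B$ has local units for $A$. The twisted-covariance relations $(b\otimes b')\hat x=(\gamma^{-1}(b')\otimes\gamma(b))\hat x$ and $\hat x(b\otimes b')=\hat x(\gamma'{}^{-1}(b')\otimes\gamma'(b))$ for $\hat x\in\hat A^{\gamma,\gamma'}$ then amount, after unwinding these formulas, to the twisted-centrality identities $xr(b)=r(\partial_x(b))x$ and $xs(b)=s(\bar\partial_x(b))x$, which hold in $A$ because $A$ is a $(B,\Gamma)^{\ev}$-module. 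The remaining non-degeneracy conditions on $\hat A$ (in particular the spanning condition in the definition of a $(B,\Gamma)^{\ev}$-matrix-algebra) translate into statements about spans of convolution products, which I would deduce from $\psi(A)=B=\phi(A)$, the existence of local units in $A_{e,e}$, bijectivity of the Galois maps $T_1,T_2$ (Proposition \ref{proposition:galois}), and faithfulness of $\nu$.

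Finally, the involution $\hat x^{*}:=\widehat{S(x)^{*}}$ is conjugate-linear, since the Fourier transform is linear and $x\mapsto S(x)^{*}$ is conjugate-linear. Its two remaining properties reduce, via the product formula, to the identities $S(x\ast\hat y)^{*}=S(y)^{*}\ast\widehat{S(x)^{*}}$ (anti-multiplicativity) and $S(S(x)^{*})^{*}=x$, i.e.\ $*\circ S\circ *=S^{-1}$ (involutivity). The latter is the standard $*$-antipode identity and follows from the axioms as in the Hopf-algebra case. The former I would verify by rewriting both sides with the alternative convolution formula \eqref{eq:convolution-alt-2}, applying the strong-invariance relations of Proposition \ref{proposition:integral-strong-invariance}, the antipode relation \eqref{eq:antipode-sigma}, and $*$-linearity of $\nu$ (Remark \ref{remarks:measured} i)), mirroring the corresponding computation for multiplier Hopf $*$-algebras in \cite{daele}. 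I expect this compatibility of the involution with the convolution product, together with the bookkeeping of the $\Gamma$-grading twists, to be the main obstacle; all the other verifications are formula-pushing with the identities already assembled above.
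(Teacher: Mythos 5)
Your proposal is correct and follows essentially the same route as the paper's proof: associativity from \eqref{eq:convolution-product}, non-degeneracy from surjectivity of $T_{2}$, the multiplier and covariance properties of $\hat r,\hat s$ from \eqref{eq:convolution-module} together with the twisted centrality of $r(B),s(B)$ and commutativity of $B$, the grading rule from \eqref{eq:convolution-grading}, and anti-multiplicativity of the involution by exactly the Sweedler computation you sketch (the paper carries it out in four lines using \eqref{eq:convolution-alt-2}, \eqref{eq:antipode-sigma}, involutivity of $*\circ S$ and $*$-linearity of $\psi$). The only difference is bookkeeping: you make injectivity of the Fourier transform explicit to justify the direct sum decomposition, which the paper leaves implicit.
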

\begin{proof}
  The multiplication is associative and turns $\hA$ into an
  algebra by \eqref{eq:convolution-product}.  This algebra
  is non-degenerate because $A \ast \hat A$ spans $A$ by surjectivity of $T_{2}$.

  The $*$-operation is involutive because $\ast \circ S$ is
  involutive, and anti-multiplicative because 
  \begin{align*}
    S(y \ast \hat x)^{*} &=
    \sum    S(y_{(2)}r(\psi(S(x)y_{(1)}))^{*}  \\
    &= \sum S(y_{(2)})^{*}s(\psi(y_{(1)}^{*}S(x)^{*})) \\
    &= \sum S(y)^{*}_{(1)}s(\psi(S(S(y)^{*}_{(2)})S(x)^{*}))
    = S(x)^{*} \ast \widehat{S(y)^{*}}.
  \end{align*}

  For each $b\in B$, the formulas above define multipliers
  $\hat r(b), \hat s(b) \in M(\hA)$ because
  \begin{align*}
    \hat y (\hat r(b)\hat x) = (xr(b) \ast \hat y)^{\widehat{\ }} =
    (x \ast \widehat{ys(b)})^{\widehat{\ }} = (\hat y \hat
    r(b))\hat x
  \end{align*}
  and similarly $\hat y (\hat s(b)\hat x) = (\hat y \hat
  s(b))\hat x$ for all $x,y\in A$ by
  \eqref{eq:convolution-module}.  The maps $\hat r,\hat s
  \colon B\to M(\hA)$ are non-degenerate homo\-mor\-phisms
  because $r,s\colon B\to M(A)$ have the same properties,
  their images evidently commute, and they are involutive
  because
  \begin{align*}
    (\hat x \hat r(b))^{*} = (\widehat{xs(b)})^{*} =
    (S(xs(b))^{*})^{\widehat{\ }} = (S(x)^{*}r(b^{*}))^{\widehat{\ }}
    = \hat r(b^{*})\hat x^{*}
  \end{align*}
  and similarly $(\hat x \hat s(b))^{*} = \hat s(b^{*})\hat
  x^{*} $ for all $x\in A$, $b\in B$.  Furthermore, $\hat r(b)\hat x = \widehat{xr(b)} =
  (r(\gamma(b))x)^{\widehat{\ }} =\hat s(\gamma(b))
  \widehat{x}$ and likewise $\hat x\hat r(b) = \hat x \hat
  s(\gamma'(b)) $ for all $\gamma,\gamma' \in \Gamma$, $x\in
  A_{\gamma,\gamma'}$, $b\in B$.  

  Finally, \eqref{eq:convolution-grading} implies $
  \hA^{\gamma,\gamma'} \hA^{\delta,\delta'} \subseteq
  \delta_{\gamma',\delta} \hA^{\gamma,\delta'}$ for all
  $\gamma,\gamma',\delta,\delta'\in \Gamma$.
\end{proof}
\begin{remark}
  We expect that $\hA$ carries a natural structure of a
  $(B,\Gamma)^{\ev}$-algebra if $A$ has a suitable structure
  of a $(B,\Gamma)^{\ev}$-matrix algebra, and that $\hA$
  carries a natural structure of a $(B,\Gamma)$-Hopf
  $*$-algebroid if $(A,\Delta)$ additionally is
  \emph{$(B,\Gamma)^{\ev}$-bigraded} in the sense that
  \begin{enumerate}
  \item $A$ is the direct sum of the subspaces
    $A^{\delta,\delta'}_{\gamma,\gamma'}:=A^{\delta,\delta'}
    \cap A_{\gamma,\gamma'}$, where
    $\delta,\delta',\gamma,\gamma' \in \Gamma$, and $A=
    \langle A^{e,e}_{e,e}A\rangle$;
  \item $\Delta(A^{\delta,\delta'}_{\gamma,\gamma''})
    \subseteq \sum A^{\alpha,\alpha'}_{\gamma,\gamma'}
    \todot A^{\beta,\beta'}_{\gamma',\gamma''}$ for all
    $\gamma,\gamma',\delta,\delta' \in \Gamma$, where the
    sum is taken over all $\alpha,\alpha',\beta,\beta'\in
    \Gamma$ satisfying $\alpha\beta=\delta$ and
    $\alpha'\beta'=\delta'$.
  \end{enumerate}
  In that case, $ \epsilon(A^{\delta,\delta'}) = 0$ if $
  (\delta,\delta') \neq (e,e)$ and $S(A^{\delta,\delta'})
  \subseteq A^{\delta'{}^{-1},\delta^{-1}}$ for all
  $\delta,\delta' \in \Gamma$. Indeed, For each
  $\delta,\delta' \in \Gamma$, denote by $p^{\delta,\delta'}
  \colon A = \bigoplus_{\gamma,\gamma'} A^{\gamma,\gamma'}
  \to A^{\delta,\delta'} \subseteq A$ the projection,  let
  $\epsilon':=\epsilon \circ p^{e,e} \colon A \to B$, and
  define $S' \colon A \to A$ by $S'|_{A^{\delta,\delta'}} =
  p^{\delta'{}^{-1},\delta^{-1}} \circ
  S|_{A^{\delta,\delta'}}$ for all $\delta,\delta' \in
  \Gamma$. Then one can check that
  $\epsilon'$ is a counit and $S'$ an antipode for
  $(A,\Delta)$ and therefore coincide with $\epsilon$ and
  $S$, respectively.
\end{remark}

\section{Construction of associated measured quantum groupoids}

\label{section:reduced}

Throughout this section, we assume:
\begin{itemize}
\item[\textbf{(A1)}] $(A,\Delta,\epsilon,S,\mu,\phi,\psi)$
  is a measured multiplier $(B,\Gamma)$-Hopf $*$-algebroid.
\end{itemize}
We shall construct operator-algebraic completions of this
algebraic object in the form of a Hopf $C^{*}$-bimodule,
Hopf-von Neumann bimodule and a measured quantum groupoid.
Along the way, we shall impose further assumptions on
$B,\Gamma,\mu,A$ which were  mentioned already in the
introduction, most notably properness of $A$.

The basic idea is to use the GNS-representations for the
weight $\mu$ on the basis $B$ and the functional $\nu$
on the total algebra $A$, respectively.  Naturally, some
restrictions have to be made on $B,\Gamma,\mu$.  To show
that $\nu$ admits a bounded GNS-representation and to lift
the comultiplication to the level of operator algebras, we
use a fundamental unitary.  To take full advantage of this
unitary, we describe its domain and range as relative tensor
products, and show that it is a pseudo-multiplicative
unitary in the sense of \cite{timmermann:cpmu} and
\cite{vallin:pmu}. The necessary modules are introduced in
\S\ref{subsection:modules}, and the unitary itself is
constructed in \S\ref{subsection:unitary}. This part uses
Connes' spatial theory \cite{takesaki:2}, and the relative
tensor product of Hilbert spaces over $C^{*}$-algebras which
was introduced in \cite{timmermann:fiber}.  The fundamental
unitary then gives rise to completions of $A$ and $\hat A$
in the form of Hopf $C^{*}$-bimodules and two Hopf-von
Neumann bimodules; see \S\ref{subsection:bounded}--\S\ref{subsection:hopf-c-bimodules}.To
obtain the full structure of a measured quantum groupoid, we
finally extend the integrals $\phi,\psi$ to the level of
 von Neumann algebras and show that
these extensions are left or right invariant again in
\S\ref{subsection:reduced-extension}.

Before we turn to details, let us briefly sketch the construction of
the fundamental unitary, which we denote by $W$. Its domain
and range can be described as separated completions of the
relative tensor products $\sA\otimesB \rA$ and $\rA \otimesB
\Ar$ with respect to the  sesquilinear forms given by
 \begin{align} \label{eq:pmu-sesquilinear}
   \begin{aligned}
\langle x
     \otimesB y|x' \otimesB y'\rangle_{(\sA\otimesB \rA)} &=
     \nu(x^{*}s(\partial_{y}(\phi(y^{*}y')) )x'), \\
     \langle x \otimesB y)|x' \otimesB y'\rangle_{(\rA\otimesB
       \Ar)} & = \nu(x^{*}r(\phi(y^{*}y'))x'). \\ 
   \end{aligned}
\end{align}
Note that positivity of these forms is not evident because
$\phi$ is not assumed to be completely positive in any
sense.  Given that positivity, the  map
\begin{align*}
T_{4} \colon \rA\otimesB \Ar \to \sA \otimesB \rA, \quad x
  \otimesB y \mapsto \Delta(y)(x \otimesB 1)=\sum y_{(1)}x
  \otimesB y_{(2)},
\end{align*}
extends to a unitary on the respective completions because
it is surjective by Proposition \ref{proposition:galois} and
isometric as shown by the following calculation:
\begin{align} \label{eq:pmu-isometric}
  \begin{aligned}
    \sum \langle y_{(1)}x \otimesB y_{(2)} |y'_{(1)}x' \otimesB
    y'_{(2)}\rangle_{(\sA \otimesB \rA)} &= \sum
    \nu(x^{*}y_{(1)}^{*}s(\partial_{y_{(2)}}(\phi(y_{(2)}^{*}y'_{(2)})))y'_{(1)}x') \\
    &= \sum \nu(x^{*}s( \phi(y_{(2)}^{*}y'_{(2)})
    )y_{(1)}^{*}y'_{(1)}x') \\ &=
    \nu(x^{*}r(\phi(y^{*}y'))x') = \langle x \otimesB y|x'
    \otimesB y'\rangle_{(\rA\otimesB\Ar)}.
  \end{aligned}
\end{align}
The adjoint of this extension is the fundamental unitary $W$.

Similarly, one can construct and employ another unitary $V$
which is an extension of the map $T_{1}\colon \As\otimesB
\sA \to \sA \otimesB \rA$, $x\otimesB y \mapsto
\Delta(x)(1\otimesB y)$.  We shall focus on $W$ because this
unitary is given preference in the theory of locally compact
quantum groups and measured quantum groupoids.

\subsection{Preparations concerning the base} \label{subsection:reduced-prep} We define an
inner product on $B$ by $\langle b|b'\rangle
:=\mu(b^{*}b')$ for all $b,b' \in B$, and denote by $K$ the
Hilbert space obtained by completion, and by $\Lambda_{\mu}
\colon B\to K$ the canonical inclusion.  To proceed, we have
to impose the following  assumption:
\begin{itemize}
\item[\textbf{(A2)}] For each $b\in B$, the following
  equivalent conditions hold:
  \begin{enumerate}
  \item  there exists a
  $K\geq 0$ such that $\mu(c^{*}b^{*}bc)\leq K\mu(c^{*}c)$
  for all $c\in B$;
\item there exists an operator $\pi_{\mu}(b) \in
  \mathcal{L}(K)$ such that
  $\pi_{\mu}(b)\Lambda_{\mu}(c)=\Lambda_{\mu}(bc)$ for all
  $c\in B$.
\end{enumerate}
\end{itemize}
\begin{remark}
  To apply the constructions below, it may be useful to
  first perform a base change, similarly as described in
  \cite[\S2]{timmermann:free}, to replace $B$ by an algebra
  of the form $C_{c}(\Omega)$, where $\Omega$ is a locally
  compact space with an action of $\Gamma$. 
Then condition
  (A2) is automatically satisfied. For example, one can take
  $\Omega$ to be the set of all $*$-homomorphisms $\chi
  \colon B\to \complex$, equipped with the weakest topology
  that makes the function $\Omega \to \complex$, $\chi
  \mapsto \chi(b)$, continuous for each $b\in B$, and
  perform a base change along the canonical map $B \to
  M(C_{c}(\Omega))$. 
  Note, however, that such a base change can not simply be
  applied to left and right integrals, but only to
  bi-integrals.
\end{remark}

Assumption (A2) immediately implies the existence of a
$*$-homomorphism $\pi_{\mu} \colon B\to \mathcal{L}(\Hmu)$
which can be regarded as a GNS-representation for $\mu$.

Recall that a \emph{Hilbert algebra} is a $*$-algebra with
an inner product such that left multiplication by each
element is bounded, the resulting $*$-representation is
non-degenerate, and the involution is pre-closed with respect
to the norm induced by the inner product.  Since $B$ is
commutative, the map $\Lambda_{\mu}(B) \to \Lambda_{\mu}(B)$
given by $\Lambda_{\mu}(b) \mapsto \Lambda_{\mu}(b^{*})$
extends to an anti-unitary operator $J_{\mu}$ on
$K$, and hence $\Lambda_{\mu}(B) \subseteq K$ together
with the $*$-algebra structure inherited from $B$ is a
Hilbert algebra. We thus obtain
\begin{itemize}
\item a von Neumann algebra $N:=\pi_{\mu}(B)'' \subseteq \mathcal{L}(K)$,
\item a n.s.f.\ weight $\tilde \mu$ on $N$ such that $\tilde
  \mu(\pi_{\mu}(b^{*}b)) = \langle
  \Lambda_{\mu}(b)|\Lambda_{\mu}(b)\rangle = \mu(b^{*}b)$
  for all $b\in B$,
\item a left ideal $\frakN_{\tilde \mu} := \{ x\in N :
  \tilde \mu(x^{*}x) <\infty\} \subseteq N$ of
  square-integrable elements,
\item a closed map $\Lambda_{\tilde \mu} \colon
  \frakN_{\tilde \mu} \to K$ such that $(K,\Lambda_{\tilde
    \mu},\Id_{N})$ is a GNS-representation for $\tilde \mu$;
  this is the closure of the map $\pi_{\mu}(B) \to K$ given by
  $\pi_{\mu}(b) \to \Lambda_{\mu}(b)$.
\end{itemize}

\subsection{Various module structures} \label{subsection:modules}

We define an inner product on $A$ by $\langle
a|a'\rangle := \nu(a^{*}a')$ for all $a,a' \in A$, denote by
$H$ the Hilbert space obtained by completion, and by
$\Lambda_{\nu} \colon A\to H$ the canonical inclusion. 

\begin{lemma} \label{lemma:ksgns-phi-psi} There exist maps
  $\Lambda_{\phi},\Lambda_{\psi},\Lambda_{\phi}^{\dag},\Lambda_{\psi}^{\dag}
  \colon A \to \mathcal{L}(\Hmu,\Hnu)$ such that for all
  $x,y \in A$, $b \in B$,
  \begin{align*}
    \Lambda_{\phi}(x)\Lambda_{\mu}(b) &=
    \Lambda_{\nu}(xr(b)), &
    \Lambda_{\phi}(x)^{*}\Lambda_{\nu}(y) &=
    \Lambda_{\mu}(\phi(x^{*}y)), &
    \Lambda_{\phi}(x)^{*}\Lambda_{\phi}(y) &=
    \pi_{\mu}(\phi(x^{*}y)), \\
    \Lambda_{\psi}(x)\Lambda_{\mu}(b) &=
    \Lambda_{\nu}(xs(b)), &
    \Lambda_{\psi}(x)^{*}\Lambda_{\nu}(y) &=
    \Lambda_{\mu}( \psi(x^{*}y)), &
    \Lambda_{\psi}(x)^{*}\Lambda_{\psi}(y) &= \pi_{\mu}(
    \psi(x^{*}y)), \\
    \Lambda_{\phi}^{\dag}(x) \Lambda_{\mu}(b)
    &=\Lambda_{\nu}(r(b)x), &
    \Lambda^{\dag}_{\phi}(x)^{*}\Lambda_{\nu}(y) &=
\Lambda_{\mu}(\phi(y\theta(x^{*}))), &
\Lambda^{\dag}_{\phi}(x)^{*}\Lambda_{\phi}^{\dag}(y) &=
\pi_{\mu}(\phi(y\theta(x^{*}))), \\
    \Lambda_{\psi}^{\dag}(x) \Lambda_{\mu}(b)
    &=\Lambda_{\nu}(s(b)x), &
    \Lambda^{\dag}_{\psi}(x)^{*}\Lambda_{\nu}(y) &=
\Lambda_{\mu}(\psi(y\theta(x^{*}))), &
\Lambda^{\dag}_{\psi}(x)^{*}\Lambda_{\psi}^{\dag}(y) &=
\pi_{\mu}(\psi(y\theta(x^{*}))).
  \end{align*}
\end{lemma}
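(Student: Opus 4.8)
The plan is to construct the four maps simultaneously by the same recipe: for $x \in A$, define a linear operator on the dense subspace $\Lambda_\mu(B) \subseteq K$ by the stated formula, show it is bounded using the Cauchy--Schwarz-type estimates that follow from positivity of $\nu$ and faithfulness of $\mu$, and then take the bounded extension to $K$. For $\Lambda_\phi$, the defining formula is $\Lambda_\phi(x)\Lambda_\mu(b) = \Lambda_\nu(xr(b))$; the key point is that this is well defined on $\Lambda_\mu(B)$ and bounded, because for $b \in B$ we have $\|\Lambda_\nu(xr(b))\|^2 = \nu(r(b)^* x^* x r(b)) = \mu(b^* \phi(x^*x) b) = \langle \Lambda_\mu(b) \mid \pi_\mu(\phi(x^*x))\Lambda_\mu(b)\rangle \le \|\pi_\mu(\phi(x^*x))\| \, \|\Lambda_\mu(b)\|^2$, using $r$-linearity of $\phi$, positivity of $\nu$, and Assumption (A2) applied to the element $\phi(x^*x) \in B$. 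Here I use that $\phi$ takes values in $B$ and that $\nu(r(b)^* a r(b)) = \mu(b^*\phi(a)b)$ by $r$-linearity and $*$-linearity of $\phi$ (Remark~\ref{remarks:measured} i)).

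Next I would identify the adjoint. Since $\Lambda_\nu(A)$ is dense in $H$ and $\Lambda_\mu(B)$ is dense in $K$, it suffices to compute $\langle \Lambda_\phi(x)\Lambda_\mu(b) \mid \Lambda_\nu(y)\rangle = \langle \Lambda_\nu(xr(b)) \mid \Lambda_\nu(y)\rangle = \nu(r(b)^* x^* y) = \mu(b^* \phi(x^*y)) = \langle \Lambda_\mu(b) \mid \Lambda_\mu(\phi(x^*y))\rangle$, which gives $\Lambda_\phi(x)^*\Lambda_\nu(y) = \Lambda_\mu(\phi(x^*y))$ on the dense domain, and then one checks $\Lambda_\mu(\phi(x^*y))$ depends boundedly on $\Lambda_\nu(y)$ (again by (A2) applied to the relevant base element, or simply by closedness of $\Lambda_\mu$ together with boundedness of $\Lambda_\phi(x)$). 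The third formula $\Lambda_\phi(x)^*\Lambda_\phi(y) = \pi_\mu(\phi(x^*y))$ is then immediate: apply the second formula with $y$ replaced by $yr(b)$ and use $r$-linearity of $\phi$ together with the defining relation for $\pi_\mu$. The case of $\Lambda_\psi$ is identical with $r$ replaced by $s$ and $\phi$ by $\psi$, invoking the $s$-linearity and $*$-linearity of $\psi$.

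For the daggered maps $\Lambda_\phi^\dag, \Lambda_\psi^\dag$ the formula is $\Lambda_\phi^\dag(x)\Lambda_\mu(b) = \Lambda_\nu(r(b)x)$, and the computation of the adjoint uses the modular automorphism: $\langle \Lambda_\nu(r(b)x) \mid \Lambda_\nu(y)\rangle = \nu(x^* r(b)^* y) = \nu(r(b^*) y \theta(x^*)) = \mu(b^* \phi(y\theta(x^*)))$, using Theorem~\ref{theorem:modular} to move $x^*$ past everything via $\theta$, then $r$-linearity of $\phi$ and $*$-linearity; this yields $\Lambda_\phi^\dag(x)^*\Lambda_\nu(y) = \Lambda_\mu(\phi(y\theta(x^*)))$. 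Boundedness of $\Lambda_\phi^\dag(x)$ follows from $\|\Lambda_\nu(r(b)x)\|^2 = \nu(x^* r(b^*b) x) = \mu(b^* \phi(x\theta(x^*)) b) \le \|\pi_\mu(\phi(x\theta(x^*)))\|\,\|\Lambda_\mu(b)\|^2$, again by (A2); note $\phi(x\theta(x^*)) = \phi(\theta(x^*) \theta(x^{**})) \ge 0$ is a positive element of $B$ by $\nu \circ \theta = \nu$ and positivity of $\nu$, so (A2) applies. The third identity for $\Lambda_\phi^\dag$ again follows by substituting $yr(b)$ and using $r$-linearity. The only genuine subtlety — the step I expect to be the main obstacle — is verifying that each target element such as $\phi(x^*y)$ or $\phi(y\theta(x^*))$ really lies in $B$ (so that $\pi_\mu$ and $\Lambda_\mu$ apply and (A2) is available), and that $y \mapsto \Lambda_\mu(\phi(y\theta(x^*)))$ extends boundedly; both points are handled by the two-sided $B$-linearity of $\phi,\psi$ together with the estimates above and the density of $\Lambda_\nu(A)$ in $H$. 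Once these are in place the remaining identities are routine bookkeeping with Sweedler-free manipulations of $\phi,\psi,\theta$.
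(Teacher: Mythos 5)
Your proposal is correct and follows essentially the same route as the paper's proof: the same boundedness estimates via (A2) applied to $\phi(x^{*}x)$, respectively to $\phi(\theta^{-1}(x)x^{*})=\phi(x\theta(x^{*}))$, the same use of the modular automorphism $\theta$ from Theorem \ref{theorem:modular} to identify the adjoints of the daggered maps on the dense subspaces, and the same reduction of the $\Lambda_{\psi},\Lambda_{\psi}^{\dag}$ cases by symmetry. One small caveat: your claimed identity $\phi(x\theta(x^{*}))=\phi(\theta(x^{*})\theta(x))$ is not valid in general (it would amount to traciality of $\phi$), but it is also unnecessary, since (A2) provides the operator $\pi_{\mu}(d)$ for every $d\in B$ irrespective of positivity, so the estimate goes through unchanged.
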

\begin{proof}
We only prove the assertions concerning $\Lambda_{\phi}$ and
  $\Lambda_{\phi}^{\dag}$. They follow from the relations
  \begin{align*}
    \| \Lambda_{\nu}(xr(b))\|^{2} = \nu(r(b)^{*}x^{*}xr(b))
    &= \mu(b^{*}\phi(x^{*}x)b) \leq
    \|\pi_{\mu}(\phi(x^{*}x))\| \|\Lambda_{\mu}(b)\|^{2}, \\
    \langle \Lambda_{\nu}(y)|\Lambda_{\nu}(xr(b))\rangle =
    \nu(y^{*}xr(b)) &= \mu(\phi(y^{*}x)b) = \langle
    \Lambda_{\mu}(\phi(x^{*}y))|\Lambda_{\mu}(b)\rangle
  \end{align*}
  and
  \begin{align*}
  \| \Lambda_{\nu}(r(b)x)\|^{2} = \nu(x^{*}r(b^{*}b)x) &=
  \nu(\theta^{-1}(x)x^{*}r(b^{*}b)) \\ &=
  \mu(\phi(\theta^{-1}(x)x^{*})b^{*}b)  
  \leq \|\Lambda_{\mu}(b)\|^{2} \|
  \pi_{\mu}(\theta^{-1}(x)x^{*})\|,\\
  \langle \Lambda_{\nu}(y)|\Lambda_{\nu}(r(b)x)\rangle =
  \nu(y^{*}r(b)x) &=
  \nu(\theta^{-1}(x)y^{*}r(b)) \\
 &=
  \mu(\phi(\theta^{-1}(x)y^{*})b) = \langle \Lambda_{\mu}(\phi(y\theta(x^{*})))|\Lambda_{\mu}(b)\rangle,
  \end{align*}
which hold for all
 $x,y\in A$ and $b\in B$. 
\end{proof}
The maps introduced above yield various module structures on
$H$ as follows. Let
\begin{align} \label{eq:subspaces}
  E_{\phi} &:= [\Lambda_{\phi}(A)], &
  E_{\psi} &:= [\Lambda_{\psi}(A)], &
  E_{\phi}^{\dag} &:= [\Lambda_{\phi}^{\dag}(A)], &
  E_{\psi}^{\dag} &:= [\Lambda_{\psi}^{\dag}(A)].
\end{align}

We shall use the following concepts introduced in
\cite{timmermann:fiber,timmermann:cpmu}.  A {\em
  $C^{*}$-$\frakb$-module}, where
$\frakb=(K,[\pi_{\mu}(B)],[\pi_{\mu}(B)])$, consists of a
Hilbert space $L$ and a closed subset $E\subseteq
\mathcal{L}(K,L)$ such that $[EK]=L$, $[E\pi_{\mu}(B)]=E$,
$[E^{*}E]=[\pi_{\mu}(B)]$.  Each such
$C^{*}$-$\frakb$-module gives rise to a normal, faithful,
non-degenerate representation $\rho_{E} \colon
N=\pi_{\mu}(B)'' \to \mathcal{L}(L)$ such that
$\rho_{E}(x)\xi = \xi x$ for all $x\in N$, $\xi \in E$.  A
\emph{$C^{*}$-$(\frakb,\frakb)$-module} is a triple
$(L,E,F)$ such that $(L,E)$ and $(L,F)$ are
$C^{*}$-$\frakb$-modules and $[\rho_{E}(\pi_{\mu}(B))F]=F$
and $[\rho_{F}(\pi_{\mu}(B))E]=[E]$.
\begin{lemma} \label{lemma:pmu-module} The Hilbert space $H$
  is a $C^{*}$-$(\frakb,\frakb)$-module with respect to
  either two of the spaces
  $E_{\phi},E_{\psi},E_{\phi}^{\dag},E_{\psi}^{\dag}$.  The
  representations $\alpha:=\rho_{E_{\phi}^{\dag}}$,
  $\beta:=\rho_{E_{\psi}^{\dag}}$,
  $\halpha:=\rho_{E_{\psi}}$, $\hbeta :=\rho_{E_{\phi}}$ of
  $N$ on $H$ are given by
  \begin{align*} 
    \alpha(\pi_{\mu}(b)) \Lambda_{\nu}(a) &=
    \Lambda_{\nu}(r(b)a), &
    \beta(\pi_{\mu}(b))\Lambda_{\nu}(a) &=
    \Lambda_{\nu}(s(b)a), \\
    \hbeta(\pi_{\mu}(b)) \Lambda_{\nu}(a) &=
    \Lambda_{\nu}(ar(b)), &
    \halpha(\pi_{\mu}(b))\Lambda_{\nu}(a) &=
    \Lambda_{\nu}(as(b)) &&\text{for all } b\in B,a\in A.
  \end{align*}
\end{lemma}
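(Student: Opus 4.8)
The plan is to verify directly, for each of the four spaces $E_{\phi},E_{\psi},E_{\phi}^{\dag},E_{\psi}^{\dag}$, the three conditions in the definition of a $C^{*}$-$\frakb$-module, then to verify for every pair of them the two compatibility conditions of a $C^{*}$-$(\frakb,\frakb)$-module, and finally to read off the representations $\alpha,\beta,\halpha,\hbeta$ from Lemma \ref{lemma:ksgns-phi-psi}. Two elementary facts are used throughout. First, $r,s\colon B\to M(A)$ are non-degenerate: the structure map $B\otimes B\to M(A)$ is non-degenerate by the $(B,\Gamma)^{\ev}$-module axioms, and $r(b)$ commutes with $s(b')$ in $M(A)$ since $B\otimes B$ is commutative, so that $\langle r(B)A\rangle=\langle Ar(B)\rangle=\langle s(B)A\rangle=\langle As(B)\rangle=A$. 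Second, $\langle A^{*}A\rangle=A$, which is a standard consequence of the existence of local units, while $\phi(A)=\psi(A)=B$ by (A1). I shall use these without further comment.

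First I would treat the condition $[EK]=H$. Since $\Lambda_{\mu}(B)$ is dense in $K$ and $\Lambda_{\nu}(A)$ is dense in $H$, Lemma \ref{lemma:ksgns-phi-psi} gives $[E_{\phi}K]=[\Lambda_{\phi}(A)\Lambda_{\mu}(B)]=[\Lambda_{\nu}(Ar(B))]=H$, and similarly $[E_{\psi}K]=[\Lambda_{\nu}(As(B))]=H$, $[E_{\phi}^{\dag}K]=[\Lambda_{\nu}(r(B)A)]=H$, $[E_{\psi}^{\dag}K]=[\Lambda_{\nu}(s(B)A)]=H$. Next, combining Lemma \ref{lemma:ksgns-phi-psi} with $\pi_{\mu}(b)\Lambda_{\mu}(c)=\Lambda_{\mu}(bc)$ yields $\Lambda_{\phi}(x)\pi_{\mu}(b)=\Lambda_{\phi}(xr(b))$, $\Lambda_{\psi}(x)\pi_{\mu}(b)=\Lambda_{\psi}(xs(b))$, $\Lambda_{\phi}^{\dag}(x)\pi_{\mu}(b)=\Lambda_{\phi}^{\dag}(r(b)x)$, $\Lambda_{\psi}^{\dag}(x)\pi_{\mu}(b)=\Lambda_{\psi}^{\dag}(s(b)x)$, so that $[E\pi_{\mu}(B)]=E$ in each of the four cases by non-degeneracy of $r,s$. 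For the condition $[E^{*}E]=[\pi_{\mu}(B)]$ I would use the last column of Lemma \ref{lemma:ksgns-phi-psi}: the span of the products $\Lambda_{\phi}(x)^{*}\Lambda_{\phi}(y)$ over $x,y\in A$ equals $\pi_{\mu}(\phi(\langle A^{*}A\rangle))=\pi_{\mu}(\phi(A))=\pi_{\mu}(B)$, and likewise for $E_{\psi}$ with $\psi$ in place of $\phi$; for $E_{\phi}^{\dag}$ the corresponding span is $\pi_{\mu}(\phi(\langle\{\,y\theta(x^{*}):x,y\in A\,\}\rangle))$, and since $\ast$ and $\theta$ are bijective (Theorem \ref{theorem:modular}) the set $\{\,y\theta(x^{*})\,\}$ spans $\langle AA\rangle=A$, so this is $\pi_{\mu}(\phi(A))=\pi_{\mu}(B)$ again, and $E_{\psi}^{\dag}$ is treated the same way with $\psi$. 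Passing to norm closures gives $[E^{*}E]=[\pi_{\mu}(B)]$, so $(H,E)$ is a $C^{*}$-$\frakb$-module for each of the four choices.

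To identify the representations, I use that $\rho_{E}(x)$ is characterised by $\rho_{E}(x)(\xi\eta)=\xi(x\eta)$ for $\xi\in E$, $\eta\in K$, independently of the chosen decomposition, which is part of the theory developed in \cite{timmermann:fiber}. Writing $a=\sum_{i}r(c_{i})a_{i}$, possible since $\langle r(B)A\rangle=A$, one gets $\Lambda_{\nu}(a)=\sum_{i}\Lambda_{\phi}^{\dag}(a_{i})\Lambda_{\mu}(c_{i})$ and hence $\rho_{E_{\phi}^{\dag}}(\pi_{\mu}(b))\Lambda_{\nu}(a)=\sum_{i}\Lambda_{\phi}^{\dag}(a_{i})\Lambda_{\mu}(bc_{i})=\sum_{i}\Lambda_{\nu}(r(b)r(c_{i})a_{i})=\Lambda_{\nu}(r(b)a)$, the asserted formula for $\alpha$. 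The formulas for $\beta,\halpha,\hbeta$ follow in the same way from the decompositions $a=\sum_{i}s(c_{i})a_{i}$, $a=\sum_{i}a_{i}s(c_{i})$, $a=\sum_{i}a_{i}r(c_{i})$ and the fact that $r(b)$ commutes with $s(b')$. As $\pi_{\mu}(B)$ generates $N$ and $\rho_{E}$ is normal, this determines $\alpha,\beta,\halpha,\hbeta$ completely.

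It remains to check, for each ordered pair $(E,F)$ of the four spaces, that $[\rho_{E}(\pi_{\mu}(B))F]=F$; by the symmetry of the two compatibility conditions this is enough. By the formulas just obtained, each $\rho_{E}(\pi_{\mu}(b))$ acts on $\Lambda_{\nu}(A)$ by left or right multiplication with $r(b)$ or $s(b)$, while each generator of $F$ satisfies $\Lambda_{F}(a)\Lambda_{\mu}(c)=\Lambda_{\nu}(y)$ with $y$ one of $ar(c),as(c),r(c)a,s(c)a$ (where $\Lambda_{F}$ denotes the map among $\Lambda_{\phi},\Lambda_{\psi},\Lambda_{\phi}^{\dag},\Lambda_{\psi}^{\dag}$ whose closed range is $F$); composing and using again that $r(B)$ and $s(B)$ commute, one finds that $\rho_{E}(\pi_{\mu}(b))\Lambda_{F}(a)$ is $\Lambda_{F}(r(b)a)$, $\Lambda_{F}(s(b)a)$, $\Lambda_{F}(ar(b))$ or $\Lambda_{F}(as(b))$, an element of $\Lambda_{F}(A)$ whose closed span is $F$ by non-degeneracy of $r,s$. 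Hence $(H,E,F)$ is a $C^{*}$-$(\frakb,\frakb)$-module. The main points to watch, beyond the uniform bookkeeping, are the evaluation of $[(E_{\phi}^{\dag})^{*}E_{\phi}^{\dag}]$ and $[(E_{\psi}^{\dag})^{*}E_{\psi}^{\dag}]$, where bijectivity of the modular automorphism $\theta$ is needed, together with the (easy but not entirely trivial) non-degeneracy of $r$ and $s$; all remaining verifications are direct consequences of Lemma \ref{lemma:ksgns-phi-psi} and the fibre-module theory of \cite{timmermann:fiber,timmermann:cpmu}.
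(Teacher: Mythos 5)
Your proposal is correct and follows essentially the same route as the paper: verify the three $C^{*}$-$\frakb$-module conditions from Lemma \ref{lemma:ksgns-phi-psi} together with $\phi(A)=B=\psi(A)$ and the non-degeneracy relations $\langle r(B)s(B)Ar(B)s(B)\rangle=A$, read off the four representations from the defining relation $\rho_{E}(x)\xi=\xi x$, and then use those formulas to check the compatibility conditions. The only difference is that you spell out the details the paper leaves implicit (e.g.\ invoking bijectivity of $\theta$ for $[(E_{\phi}^{\dag})^{*}E_{\phi}^{\dag}]$ and $[(E_{\psi}^{\dag})^{*}E_{\psi}^{\dag}]$), all of which is sound.
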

\begin{proof}
  Let $E,F$ be any two of the spaces listed above.  Then
  $[E\Hnu]=\Hnu$ and $[E \pi_{\mu}(B)]=E$ because $\langle
  r(B)s(B) Ar(B)s(B)\rangle = A$, and $[E^{*}E] =
  [\pi_{\mu}(B)]$ because $\phi(A)=B=\psi(A)$. Thus, $(H,E)$
  is a $C^{*}$-$\frakb$-module. The formulas for the associated
  representations are easily verified. Using these formulas
  and the relation $\langle r(B)s(B)Ar(B)s(B)\rangle =A$,
  one easily checks that $[\rho_{E}(\pi_{\mu}(B))F]=F$ and
  $[\rho_{F}(\pi_{\mu}(B))E]=E$.
\end{proof}

Recall that a vector $\zeta$ in a Hilbert space $L$ is
\emph{bounded} with respect to a normal, non-degenerate
representation $\rho \colon N \to L$ and the weight $\tilde
\mu$ if the following equivalent conditions hold:
\begin{enumerate}
\item there exists a $K\geq 0$ such that  $\|\rho(x)\zeta\| \leq K\tilde \mu(x^{*}x)$ for all
  $x\in \frakN_{\tilde \mu}$;
\item there exists an operator $R_{\zeta}^{\rho,\tilde \mu}
  \in \mathcal{L}(K,L)$ such that $R_{\zeta}^{\rho,\tilde
    \mu}\Lambda_{\mu}(x) = \rho(x)\zeta$ for all $x\in
  \frakN_{\tilde \mu}$.
\end{enumerate}
The set of all such bounded vectors is denoted by
$D(L_{\rho},\tilde \mu)$. This spaces carries an $N$-valued
inner product $\langle -|-\rangle_{\rho,\tilde \mu}$, given
by $\langle \zeta|\zeta'\rangle_{\rho,\tilde
  \mu}=(R_{\zeta}^{\rho,\tilde
  \mu})^{*}R_{\zeta'}^{\rho,\tilde \mu}$ for all
$\zeta,\zeta'\in D(L_{\rho},\tilde\mu)$, and $\rho(N)'D(L_{\rho},\tilde \mu)
= D(L_{\rho},\tilde \mu)$ and
\begin{align} \label{eq:bounded-vectors} \Lambda_{\tilde
    \mu}(\langle \zeta|\zeta'\rangle_{\rho,\tilde \mu})
  &=(R_{\zeta}^{\rho,\tilde\mu})^{*}\zeta', & R^{\rho,\tilde
    \mu}_{T\zeta} &= TR^{\rho,\tilde \mu}_{\zeta} && \text{for
    all } T\in \rho(N)', \zeta,\zeta' \in
  D(L_{\rho},\tilde\mu).
\end{align}
\begin{lemma} \label{lemma:module-bounded}
  $\Lambda_{\nu}(A) \subseteq \Da \cap \Db \cap \Dha \cap
  \Dhb$ and for all $x,y\in A$,
  \begin{align*}
    R_{\Lambda_{\nu}(x)}^{\alpha,\tilde\mu} &=
    \Lambda_{\phi}^{\dag}(x), & 
    R^{\beta,\tilde \mu}_{\Lambda_{\nu}(x)} &=
    \Lambda_{\psi}^{\dag}(x), &
    R_{\Lambda_{\nu}(x)}^{\halpha,\tilde \mu} &=
    \Lambda_{\psi}(x), & 
    R_{\Lambda_{\nu}(x)}^{\hbeta,\tilde\mu} &=
    \Lambda_{\phi}(x).
  \end{align*}
\end{lemma}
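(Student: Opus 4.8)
The plan is to verify, for each of the four representations $\alpha,\beta,\halpha,\hbeta$, the operator form of the boundedness criterion: a vector $\zeta \in H$ lies in $D(H_{\rho},\tilde\mu)$ with $R^{\rho,\tilde\mu}_{\zeta}=T$ precisely when $T\in\mathcal{L}(K,H)$ and $T\Lambda_{\tilde\mu}(y)=\rho(y)\zeta$ for all $y\in\frakN_{\tilde\mu}$. Since $\Lambda_{\tilde\mu}$ is by construction the closure of the map $\pi_{\mu}(b)\mapsto\Lambda_{\mu}(b)$, the subspace $\pi_{\mu}(B)$ is a core for $\Lambda_{\tilde\mu}$, so it suffices to check this identity for $y=\pi_{\mu}(b)$, $b\in B$, i.e.\ $T\Lambda_{\mu}(b)=\rho(\pi_{\mu}(b))\zeta$. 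The candidate operators $T$ are exactly the maps $\Lambda^{\dag}_{\phi}(x),\Lambda^{\dag}_{\psi}(x),\Lambda_{\psi}(x),\Lambda_{\phi}(x)$ already produced in Lemma \ref{lemma:ksgns-phi-psi}, which by that lemma lie in $\mathcal{L}(K,H)$.

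It then remains to match the four rows. Fix $x\in A$ and $b\in B$. By Lemma \ref{lemma:ksgns-phi-psi} we have $\Lambda^{\dag}_{\phi}(x)\Lambda_{\mu}(b)=\Lambda_{\nu}(r(b)x)$, and by Lemma \ref{lemma:pmu-module} the representation $\alpha=\rho_{E_{\phi}^{\dag}}$ satisfies $\alpha(\pi_{\mu}(b))\Lambda_{\nu}(x)=\Lambda_{\nu}(r(b)x)$; hence $\Lambda_{\nu}(x)\in\Da$ with $R^{\alpha,\tilde\mu}_{\Lambda_{\nu}(x)}=\Lambda^{\dag}_{\phi}(x)$. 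The other three cases are identical: $\beta=\rho_{E_{\psi}^{\dag}}$ pairs with $\Lambda^{\dag}_{\psi}$ via $\Lambda^{\dag}_{\psi}(x)\Lambda_{\mu}(b)=\Lambda_{\nu}(s(b)x)=\beta(\pi_{\mu}(b))\Lambda_{\nu}(x)$; $\halpha=\rho_{E_{\psi}}$ pairs with $\Lambda_{\psi}$ via $\Lambda_{\psi}(x)\Lambda_{\mu}(b)=\Lambda_{\nu}(xs(b))=\halpha(\pi_{\mu}(b))\Lambda_{\nu}(x)$; and $\hbeta=\rho_{E_{\phi}}$ pairs with $\Lambda_{\phi}$ via $\Lambda_{\phi}(x)\Lambda_{\mu}(b)=\Lambda_{\nu}(xr(b))=\hbeta(\pi_{\mu}(b))\Lambda_{\nu}(x)$. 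Together these give $\Lambda_{\nu}(A)\subseteq\Da\cap\Db\cap\Dha\cap\Dhb$ and the asserted formulas for the $R$-operators.

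The only point that needs a word of care is the passage from the core $\pi_{\mu}(B)$ back to all of $\frakN_{\tilde\mu}$: given $y\in\frakN_{\tilde\mu}$, pick a net $(b_{i})$ in $B$ with $\pi_{\mu}(b_{i})\to y$ $\sigma$-strongly and $\Lambda_{\mu}(b_{i})\to\Lambda_{\tilde\mu}(y)$ in norm; then normality of $\rho$ yields $\rho(\pi_{\mu}(b_{i}))\zeta\to\rho(y)\zeta$ while boundedness of $T$ yields $T\Lambda_{\mu}(b_{i})\to T\Lambda_{\tilde\mu}(y)$, so the defining identity propagates from the core to $\frakN_{\tilde\mu}$. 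Beyond this standard closure argument the proof is a direct comparison of the formulas in Lemmas \ref{lemma:ksgns-phi-psi} and \ref{lemma:pmu-module}, so I do not expect any genuine obstacle; the content of Lemma \ref{lemma:module-bounded} is precisely that the four operators built in Lemma \ref{lemma:ksgns-phi-psi} are the $R$-operators for the four module structures on $H$.
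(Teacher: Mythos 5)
Your proposal is correct and follows essentially the same route as the paper's proof: verify the defining identity $T\Lambda_{\tilde\mu}(\pi_{\mu}(b))=\rho(\pi_{\mu}(b))\Lambda_{\nu}(x)$ on the core $\pi_{\mu}(B)$ by comparing the formulas of Lemmas \ref{lemma:ksgns-phi-psi} and \ref{lemma:pmu-module}, then extend to all of $\frakN_{\tilde\mu}$ by the core/closure argument (the paper only writes out the $\alpha$ case and leaves the other three, which you carry out explicitly, to the same matching).
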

\begin{proof}
  We shall only prove the assertion concerning $\alpha$. Let $a\in
  A$. Then $\Lambda_{\phi}^{\dag}(a)\Lambda_{\tilde
    \mu}(\pi_{\mu}(b)) = \Lambda_{\nu}(r(b)a) =
  \alpha(\pi_{\mu}(b))\Lambda_{\nu}(a)$ for all $b\in B$,
  and since $\pi_{\mu}(B)$ is a core for $\Lambda_{\tilde
    \mu}$, we can conclude
  $\Lambda_{\phi}^{\dag}(a)\Lambda_{\tilde \mu}(x) =
  \alpha(x)\Lambda_{\nu}(a)$ for all $x\in \frakN_{\tilde
    \mu}$.
\end{proof}
 The preceding result and Lemma \ref{lemma:ksgns-phi-psi}
 imply that for all $x,y \in A$,
 \begin{align} \label{eq:bounded-inner-product}
   \begin{aligned}
     \langle
     \Lambda_{\nu}(x)|\Lambda_{\nu}(y)\rangle_{\alpha,\tilde\mu}
     &= \pi_{\mu}(\phi(y\theta(x^{*}))), & \langle
     \Lambda_{\nu}(x)|\Lambda_{\nu}(y)\rangle_{\beta,\tilde\mu}
     &= \pi_{\mu}(\psi(y\theta(x^{*}))),  \\
     \langle
     \Lambda_{\nu}(x)|\Lambda_{\nu}(y)\rangle_{\hat\alpha,\tilde\mu}
     &= \pi_{\mu}(\psi(x^{*}y)), & \langle
     \Lambda_{\nu}(x)|\Lambda_{\nu}(y)\rangle_{\hat\beta,\tilde\mu}
     &= \pi_{\mu}(\phi(x^{*}y)).
   \end{aligned}
 \end{align}

\subsection{The fundamental unitary}

\label{subsection:unitary}
To define the domain and the range of the fundamental
unitary, we use Connes' relative tensor product of Hilbert
modules and the module structures introduced above.  Connes'
original manuscript on the construction remained
unpublished; we therefore refer to
\cite{takesaki:2} and \cite{timmermann:buch} for details.

The \emph{relative tensor product} $H \swotimes H$ is the
separated completion of the algebraic tensor product $\Db
\otimes K \otimes \Da$ with respect to the sesquilinear form
given by
 \begin{align} \label{eq:rtp-inner}
   \langle \xi \otimes \zeta \otimes \eta|\xi' \otimes
  \zeta' \otimes \eta'\rangle = \langle \zeta|
\langle \xi|\xi'\rangle_{\beta,\tilde \mu}
\langle \eta|\eta'\rangle_{\alpha,\tilde \mu}\zeta'\rangle.
 \end{align}
This Hilbert space can naturally be identified
 with the separated completions of the algebraic tensor
 products $\Db \otimes H$ and $H\otimes \Da$
with respect to the sesquilinear forms given by
\begin{align} \label{eq:rtp-asymmetric}
  \langle \xi \otimes \eta|\xi'\otimes \eta'\rangle &=
  \langle \eta|\alpha(\langle \xi|\xi'\rangle_{\beta,\tilde
    \mu})\eta'\rangle &&\text{and} &
  \langle \xi \otimes \eta|\xi'\otimes \eta'\rangle &=
  \langle \xi|\beta(\langle \eta|\eta'\rangle_{\alpha,\tilde
    \mu})\xi'\rangle,
\end{align}
respectively, via 
\begin{align} \label{eq:rtp-identify}
  \xi \otimes R_{\xi}^{\alpha,\tilde\mu}\zeta \equiv \xi
  \otimes \zeta \otimes \eta \equiv R_{\xi}^{\beta,\tilde
    \mu} \zeta \otimes \eta,
\end{align}
and we shall use these identifications without further
notice.  Replacing the representations $\beta,\alpha$ by
$\alpha,\hbeta$ or $\halpha,\beta$, respectively, one
obtains the relative tensor products $H
\rwotimes H$ and $H\sotimes H$.

To proceed, we shall impose the following
simplifying assumption which essentially says that the cocycle
$(D_{\gamma})_{\gamma}$ in $M(B)$ has a positive square root
on the algebraic level:
\begin{itemize}
\item[\textbf{(A3)}] There exists a  family
  $(D_{\gamma}^{\frac{1}{2}})_{\gamma\in \Gamma}$ in $M(B)$
  such that for all $\gamma,\gamma'\in \Gamma$, $c\in B$,
  \begin{align*}
    D^{\frac{1}{2}}_{e}&=1, &
    (D^{\frac{1}{2}}_{\gamma})^{*} &=
    D^{\frac{1}{2}}_{\gamma}, &
    (D_{\gamma}^{\frac{1}{2}})^{2}
    &=D_{\gamma}, &
    D^{\frac{1}{2}}_{\gamma\gamma'} &=
    \gamma'{}^{-1}(D_{\gamma}^{\frac{1}{2}})D_{\gamma'}^{\frac{1}{2}},
    &
    \mu(c^{*}D^{\frac{1}{2}}_{\gamma}c) \geq 0.
  \end{align*}
\end{itemize}
Clearly, this condition  implies the existence of a unitary
representation $U\colon \Gamma \to \mathcal{L}(\Hmu)$ such
that
 \begin{align} \label{eq:ugamma}
    U_{\gamma}\Lambda_{\mu}(c) &=
  \Lambda_{\mu}(\gamma(cD^{\frac{1}{2}}_{\gamma})), &
U_{\gamma}
  \pi_{\mu}(b)U_{\gamma}^{*} &= \pi_{\mu}(\gamma(b)) &&
  \text{for all } b,c\in B, \gamma \in \Gamma.
  \end{align}

  Similarly as in \eqref{eq:modular-d}, we define linear
  maps $D^{\frac{1}{2}},\bar D^{\frac{1}{2}} \colon A\to A$
  by \begin{align*}
    D^{\frac{1}{2}}(a) &=
    r(D^{\frac{1}{2}}_{\partial^{-1}_{a}})a =
    ar(D^{-\frac{1}{2}}_{\partial_{a}}), & \bar
    D^{\frac{1}{2}}(a) &=
    s(D^{\frac{1}{2}}_{\bar \partial^{-1}_{a}})a =
    as(D^{-\frac{1}{2}}_{\bar\partial_{a}})
  \end{align*}
  for all $a \in A$.  These maps share all the properties of
  the maps $D,\bar D$ listed in Lemma
  \ref{lemma:modular-d}.  Short calculations show that for
  all $x,y\in A$,
  \begin{align} \label{eq:ksgns-inner-phi}
    \Lambda_{\phi}(x)U_{\partial_{x}^{-1}} &=
    \Lambda^{\dag}_{\phi}(D^{\frac{1}{2}}(x)), & \langle
    \Lambda_{\nu}(D^{\frac{1}{2}}(x))|\Lambda_{\nu}(D^{\frac{1}{2}}(y))\rangle_{\alpha,\tilde
      \mu} &= \pi_{\mu}(\partial_{x}(\phi(x^{*}y))),
    \\ \label{eq:ksgns-inner-psi}
    \Lambda_{\psi}(x)U_{\bar\partial_{x}^{-1}} &=
    \Lambda_{\psi}^{\dag}(\bar D^{\frac{1}{2}}(x)), &
    \langle\Lambda_{\nu}(\bar D^{\frac{1}{2}}(x))
    |\Lambda_{\nu}(\bar
    D^{\frac{1}{2}}(y))\rangle_{\beta,\tilde \mu} &=
    \pi_{\mu}(\bar\partial_{x}(\psi(x^{*}y))).
  \end{align}
  Indeed, for all $x,y\in A$ and $b\in B$,
  \begin{gather*}
    \Lambda_{\phi}(x)U_{\partial_{x}^{-1}}\Lambda_{\mu}(b) =
    \Lambda_{\nu}(xr(\partial_{x}^{-1}(bD^{\frac{1}{2}}_{\partial^{-1}_{x}}))
    =
    \Lambda_{\nu}(r(bD^{\frac{1}{2}}_{\partial_{x}^{-1}}x))
    =
    \Lambda_{\psi}^{\dag}(D^{\frac{1}{2}}(x))\Lambda_{\mu}(b), \\
    \Lambda^{\dag}_{\phi}(D^{\frac{1}{2}}(x))^{*}
    \Lambda^{\dag}_{\phi}(D^{\frac{1}{2}}(y)) =
    U_{\partial_{x}^{-1}}^{*}
    \Lambda_{\phi}(x)^{*}\Lambda_{\phi}(y)
    U_{\partial_{y}^{-1}} = U_{\partial_{x}}
    \pi_{\mu}(\phi(x^{*}y)) U_{\partial_{y}^{-1}} =
    \pi_{\mu}(\partial_{x}(\phi(x^{*}y))).
  \end{gather*}
  \begin{lemma} \label{lemma:pmu-domain-range} The
    sesquilinear forms on $\sA\otimesB \rA$ and $\rA
    \otimesB \Ar$ defined in \eqref{eq:pmu-sesquilinear} are
    positive. Denote by $\overline{\sA \otimesB \rA}$ and
    $\overline{\rA \otimesB \Ar}$ the respective separated
    completions. Then there exist isomorphisms
  \begin{align*}
    \begin{aligned}
      \Lambda &\colon \overline{\rA \otimesB \Ar} \to
      H\rwotimes H, & x\otimesB y &\mapsto \Lambda_{\nu}(x)
      \otimesm
      \Lambda_{\nu}(y), \\
      \Lambda' & \colon \overline{\sA \otimesB \rA} \to
      H\swotimes H, & x\otimesB y &\mapsto
      \Lambda_{\nu}(x) \otimesm
      \Lambda_{\nu}(D^{\frac{1}{2}}(y)).
    \end{aligned}
  \end{align*}
\end{lemma}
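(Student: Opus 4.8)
The plan is to realise both sesquilinear forms of \eqref{eq:pmu-sesquilinear} as pull‑backs of the (automatically positive) inner products of the relative tensor products $H\rwotimes H$ and $H\swotimes H$ along the prescribed maps; positivity and the isometry property then follow at once, and only surjectivity of the induced isometries remains to be checked. First I would verify well‑definedness. The map $(\xi,\eta)\mapsto\xi\otimesm\eta$ from $H\times\Dhb$ to $H\rwotimes H$ is $\complex$‑bilinear, and $\Lambda_{\nu}(A)\subseteq\Da\cap\Dhb$ by Lemma \ref{lemma:module-bounded}, so $x\otimes y\mapsto\Lambda_{\nu}(x)\otimesm\Lambda_{\nu}(y)$ is defined on $A\otimes_{\complex}A$; it descends to $\rA\otimesB\Ar$ because, by Lemma \ref{lemma:pmu-module}, $\Lambda_{\nu}(r(b)x)=\alpha(\pi_{\mu}(b))\Lambda_{\nu}(x)$ and $\Lambda_{\nu}(yr(b))=\hbeta(\pi_{\mu}(b))\Lambda_{\nu}(y)$, which turns the defining $B$‑balancing relation of $\rA\otimesB\Ar$ into the balancing relation of $H\rwotimes H$ (here $N=\pi_{\mu}(B)''$ is commutative, so there is no issue of opposite algebras). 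For $\Lambda'$ one argues in the same way with $\beta,\alpha$ replacing $\alpha,\hbeta$, using additionally that $D^{\frac{1}{2}}$ commutes with left multiplication by $r(B)$ --- part of its being a $(B,\Gamma)^{\ev}$‑module automorphism, by the analogue of Lemma \ref{lemma:modular-d}.

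Next I would compute the inner products. Using the description of $H\rwotimes H$ as the separated completion of $H\otimes\Dhb$ obtained from \eqref{eq:rtp-asymmetric}, together with $R^{\hbeta,\tilde\mu}_{\Lambda_{\nu}(y)}=\Lambda_{\phi}(y)$ (Lemma \ref{lemma:module-bounded}), $\langle\Lambda_{\nu}(y)|\Lambda_{\nu}(y')\rangle_{\hbeta,\tilde\mu}=\pi_{\mu}(\phi(y^{*}y'))$ (from \eqref{eq:bounded-inner-product}) and $\alpha(\pi_{\mu}(b))\Lambda_{\nu}(x')=\Lambda_{\nu}(r(b)x')$, one finds
\begin{align*}
  &\langle\Lambda_{\nu}(x)\otimesm\Lambda_{\nu}(y)\,|\,\Lambda_{\nu}(x')\otimesm\Lambda_{\nu}(y')\rangle \\
  &\qquad =\langle\Lambda_{\nu}(x)\,|\,\Lambda_{\nu}(r(\phi(y^{*}y'))x')\rangle=\nu(x^{*}r(\phi(y^{*}y'))x'),
\end{align*}
which is exactly $\langle x\otimesB y|x'\otimesB y'\rangle_{(\rA\otimesB\Ar)}$. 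For $\Lambda'$ the identical computation with $\beta$ in place of $\alpha$, and with \eqref{eq:ksgns-inner-phi} now supplying $\langle\Lambda_{\nu}(D^{\frac{1}{2}}(y))|\Lambda_{\nu}(D^{\frac{1}{2}}(y'))\rangle_{\alpha,\tilde\mu}=\pi_{\mu}(\partial_{y}(\phi(y^{*}y')))$ for homogeneous $y,y'$, gives $\nu(x^{*}s(\partial_{y}(\phi(y^{*}y')))x')=\langle x\otimesB y|x'\otimesB y'\rangle_{(\sA\otimesB\rA)}$; the non‑homogeneous case follows by bilinearity, since $\phi$ annihilates the off‑diagonal homogeneous components. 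Hence both forms are positive and $\Lambda,\Lambda'$ induce isometries of the separated completions $\overline{\rA\otimesB\Ar}$, $\overline{\sA\otimesB\rA}$ into $H\rwotimes H$, $H\swotimes H$.

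It remains to see that these isometries are onto. The range of $\Lambda$ is the closed linear span of the vectors $\Lambda_{\nu}(x)\otimesm\Lambda_{\nu}(y)$ with $x,y\in A$. Since $\Lambda_{\nu}(A)$ is dense in $H$ and $\{R^{\hbeta,\tilde\mu}_{\Lambda_{\nu}(y)}:y\in A\}=\Lambda_{\phi}(A)$ is dense in the $C^{*}$‑module $E_{\phi}$ that defines $\hbeta$, this span is dense in $H\rwotimes H$ by the construction of the relative tensor product over the $C^{*}$‑base $\frakb$ in \cite{timmermann:fiber}: the relative tensor product may be built from any family of $\hbeta$‑bounded vectors whose range operators are total in $E_{\phi}$, tensored with a total family of vectors in the other leg. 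The same argument applies to $\Lambda'$, now using $E_{\phi}^{\dag}$ and $E_{\psi}^{\dag}$, the equalities $R^{\alpha,\tilde\mu}_{\Lambda_{\nu}(x)}=\Lambda_{\phi}^{\dag}(x)$ and $R^{\beta,\tilde\mu}_{\Lambda_{\nu}(x)}=\Lambda_{\psi}^{\dag}(x)$, and the fact that $D^{\frac{1}{2}}$ is a bijection of $A$, so that $\{\Lambda_{\nu}(D^{\frac{1}{2}}(y)):y\in A\}=\Lambda_{\nu}(A)$.

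I expect the main obstacle to be bookkeeping rather than anything conceptual: keeping track of which of the four representations $\alpha,\beta,\halpha,\hbeta$ sits on which leg of which relative tensor product, correctly propagating the $D^{\frac{1}{2}}$‑twist through to the factor $\partial_{y}$ appearing in the $\Lambda'$‑form, and invoking the $C^{*}$‑module description of the relative tensor product from \cite{timmermann:fiber} in precisely the form needed for the density argument in the third paragraph.
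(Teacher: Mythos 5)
Your proposal is correct and follows essentially the same route as the paper: both realise the forms \eqref{eq:pmu-sesquilinear} as pull-backs of the relative-tensor-product inner products via \eqref{eq:rtp-asymmetric}, \eqref{eq:bounded-inner-product} and \eqref{eq:ksgns-inner-phi}, obtaining positivity and isometry simultaneously, and get surjectivity from density of $\Lambda_{\nu}(A)$ in $H$ (together with $\Lambda_{\nu}(A)$ consisting of bounded vectors and $D^{\frac{1}{2}}$ being bijective). You merely spell out the well-definedness, homogeneity and density bookkeeping that the paper leaves implicit, and your computed inner products match the statement (the paper's own displayed formulas have $r$ and $s$ interchanged, evidently a typo).
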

\begin{proof}
  The maps $\Lambda,\Lambda'$ are surjective because
  $\Lambda_{\nu}(A) \subseteq H$ is dense, and they are
  well-defined and isometric because
  \eqref{eq:rtp-asymmetric},
  \eqref{eq:bounded-inner-product} and
  \eqref{eq:ksgns-inner-phi} imply for all $x,y\in A$
\begin{align*}
  \langle \Lambda(x \otimes y)|\Lambda(x' \otimes y')\rangle
  & =
  \nu(x^{*}s(\phi(y^{*}y'))x'), \\
  \langle \Lambda'(x \otimes y)|\Lambda'(x' \otimes
  y')\rangle &= \nu(x^{*}r( \partial_{y}(\phi(y^{*}y'))
  )x'). \qedhere
\end{align*}
\end{proof}
\begin{proposition} \label{proposition:pmu} There exists a
  unitary $W \colon H \swotimes H \to H \rwotimes H$ such
  that $W^{*} \circ \Lambda = \Lambda'\circ T_{4}$ as maps
  from $\rA \otimesB \Ar$ to $H \swotimes H$, that is, for
  all $x,y \in A$,
  \begin{align*}
    W^{*}(\Lambda_{\nu}(x) \otimesm \Lambda_{\nu}(y)) &=
    \sum \Lambda_{\nu}(\bar D^{\frac{1}{2}}(y_{(1)})x)
    \otimesm \Lambda_{\nu}(y_{(2)}) = \sum
    \Lambda_{\nu}(y_{(1)}x) \otimesm \Lambda_{\nu}(
    D^{\frac{1}{2}}(y_{(2)})), \\ 
    W(\Lambda_{\nu}(x) \otimesm \Lambda_{\nu}(y)) &= \sum
    \Lambda_{\nu}(S^{-1}(D^{-\frac{1}{2}}(y_{(1)}))x)
    \otimesm \Lambda_{\nu}(y_{(2)}) = \sum (\bar
    D^{\frac{1}{2}}(S^{-1}(y_{(1)}))x) \otimesm
    \Lambda_{\nu}(y_{(2)}).
  \end{align*}
 \end{proposition}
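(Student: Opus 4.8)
The plan is to produce $W$ by transporting the Galois map $T_{4}$ through the identifications $\Lambda$ and $\Lambda'$ of Lemma \ref{lemma:pmu-domain-range}. That lemma already supplies the positivity of the sesquilinear forms \eqref{eq:pmu-sesquilinear}, hence the separated completions $\overline{\rA\otimesB\Ar}$ and $\overline{\sA\otimesB\rA}$, together with unitary isomorphisms $\Lambda\colon\overline{\rA\otimesB\Ar}\to H\rwotimes H$ and $\Lambda'\colon\overline{\sA\otimesB\rA}\to H\swotimes H$. So the first step is to check that $T_{4}\colon\rA\otimesB\Ar\to\sA\otimesB\rA$ is isometric for these two forms, which is exactly the calculation \eqref{eq:pmu-isometric}, and bijective, which is Proposition \ref{proposition:galois}; consequently $T_{4}$ carries the kernel of the first form onto that of the second and descends to an isometric bijection $\bar T_{4}\colon\overline{\rA\otimesB\Ar}\to\overline{\sA\otimesB\rA}$. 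Then $W^{*}:=\Lambda'\circ\bar T_{4}\circ\Lambda^{-1}$ is a unitary $H\rwotimes H\to H\swotimes H$, so $W:=(W^{*})^{*}$ is a unitary $H\swotimes H\to H\rwotimes H$, and $W^{*}\circ\Lambda=\Lambda'\circ T_{4}$ holds by construction on the dense domain $\rA\otimesB\Ar$.

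The formula for $W^{*}$ follows by evaluating on simple tensors: since $\Lambda(x\otimesB y)=\Lambda_{\nu}(x)\otimesm\Lambda_{\nu}(y)$, $\Lambda'(x\otimesB y)=\Lambda_{\nu}(x)\otimesm\Lambda_{\nu}(D^{\frac12}(y))$ and $T_{4}(x\otimesB y)=\sum y_{(1)}x\otimesB y_{(2)}$, one gets
\[
W^{*}(\Lambda_{\nu}(x)\otimesm\Lambda_{\nu}(y))=\Lambda'(T_{4}(x\otimesB y))=\sum\Lambda_{\nu}(y_{(1)}x)\otimesm\Lambda_{\nu}(D^{\frac12}(y_{(2)})),
\]
which is the second of the two claimed expressions for $W^{*}$. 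To obtain the first, I would move the modular factor from the right leg to the left leg of $H\swotimes H$: in $\Delta(y)$ one has $\bar\partial_{y_{(1)}}=\partial_{y_{(2)}}$, because $A\todot A$ is a quotient of $A\gtimes A=\sum_{\gamma,\gamma',\gamma''}A_{\gamma,\gamma'}\otimes A_{\gamma',\gamma''}$ and the defining relation $s_{A}(b)\otimes1\equiv1\otimes r_{A}(b)$ preserves the grading; hence, writing $d:=D^{\frac12}_{\partial_{y_{(2)}}^{-1}}=D^{\frac12}_{\bar\partial_{y_{(1)}}^{-1}}$, the formulas for $\alpha$ and $\beta$ in Lemma \ref{lemma:pmu-module} give $\Lambda_{\nu}(D^{\frac12}(y_{(2)}))=\alpha(\pi_{\mu}(d))\Lambda_{\nu}(y_{(2)})$ and $\Lambda_{\nu}(\bar D^{\frac12}(y_{(1)})x)=\beta(\pi_{\mu}(d))\Lambda_{\nu}(y_{(1)}x)$. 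The balancing relation $\xi\otimesm\alpha(\pi_{\mu}(d))\eta=\beta(\pi_{\mu}(d))\xi\otimesm\eta$ in $H\swotimes H$, which one reads off from the identifications \eqref{eq:rtp-identify} and Lemma \ref{lemma:pmu-module}, then gives $\sum\Lambda_{\nu}(y_{(1)}x)\otimesm\Lambda_{\nu}(D^{\frac12}(y_{(2)}))=\sum\Lambda_{\nu}(\bar D^{\frac12}(y_{(1)})x)\otimesm\Lambda_{\nu}(y_{(2)})$.

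For $W$ itself, I would use $W\circ\Lambda'=\Lambda\circ T_{4}^{-1}$, which follows from $W^{*}\circ\Lambda=\Lambda'\circ\bar T_{4}$. Writing $\Lambda_{\nu}(x)\otimesm\Lambda_{\nu}(y)=\Lambda'(x\otimesB D^{-\frac12}(y))$ and using $\Delta\circ D^{-\frac12}=(D^{-\frac12}\todot\Id)\circ\Delta$, the formula $T_{4}^{-1}(u\otimesB v)=\sum S^{-1}(S(u)v_{(1)})\otimesB v_{(2)}$ from Proposition \ref{proposition:galois}, and the anti-multiplicativity of $S^{-1}$, one obtains
\[
W(\Lambda_{\nu}(x)\otimesm\Lambda_{\nu}(y))=\Lambda\Big(\sum S^{-1}(D^{-\frac12}(y_{(1)}))x\otimesB y_{(2)}\Big)=\sum\Lambda_{\nu}(S^{-1}(D^{-\frac12}(y_{(1)}))x)\otimesm\Lambda_{\nu}(y_{(2)}),
\]
and the second form then results from $S^{-1}\circ D^{-\frac12}=\bar D^{\frac12}\circ S^{-1}$; both latter identities are instances of Lemma \ref{lemma:modular-d} for the half-cocycle $(D^{\frac12}_{\gamma})_{\gamma}$. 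The only genuinely delicate point is the rebracketing in the relative tensor product used to pass between the two forms of $W^{*}$ and of $W$: it relies on the grading matching in $A\todot A$ and on correctly keeping track of which of $\alpha,\beta$ (resp.\ $\alpha,\hbeta$) implements left multiplication by $r(B)$ versus $s(B)$. Everything else is bookkeeping, the single analytic subtlety --- positivity of the forms --- having already been settled in Lemma \ref{lemma:pmu-domain-range}.
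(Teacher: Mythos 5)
Your argument is correct and is essentially the paper's own proof, which likewise obtains $W$ by combining the isometry computation \eqref{eq:pmu-isometric} and Lemma \ref{lemma:pmu-domain-range} with the bijectivity and inversion formula of Proposition \ref{proposition:galois}; your extra verifications of the displayed formulas (via the half-cocycle analogues of Lemma \ref{lemma:modular-d}) are exactly the bookkeeping the paper leaves implicit. One small imprecision: in passing between the two expressions for $W^{*}$ you invoke $\alpha(\pi_{\mu}(d))$ and $\beta(\pi_{\mu}(d))$ with $d=D^{\frac{1}{2}}_{\gamma}\in M(B)$, but assumption (A2) only makes $\pi_{\mu}$ bounded on $B$, so these operators need not exist; the shift of $d$ is better carried out inside $\sA\otimesB\rA$ (using local units in $B$) before applying $\Lambda'$, or checked directly against inner products --- and note that the naive balancing $\beta(x)\xi\otimesm\eta=\xi\otimesm\alpha(x)\eta$ is legitimate here only because $B$ is commutative, so that $N$ is commutative and $\tilde\mu$ is a trace.
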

 \begin{proof}
   Calculation \eqref{eq:pmu-isometric} and Lemma
   \ref{lemma:pmu-domain-range} imply that the map
   $\Lambda_{\nu}(x) \otimesm \Lambda_{\nu}(y) \mapsto \sum
   \Lambda_{\nu}(y_{(1)}x) \otimesm
   \Lambda_{\nu}(D^{\frac{1}{2}}(y_{(2)}))$
   extends to an isometry $H\rwotimes H \to H\swotimes
   H$. Bijectivity of this isometry and the formula for $W$
   follow from Proposition \ref{proposition:galois}.
\end{proof}
  Similarly, the map $T_{1}$ yields a second fundamental
  unitary:
  \begin{proposition}
  \label{proposition:pmu-v}
  There exists a unitary $V\colon H\sotimes H \to H\rotimes
  H$ such that for all $x,y\in A$,
   \begin{align*} 
 V(\Lambda_{\nu}(x) \otimesm
     \Lambda_{\nu}(y)) &= \sum \Lambda_{\nu}(\bar
     D^{\frac{1}{2}}(x_{(1)})) \otimesm
     \Lambda_{\nu}(x_{(2)}y)  = \sum
     \Lambda_{\nu}(x_{(1)})
     \otimesm \Lambda_{\nu}(D^{\frac{1}{2}}(x_{(2)})y),
 \\
     V^{*}(\Lambda_{\nu}(x) \otimesm \Lambda_{\nu}(y)) &=
     \sum \Lambda_{\nu}(x_{(1)}) \otimesm
     \Lambda_{\nu}(S(\bar D^{-\frac{1}{2}}(x_{(2)}))y)
  \sum
     \Lambda_{\nu}(x_{(1)}) \otimesm
     \Lambda_{\nu}(D^{\frac{1}{2}}(S(x_{(2)}))y).
  \end{align*}
\end{proposition}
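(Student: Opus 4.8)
The plan is to repeat the construction of $W$ in Proposition~\ref{proposition:pmu}, with $T_{4}$ replaced by $T_{1}$ and the module structures $\alpha,\hbeta$ replaced by $\halpha,\beta$. First I would prove the analogue of Lemma~\ref{lemma:pmu-domain-range} for the domain of $V$: there is a sesquilinear form on $\As\otimesB\sA$, built from $\nu$, the right integral $\psi$ and a twist by $D^{\frac{1}{2}}$ in the spirit of \eqref{eq:pmu-sesquilinear}, whose separated completion $\overline{\As\otimesB\sA}$ is isomorphic to $H\sotimes H$ via a map $\Lambda''$ sending $x\otimesB y$ to $\Lambda_{\nu}(x)\otimesm\Lambda_{\nu}(D^{\frac{1}{2}}(y))$. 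As in Lemma~\ref{lemma:pmu-domain-range}, surjectivity is immediate from density of $\Lambda_{\nu}(A)$ in $H$, while positivity of the form and isometry of $\Lambda''$ are obtained a posteriori by unfolding the inner product of $H\sotimes H$ through the identifications \eqref{eq:rtp-asymmetric}, the inner-product formulas \eqref{eq:bounded-inner-product} for $\halpha$ and $\beta$, and the twisting relations \eqref{eq:ksgns-inner-phi}--\eqref{eq:ksgns-inner-psi}. The range $H\rotimes H = H\swotimes H$ is already identified with $\overline{\sA\otimesB\rA}$ by the map $\Lambda'$ of Lemma~\ref{lemma:pmu-domain-range}.

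The computational core is the analogue of the isometry calculation \eqref{eq:pmu-isometric}, namely that $T_{1}\colon\As\otimesB\sA\to\sA\otimesB\rA$, $x\otimesB y\mapsto\sum x_{(1)}\otimesB x_{(2)}y$, is isometric for the two forms above. To prove it I would first absorb the $\partial$-twist in \eqref{eq:pmu-sesquilinear} past $x_{(1)}^{*}$ using $\bar\partial_{x_{(1)}}=\partial_{x_{(2)}}$, then rewrite $x_{(1)}^{*}x'_{(1)}$ and $x_{(2)}^{*}x'_{(2)}$ as legs of $\Delta(x^{*}x')$ (using that $\Delta$ is multiplicative and $\ast$-preserving), and finally reduce the resulting expression in $\phi$ to the expression in $\psi$ on the domain side by means of the strong invariance relations of Proposition~\ref{proposition:integral-strong-invariance}, together with Lemma~\ref{lemma:modular-bimodule} and the cocycle identities of Remark~\ref{remark:modular-cocycle}. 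Granting this, surjectivity of $T_{1}$ from Proposition~\ref{proposition:galois} shows that $T_{1}$ extends to a unitary $\overline{\As\otimesB\sA}\to\overline{\sA\otimesB\rA}$, and $V:=\Lambda'\circ\overline{T_{1}}\circ(\Lambda'')^{-1}$ is the desired unitary $H\sotimes H\to H\rotimes H$.

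The explicit formulas then follow by unwinding: since $(\Lambda'')^{-1}(\Lambda_{\nu}(x)\otimesm\Lambda_{\nu}(y))=x\otimesB D^{-\frac{1}{2}}(y)$ and $D^{\frac{1}{2}}$ is an algebra automorphism, $V$ sends $\Lambda_{\nu}(x)\otimesm\Lambda_{\nu}(y)$ to $\Lambda'(T_{1}(x\otimesB D^{-\frac{1}{2}}(y)))=\sum\Lambda_{\nu}(x_{(1)})\otimesm\Lambda_{\nu}(D^{\frac{1}{2}}(x_{(2)})y)$; the equivalent expression with $\bar D^{\frac{1}{2}}(x_{(1)})$ on the first leg is obtained by moving $\bar D^{\frac{1}{2}}$ across the relative tensor product through the $\beta$-$\alpha$ balancing relation and the identity $\bar\partial_{x_{(1)}}=\partial_{x_{(2)}}$, exactly as in the identification preceding Proposition~\ref{proposition:pmu}. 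The two formulas for $V^{*}$ come from the formula for $T_{1}^{-1}$ in Proposition~\ref{proposition:galois} by the same bookkeeping, and are matched to one another by $S\circ\bar D^{-\frac{1}{2}}=D^{\frac{1}{2}}\circ S$.

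I expect the isometry of $T_{1}$ to be the main obstacle. Unlike \eqref{eq:pmu-isometric}, which needed only left invariance of $\phi$, here the domain form is built from $\psi$ and the range form from $\phi$, so the $\partial$-twist and the two integrals interact asymmetrically and one is forced to invoke the full strong-invariance identities rather than invariance alone; a secondary point, shared with the treatment of $W$, is that positivity of the sesquilinear forms is not evident a priori and is deduced only once the identifications $\Lambda'$ and $\Lambda''$ with genuine Hilbert spaces are in place.
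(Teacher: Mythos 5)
Your overall skeleton is the same as the paper's: $V$ is obtained by checking that the prescribed map on elementary tensors is isometric, with surjectivity and the inversion formula coming from the bijectivity of $T_{1}$ in Proposition \ref{proposition:galois}, and the two expressions for $V$ are reconciled by moving $\bar D^{\frac{1}{2}}$ across the relative tensor product using $\bar\partial_{x_{(1)}}=\partial_{x_{(2)}}$. However, the one step that carries all the content --- the isometry --- is left as a sketch, and the sketch points at the wrong mechanism. The paper computes the inner product of the image vectors $\sum \Lambda_{\nu}(\bar D^{\frac{1}{2}}(x_{(1)}))\otimesm\Lambda_{\nu}(x_{(2)}y)$ in $H\rotimes H$ via the first-leg $(\beta,\tilde\mu)$-valued inner product, i.e.\ \eqref{eq:rtp-asymmetric} combined with \eqref{eq:ksgns-inner-psi}, so that it comes out as $\sum\nu(y^{*}x_{(2)}^{*}x'_{(2)}r(\psi(x_{(1)}^{*}x'_{(1)}))y')$, an expression in $\psi$ alone; since the domain-side inner product is $\nu(y^{*}s(\psi(x^{*}x'))y')$ by \eqref{eq:bounded-inner-product}, the two agree by plain right-invariance of $\psi$ (Remark \ref{remarks:algebra-integrals} i)), exactly parallel to \eqref{eq:pmu-isometric}. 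Strong invariance never enters, so your claim that one ``is forced to invoke the full strong-invariance identities'' is incorrect.

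The asymmetry you worry about is created by your own choice of expressing the range-side form through $\Lambda'$ and the $\phi$-based formula \eqref{eq:pmu-sesquilinear}. On that route the identity to be proved genuinely mixes $\phi$ and $\psi$, but the tool that converts one into the other is not Proposition \ref{proposition:integral-strong-invariance} --- those relations only move the antipode around and never exchange $\phi$ for $\psi$ --- it is quasi-invariance of $\mu$ together with $\mu\circ\phi=\nu=\mu\circ\psi$, i.e.\ Remark \ref{remark:modular-cocycle} iii) and the Step 1 identities in the proof of Theorem \ref{theorem:modular} such as \eqref{eq:modular-2}; equivalently, the exchange is already packaged in the agreement of the two asymmetric descriptions \eqref{eq:rtp-asymmetric} of one and the same relative tensor product, which is why the paper can bypass it altogether. (Your extra $D^{\frac{1}{2}}$-twist on the domain identification is also unnecessary: the untwisted map $x\otimesB y\mapsto\Lambda_{\nu}(x)\otimesm\Lambda_{\nu}(y)$ already respects the $\halpha$-$\beta$ balancing, since both legs of $\As\otimesB\sA$ act through $s$.) So either carry out the $\phi$-to-$\psi$ conversion with those modular identities, or --- much simpler --- compute the range inner product with the $\psi$-based leg as the paper does; as written, the central computation is missing and the ingredients you name would not close it.
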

\begin{proof}
The formula above defines an isometry $V$ because
  \eqref{eq:rtp-asymmetric},
  \eqref{eq:bounded-inner-product} and
  \eqref{eq:ksgns-inner-psi} imply
  \begin{align*}
    \sum \langle \Lambda_{\nu}(\bar
    D^{\frac{1}{2}}(x_{(1)})) \otimesm
    \Lambda_{\nu}(x_{(2)}y) &|
\Lambda_{\nu}(\bar
    D^{\frac{1}{2}}(x'_{(1)})) \otimesm
    \Lambda_{\nu}(x'_{(2)}y')\rangle_{(H\rotimes H)} \\ &=
    \sum
    \nu(y^{*}x_{(2)}^{*}r(\bar\partial_{x_{(1)}}(\psi(x_{(1)}^{*}x'_{(1)}))x'_{(2)}y')
    \\ &= \sum
    \nu(y^{*}x_{(2)}^{*}x'_{(2)}r(\psi(x_{(1)}^{*}x'_{(1)}))y'),
    \\
 \langle \Lambda_{\nu}(x) \otimesm
    \Lambda_{\nu}(y)|\Lambda_{\nu}(x') \otimesm
    \Lambda_{\nu}(y')\rangle_{(H \sotimes H)} &=
   \nu(y^{*}s(\psi(x^{*}x'))y')
\end{align*}
 for all $x,x',y,y' \in A$, and
by right-invariance of $\psi$ (see Remark
\ref{remarks:algebra-integrals} i)), the expressions above
coincide. Bijectivity of $V$ and the inversion formula
follow from Proposition \ref{proposition:galois}.
\end{proof}

\subsection{Boundedness of the canonical representations}  \label{subsection:bounded}
The first application of the fundamental unitary $W$ is to show that
left multiplication on $A$ and right convolution by $\hat A$
extend to  representations on the Hilbert space $H$.
\begin{theorem} \label{theorem:pmu-bounded}
  There exist $*$-homomorphisms $\pi_{\nu}\colon A \to
  \mathcal{L}(H)$ and $\rho \colon \hat A \to
  \mathcal{L}(H)$ such that
  \begin{align} \label{eq:legs-representations}
    \pi_{\nu}(x)\Lambda_{\nu}(y) &= \Lambda_{\nu}(xy) \text{
      for all } x,y\in A, & \rho(\omega)\Lambda_{\nu}(y)
    &= \Lambda_{\nu}(y\ast \omega) \text{ for all }
    \omega\in \hat A,y\in A.
  \end{align}
\end{theorem}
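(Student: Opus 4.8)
The plan is to derive both $\pi_{\nu}$ and $\rho$ from the fundamental unitary $W$ of Proposition \ref{proposition:pmu} by slicing it against bounded vectors: since $W$ is unitary, such slices are automatically \emph{bounded} operators on $H$, and the whole content of the theorem is the identification of these slices with the a priori merely densely defined maps $\Lambda_{\nu}(y) \mapsto \Lambda_{\nu}(xy)$ and $\Lambda_{\nu}(y) \mapsto \Lambda_{\nu}(y \ast \omega)$. Recall that $\Lambda_{\nu}(A)$ is dense in $H$ and, by Lemma \ref{lemma:module-bounded}, contained in each of $\Da$, $\Db$, $\Dha$, $\Dhb$; moreover, for a bounded vector $\zeta$ in the module occurring in one leg of $H \swotimes H$ or $H \rwotimes H$, the insertion $\eta \mapsto \zeta \otimesm \eta$ is bounded, with norm controlled by the operator norm of the $N$-valued inner product $\langle \zeta | \zeta \rangle$. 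Hence, composing $W$ with such an insertion in one leg and then pairing the opposite leg against a second bounded vector produces bounded operators on $H$; these are the two families of ``legs'' of $W$.

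Using the explicit formula for $W$ in Proposition \ref{proposition:pmu}, the identifications \eqref{eq:rtp-identify}, the inner-product formulas \eqref{eq:bounded-inner-product}, and the actions of $\alpha,\beta,\halpha,\hbeta$ from Lemma \ref{lemma:pmu-module}, a direct Sweedler-notation computation shows that slicing the second leg of $W$ against vectors $\Lambda_{\nu}(c), \Lambda_{\nu}(c')$ yields the left-multiplication operator $\Lambda_{\nu}(y) \mapsto \Lambda_{\nu}(zy)$ for an explicit element $z = z_{c,c'} \in A$, whereas slicing the first leg yields an operator of the shape $\Lambda_{\nu}(y) \mapsto \sum \Lambda_{\nu}(y_{(2)} r(\omega_{s}(y_{(1)}))) = \Lambda_{\nu}(y \ast \omega)$ for an explicit $\omega = \omega_{c,c'} \in \hat A$. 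Since the Galois maps $T_{1},\dots,T_{4}$ are bijective (Proposition \ref{proposition:galois}) and $\phi(A) = B = \psi(A)$, the elements $z_{c,c'}$ exhaust $A$ and the functionals $\omega_{c,c'}$ exhaust $\hat A$; therefore every left-multiplication operator $\Lambda_{\nu}(y) \mapsto \Lambda_{\nu}(xy)$ ($x \in A$) and every right-convolution operator $\Lambda_{\nu}(y) \mapsto \Lambda_{\nu}(y \ast \omega)$ ($\omega \in \hat A$) equals a finite sum of bounded slices of $W$ and is bounded. Define $\pi_{\nu}(x)$ and $\rho(\omega)$ to be their continuous extensions to $H$; they satisfy the asserted formulas on the dense subspace $\Lambda_{\nu}(A)$, which is invariant under all $\pi_{\nu}(x)$ and $\rho(\omega)$. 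Multiplicativity is then immediate from $\pi_{\nu}(x)\pi_{\nu}(x')\Lambda_{\nu}(y) = \Lambda_{\nu}(xx'y)$ and from $\rho(\omega')\rho(\omega)\Lambda_{\nu}(y) = \Lambda_{\nu}((y \ast \omega)\ast \omega') = \Lambda_{\nu}(y \ast (\omega'\omega))$, the last step being equation \eqref{eq:convolution-product} combined with the product formula of Proposition \ref{proposition:dual-algebra}. The identity $\pi_{\nu}(x)^{*} = \pi_{\nu}(x^{*})$ holds on $\Lambda_{\nu}(A)$ because $\langle \Lambda_{\nu}(w) | \Lambda_{\nu}(xy)\rangle = \nu(w^{*}xy) = \langle \Lambda_{\nu}(x^{*}w) | \Lambda_{\nu}(y)\rangle$, and $\rho(\omega)^{*} = \rho(\omega^{*})$ follows similarly, using $\hat x^{*} = \widehat{S(x)^{*}}$ from Proposition \ref{proposition:dual-algebra} and a short computation with the convolution formulas and the $\theta$-invariance and $*$-linearity of $\nu$; boundedness upgrades these dense identities to genuine adjoint relations.

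The main obstacle is the identification step in the second paragraph: one has to track the three relative tensor products through the identifications \eqref{eq:rtp-identify}, carry the twists $D^{\frac{1}{2}}, \bar D^{\frac{1}{2}}, S$ of the formula for $W$ through the Sweedler calculus, recognize the outcome as $\Lambda_{\nu}(zy)$ (respectively $\Lambda_{\nu}(y \ast \omega)$), and verify that the elements $z_{c,c'}$ and the functionals $\omega_{c,c'}$ genuinely span $A$ and $\hat A$. No analytic input beyond assumptions (A1)--(A3) enters here, since positivity of all the sesquilinear forms involved was already settled in the construction of $W$.
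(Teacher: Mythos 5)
Your proposal follows essentially the same route as the paper: the boundedness comes from slicing the unitary $W^{*}$ against vectors $\Lambda_{\nu}(x),\Lambda_{\nu}(y)$, the identification of these slices with left multiplication by explicit elements of $A$ and right convolution by explicit elements of $\hat A$ is exactly the content of the paper's Lemma \ref{lemma:pmu-slice}, and the spanning argument plus the verification of multiplicativity and involutivity via \eqref{eq:convolution-product}, \eqref{eq:convolution-alt-2} and Proposition \ref{proposition:dual-algebra} match the paper's proof of Theorem \ref{theorem:pmu-bounded}. The only caveat is that the key Sweedler computation identifying the slices is deferred rather than carried out, but the outline and the ingredients you cite are the correct ones.
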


The proof of Theorem \ref{theorem:pmu-bounded} involves
operators and slice maps of the following form. For each
$\xi \in \Db$ and $\eta \in \Da$, there exist bounded
linear operators
\begin{align} \label{eq:rtp-legs}
  \lambda^{\beta,\alpha}_{\xi} &\colon H \to H\swotimes H, \
  \eta' \mapsto \xi \otimesm \eta', &
  \rho^{\beta,\alpha}_{\eta} &\colon H\to H\swotimes H, \
  \xi' \mapsto \xi' \otimesm \eta,
\end{align}
whose adjoints are given by
\begin{align}\label{eq:rtp-legs-adjoints}
  (\lambda^{\beta,\alpha}_{\xi})^{*}(\xi' \otimes \eta') &=
  \alpha(\langle \xi|\xi'\rangle_{\beta,\tilde \mu})\eta',
  &
  (\rho^{\beta,\alpha}_{\eta})^{*}(\xi' \otimes \eta') &=
  \beta(\langle \eta|\eta'\rangle_{\alpha,\tilde \mu})\xi'.
\end{align}
Likewise, there exist operators
$\lambda^{\alpha,\hbeta}_{\xi},\rho^{\alpha,\hbeta}_{\eta}
\colon H \to H\rwotimes H$ for all $\xi \in \Da$ and $\eta\in
\Dhb$ which are defined similarly.  Using these operators,
one defines  slice maps
\begin{align*}
  \begin{aligned}
    \omega_{\xi,\xi'} \ast \Id &\colon \mathcal{L}(H\rwotimes
    H,H\swotimes H) \to \mathcal{L}(H), & T &\mapsto
    (\lambda_{\xi}^{\beta,\alpha})^{*}T
    \lambda_{\xi'}^{\alpha,\hbeta},  \\
    \Id \ast \omega_{\eta,\eta'} &\colon
    \mathcal{L}(H\rwotimes H,H\swotimes H) \to \mathcal{L}(H),
    & T &\mapsto (\rho_{\eta}^{\beta,\alpha})^{*}T
    \rho_{\eta'}^{\alpha,\hbeta}
  \end{aligned}
\end{align*}
 for all $\xi\in\Db$, $\xi' \in \Da$, $\eta\in
\Da$, $\eta' \in
\Dhb$.
\begin{lemma} \label{lemma:pmu-slice} Let $x,x',y,y' \in
  A$. Then
  \begin{align*}
    (\Id \ast
    \omega_{\Lambda_{\nu}(y),\Lambda_{\nu}(y')})(W^{*})
    \Lambda_{\nu}(x) &= \Lambda_{\nu}(ax), & \text{where }
    a&= \sum \bar D^{-\frac{1}{2}}(y'_{(1)}
    s(\phi(y^{*}y'_{(2)}))), \\
    (\omega_{\Lambda_{\nu}(x),\Lambda_{\nu}(x')} \ast
    \Id)(W^{*}) \Lambda_{\nu}(y) &= \Lambda_{\nu}(y \ast
    \hat c), & \text{where } c &= S^{-1}(\bar
    D^{\frac{1}{2}}(\theta^{-1}(x')x^{*})).
  \end{align*}
\end{lemma}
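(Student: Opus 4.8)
The plan is to prove both identities by a direct computation that unwinds the definitions of the two slice maps, substitutes the explicit description of $W^{*}$ from Proposition~\ref{proposition:pmu}, and then applies the adjoint formulas \eqref{eq:rtp-legs-adjoints} for the leg operators, the inner-product identities \eqref{eq:bounded-inner-product} and \eqref{eq:ksgns-inner-phi}--\eqref{eq:ksgns-inner-psi}, and the formulas for $\alpha,\beta$ from Lemma~\ref{lemma:pmu-module}. First I would note that, by Lemma~\ref{lemma:module-bounded}, the vectors $\Lambda_{\nu}(x),\Lambda_{\nu}(x'),\Lambda_{\nu}(y),\Lambda_{\nu}(y')$ are bounded with respect to all the representations occurring below, so the operators $\lambda^{\beta,\alpha}_{\Lambda_{\nu}(x)},\lambda^{\alpha,\hbeta}_{\Lambda_{\nu}(x')},\rho^{\beta,\alpha}_{\Lambda_{\nu}(y)},\rho^{\alpha,\hbeta}_{\Lambda_{\nu}(y')}$ and the slices $(\Id\ast\omega_{\Lambda_{\nu}(y),\Lambda_{\nu}(y')})(W^{*})$ and $(\omega_{\Lambda_{\nu}(x),\Lambda_{\nu}(x')}\ast\Id)(W^{*})$ are well defined on $\Lambda_{\nu}(A)$. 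In each case the only nontrivial point is a resulting identity in $M(A)$, obtained from the modular properties of $\phi$ and $\psi$ (Proposition~\ref{proposition:modular-integrals}), the relations for $D^{\frac{1}{2}},\bar D^{\frac{1}{2}},\theta,S,\ast$ (Lemma~\ref{lemma:modular-d} and its $D^{\frac{1}{2}}$-analogue, Theorem~\ref{theorem:modular}), the commutation of $r(B),s(B)$ with $\Gamma$-homogeneous elements, and the vanishing of $\phi,\psi$ off degree $(e,e)$ (Lemma~\ref{lemma:modular-bimodule}).

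For the first identity I would write $(\Id\ast\omega_{\Lambda_{\nu}(y),\Lambda_{\nu}(y')})(W^{*})\Lambda_{\nu}(x)=(\rho^{\beta,\alpha}_{\Lambda_{\nu}(y)})^{*}\,W^{*}(\Lambda_{\nu}(x)\otimesm\Lambda_{\nu}(y'))$, use the form $W^{*}(\Lambda_{\nu}(x)\otimesm\Lambda_{\nu}(y'))=\sum\Lambda_{\nu}(y'_{(1)}x)\otimesm\Lambda_{\nu}(D^{\frac{1}{2}}(y'_{(2)}))$, pair the second leg with $\Lambda_{\nu}(y)=\Lambda_{\nu}(D^{\frac{1}{2}}(D^{-\frac{1}{2}}(y)))$ via \eqref{eq:ksgns-inner-phi} to get $\pi_{\mu}(\partial_{y}(\phi(D^{\frac{1}{2}}(y^{*})y'_{(2)})))$, and then apply $\beta(\pi_{\mu}(b))\Lambda_{\nu}(a)=\Lambda_{\nu}(s(b)a)$. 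This produces $\sum\Lambda_{\nu}\bigl(s(\partial_{y}(\phi(D^{\frac{1}{2}}(y^{*})y'_{(2)})))\,y'_{(1)}\,x\bigr)$, visibly left multiplication by an element of $A$. It then remains to check that $\sum s(\partial_{y}(\phi(D^{\frac{1}{2}}(y^{*})y'_{(2)})))\,y'_{(1)}$ equals $\sum\bar D^{-\frac{1}{2}}(y'_{(1)})\,s(\phi(y^{*}y'_{(2)}))=a$; for this I would move the $\Gamma$-twist inside $s(\partial_{y}(-))$ across $y'_{(1)}$, observe that on the support of $\phi$ the degrees satisfy $\partial_{y'_{(2)}}=\partial_{y}=\bar\partial_{y'_{(1)}}$ so that this twist and the factor $r(D^{-\frac{1}{2}}_{\partial_{y}^{-1}})$ hidden in $D^{\frac{1}{2}}(y^{*})$ collapse via the cocycle relation for $(D^{\frac{1}{2}}_{\gamma})$, and recognise the leftover factor $s(D^{\frac{1}{2}}_{\bar\partial_{y'_{(1)}}})$ as exactly the one defining $\bar D^{-\frac{1}{2}}$ on $y'_{(1)}$.

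For the second identity the same scheme, now with $W^{*}(\Lambda_{\nu}(x')\otimesm\Lambda_{\nu}(y))=\sum\Lambda_{\nu}(\bar D^{\frac{1}{2}}(y_{(1)})x')\otimesm\Lambda_{\nu}(y_{(2)})$, the adjoint formula for $(\lambda^{\beta,\alpha}_{\Lambda_{\nu}(x)})^{*}$, the pairing $\langle\Lambda_{\nu}(x)|\Lambda_{\nu}(z)\rangle_{\beta,\tilde\mu}=\pi_{\mu}(\psi(z\theta(x^{*})))$ and $\alpha(\pi_{\mu}(b))\Lambda_{\nu}(a)=\Lambda_{\nu}(r(b)a)$, produces the operator $\Lambda_{\nu}(y)\mapsto\sum\Lambda_{\nu}\bigl(r(\psi(\bar D^{\frac{1}{2}}(y_{(1)})x'\theta(x^{*})))\,y_{(2)}\bigr)$. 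Comparing with $y\ast\hat c=\sum y_{(2)}r(\psi(S(c)y_{(1)}))$ from the definition \eqref{eq:convolution-1} and moving the twist across $y_{(2)}$, I would reduce the claim to the identity $\partial_{y_{(2)}}^{-1}\bigl(\psi(\bar D^{\frac{1}{2}}(y_{(1)})x'\theta(x^{*}))\bigr)=\psi(\bar D^{\frac{1}{2}}(\theta^{-1}(x')x^{*})\,y_{(1)})$ on the support of $\psi$, which forces $S(c)=\bar D^{\frac{1}{2}}(\theta^{-1}(x')x^{*})$, i.e.\ $c=S^{-1}(\bar D^{\frac{1}{2}}(\theta^{-1}(x')x^{*}))$. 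This identity I would verify by applying Proposition~\ref{proposition:modular-integrals}~ii) to the right-hand side, using that $\theta$ commutes with $\bar D^{\frac{1}{2}}$ and preserves the grading, and then Lemma~\ref{lemma:modular-bimodule}~iii) (for $\bar D^{\frac{1}{2}}$) to push $\bar D^{\frac{1}{2}}$ back onto $y_{(1)}$; the outer $\Gamma$-twist matches $\partial_{y_{(2)}}^{-1}$ because on the support $\partial_{y_{(2)}}=\bar\partial_{x}\bar\partial_{x'}^{-1}$.

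The hard part will be exactly this last step in each case: the formulas for $a$ and $c$ are deliberately asymmetric, so reproducing them requires careful bookkeeping of how $r(B),s(B)$ commute past $\Gamma$-homogeneous factors, of the interplay $S\circ D^{\frac{1}{2}}=\bar D^{-\frac{1}{2}}\circ S$, $\ast\circ D^{\frac{1}{2}}=D^{-\frac{1}{2}}\circ\ast$, $\theta\circ\bar D=\bar D\circ\theta$, and of the collapse of the cocycle $(D_{\gamma})$ to $1$ in degree $e$; one also has to choose, among the two equivalent forms of $W^{*}$ in Proposition~\ref{proposition:pmu} and of the convolution, the combination for which the pairing via \eqref{eq:ksgns-inner-phi}--\eqref{eq:ksgns-inner-psi} or \eqref{eq:bounded-inner-product} yields these expressions most directly. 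Everything else is routine, and the two operators so obtained are visibly left multiplication by $a$ and right convolution by $\hat c$, as needed for Theorem~\ref{theorem:pmu-bounded}.
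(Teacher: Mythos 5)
Your proposal is correct and follows essentially the same route as the paper's proof: unwind the slice maps, apply the explicit formula for $W^{*}$ from Proposition~\ref{proposition:pmu}, use \eqref{eq:rtp-legs-adjoints} together with \eqref{eq:bounded-inner-product} and \eqref{eq:ksgns-inner-phi}, and then massage the resulting element of $A$ via the modular property of $\psi$ (your direct appeal to Proposition~\ref{proposition:modular-integrals} is exactly what \eqref{eq:convolution-alt}, cited in the paper, encodes) and the $D^{\frac{1}{2}},\bar D^{\frac{1}{2}}$ rules; your grading and cocycle bookkeeping (e.g.\ $\partial_{y'_{(2)}}=\partial_{y}=\bar\partial_{y'_{(1)}}$ and $\partial_{y_{(2)}}=\bar\partial_{x}\bar\partial_{x'}^{-1}$ on the supports) checks out.
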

\begin{proof}
We calculate
    \begin{align*}
  (\rho^{\beta,\alpha}_{\Lambda_{\nu}(y)})^{*}
  W^{*} \rho^{\alpha,\hbeta}_{\Lambda_{\nu}(y')}      \Lambda_{\nu}(x)
&  =   \sum (\rho^{\beta,\alpha}_{\Lambda_{\nu}(y)})^{*} (\Lambda_{\nu}(y'_{(1)}x)
      \otimesm \Lambda_{\nu}
      (D^{\frac{1}{2}}(y_{(2)}')))  && \\
      & = \sum
      \beta(\langle
      \Lambda_{\nu}(y)|\Lambda_{\nu}(D^{\frac{1}{2}}(y'_{(2)})))\rangle_{\alpha,\tilde
        \mu} \Lambda_{\nu}(y'_{(1)}x)  && (\text{Equation }
      \eqref{eq:rtp-legs-adjoints}) \\
      &= \sum
      \Lambda_{\nu}(s(\partial_{y}(\phi(D^{-\frac{1}{2}}(y)^{*}y'_{(2)})))y'_{(1)}x)
      && (\text{Equation } \eqref{eq:ksgns-inner-phi}) \\
      & = \sum
      \Lambda_{\nu}(y'_{(1)}s(\phi(y^{*}D^{-\frac{1}{2}}(y'_{(2)})))x) &&(\text{Lemma \ref{lemma:modular-d}})
       \\
       &=  \sum \Lambda_{\nu}(\bar
       D^{-\frac{1}{2}}(y'_{(1)}s(\phi(y^{*}y'_{(2)})))x), &&
       (\text{Lemma \ref{lemma:modular-d}}), 
\end{align*}
\begin{align*}
  (\lambda^{\beta,\alpha}_{\Lambda_{\nu}(x)})^{*} W^{*}
  \lambda^{\alpha,\hbeta}_{\Lambda_{\nu}(x')}
  \Lambda_{\nu}(y) &= \sum
  (\lambda^{\beta,\alpha}_{\Lambda_{\nu}(x)})^{*}(
  \Lambda_{\nu}(\bar D^{\frac{1}{2}}(y_{(1)})x') \otimesm
  \Lambda_{\nu}( y_{(2)})) &&  \\
  &= \sum \alpha(\langle \Lambda_{\nu}(x) | \Lambda_{\nu}(
  \bar D^{\frac{1}{2}}(y_{(1)})x') \rangle_{\beta,\tilde
    \mu}) \Lambda_{\nu}(y_{(2)}) && (\text{Equation }
  \eqref{eq:rtp-legs-adjoints}) \\
  &= \sum \Lambda_{\nu}(r(\psi(\bar
  D^{\frac{1}{2}}(y_{(1)})x'\theta(x^{*})))y_{(2)})
  &&   (\text{Equation } \eqref{eq:bounded-inner-product}) \\
  & = \sum \Lambda_{\nu}(r(\psi(y_{(1)}\bar
  D^{-\frac{1}{2}}(x'\theta(x^{*}))))y_{(2)}) &&
  (\text{Lemma \ref{lemma:modular-d}}) \\
  &= \sum \Lambda_{\nu}(y_{(2)}r(\psi(\bar
    D^{\frac{1}{2}}(\theta^{-1}(x')x^{*})y_{(1)}))). &&
  (\text{Equation } \eqref{eq:convolution-alt}) \qedhere
    \end{align*}
\end{proof}
\begin{proof}[Proof of Theorem \ref{theorem:pmu-bounded}]
  For the elements $a$ and $c$ of the form in Lemma
  \ref{lemma:pmu-slice}, the maps $\Lambda_{\nu}(y) \mapsto
  \Lambda_{\nu}(ay)$ and $\Lambda_{\nu}(x) \mapsto
  \Lambda_{\nu}(x \ast \hat c)$ coincide with compositions
  of bounded operators and therefore are bounded. Since
  elements of the form like $a,c$ span $A$, we obtain maps
  $\pi_{\nu} \colon A \to \mathcal{L}(H)$ and $\rho
  \colon \hat A \to \mathcal{L}(H)$ satisfying
  \eqref{eq:legs-representations}.  Evidently, $\pi_{\nu}$
  is a $*$-homomorphism.  The map $\rho$ is
  multiplicative by  \eqref{eq:convolution-product} and
  Proposition \ref{proposition:dual-algebra}, and it is
  involutive because by \eqref{eq:convolution-alt-2} and
  Proposition \ref{proposition:dual-algebra},
  \begin{align*}
    \langle \rho(\hat
    x)^{*}\Lambda_{\nu}(z)|\Lambda_{\nu}(y)\rangle &=
    \langle
    \rho(\widehat{S(x)^{*}})\Lambda_{\nu}(z)|\Lambda_{\nu}(y)\rangle \\
    &= \sum \langle
    \Lambda_{\nu}(S(x)^{*}_{(1)}s(\psi(S(S(x)_{(2)}^{*})z)))|\Lambda_{\nu}(y)\rangle
 \\
    &= \nu(s(\psi(z^{*}x_{(1)})S(x_{(2)})y) \\
    &= \nu(z^{*}x_{(1)}s(\psi(S(x_{(2)})y))) = \langle
    \Lambda_{\nu}(z)|\rho(\hat x)\Lambda_{\nu}(y)\rangle. 
    \qedhere
  \end{align*}
\end{proof}
\begin{remarks} \label{remarks:pmu-slice}
  \begin{enumerate}
  \item  $\pi_{\nu}(A)'' \subseteq \hbeta(N)' \cap
    \halpha(N)'$ and $\rho(\hA)'' \subseteq \beta(N)' \cap
    \halpha(N)'$ by \eqref{eq:convolution-module}.
  \item Lemma \ref{lemma:pmu-slice}, Theorem
    \ref{theorem:pmu-bounded} and self-adjointness of
    $\pi_{\nu}(A)$ and $\rho(\hA)$ imply
    \begin{align*}
      \pi_{\nu}(A) &= \mathrm{span} \{ (\Id \ast
      \omega_{\Lambda_{\nu}(y),\Lambda_{\nu}(y')})(W^{*}) |
      y,y' \in A\} =\mathrm{span} \{ (\Id \ast
      \omega_{\Lambda_{\nu}(y),\Lambda_{\nu}(y')})(W) |
      y,y' \in A\}, \\
      \rho(\hA) &= \mathrm{span} \{
      (\omega_{\Lambda_{\nu}(x),\Lambda_{\nu}(x')} \ast
      \Id)(W^{*}) |x,x'\in A\} = \mathrm{span} \{
      (\omega_{\Lambda_{\nu}(x),\Lambda_{\nu}(x')} \ast
      \Id)(W) |x,x'\in A\}.
    \end{align*}
  \end{enumerate}
\end{remarks}
For later use, we calculate the slices of
$V$, which are defined similarly as those of $W^{*}$.
\begin{lemma} \label{lemma:pmu-v-slice}
Let $x,x',y,y' \in A$. Then
  \begin{align*}
(\omega_{\Lambda_{\nu}(x),\Lambda_{\nu}(x')} \ast \Id)(V)
    \Lambda_{\nu}(y) &= \Lambda_{\nu}(ay), & \text{where
    } a &=\sum
    D^{-\frac{1}{2}}(x'_{(2)}r(\psi(x^{*}x'_{(1)}))), \\
(\Id \ast \omega_{\Lambda_{\nu}(y),\Lambda_{\nu}(y')})(V) \Lambda_{\nu}(x)
    &= 
    \Lambda_{\nu}(\check c \ast x), &
    \text{where } c&=S^{-1}(D^{-\frac{1}{2}}(y'\theta(y^{*})).
  \end{align*} 
\end{lemma}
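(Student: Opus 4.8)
The plan is to repeat the proof of Lemma~\ref{lemma:pmu-slice} almost verbatim, with $W^{*}$ replaced by the unitary $V$ of Proposition~\ref{proposition:pmu-v}. First I would record the leg operators and slice maps for $V\colon H\sotimes H \to H\rotimes H$. As in \eqref{eq:rtp-legs}--\eqref{eq:rtp-legs-adjoints}, there are operators $\lambda^{\halpha,\beta}_{\xi},\rho^{\halpha,\beta}_{\eta}\colon H\to H\sotimes H$ (for $\xi\in\Dha$, $\eta\in\Db$) on the domain side and $\lambda^{\beta,\alpha}_{\xi},\rho^{\beta,\alpha}_{\eta}\colon H\to H\rotimes H$ (for $\xi\in\Db$, $\eta\in\Da$) on the range side, with adjoints given by the analogues of \eqref{eq:rtp-legs-adjoints}, and the slices of $V$ are $(\omega_{\Lambda_{\nu}(x),\Lambda_{\nu}(x')}\ast\Id)(V)=(\lambda^{\beta,\alpha}_{\Lambda_{\nu}(x)})^{*}V\lambda^{\halpha,\beta}_{\Lambda_{\nu}(x')}$ and $(\Id\ast\omega_{\Lambda_{\nu}(y),\Lambda_{\nu}(y')})(V)=(\rho^{\beta,\alpha}_{\Lambda_{\nu}(y)})^{*}V\rho^{\halpha,\beta}_{\Lambda_{\nu}(y')}$; these are bounded since $V$ is unitary, and $\Lambda_{\nu}(A)$ lies in all four spaces of bounded vectors by Lemma~\ref{lemma:module-bounded}.

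For the first slice I would feed $\Lambda_{\nu}(y)$ through $\lambda^{\halpha,\beta}_{\Lambda_{\nu}(x')}$, apply $V$ in the form $V(\Lambda_{\nu}(x')\otimesm\Lambda_{\nu}(y))=\sum\Lambda_{\nu}(\bar D^{\frac{1}{2}}(x'_{(1)}))\otimesm\Lambda_{\nu}(x'_{(2)}y)$, and then apply $(\lambda^{\beta,\alpha}_{\Lambda_{\nu}(x)})^{*}$, which contracts the left leg and leaves $\langle\Lambda_{\nu}(x)|\Lambda_{\nu}(\bar D^{\frac{1}{2}}(x'_{(1)}))\rangle_{\beta,\tilde\mu}$ acting on $\Lambda_{\nu}(x'_{(2)}y)$ through $\alpha$. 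Writing $x=\bar D^{\frac{1}{2}}(\bar D^{-\frac{1}{2}}(x))$ and using \eqref{eq:ksgns-inner-psi}, this inner product equals $\pi_{\mu}(\bar\partial_{x}(\psi(\bar D^{-\frac{1}{2}}(x)^{*}x'_{(1)})))$, which carries no $\theta$. Applying $\alpha(\pi_{\mu}(-))$ via Lemma~\ref{lemma:pmu-module}, commuting the resulting $r(\cdot)$ past $x'_{(2)}$ (the twist $\bar\partial_{x}$ cancels because $\psi$ vanishes outside degree $(e,e)$, which pins down the intermediate grading of $x'_{(2)}$), rewriting $\bar D^{-\frac{1}{2}}(x)^{*}=\bar D^{\frac{1}{2}}(x^{*})$ and then $\psi(\bar D^{\frac{1}{2}}(x^{*})x'_{(1)})=\psi(x^{*}\bar D^{-\frac{1}{2}}(x'_{(1)}))$ by the $\bar D^{\frac{1}{2}}$-analogue of Lemma~\ref{lemma:modular-bimodule} iii), and finally moving the half-cocycle across $\Delta$ by the $\bar D^{\frac{1}{2}}$-analogue of Lemma~\ref{lemma:modular-d} ($(\bar D^{-\frac{1}{2}}\todot\Id)\circ\Delta=(\Id\todot D^{-\frac{1}{2}})\circ\Delta$), one arrives at $\Lambda_{\nu}(ay)$ with $a=\sum D^{-\frac{1}{2}}(x'_{(2)}r(\psi(x^{*}x'_{(1)})))$, using once more that $D^{-\frac{1}{2}}$ is a $(B,\Gamma)^{\ev}$-module map.

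For the second slice I would instead use $V(\Lambda_{\nu}(x)\otimesm\Lambda_{\nu}(y'))=\sum\Lambda_{\nu}(x_{(1)})\otimesm\Lambda_{\nu}(D^{\frac{1}{2}}(x_{(2)})y')$ and apply $(\rho^{\beta,\alpha}_{\Lambda_{\nu}(y)})^{*}$, which contracts the right leg and leaves $\langle\Lambda_{\nu}(y)|\Lambda_{\nu}(D^{\frac{1}{2}}(x_{(2)})y')\rangle_{\alpha,\tilde\mu}=\pi_{\mu}(\phi(D^{\frac{1}{2}}(x_{(2)})y'\theta(y^{*})))$ by \eqref{eq:bounded-inner-product}; here the right leg is not of the form $\Lambda_{\nu}(D^{\frac{1}{2}}(\cdot))$, so \eqref{eq:bounded-inner-product} must be used instead of \eqref{eq:ksgns-inner-phi}, which is precisely what brings in the $\theta$ in the statement. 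Rewriting $\phi(D^{\frac{1}{2}}(x_{(2)})y'\theta(y^{*}))=\phi(x_{(2)}D^{-\frac{1}{2}}(y'\theta(y^{*})))$ by the $D^{\frac{1}{2}}$-analogue of Lemma~\ref{lemma:modular-bimodule} iii) and applying $\beta(\pi_{\mu}(-))$ gives $\sum\Lambda_{\nu}(s(\phi(x_{(2)}D^{-\frac{1}{2}}(y'\theta(y^{*}))))x_{(1)})$, which by the left convolution formula \eqref{eq:convolution-left} is $\Lambda_{\nu}(\check c\ast x)$ with $S(c)=D^{-\frac{1}{2}}(y'\theta(y^{*}))$, i.e.\ $c=S^{-1}(D^{-\frac{1}{2}}(y'\theta(y^{*})))$.

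There is no genuine obstacle: the argument is entirely parallel to Lemma~\ref{lemma:pmu-slice} and amounts to bookkeeping with the half-cocycle $(D^{\frac{1}{2}}_{\gamma})_{\gamma}$ from assumption \textbf{(A3)}. The only step that really requires care -- and the only place where a wrong choice would change the output -- is selecting, for each slice, the correct one of the two expressions for $V$ in Proposition~\ref{proposition:pmu-v}: for $(\omega\ast\Id)(V)$ one takes the form whose left leg carries a $\bar D^{\frac{1}{2}}$-twist, so that the $\theta$-free formula \eqref{eq:ksgns-inner-psi} applies and $a$ comes out as stated, while for $(\Id\ast\omega)(V)$ one takes the other form, so that \eqref{eq:bounded-inner-product} produces exactly the $\theta$-twisted element $D^{-\frac{1}{2}}(y'\theta(y^{*}))$. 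Once these choices are fixed, the subsequent manipulations of $D^{\pm\frac{1}{2}},\bar D^{\pm\frac{1}{2}}$ via the intertwining relations and the $\phi$- and $\psi$-trace identities are forced.
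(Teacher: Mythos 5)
Your proposal matches the paper's proof essentially verbatim: the paper also proceeds "as in the proof of Lemma \ref{lemma:pmu-slice}", applies $V$ in exactly the two forms you select (the $\bar D^{\frac{1}{2}}$-twisted left leg for $(\omega\ast\Id)(V)$ and the $D^{\frac{1}{2}}$-twisted right leg for $(\Id\ast\omega)(V)$), and uses \eqref{eq:ksgns-inner-psi} respectively \eqref{eq:bounded-inner-product} followed by the same half-cocycle bookkeeping and the identification via \eqref{eq:convolution-left}. Your write-up only spells out more explicitly the grading/cocycle manipulations that the paper compresses into its final line, so the argument is correct and the same as the paper's.
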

\begin{proof}
  Proceeding as in the proof of that Lemma
  \ref{lemma:pmu-slice}, we find
  \begin{align*}
    (\lambda^{\beta,\alpha}_{\Lambda_{\nu}(x)})^{*} V
    \lambda^{\halpha,\beta}_{\Lambda_{\nu}(x')}
    \Lambda_{\nu}(y) &= \sum
    (\lambda^{\beta,\alpha}_{\Lambda_{\nu}(x)})^{*}
    (\Lambda_{\nu}(\bar D^{\frac{1}{2}}(x'_{(1)}))\otimesm
    \Lambda_{\nu}(x'_{(2)}y)) &&
    (\text{Definition of $V$}) \\
    &=\sum \alpha(\langle
    \Lambda_{\nu}(x)|\Lambda_{\nu}(\bar
    D^{\frac{1}{2}}(x'_{(1)})) \rangle_{\beta,\tilde
      \mu})\Lambda_{\nu}(x'_{(2)}y) \\
    &=\sum \Lambda_{\nu}(r(\bar \partial_{x}(\psi(\bar
    D^{-\frac{1}{2}}(x)^{*}x'_{(1)})))x'_{(2)}y) &&
    \text{(Equation \eqref{eq:ksgns-inner-psi})} \\
    &=\sum
    \Lambda_{\nu}(D^{-\frac{1}{2}}(x'_{(2)}r(\psi(x^{*}x'_{(1)})))y),
  \end{align*}
  \begin{align*}
    (\rho^{\beta,\alpha}_{\Lambda_{\nu}(y)})^{*}V\rho^{\halpha,\beta}_{\Lambda_{\nu}(y')}\Lambda_{\nu}(x)
    &= \sum
    (\rho^{\beta,\alpha}_{\Lambda_{\nu}(y)})^{*}(\Lambda_{\nu}(x_{(1)})\otimesm
    \Lambda_{\nu}(D^{\frac{1}{2}}(x_{(2)})y')) &&
    (\text{Definition of $V$}) \\
    &= \sum \beta(\langle \Lambda_{\nu}(y)|\Lambda_{\nu}
    (D^{\frac{1}{2}}(x_{(2)})
    y')\rangle_{\alpha,\tilde\mu})\Lambda_{\nu}(x_{(1)}) \\
    &= \sum
    \Lambda_{\nu}(s(\phi(D^{\frac{1}{2}}(x_{(2)})y'\theta(y^{*}))x_{(1)})
    && \text{(Equation \eqref{eq:bounded-inner-product})}
    \\
    &= \sum
    \Lambda_{\nu}(s(\phi(x_{(2)}D^{-\frac{1}{2}}(y'\theta(y^{*})))x_{(1)}).
    && \qedhere
  \end{align*}
\end{proof}

\subsection{The Hopf-von Neumann bimodules} \label{subsection:hopf-vn-bimodules}
We next show that the fundamental unitary $W$ is
pseudo-multiplicative in the sense of \cite{vallin:pmu} and
therefore yields two Hopf-von Neumann bimodules, which are
completions of $A$ and $\hat A$, respectively. First, we
need further preliminaries.

The relative tensor product is functorial so that there
exist bounded linear operators $S \otimesm T \in
\mathcal{L}(H\swotimes H)$ for all $S\in \beta(N)', T\in
\alpha(N)'$, as well as $S \otimesm T \in
\mathcal{L}(H\rwotimes H)$ for all $S\in \alpha(N)', T\in
\hbeta(N)'$, both times given by $\xi \otimesm \eta \mapsto
S\xi \otimesm T\eta$. 

In particular, the commuting representations
$\alpha,\beta,\halpha,\hbeta$ yield six representations
$\alpha \otimesm \Id$, $ \halpha\otimesm \Id$, $ \hbeta
\otimesm \Id$, $\Id \otimesm \beta$, $ \Id \otimesm
\halpha$, $\Id\otimesm \hbeta$ of $N$ on $H\swotimes H$, and
further six representations of $N$ on $H\rwotimes H$.
\begin{lemma} \label{lemma:pmu-intertwine}
  The following relations hold  for all $x\in N$:
  \begin{align*} 
W(\Id \otimesm \hbeta(x)) &=     (\beta(x) \otimesm \Id)W, &
W(\halpha(x) \otimesm
    \Id) &=     (\halpha (x) \otimesm \Id)W, &
W(\hbeta(x) \otimesm \Id)     &= (\hbeta(x) \otimesm \Id)W, \\
 W(\alpha(x) \otimesm \Id) &=     (\Id \otimesm \alpha(x))W,
    & W(\Id \otimesm \beta(x)) &= (\Id \otimesm \beta(x))W,
    & W(\Id \otimesm
    \halpha(x)) &= (\Id \otimesm \halpha(x))W.
  \end{align*}
\end{lemma}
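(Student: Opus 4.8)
The plan is to verify the six identities on the dense subspace of $H\rwotimes H$ spanned by the vectors $\Lambda_{\nu}(u)\otimesm\Lambda_{\nu}(v)$ with $u,v\in A$ (dense by Lemma \ref{lemma:pmu-domain-range}), and, since $\pi_{\mu}(B)$ is $\sigma$-weakly dense in $N$ and all the maps involved are normal, only for $x=\pi_{\mu}(b)$ with $b\in B$. Since $W$ is unitary and $\alpha,\beta,\halpha,\hbeta$ are $*$-representations, each stated identity is equivalent to the one obtained by interchanging its two sides and replacing $W$ by $W^{*}$, and it is more convenient to verify these latter forms, starting from the formula $W^{*}(\Lambda_{\nu}(u)\otimesm\Lambda_{\nu}(v))=\sum\Lambda_{\nu}(\bar D^{\frac{1}{2}}(v_{(1)})u)\otimesm\Lambda_{\nu}(v_{(2)})$ of Proposition \ref{proposition:pmu}. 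Two facts are used throughout. First, $\Delta$ is $B\otimes B$-linear, being a morphism of $(B,\Gamma)^{\ev}$-algebras, so that $\Delta(r(b)v)=(r(b)\todot 1)\Delta(v)$, $\Delta(s(b)v)=(1\todot s(b))\Delta(v)$ and $\Delta(vs(b))=\Delta(v)(1\todot s(b))$, and likewise $\bar D^{\frac{1}{2}}$ is $B\otimes B$-linear (Lemma \ref{lemma:modular-d} and the remark following \textbf{(A3)}). Second, $r(B)$ and $s(B)$ commute in $M(A)$ and left and right multiplications commute, so that $\alpha(N),\beta(N),\halpha(N),\hbeta(N)$ commute among themselves to the extent needed for all operators $S\otimesm\Id$ and $\Id\otimesm S$ below to be defined on the relevant relative tensor products.

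Five of the six identities then reduce to a direct substitution via Lemma \ref{lemma:pmu-module}. Applying $\hbeta(\pi_{\mu}(b))\otimesm\Id$ or $\halpha(\pi_{\mu}(b))\otimesm\Id$ to $\Lambda_{\nu}(u)\otimesm\Lambda_{\nu}(v)$ changes $u$ into $ur(b)$ or $us(b)$, and this factor passes unchanged through $W^{*}$ onto the first leg; applying $\Id\otimesm\beta(\pi_{\mu}(b))$ or $\Id\otimesm\halpha(\pi_{\mu}(b))$ changes $v$ into $s(b)v$ or $vs(b)$, hence by the formulas for $\Delta$ leaves $v_{(1)}$ unchanged and changes $v_{(2)}$ into $s(b)v_{(2)}$ or $v_{(2)}s(b)$, so the factor reappears on the second leg; and applying $\Id\otimesm\alpha(\pi_{\mu}(b))$ changes $v$ into $r(b)v$, hence $v_{(1)}$ into $r(b)v_{(1)}$ and $\bar D^{\frac{1}{2}}(v_{(1)})u$ into $r(b)\bar D^{\frac{1}{2}}(v_{(1)})u$, so the factor moves to the far left of the first leg, i.e.\ becomes $\alpha(\pi_{\mu}(b))\otimesm\Id$. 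In each of these five cases one arrives precisely at the right-hand side of the corresponding identity.

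The remaining identity, which in the $W^{*}$-form reads $W^{*}(\beta(x)\otimesm\Id)=(\Id\otimesm\hbeta(x))W^{*}$, carries the genuine pseudo-multiplicativity and needs more. One has $W^{*}\big((\beta(\pi_{\mu}(b))\otimesm\Id)(\Lambda_{\nu}(u)\otimesm\Lambda_{\nu}(v))\big)=\sum\Lambda_{\nu}\big(\bar D^{\frac{1}{2}}(v_{(1)})s(b)u\big)\otimesm\Lambda_{\nu}(v_{(2)})$. Since $\bar D^{\frac{1}{2}}$ preserves the bigrading and the middle degrees match in $A\todot A$, i.e.\ $\bar\partial_{v_{(1)}}=\partial_{v_{(2)}}$ termwise in the Sweedler sum after splitting $v$ into bihomogeneous components, the twisted-centrality relation $as(c)=s(\bar\partial_{a}(c))a$ rewrites this as $\sum\Lambda_{\nu}\big(s(\partial_{v_{(2)}}(b))\bar D^{\frac{1}{2}}(v_{(1)})u\big)\otimesm\Lambda_{\nu}(v_{(2)})$. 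Applying termwise, with $z=\pi_{\mu}(\partial_{v_{(2)}}(b))$, the balancing property $\beta(z)\otimesm\Id=\Id\otimesm\alpha(z)$ of the relative tensor product $H\swotimes H=\rtensor{\beta}{\tilde\mu}{\alpha}$ (which follows from \eqref{eq:bounded-inner-product}, Lemma \ref{lemma:modular-bimodule} and commutativity of $r(B)$, or from \cite{takesaki:2}), together with $r(\partial_{v_{(2)}}(b))v_{(2)}=v_{(2)}r(b)$, turns the sum into $\sum\Lambda_{\nu}(\bar D^{\frac{1}{2}}(v_{(1)})u)\otimesm\Lambda_{\nu}(v_{(2)}r(b))=(\Id\otimesm\hbeta(\pi_{\mu}(b)))W^{*}(\Lambda_{\nu}(u)\otimesm\Lambda_{\nu}(v))$, which is what we want. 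The main obstacle is exactly this last identity — and within it keeping track of the $\Gamma$-grading along the leg $v_{(1)}$ of $\Delta(v)$, which a priori lies only in $\tilde M(A\todot A)$ and not in $A\todot A$, carefully enough to justify the termwise use of twisted-centrality and of the balancing property; the remaining computations are routine.
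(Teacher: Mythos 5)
Your argument is correct and is essentially the paper's own proof written out in full: the paper disposes of the lemma in one line by appealing to weak density of $\pi_{\mu}(B)$ in $N$, the defining formula for $W$ (Proposition \ref{proposition:pmu}), and the formulas for $\alpha,\beta,\halpha,\hbeta$ in Lemma \ref{lemma:pmu-module}, which is exactly the computation you carry out on $\Lambda_{\nu}(A)\otimesm\Lambda_{\nu}(A)$, including the only nontrivial case, where your use of the twisted centrality $as(c)=s(\bar\partial_{a}(c))a$, the matching of middle degrees in $A\todot A$, and the balancing of the relative tensor product is precisely what makes ``the definition of $W$'' do its work. No gaps; your closing caveat about handling the Sweedler sums via representatives of $\Delta(v)(u\otimes 1)\in{}\sA\otimesB\rA$ is the right amount of care.
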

\begin{proof}
  This follows immediately from the fact that
  $\pi_{\mu}(B)\subseteq N$ is weakly dense, the definition
  of $W$, and the
  formulas for $\alpha,\beta,\halpha,\hbeta$ given in Lemma
  \ref{lemma:pmu-module}.
\end{proof}

The relative tensor product is associative in a natural
sense.  The intertwining relations for $W$ obtained above
imply that all operators in the diagram below are
well-defined,
\begin{gather} \label{eq:pmu-pentagon} \smalldiagram
  \begin{gathered} \xymatrix@R=20pt@C=20pt{ {\wHone}
      \ar[r]^{W_{12}} \ar[d]^(0.6){W_{23}} &
      {\wHtwo} \ar[r]^{W_{23}} & {\wHthree,} \\
      { H {_{\beta}\otimesm{}_{(\Id \otimesm \alpha)}} (H \rwotimes H)}
      \ar[rr]^{W_{13}} & & {(H \swotimes H) 
        {_{(\alpha \otimesm \Id)}\otimesm{}_{\hbeta}} H}
      \ar[u]^(0.4){W_{12}}}
  \end{gathered}
\end{gather} 
where $W_{12} = W \otimesm \Id$, $W_{23} = \Id \otimesm W$,
and $W_{13}$ acts on the first and third tensor factor; see
\cite{vallin:pmu} for details.
\begin{lemma} \label{lemma:pmu-pentagon}
  Diagram \eqref{eq:pmu-pentagon} commutes, that is,
  $W_{23}W_{12}=W_{12}W_{13}W_{23}$.
\end{lemma}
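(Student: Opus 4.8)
The plan is to verify the pentagon equation $W_{23}W_{12} = W_{12}W_{13}W_{23}$ by testing both sides on elementary tensors $\Lambda_{\nu}(x) \otimesm \Lambda_{\nu}(y) \otimesm \Lambda_{\nu}(z)$ with $x,y,z \in A$, since such vectors span a dense subspace of $\wHone$. Using the explicit formula for $W^{*}$ from Proposition \ref{proposition:pmu}, namely $W^{*}(\Lambda_{\nu}(x) \otimesm \Lambda_{\nu}(y)) = \sum \Lambda_{\nu}(y_{(1)}x) \otimesm \Lambda_{\nu}(D^{\frac{1}{2}}(y_{(2)}))$, one transposes the pentagon relation into the equivalent form $W^{*}_{12}W^{*}_{23} = W^{*}_{23}W^{*}_{13}W^{*}_{12}$, which is more convenient because $W^{*}$ corresponds to the map $T_{4}$ and hence to applying $\Delta$ in a controlled way. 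The key point is that coassociativity of $\Delta$, i.e.\ $(\Delta \todot \Id)\circ \Delta = (\Id \todot \Delta)\circ \Delta$, together with the fact that $D^{\frac{1}{2}}$ is an algebra homomorphism satisfying $(D^{\frac{1}{2}} \todot \Id)\circ \Delta = \Delta \circ D^{\frac{1}{2}}$ (Lemma \ref{lemma:modular-d}, applied to $D^{\frac{1}{2}}$ as noted after Proposition \ref{proposition:pmu}), will make both sides collapse to the same triple Sweedler expression.

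Concretely, I would first compute $W^{*}_{23}(\Lambda_{\nu}(x) \otimesm \Lambda_{\nu}(y) \otimesm \Lambda_{\nu}(z)) = \sum \Lambda_{\nu}(x) \otimesm \Lambda_{\nu}(z_{(1)}y) \otimesm \Lambda_{\nu}(D^{\frac{1}{2}}(z_{(2)}))$, then apply $W^{*}_{12}$ to the first two factors, obtaining a sum over Sweedler indices of $z$ and of $z_{(1)}y$; expanding $\Delta(z_{(1)}y) = \Delta(z_{(1)})\Delta(y)$ and using coassociativity to rewrite $\sum z_{(1)} \todot z_{(2)} \todot z_{(3)}$ consistently, I would collect the result. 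On the other side, I would compute $W^{*}_{12}$ first, then $W^{*}_{13}$ (acting on the outer two factors, which is well-defined by the intertwining relations in Lemma \ref{lemma:pmu-intertwine}), then $W^{*}_{23}$. The intertwining relations are what guarantee that the "$13$"-leg operator $W_{13}$ lands in the correct relative tensor products appearing in diagram \eqref{eq:pmu-pentagon}, so I would invoke them to make sense of each composite before doing the Sweedler bookkeeping. Throughout, one must carefully track the $D^{\frac{1}{2}}$ factors: each application of $W^{*}$ inserts one on the "new" right-hand leg, and the identity $(D^{\frac{1}{2}} \todot \Id)\circ \Delta = \Delta \circ D^{\frac{1}{2}}$ together with $D^{\frac{1}{2}}$ being multiplicative is exactly what lets these factors propagate coherently on both sides.

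The main obstacle I anticipate is purely organizational rather than conceptual: keeping the relative-tensor-product leg structure straight while simultaneously managing the $D^{\frac{1}{2}}$-twists and the multi-index Sweedler notation, especially since $\Delta(a)$ lives only in $\tilde M(A \todot A)$ and not in $A \todot A$, so the manipulations must implicitly be justified by pairing against enough elements of $A_{e,e}$ (as in the proof of Proposition \ref{proposition:galois}). A clean way to handle this is to observe that both $W_{23}W_{12}$ and $W_{12}W_{13}W_{23}$ are bounded operators, so it suffices to check equality on the dense set of elementary tensors, and there the computation reduces to an identity of finite sums in $A \otimesm A \otimesm A$ that follows formally from coassociativity; alternatively, one can cite that this is the standard pentagon verification for fundamental unitaries arising from Hopf algebroids, the only new feature being the cocycle twist $D^{\frac{1}{2}}$, whose compatibility with $\Delta$ has already been recorded.

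Therefore the proof would conclude: since $W_{23}W_{12}$ and $W_{12}W_{13}W_{23}$ agree on the total subset $\{\Lambda_{\nu}(x) \otimesm \Lambda_{\nu}(y) \otimesm \Lambda_{\nu}(z) : x,y,z \in A\}$ and both are bounded, they coincide, which is the assertion of Lemma \ref{lemma:pmu-pentagon}.
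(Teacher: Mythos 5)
Your proposal is correct and follows essentially the same route as the paper: the paper's proof consists precisely of checking that the adjoints of both compositions act on elementary tensors $\Lambda_{\nu}(x)\otimesm\Lambda_{\nu}(y)\otimesm\Lambda_{\nu}(z)$ by the same formula $\sum\Lambda_{\nu}(z_{(1)}y_{(1)}x)\otimesm\Lambda_{\nu}(D^{\frac{1}{2}}(z_{(2)}y_{(2)}))\otimesm\Lambda_{\nu}(D^{\frac{1}{2}}(z_{(3)}))$, which is exactly the computation you outline using the formula for $W^{*}$, coassociativity, and the compatibility of the multiplicative map $D^{\frac{1}{2}}$ with $\Delta$. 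Your only slip is cosmetic: the fact that $D^{\frac{1}{2}}$ shares the properties of $D$ from Lemma \ref{lemma:modular-d} is recorded just before, not after, Proposition \ref{proposition:pmu}.
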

\begin{proof}
  A short calculation shows that the adjoints of both
  compositions are given by
\begin{align*}
  \Lambda_{\nu}(x) \otimesm \Lambda_{\nu}(y) \otimesm
  \Lambda_{\nu}(z) &\mapsto \sum
  \Lambda_{\nu}(z_{(1)}y_{(1)}x) \otimesm
  \Lambda_{\nu}(D^{\frac{1}{2}}(z_{(2)}y_{(2)})) \otimesm
  \Lambda_{\nu}(D^{\frac{1}{2}}(z_{(3)})). \qedhere
\end{align*}
\end{proof}
\begin{theorem}
  $W$ and $V$ are pseudo-multiplicative unitaries in the sense of
  \cite{vallin:pmu}.
\end{theorem}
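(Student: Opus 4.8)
The plan is to extract everything from results already in place rather than to compute anew. Recall from \cite{vallin:pmu} that a pseudo-multiplicative unitary over the base $(N,\tilde\mu)$, with $N=\pi_{\mu}(B)''$, is a unitary between two relative tensor products of $H$ with itself --- formed along the canonical representations of $N$ on $H$ --- that intertwines those representations on every leg according to a fixed list of six relations and satisfies a pentagon identity. The four representations $\alpha,\beta,\halpha,\hbeta$ of $N$ on $H$ provided by Lemma \ref{lemma:pmu-module} are normal, faithful and non-degenerate, and pairwise commute: the left-multiplication representations $\alpha,\beta$ commute because $r(B)$ and $s(B)$ commute in $A$, the right-multiplication representations $\halpha,\hbeta$ commute for the same reason, and every left multiplication commutes with every right multiplication. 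This supplies the ambient data of Vallin's definition.

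For $W$, I would then only collect facts. By Proposition \ref{proposition:pmu}, $W$ is a unitary $H\swotimes H\to H\rwotimes H$, hence of the form prescribed in \cite{vallin:pmu} (with $\beta,\alpha$ on the source and $\alpha,\hbeta$ on the target). The six intertwining relations are precisely Lemma \ref{lemma:pmu-intertwine}, and the pentagon equation $W_{23}W_{12}=W_{12}W_{13}W_{23}$ on $\wHone$ is Lemma \ref{lemma:pmu-pentagon}. Thus $W$ is pseudo-multiplicative with no additional argument.

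For $V$, the plan is to run the same two lemmas with the representations relabelled as forced by Proposition \ref{proposition:pmu-v}, where $V$ is a unitary $H\sotimes H\to H\rotimes H$; so $\halpha,\beta$ take the roles of the source-representations and $\beta,\alpha$ those of the target-representations, and again $V$ has the required shape. Evaluating the explicit formulas for $V$ and $V^{*}$ from Proposition \ref{proposition:pmu-v} on the dense set of elementary tensors $\Lambda_{\nu}(x)\otimesm\Lambda_{\nu}(y)$, $x,y\in A$ (these are legitimate by Lemma \ref{lemma:module-bounded}), and inserting the formulas for $\alpha,\beta,\halpha,\hbeta$ from Lemma \ref{lemma:pmu-module}, one obtains the analogue of Lemma \ref{lemma:pmu-intertwine} for $V$ by the same one-line computations; the only thing to watch is the placement of the twists $D^{\frac{1}{2}},\bar D^{\frac{1}{2}}$, controlled by Lemma \ref{lemma:modular-d} exactly as in that proof. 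The pentagon for $V$ follows by imitating the proof of Lemma \ref{lemma:pmu-pentagon} on the triple relative tensor product $\Hone$: one computes the adjoints of the two compositions on the dense span of the $\Lambda_{\nu}(x)\otimesm\Lambda_{\nu}(y)\otimesm\Lambda_{\nu}(z)$, applies coassociativity of $\Delta$ at the same step, and checks that both yield the same Sweedler expression, the obvious analogue of the formula displayed in the proof of Lemma \ref{lemma:pmu-pentagon}.

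I do not expect a real obstacle here. The substantive inputs --- positivity of the relevant sesquilinear forms, bijectivity of the Galois-type maps $T_{1}$ and $T_{4}$, and the Sweedler computation behind the pentagon --- are already settled in Propositions \ref{proposition:galois}, \ref{proposition:pmu}, \ref{proposition:pmu-v} and Lemmas \ref{lemma:pmu-domain-range}--\ref{lemma:pmu-pentagon}. What remains is bookkeeping: matching the four representations and the two relative tensor products to Vallin's conventions, and, for $V$, keeping the $\Gamma$-cocycle $D^{\frac{1}{2}}$ in the correct slots --- both routine given Lemmas \ref{lemma:modular-d} and \ref{lemma:pmu-module}.
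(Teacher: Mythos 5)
Your proposal is correct and coincides with the paper's own argument: the paper proves the theorem for $W$ simply by citing Lemma \ref{lemma:pmu-intertwine} and Lemma \ref{lemma:pmu-pentagon}, and disposes of $V$ with the remark that the proof is similar, which is exactly the relabelled verification you outline.
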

\begin{proof}
   The assertion on $W$ is just Lemma
  \ref{lemma:pmu-intertwine} and Lemma
  \ref{lemma:pmu-pentagon}. For $V$, the proof is similar.
\end{proof}

Recall from \cite{vallin:1} that a \emph{Hopf-von Neumann
  bimodule} over $(N,\tilde\mu)$ is a von Neumann algebra
$M$ acting on a Hilbert space $L$ together with faithful,
non-degenerate, commuting normal representations $
\gamma,\delta \colon N\to M$ and a non-degenerate, normal
$*$-homomorphism $\Delta_{M} \colon M \to M
\fibre{\delta}{\tilde \mu}{\gamma} M$ such that $\Delta_{M}
\circ \gamma = \gamma \otimesm \Id$, $\Delta_{M} \circ
\delta = \Id \otimesm \delta$ and $(\Delta_{M}
\fibre{}{\tilde \mu}{} \Id) \circ \Delta_{M} =
(\Id \fibre{}{\tilde \mu}{} \Delta_{M})$, where
$M \fibre{\delta}{\tilde \mu}{\gamma} M = (M' \otimesm M')'
\subseteq \mathcal{L}(L {_{\delta}\otimesm{_{\gamma}}} L)$,
and $\Delta_{M} \fibre{}{\tilde \mu}{} \Id$ and
$\Id \fibre{}{\tilde \mu}{} \Delta_{M}$ are
suitably defined \cite{sauvageot:2}.

Using Remark \ref{remarks:pmu-slice} i) and  slightly abusing notation, we define faithful, normal,
non-degenerate $*$-homomorphisms
\begin{align*}
    \Delta &\colon \pi_{\nu}(A)'' \to
    \mathcal{L}(H\rotimes H), \ x \mapsto W^{*}(\Id
    \otimesm x)W,   \\
    \hDelta &\colon \rho(\hA)'' \to
    \mathcal{L}(H\rwotimes H), \ y \mapsto \Sigma W(y\otimesm
    \Id)W^{*}\Sigma.
\end{align*}
\begin{theorem} \label{theorem:pmu-legs-vn}
  $(\pi_{\nu}(A)'',\alpha,\beta,\Delta)$ and $(\rho(\hat
  A)'',\hbeta,\alpha,\hDelta)$ are Hopf-von Neumann
  bimodules.
\end{theorem}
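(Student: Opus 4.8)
The plan is to verify, for each of the two structures in turn, the axioms of a Hopf-von Neumann bimodule as recalled from \cite{vallin:1}: faithfulness, normality and non-degeneracy of the leg representations, the compatibility of $\Delta$ (resp.\ $\hDelta$) with those representations, and coassociativity. The key input making all of this work is that $W$ is a pseudo-multiplicative unitary (just proved), together with the intertwining relations of Lemma \ref{lemma:pmu-intertwine}, the slice formulas of Remark \ref{remarks:pmu-slice} ii), and the pentagon equation of Lemma \ref{lemma:pmu-pentagon}.

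First I would treat $(\pi_{\nu}(A)'',\alpha,\beta,\Delta)$. The representations $\alpha,\beta$ of $N$ on $H$ were already shown in Lemma \ref{lemma:pmu-module} to be faithful, normal and non-degenerate; by Remark \ref{remarks:pmu-slice} i) they commute with $\pi_{\nu}(A)$ and hence restrict to representations into $\pi_{\nu}(A)''$. That $\Delta$ is a well-defined faithful normal non-degenerate $*$-homomorphism into $\pi_{\nu}(A)'' \fibre{\beta}{\tilde\mu}{\alpha} \pi_{\nu}(A)''$ follows from the intertwining relations for $W$ in Lemma \ref{lemma:pmu-intertwine} (which guarantee that $W^{*}(\Id \otimesm x)W$ lands in the correct fiber product and intertwines the boundary representations correctly) combined with Remark \ref{remarks:pmu-slice} ii), which expresses $\pi_{\nu}(A)$ as slices of $W$ and thereby shows that $\Delta(\pi_{\nu}(A)'')$ is generated by slices of $W_{13}$ of the appropriate kind; this is the standard argument from the theory of pseudo-multiplicative unitaries. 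The relations $\Delta \circ \alpha = \alpha \otimesm \Id$ and $\Delta \circ \beta = \Id \otimesm \beta$ are read off directly from Lemma \ref{lemma:pmu-intertwine}. Coassociativity $(\Delta \fibre{}{\tilde\mu}{} \Id)\circ \Delta = (\Id \fibre{}{\tilde\mu}{}\Delta)\circ \Delta$ is then the operator-algebraic reformulation of the pentagon equation $W_{23}W_{12}=W_{12}W_{13}W_{23}$ from Lemma \ref{lemma:pmu-pentagon}: applying $W_{12}^{*}\,\cdot\,W_{12}$ to $\Id\otimesm\Id\otimesm x$ and using the pentagon to move the conjugating unitaries around yields exactly the two sides of the coassociativity identity, as in \cite{vallin:pmu}.

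For $(\rho(\hA)'',\hbeta,\alpha,\hDelta)$ the argument is parallel, using the Pontrjagin-dual picture. By Remark \ref{remarks:pmu-slice} i) the representations $\hbeta=\rho_{E_{\phi}}$ and $\alpha$ commute with $\rho(\hA)$, and they are faithful, normal and non-degenerate by Lemma \ref{lemma:pmu-module}; Remark \ref{remarks:pmu-slice} ii) expresses $\rho(\hA)$ as the span of slices $(\omega_{\Lambda_{\nu}(x),\Lambda_{\nu}(x')}\ast\Id)(W)$, which is what shows $\hDelta(y)=\Sigma W(y\otimesm \Id)W^{*}\Sigma$ is a well-defined faithful normal non-degenerate $*$-homomorphism into $\rho(\hA)''\fibre{\hbeta}{\tilde\mu}{\alpha}\rho(\hA)''$ and that it satisfies $\hDelta\circ\hbeta = \hbeta\otimesm\Id$, $\hDelta\circ\alpha=\Id\otimesm\alpha$. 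Coassociativity of $\hDelta$ again follows from the pentagon equation for $W$, now applied in the form appropriate to the dual leg (conjugation by $\Sigma W \cdot W^{*}\Sigma$), exactly as in the quantum-group case \cite{timmermann:buch}.

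The main obstacle I expect is purely bookkeeping rather than conceptual: one must be scrupulous about which of the six representations $\alpha\otimesm\Id,\ \halpha\otimesm\Id,\dots,\Id\otimesm\hbeta$ on each relative tensor product is being used, so that the fiber products $\pi_{\nu}(A)''\fibre{\beta}{\tilde\mu}{\alpha}\pi_{\nu}(A)''$ and $\rho(\hA)''\fibre{\hbeta}{\tilde\mu}{\alpha}\rho(\hA)''$ and the legs $W_{12},W_{13},W_{23}$ all refer to mutually compatible module structures, and so that the definitions of $\Delta\fibre{}{\tilde\mu}{}\Id$ and $\Id\fibre{}{\tilde\mu}{}\Delta$ from \cite{sauvageot:2} match the sides of the pentagon. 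Once the intertwining relations of Lemma \ref{lemma:pmu-intertwine} are used to pin down these matches, everything reduces to the general theory of pseudo-multiplicative unitaries, and no new computation beyond what is already in Lemmas \ref{lemma:pmu-intertwine}, \ref{lemma:pmu-pentagon} and Remark \ref{remarks:pmu-slice} is required.
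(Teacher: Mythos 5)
Your route is essentially the paper's: the paper proves this theorem in one line by invoking the general theory of pseudo-multiplicative unitaries (\cite[\S 10.3.2]{timmermann:buch}), with Remark \ref{remarks:pmu-slice} identifying $\pi_{\nu}(A)''$ and $\rho(\hA)''$ with the two legs of $W$; your sketch simply unwinds that theory (intertwining relations of Lemma \ref{lemma:pmu-intertwine} for compatibility of $\Delta$, $\hDelta$ with the base representations, the pentagon of Lemma \ref{lemma:pmu-pentagon} for coassociativity, the slice formulas for identifying the algebras). Those parts of your outline are fine and match the intended argument.

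There is, however, one concrete gap. Vallin's definition requires the base representations to take values \emph{in} the von Neumann algebra, i.e.\ $\alpha(N),\beta(N)\subseteq \pi_{\nu}(A)''$ and $\hbeta(N),\alpha(N)\subseteq \rho(\hA)''$, and your justification of this step fails on three counts. First, Remark \ref{remarks:pmu-slice} i) states $\pi_{\nu}(A)''\subseteq \hbeta(N)'\cap\halpha(N)'$ and $\rho(\hA)''\subseteq \beta(N)'\cap\halpha(N)'$, so in each case it concerns the \emph{other} pair of representations than the one you invoke. Second, $\alpha$ and $\beta$ do not in fact commute with $\pi_{\nu}(A)$: for homogeneous $a$ one has $\alpha(\pi_{\mu}(b))\pi_{\nu}(a)\Lambda_{\nu}(y)=\Lambda_{\nu}(r(b)ay)$ while $\pi_{\nu}(a)\alpha(\pi_{\mu}(b))\Lambda_{\nu}(y)=\Lambda_{\nu}(ar(b)y)=\Lambda_{\nu}(r(\partial_{a}(b))ay)$, and these differ because $r(B)$, $s(B)$ are central only up to the $\Gamma$-twist. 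Third, even if commutation held, it would place $\alpha(N)$, $\beta(N)$ in the commutant $\pi_{\nu}(A)'$, not in the double commutant $\pi_{\nu}(A)''$ (indeed Remark i) is exactly such a commutant statement, for $\halpha$, $\hbeta$). So the claim that the representations ``restrict to representations into'' the respective von Neumann algebras is left unproven in your proposal; it is precisely one of the non-trivial outputs of the general theory of the legs of a pseudo-multiplicative unitary that the paper's citation supplies, and if one wants to avoid the citation it needs its own argument (for instance exhibiting $\alpha(\pi_{\mu}(b))$, $\beta(\pi_{\mu}(b))$ as strong limits of operators $\pi_{\nu}(ur(b))$, $\pi_{\nu}(us(b))$ with $u$ running through suitable local units, which requires a boundedness argument, not commutation).
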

\begin{proof}
  By Remark \ref{lemma:pmu-slice}, these are the Hopf-von
  Neumann bimodules associated with the
  pseudo-multiplicative unitary $W$; see \cite[\S
  10.3.2]{timmermann:buch}.
\end{proof}
Theorem \ref{theorem:pmu-legs-vn} above can also be
deduced from the following explicit formulas for $\Delta$
and $\hDelta$:
\begin{lemma}
For all $a,c,x,y \in A$,
\begin{align*}
  \Delta(\pi_{\nu}(a))(\Lambda_{\nu}(x) \otimesm
  \Lambda_{\nu}(y)) &= \sum \Lambda_{\nu}(
a_{(1)}x) \otimesm
  \Lambda_{\nu}(  D^{\frac{1}{2}}(a_{(2)})y), \\
      \hDelta(\rho(\hat c))(\Lambda_{\nu}(x) \otimesm
      \Lambda_{\nu}(y)) &= \sum  \Lambda_{\nu}(x_{(2)}r(\psi(S(c)x_{(1)}y_{(1)})))
      \otimesm
      \Lambda_{\nu}(y_{(2)}),
    \end{align*}
\end{lemma}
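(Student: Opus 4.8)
The plan is to verify both formulas by a direct computation, unwinding the definitions $\Delta(\pi_{\nu}(a))=W^{*}(\Id\otimesm\pi_{\nu}(a))W$ and $\hDelta(\rho(\hat c))=\Sigma W(\rho(\hat c)\otimesm\Id)W^{*}\Sigma$ and plugging in the explicit descriptions of $W$, $W^{*}$, $\pi_{\nu}$ and $\rho$ obtained above. It is cleanest to route everything through the isomorphisms $\Lambda,\Lambda'$ of Lemma~\ref{lemma:pmu-domain-range}, which conjugate $W^{*}$ and $W$ into the purely algebraic maps $T_{4}$ and $T_{4}^{-1}$ of Proposition~\ref{proposition:galois}; this confines the twisting operator $D^{\frac12}$ to the single occurrence inside $\Lambda'$ and lets one compute inside $A$ for as long as possible.

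For $\Delta$: since $\pi_{\nu}(a)\in\hbeta(N)'$ (Remark~\ref{remarks:pmu-slice}) and $\pi_{\nu}(a)\Lambda_{\nu}(v)=\Lambda_{\nu}(av)$, we have $(\Id\otimesm\pi_{\nu}(a))\circ\Lambda=\Lambda\circ(\Id\otimesB L_{a})$ on $\rA\otimesB\Ar$, where $L_{a}(v)=av$. Combining this with $W^{*}\circ\Lambda=\Lambda'\circ T_{4}$ and $W\circ\Lambda'=\Lambda\circ T_{4}^{-1}$ gives
\[
\Delta(\pi_{\nu}(a))=\Lambda'\circ\bigl(T_{4}\circ(\Id\otimesB L_{a})\circ T_{4}^{-1}\bigr)\circ\Lambda'^{-1}.
\]
I would then evaluate $T_{4}(\Id\otimesB L_{a})T_{4}^{-1}$ on a homogeneous simple tensor $x\otimesB y$: apply $T_{4}^{-1}(x\otimesB y)=\sum S^{-1}(y_{(1)})x\otimesB y_{(2)}$ (using anti-multiplicativity of $S^{-1}$ and $S^{-1}\circ r=s$), multiply the second leg on the left by $a$, then apply $T_{4}$ and expand $\Delta(ay_{(2)})=\Delta(a)\Delta(y_{(2)})$ by coassociativity. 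Two of the resulting Sweedler legs of $y$ collapse via the antipode relation $\sum y_{(2)}S^{-1}(y_{(1)})=s(\epsilon^{\sharp}(y))$ (a consequence of Remark~\ref{remarks:antipode} and $S^{-1}\circ r=s$) and the counit relation $\sum r(\epsilon^{\sharp}(y_{(1)}))y_{(2)}=y$; finally one pushes the surviving $s(\cdot)$-factor across the balanced tensor product using $\Lambda_{\nu}(s(b)p)\otimesm\Lambda_{\nu}(q)=\Lambda_{\nu}(p)\otimesm\Lambda_{\nu}(r(b)q)$ (from Lemma~\ref{lemma:pmu-module} and the identification \eqref{eq:rtp-identify}) and applies $\Lambda'$. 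The claimed formula for $\Delta(\pi_{\nu}(a))$ drops out; the general case follows by linearity.

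For $\hDelta$ I would proceed in the same spirit, now using $\rho(\hat c)\Lambda_{\nu}(z)=\Lambda_{\nu}(z\ast\hat c)$ from Theorem~\ref{theorem:pmu-bounded}, the definition \eqref{eq:convolution-1} of the convolution together with its reformulations \eqref{eq:convolution-alt}, \eqref{eq:convolution-alt-2}, the module relations \eqref{eq:convolution-module}, and the strong invariance of $\psi$ (Proposition~\ref{proposition:integral-strong-invariance}); the flip $\Sigma$ and the identification of $H\rwotimes H$ with a separated completion of $\rA\otimesB\Ar$ are handled exactly as before. Once both displayed formulas are in place, Theorem~\ref{theorem:pmu-legs-vn} is immediate: they exhibit $\Delta(\pi_{\nu}(A))$ and $\hDelta(\rho(\hat A))$ as acting densely into the respective fiber products, and coassociativity together with the compatibility with $\alpha,\beta,\hbeta$ can be read off directly.

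The main obstacle is bookkeeping rather than anything conceptual: one must keep the Sweedler calculus consistent across the non-unital, $\Gamma$-graded fiber products and, above all, check that the $D^{\pm\frac12}$-factors produced by the formulas for $W$, $W^{*}$ and by $\Lambda'$ recombine into exactly one $D^{\frac12}$ in the right-hand leg and none in the left-hand leg. This uses the cocycle identity $D^{\frac12}_{\gamma\gamma'}=\gamma'^{-1}(D^{\frac12}_{\gamma})D^{\frac12}_{\gamma'}$ from (A3) and the intertwining $S\circ D^{\frac12}=\bar D^{-\frac12}\circ S$ (Lemma~\ref{lemma:modular-d}), together with the fact that $\epsilon$ vanishes off the diagonal grading, which pins down the relevant gradings on the non-zero terms. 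A convenient check along the way is that both formulas restrict correctly to $\Delta\circ\alpha=\alpha\otimesm\Id$, $\Delta\circ\beta=\Id\otimesm\beta$, and the analogues for $\hDelta$, when $a$ or $c$ lies in $r(B)$ or $s(B)$.
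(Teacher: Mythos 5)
Your proposal is correct, and for the second formula it is essentially the computation the paper carries out; for the first formula, however, your route is genuinely more laborious than the paper's. The paper also starts from $\Delta(\pi_{\nu}(a))=W^{*}(\Id\otimesm\pi_{\nu}(a))W$, but it evaluates not at a general vector $\Lambda_{\nu}(x)\otimesm\Lambda_{\nu}(y)$ but at vectors of the form $W^{*}(\Lambda_{\nu}(x)\otimesm\Lambda_{\nu}(y))=\sum\Lambda_{\nu}(y_{(1)}x)\otimesm\Lambda_{\nu}(D^{\frac{1}{2}}(y_{(2)}))$, which span a dense subspace by surjectivity of $T_{4}$ (Proposition~\ref{proposition:galois}); on these, $WW^{*}$ cancels, and a single application of the formula for $W^{*}$ from Proposition~\ref{proposition:pmu} together with multiplicativity of $D^{\frac{1}{2}}$ yields exactly the right-hand side of the first display evaluated at $(y_{(1)}x,\,D^{\frac{1}{2}}(y_{(2)}))$ --- no antipode or counit identities enter at all. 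Your route, conjugating by $\Lambda,\Lambda'$ (Lemma~\ref{lemma:pmu-domain-range}) to reduce to $T_{4}\circ(\Id\otimesB L_{a})\circ T_{4}^{-1}$, does work: the identity $\sum y_{(2)}S^{-1}(y_{(1)})z=s(\epsilon^{\sharp}(y))z$ you invoke follows from Remark~\ref{remarks:antipode} i) by applying the anti-multiplicative $S^{-1}$ and using $S^{-1}(r(b))=s(b)$, and the $D^{\pm\frac{1}{2}}$-bookkeeping you worry about in fact largely evaporates on this route, since writing $\Lambda_{\nu}(x)\otimesm\Lambda_{\nu}(y)=\Lambda'(x\otimesB D^{-\frac{1}{2}}(y))$ confines the twist to $\Lambda'$ and leaves the middle computation untwisted; the real bookkeeping is the grading twist $\bar\partial_{a_{(1)}}=\partial_{a_{(2)}}$ picked up when commuting the surviving $s(\cdot)$-factor past $a_{(1)}$ before pushing it across the balanced tensor product and applying the counit relation. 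Two cautions: the definition $\hDelta(y)=\Sigma W(y\otimesm\Id)W^{*}\Sigma$ you quote does not typecheck literally with the flips --- what the paper actually verifies, and what the stated formula corresponds to, is $W(\rho(\hat c)\otimesm\Id)W^{*}$, so make sure your $\Sigma$'s cancel; and note that either computation produces $\psi(S(c)y_{(1)}x_{(1)})$ in the second formula, so the printed order $\psi(S(c)x_{(1)}y_{(1)})$ in the statement should be read as in the paper's own proof.
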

\begin{proof}
We calculate
  \begin{align*}
    \Delta(\pi_{\nu}(a)) \sum
    \Lambda_{\nu}(y_{(1)}x) \otimesm
    \Lambda_{\nu}(D^{\frac{1}{2}}(y_{(2)}))
&= W^{*}(\Id \otimesm \pi_{\nu}(a))WW^{*}(\Lambda_{\nu}(x)
\otimesm \Lambda_{\nu}(y))
\\ &= W^{*}(\Lambda_{\nu}(x)
\otimesm \Lambda_{\nu}(ay)) \\ &= \sum
\Lambda_{\nu}(a_{(1)}y_{(1)}x) \otimesm \Lambda_{\nu}
(D^{\frac{1}{2}}(a_{(2)}y_{(2)})), 
\end{align*}
\begin{align*}
W^{*}  \hDelta(\rho(\hat c))(\Lambda_{\nu}(x) \otimesm
\Lambda_{\nu}(y)) &=
(\rho(\hat c) \otimesm \Id) W^{*} 
(\Lambda_{\nu}(x) \otimesm
\Lambda_{\nu}(y))
 \\ &= \sum \rho(\hat c)\Lambda_{\nu}(y_{(1)}x) \otimesm \Lambda_{\nu}(D^{\frac{1}{2}}(y_{(2)}))
 \\ &= \sum
 \Lambda_{\nu}(y_{(2)}x_{(2)}r(\psi(S(c)y_{(1)}x_{(1)})))
 \otimesm \Lambda_{\nu}(D^{\frac{1}{2}}(y_{(3)})) \\
 &=W^{*}\sum
 \Lambda_{\nu}(x_{(2)}r(\psi(S(c)y_{(1)}x_{(1)}))) \otimesm
 \Lambda_{\nu}(y_{(2)}). \qedhere
\end{align*}
\end{proof}
\begin{remark}
  Under the identification \eqref{eq:rtp-identify},
   for all $a,x,y\in A$ and $\zeta\in K$
  \begin{align*}
    \Delta(\pi_{\nu}(a))(\Lambda_{\nu}(x) \otimes \zeta
    \otimes \Lambda_{\nu}(y)) &= \sum
    \Lambda_{\nu}(a_{(1)}x) \otimes
    U_{\partial_{a_{(1)}}}\zeta \otimes
    \Lambda_{\nu}(a_{(2)}y).
  \end{align*}
\end{remark}

\subsection{The Hopf $C^{*}$-bimodules}\label{subsection:hopf-c-bimodules}
The fundamental unitary $W$ is regular
$C^{*}$-pseudo-mul\-tiplicative unitaries in the sense of
\cite{timmermann:cpmu}, and therefore yields Hopf
$C^{*}$-bimodules which are completions of $A$ and $\hA$. To
prove this, we again need some preliminaries concerning the
relative tensor product in the setting of $C^{*}$-algebras;
for details, see \cite{timmermann:fiber} and
\cite{timmermann:cpmu}. The construction is parallel to the
von Neumann-algebraic setting and differs mainly in
notation.

As before, let $\frakb=(K,[\pi_{\mu}(B)],[\pi_{\mu}(B)])$.
The relative tensor product $H \fwsource H$ of the
$C^{*}$-$\frakb$-modules $(H,E_{\psi}^{\dag})$ and
$(H,E_{\phi}^{\dag})$ is the separated completion of the
algebraic tensor product $E_{\psi}^{\dag} \otimes K \otimes
E_{\phi}^{\dag}$ with respect to the sesquilinear form given
by
\begin{align} \label{eq:rtp-c-inner}
  \langle \xi \otimes \zeta \otimes \eta | \xi'\otimes
  \zeta' \otimes \eta'\rangle = \langle \zeta|
  (\xi^{*}\xi')(\eta^{*}\eta')\zeta'\rangle.
\end{align}
It can be regarded as a twofold internal tensor product of
Hilbert $C^{*}$-modules and identified with certain
separated completions $E_{\psi}^{\dag} \tr_{\alpha} H$ and
$H_{\beta} \tl E_{\phi}^{\dag}$ of the algebraic tensor
products $E_{\psi}^{\dag} \otimes H$ and $H\otimes
E_{\phi}^{\dag}$, respectively, such that
\begin{align} \label{eq:rtp-c-identify}
  \begin{aligned}
    E_{\psi}^{\dag} \tr {}_{\alpha} H &\cong H\fwsource H\cong
    H_{\beta} \tl E_{\phi}^{\dag}, & \xi \tr \eta\zeta
    &\equiv \xi \otimes \zeta \otimes \eta \equiv \xi \zeta
    \tl \eta.
  \end{aligned}
\end{align}
Comparing the sesquilinear forms \eqref{eq:rtp-inner} with
\eqref{eq:rtp-c-inner} and using \eqref{eq:bounded-inner-product}, one finds that there exists an
isomorphism
\begin{align} \label{eq:rtp-c-vn}
  \begin{aligned}
    H \swotimes H &\cong H \fwsource H, & \Lambda_{\nu}(x)
    \otimes \zeta \otimes \Lambda_{\nu}(y) &\equiv
    \Lambda_{\psi}^{\dag}(x) \otimes \zeta \otimes
    \Lambda^{\dag}_{\phi}(y).
  \end{aligned}
\end{align}
For each $\xi \in E_{\psi}^{\dag}$ and $\eta\in
E^{\dag}_{\phi}$, there exist bounded linear operators
\begin{align*}
  |\xi\rangle_{1} &\colon H\to H \fwsource H, \ \eta' \mapsto
  \xi \tr \eta', & |\eta\rangle_{2} &\colon H\to H\fwsource
  H, \ \xi' \mapsto \xi' \tl \eta.
\end{align*}
We denote their adjoints by $\langle\xi|_{1}$ and
$\langle\eta|_{2}$, respectively, and write
$|E_{\psi}^{\dag}\rangle_{1}=\{ |\xi\rangle_{1} : \xi \in
E_{\psi}^{\dag}\}$, $|E_{\phi}^{\dag}\rangle_{2} =
\{|\eta\rangle_{2} : \eta\in E_{\phi}^{\dag}\}$ et cetera.
Comparing with \eqref{eq:rtp-legs}, we see that under the
identification \eqref{eq:rtp-c-vn},
$\lambda^{\halpha,\beta}_{\Lambda_{\nu}(x)} \equiv
|\Lambda_{\psi}^{\dag}(x)\rangle_{1}$ and
$\rho^{\halpha,\beta}_{\Lambda_{\nu}(y)} \equiv
|\Lambda_{\phi}^{\dag}(y)\rangle_{2}$ for all $x,y\in A$.

Replacing $E^{\dag}_{\psi}$ and $E_{\phi}^{\dag}$ by
$E_{\phi}^{\dag}$ and $E_{\phi}$, respectively, one similarly defines the
relative tensor product $H \fwrange H$ with a canonical
isomorphism $H \fwrange H \cong H\rwotimes H$, and bounded linear operators
$|\xi\rangle_{1},|\eta\rangle_{2}\colon H\to H\fwrange H$ for
all $\xi \in E_{\phi}^{\dag}$ and $\eta\in E_{\phi}$.

Thus, $W$ can be regarded as a unitary  $H\fwsource H\to
H\fwrange H$. To show that it is a
$C^{*}$-pseudo-multiplicative unitary in the sense of
\cite{timmermann:cpmu}, we  only need to prove: 

\begin{proposition} \label{proposition:pmu-intertwine-c}
The following equations for subspaces of
$\mathcal{L}(H,H\frange H)$ hold:
  \begin{align*}
    W[|E_{\psi}^{\dag}\rangle_{1}E_{\phi}] &=
    [|E_{\phi}\rangle_{2}E_{\psi}^{\dag}], &
    W[|E_{\phi}^{\dag}\rangle_{2}E_{\psi}] &=
    [|E_{\phi}\rangle_{2} E_{\psi}], &
    W[|E_{\phi}^{\dag}\rangle_{2}E_{\phi}] &= [|E_{\phi}\rangle_{2}E_{\phi}], \\
    W[|E_{\phi}^{\dag}\rangle_{2}E_{\phi}^{\dag}] &=
    [|E_{\phi}^{\dag}\rangle_{1}E_{\phi}^{\dag}], &
    W[|E_{\psi}^{\dag}\rangle_{1}E_{\psi}^{\dag}] &=
    [|E_{\phi}^{\dag}\rangle_{1}E_{\psi}^{\dag}], &
    W[|E_{\psi}^{\dag}\rangle_{1}E_{\psi}] &=
    [|E_{\phi}^{\dag}\rangle_{1}E_{\psi}].
  \end{align*}
\end{proposition}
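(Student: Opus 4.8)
The plan is to verify each of the six equalities by expanding both sides on the dense subspaces $\Lambda_\nu(A)\subseteq H$ and $\Lambda_\mu(B)\subseteq K$, using the explicit formulas for $W$ from Proposition \ref{proposition:pmu}, the concrete description of the maps $\Lambda_\phi,\Lambda_\psi,\Lambda_\phi^\dag,\Lambda_\psi^\dag$ from Lemma \ref{lemma:ksgns-phi-psi}, and the identifications \eqref{eq:rtp-c-identify} of the relative tensor products with internal tensor products of Hilbert $C^*$-modules. The key point is that each operator of the form $|\xi\rangle_1 b$ or $|\eta\rangle_2 b$ with $\xi,\eta$ ranging over $\Lambda_\phi^\dag(A)$ etc.\ and $b$ over $\pi_\mu(B)$ sends $\Lambda_\mu(c)$ to an elementary tensor $\Lambda_\nu(a)\otimes\Lambda_\mu(c')$ (or a sum thereof) which, after applying $W$ or $W^*$ and using the Sweedler-notation formulas, lands again in the span of such elementary tensors; so each identity reduces to an algebraic identity in $A$ that follows from the invariance of $\phi,\psi$, the antipode relations \eqref{eq:antipode-sigma}, and the factorizations $\langle r(B)s(B)Ar(B)s(B)\rangle = A$ and $\phi(A)=B=\psi(A)$.

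Concretely, I would first fix notation: write $E_\phi = [\Lambda_\phi(A)]$ etc.\ as in \eqref{eq:subspaces}, and note that $[|E_\psi^\dag\rangle_1 E_\phi] = [\{\,|\Lambda_\psi^\dag(x)\rangle_1\Lambda_\phi(z) : x,z\in A\,\}]$ since $[E\pi_\mu(B)]=E$ for each of the modules, and $\pi_\mu(B)$-linearity lets one absorb the base algebra. Then I would compute, for $x,z\in A$ and $c\in B$, that $|\Lambda_\psi^\dag(x)\rangle_1\Lambda_\phi(z)\Lambda_\mu(c) = \Lambda_\psi^\dag(x)\tr\Lambda_\nu(zr(c))$, identify this under \eqref{eq:rtp-c-vn}--\eqref{eq:rtp-c-identify} with an element of $H\swotimes H$, apply $W$ via Proposition \ref{proposition:pmu}, and read off the result as an element of $[|E_\phi\rangle_2 E_\psi^\dag]$. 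The density statement ``$=$'' rather than ``$\subseteq$'' in each line then follows because $W$ is unitary, hence the image of a dense subspace is dense, together with the fact that the right-hand sides are manifestly closed subspaces spanned by the relevant elementary operators; alternatively one checks both inclusions directly using $W$ and $W^*$. The six lines are handled in the same way, cycling through the four module spaces $E_\phi,E_\psi,E_\phi^\dag,E_\psi^\dag$ and the two half-braidings $\tr,\tl$; the formulas for $W$ and $W^*$ in Proposition \ref{proposition:pmu} together with the convolution identities \eqref{eq:convolution-alt}--\eqref{eq:convolution-alt-2} supply exactly the algebraic inputs needed.

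The main obstacle I anticipate is bookkeeping: keeping straight which of $\alpha,\beta,\halpha,\hbeta$ governs which leg of which relative tensor product, which square root $D^{\frac12},\bar D^{\frac12}$ appears where (from the definition of $\Lambda,\Lambda'$ in Lemma \ref{lemma:pmu-domain-range} and of $W,W^*$), and verifying that, e.g., $W[|E_\phi^\dag\rangle_2 E_\phi^\dag] = [|E_\phi^\dag\rangle_1 E_\phi^\dag]$ rather than landing in a space built from $E_\psi^\dag$. I would organize this by first tabulating, for each of $|\Lambda_\phi(x)\rangle_\bullet, |\Lambda_\psi(x)\rangle_\bullet, |\Lambda_\phi^\dag(x)\rangle_\bullet, |\Lambda_\psi^\dag(x)\rangle_\bullet$ applied to $\Lambda_\nu(y)$, what $W$ and $W^*$ do — essentially a clean restatement of Lemmas \ref{lemma:pmu-slice} and \ref{lemma:pmu-v-slice} in ket form — and then the six identities become immediate. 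The genuinely mathematical content is minimal: everything reduces to the Galois-type surjectivity of $T_1,T_2,T_3,T_4$ from Proposition \ref{proposition:galois} and the non-degeneracy facts $\phi(A)=B=\psi(A)$, $\langle r(B)s(B)Ar(B)s(B)\rangle=A$.
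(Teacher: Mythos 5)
Your plan is correct and is essentially the paper's own argument: the ``table in ket form'' you propose is exactly Lemma \ref{lemma:pmu-intertwine-c} (identifying $|\Lambda^{\dag}_{\psi}(x)\rangle_{1}R^{\gamma,\tilde\mu}_{\Lambda_{\nu}(y)}$, $|\Lambda_{\phi}(y)\rangle_{2}R^{\gamma',\tilde\mu}_{\Lambda_{\nu}(x)}$ with bounded-vector operators on the relative tensor products), after which the paper moves $W$ through these operators using the intertwining relations of Lemma \ref{lemma:pmu-intertwine} and the fact that $W$ maps the span of $\Lambda_{\nu}(A)\otimesm\Lambda_{\nu}(A)$ bijectively onto itself (definition of $W$ plus bijectivity of $T_{4}$ from Proposition \ref{proposition:galois}) --- the same inputs you list, just without redoing Sweedler computations. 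The only point to watch is that for equality of the closed \emph{operator} spaces the remark ``a unitary maps a dense subspace to a dense subspace'' is not by itself sufficient; one needs both inclusions via $W$ and $W^{*}$ (equivalently, that $W$ carries the algebraic span onto the algebraic span), which you do offer as the alternative.
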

The proof uses the following straightforward result:
\begin{lemma} \label{lemma:pmu-intertwine-c} For all
  $x,x',y,y'\in A$ and $\gamma \in \{\alpha,\beta,\hbeta\}$,
  $\gamma' \in \{\alpha,\halpha,\beta\}$,
    \begin{align*}
      \Lambda_{\nu}(x) \otimesm \Lambda_{\nu}(y) &\in D((H
      \swotimes H)_{\Id \otimesm \gamma},\tilde \mu), &
      R^{\Id\otimesm \gamma,\tilde\mu}_{ \Lambda_{\nu}(x)
        \otimesm \Lambda_{\nu}(y)} &=
      \lambda^{\beta,\alpha}_{\Lambda_{\nu}(x)} R^{\gamma,\tilde
        \mu}_{\Lambda_{\nu}(y)}
      =|\Lambda^{\dag}_{\psi}(x)\rangle_{1}R^{\gamma,\tilde
        \mu}_{\Lambda_{\nu}(y)}, \\
      \Lambda_{\nu}(x') \otimesm \Lambda_{\nu}(y') &\in D((H
      \rwotimes H)_{\gamma' \otimesm \Id},\tilde \mu), &
      R^{\gamma'\otimesm \Id,\tilde\mu}_{ \Lambda_{\nu}(x')
        \otimesm \Lambda_{\nu}(y')} &=
      \rho^{\alpha,\hbeta}_{\Lambda_{\nu}(y')}
      R^{\gamma',\tilde \mu}_{\Lambda_{\nu}(x')} =
      |\Lambda_{\phi}(y')\rangle_{2}R^{\gamma',\tilde
        \mu}_{\Lambda_{\nu}(x')}.
    \end{align*}
\end{lemma}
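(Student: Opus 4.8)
The plan is to deduce both lines of the lemma straight from the definition of a bounded vector and the factorisation properties recorded in \eqref{eq:bounded-vectors}; no real computation is involved, which is why the result is called straightforward. All the pieces are already in place: Lemma \ref{lemma:module-bounded} gives $\Lambda_{\nu}(A) \subseteq \Da \cap \Db \cap \Dha \cap \Dhb$ together with the identifications $R^{\alpha,\tilde\mu}_{\Lambda_{\nu}(x)} = \Lambda^{\dag}_{\phi}(x)$, $R^{\beta,\tilde\mu}_{\Lambda_{\nu}(x)} = \Lambda^{\dag}_{\psi}(x)$, $R^{\halpha,\tilde\mu}_{\Lambda_{\nu}(x)} = \Lambda_{\psi}(x)$, $R^{\hbeta,\tilde\mu}_{\Lambda_{\nu}(x)} = \Lambda_{\phi}(x)$; the operators $\lambda^{\beta,\alpha}_{\xi}$ and $\rho^{\alpha,\hbeta}_{\eta}$ were introduced in \eqref{eq:rtp-legs}; and the identification \eqref{eq:rtp-c-vn} of $H \swotimes H$ with $H \fwsource H$ turns $\lambda^{\beta,\alpha}_{\Lambda_{\nu}(x)}$ into $|\Lambda^{\dag}_{\psi}(x)\rangle_{1}$, while the analogous identification of $H \rwotimes H$ with $H \fwrange H$ turns $\rho^{\alpha,\hbeta}_{\Lambda_{\nu}(y')}$ into $|\Lambda_{\phi}(y')\rangle_{2}$.

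For the first line I would fix $x,y \in A$ and $\gamma \in \{\alpha,\beta,\hbeta\}$. By Lemma \ref{lemma:module-bounded} we have $\Lambda_{\nu}(x) \in \Db$ and $\Lambda_{\nu}(y) \in D(H_{\gamma},\tilde\mu)$, so $\lambda^{\beta,\alpha}_{\Lambda_{\nu}(x)} \in \mathcal{L}(H, H \swotimes H)$ and $R^{\gamma,\tilde\mu}_{\Lambda_{\nu}(y)} \in \mathcal{L}(K,H)$ are defined, and $\Id \otimesm \gamma$ is a legitimate representation of $N$ on $H \swotimes H$ because $\gamma(N) \subseteq \alpha(N)'$; indeed $N = \pi_{\mu}(B)''$ is abelian and the four representations $\alpha,\beta,\halpha,\hbeta$, being induced from the commuting maps $r,s$ and right multiplication (Lemma \ref{lemma:pmu-module}), pairwise commute. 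It then remains to check that the bounded operator $T := \lambda^{\beta,\alpha}_{\Lambda_{\nu}(x)}R^{\gamma,\tilde\mu}_{\Lambda_{\nu}(y)} \in \mathcal{L}(K, H \swotimes H)$ has the defining property of $R^{\Id \otimesm \gamma,\tilde\mu}_{\Lambda_{\nu}(x) \otimesm \Lambda_{\nu}(y)}$: for $b \in B$, using in turn Lemma \ref{lemma:pmu-module}, the definition of $\lambda^{\beta,\alpha}_{\Lambda_{\nu}(x)}$, and the definition of $\Id \otimesm \gamma$,
\begin{align*}
  T\Lambda_{\tilde\mu}(\pi_{\mu}(b)) &= \lambda^{\beta,\alpha}_{\Lambda_{\nu}(x)}\bigl(\gamma(\pi_{\mu}(b))\Lambda_{\nu}(y)\bigr) \\
  &= \Lambda_{\nu}(x) \otimesm \gamma(\pi_{\mu}(b))\Lambda_{\nu}(y) \\
  &= (\Id \otimesm \gamma)(\pi_{\mu}(b))\bigl(\Lambda_{\nu}(x) \otimesm \Lambda_{\nu}(y)\bigr);
\end{align*}
since $\pi_{\mu}(B)$ is a core for $\Lambda_{\tilde\mu}$ and $T$ is bounded, the same identity holds with $\pi_{\mu}(b)$ replaced by any element of $\frakN_{\tilde\mu}$. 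Hence $\Lambda_{\nu}(x) \otimesm \Lambda_{\nu}(y) \in D\bigl((H \swotimes H)_{\Id \otimesm \gamma},\tilde\mu\bigr)$ with $R^{\Id \otimesm \gamma,\tilde\mu}_{\Lambda_{\nu}(x) \otimesm \Lambda_{\nu}(y)} = \lambda^{\beta,\alpha}_{\Lambda_{\nu}(x)}R^{\gamma,\tilde\mu}_{\Lambda_{\nu}(y)}$, and substituting $\lambda^{\beta,\alpha}_{\Lambda_{\nu}(x)} \equiv |\Lambda^{\dag}_{\psi}(x)\rangle_{1}$ gives the asserted formula.

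The second line is obtained in exactly the same way after exchanging the two tensor legs: one uses $\Lambda_{\nu}(y') \in \Dhb$ and $\Lambda_{\nu}(x') \in D(H_{\gamma'},\tilde\mu)$ for $\gamma' \in \{\alpha,\halpha,\beta\}$ to build $\rho^{\alpha,\hbeta}_{\Lambda_{\nu}(y')}R^{\gamma',\tilde\mu}_{\Lambda_{\nu}(x')} \in \mathcal{L}(K, H \rwotimes H)$, checks on $\Lambda_{\tilde\mu}(\pi_{\mu}(b))$ that it implements $(\gamma' \otimesm \Id)(\pi_{\mu}(b))$ applied to $\Lambda_{\nu}(x') \otimesm \Lambda_{\nu}(y')$, and rewrites $\rho^{\alpha,\hbeta}_{\Lambda_{\nu}(y')} \equiv |\Lambda_{\phi}(y')\rangle_{2}$. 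I do not anticipate any genuine obstacle: the whole statement is just a matter of unwinding the definition of a bounded vector and the behaviour of the leg maps \eqref{eq:rtp-legs}, and the only point that even needs to be mentioned is that $\Id \otimesm \gamma$ and $\gamma' \otimesm \Id$ make sense, which is immediate from commutativity of $N$ and of $\alpha,\beta,\halpha,\hbeta$.
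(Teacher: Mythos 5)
Your argument is correct and is precisely the ``straightforward'' verification the paper has in mind (it omits the proof): one composes the bounded operators $\lambda^{\beta,\alpha}_{\Lambda_{\nu}(x)}R^{\gamma,\tilde\mu}_{\Lambda_{\nu}(y)}$, checks the defining relation of a bounded vector on $\Lambda_{\tilde\mu}(\pi_{\mu}(b))$ using functoriality of the relative tensor product, and extends by the core property of $\pi_{\mu}(B)$, exactly as in the paper's own proof of Lemma \ref{lemma:module-bounded}. You also rightly flag the one point needing a remark, namely that $\Id\otimesm\gamma$ (in particular $\Id\otimesm\alpha$) and $\gamma'\otimesm\Id$ are well defined because $N$ is abelian and $\alpha,\beta,\halpha,\hbeta$ commute; only your citation of Lemma \ref{lemma:pmu-module} should really be to Lemma \ref{lemma:module-bounded} and the definition of $R^{\gamma,\tilde\mu}$.
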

\begin{proof}[Proof of Proposition
  \ref{proposition:pmu-intertwine-c}]
  We only prove the first equation; the others follow
  similarly:
  \begin{align*}
    W[|E_{\psi}^{\dag}\rangle_{1}E_{\phi}] &= [\{ W
    R^{\Id\otimesm \hbeta,\tilde \mu}_{\omega} : \omega \in
    \Lambda_{\nu}(A) \otimesm \Lambda_{\nu}(A)\}] &&
    \text{(Lemma \ref{lemma:pmu-intertwine-c} and
      \eqref{lemma:module-bounded})}  \\
    &= [\{ R^{\beta \otimesm \Id,\tilde \mu}_{W\omega} :
    \omega \in \Lambda_{\nu}(A) \otimesm \Lambda_{\nu}(A)\}]
    &&\text{(Lemma
      \ref{lemma:pmu-intertwine})} \\
    &= [\{ R^{\beta \otimesm \Id,\tilde \mu}_{\omega'} :
    \omega' \in \Lambda_{\nu}(A) \otimesm
    \Lambda_{\nu}(A)\}] && \text{(Definition of $W$)} \\
    &= [|E_{\phi}\rangle_{2}E_{\psi}^{\dag}]. &&
    \text{(Lemma \ref{lemma:pmu-intertwine-c} and
      \ref{lemma:module-bounded})} \qedhere
  \end{align*}
\end{proof}
\begin{theorem}
$W$ and  $V$ are $C^{*}$-pseudo-multiplicative unitaries in the
  sense of \cite{timmermann:cpmu}.
\end{theorem}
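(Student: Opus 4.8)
The plan is to verify the axioms of a (regular) $C^{*}$-pseudo-multiplicative unitary from \cite{timmermann:cpmu} directly, recycling the computations already performed for the von Neumann-algebraic version. Recall that such a unitary consists of a $C^{*}$-$(\frakb,\frakb)$-module carrying several commuting module representations, the associated relative tensor products of $C^{*}$-modules, and a unitary between two of them which, first, intertwines the relevant pairs of leg representations in the precise sense of a prescribed list of subspace identities involving the ``ket'' operators $|\xi\rangle_{1},|\eta\rangle_{2}$, and, second, satisfies the pentagon equation on the associated triple relative tensor product.

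First I would assemble the module data. By Lemma~\ref{lemma:pmu-module}, $H$ is a $C^{*}$-$(\frakb,\frakb)$-module with respect to any two of $E_{\phi},E_{\psi},E_{\phi}^{\dag},E_{\psi}^{\dag}$, and by the isomorphism \eqref{eq:rtp-c-vn} the relative tensor products $H\fwsource H$ and $H\fwrange H$ are the $C^{*}$-algebraic incarnations of $H\swotimes H$ and $H\rwotimes H$. Under this identification the operators $\lambda^{\halpha,\beta}_{\Lambda_{\nu}(x)}$ and $\rho^{\halpha,\beta}_{\Lambda_{\nu}(y)}$ correspond to $|\Lambda_{\psi}^{\dag}(x)\rangle_{1}$ and $|\Lambda_{\phi}^{\dag}(y)\rangle_{2}$, and likewise on the range side, so that $W$, regarded as a unitary $H\fwsource H\to H\fwrange H$, is literally the same operator as the one constructed in Proposition~\ref{proposition:pmu}.

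The intertwining axiom is then exactly Proposition~\ref{proposition:pmu-intertwine-c}: the six displayed subspace identities there are precisely the conditions imposed in \cite{timmermann:cpmu}, with the leg spaces $E_{\psi}^{\dag},E_{\phi}^{\dag},E_{\phi},E_{\psi}$ in the roles prescribed by the definition. For the pentagon axiom one observes that, applying the identification \eqref{eq:rtp-c-vn} twice and using the associativity of both relative tensor products, the operators $W_{12},W_{13},W_{23}$ on the triple $C^{*}$-relative tensor product are identified with the operators $W\otimesm\Id$, $W_{13}$, $\Id\otimesm W$ appearing in diagram \eqref{eq:pmu-pentagon} on $\wHthree$; here the intertwining relations (Lemma~\ref{lemma:pmu-intertwine} together with Lemma~\ref{lemma:pmu-intertwine-c} and Proposition~\ref{proposition:pmu-intertwine-c}) are needed to see that all of these are well-defined in the $C^{*}$-picture. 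Thus the $C^{*}$-pentagon equation reduces to Lemma~\ref{lemma:pmu-pentagon}. If regularity is demanded in addition, it follows from Remark~\ref{remarks:pmu-slice}, which identifies the closed spans of the slices of $W$ and $W^{*}$ with $\pi_{\nu}(A)$ and $\rho(\hat A)$, both of which act nondegenerately on $H$. The same three steps apply verbatim to $V$, using Proposition~\ref{proposition:pmu-v} and the evident analogues of Proposition~\ref{proposition:pmu-intertwine-c} and Lemma~\ref{lemma:pmu-pentagon} for $V$, established by the same calculations.

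The main obstacle is purely bookkeeping: one must keep careful track of which leg space of $H$ plays which role in the definition of \cite{timmermann:cpmu}, and one must check that the passage between the $C^{*}$-relative tensor product and Connes' relative tensor product is compatible with the formation of $W_{12},W_{13},W_{23}$. Both points are routine once the identification \eqref{eq:rtp-c-vn} and Lemma~\ref{lemma:pmu-intertwine-c} are in hand, and there is no new analytic input beyond what was already used for the von Neumann-algebraic pseudo-multiplicativity.
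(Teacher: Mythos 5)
Your proof matches the paper's: the paper likewise proves the theorem by citing Proposition~\ref{proposition:pmu-intertwine-c} for the intertwining conditions and Lemma~\ref{lemma:pmu-pentagon} for the pentagon equation (transferred through the identification \eqref{eq:rtp-c-vn}), with the same argument repeated for $V$. Your aside on regularity is not part of this statement and is handled in the paper by the separate direct computation of Proposition~\ref{proposition:pmu-regular} rather than by Remark~\ref{remarks:pmu-slice}, but that does not affect the correctness of your proof of the theorem as stated.
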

\begin{proof}
The assertion on $W$ is Proposition
  \ref{proposition:pmu-intertwine-c} and Lemma
  \ref{lemma:pmu-pentagon}. For $V$, the proof is similar.
\end{proof}
\begin{proposition} \label{proposition:pmu-regular} $W$ and
  $V$ are regular in the sense that $[\langle
  E_{\phi}^{\dag}|_{1}W|E_{\phi}^{\dag}\rangle_{2}]=[E_{\phi}^{\dag}(E_{\phi}^{\dag})^{*}]
  \subseteq \mathcal{L}(\Hnu)$ and $[\langle
  \E|_{1}V|\E\rangle_{2}]=[\E(\E)^{*}] \subseteq
  \mathcal{L}(\Hnu)$.
\end{proposition}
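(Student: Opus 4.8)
The plan is to verify both identities by computing the operators $\langle \F|_{1}W|\F\rangle_{2}$ and $\langle \E|_{1}V|\E\rangle_{2}$ on the dense subspace $\Lambda_{\nu}(A)\subseteq \Hnu$ and then matching their closed linear spans with $[\F\F^{*}]$ and $[\E\E^{*}]$, respectively. I carry out the case of $W$; the case of $V$ is entirely parallel, with Proposition~\ref{proposition:pmu} replaced by Proposition~\ref{proposition:pmu-v}, $\phi$ by $\psi$, $\F$ by $\E$, \eqref{eq:bounded-inner-product} by \eqref{eq:ksgns-inner-psi}, and Lemma~\ref{lemma:pmu-slice} by Lemma~\ref{lemma:pmu-v-slice}; alternatively it can be deduced from the $W$-statement by applying the antipode, which intertwines $T_{1}$ and $T_{4}$.

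First I would unwind the identifications of \S\ref{subsection:hopf-c-bimodules}: for $a,b\in A$, under $\Hnu\swotimes\Hnu\cong\Hnu\fwsource\Hnu$ and $\Hnu\rwotimes\Hnu\cong\Hnu\fwrange\Hnu$ the operator $|\Lambda^{\dag}_{\phi}(b)\rangle_{2}$ becomes $\rho^{\beta,\alpha}_{\Lambda_{\nu}(b)}$ and $\langle\Lambda^{\dag}_{\phi}(a)|_{1}$ becomes $(\lambda^{\alpha,\hbeta}_{\Lambda_{\nu}(a)})^{*}$, so the operator in question is $(\lambda^{\alpha,\hbeta}_{\Lambda_{\nu}(a)})^{*}W\rho^{\beta,\alpha}_{\Lambda_{\nu}(b)}$. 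Applying it to $\Lambda_{\nu}(c)$ and using in turn $\rho^{\beta,\alpha}_{\Lambda_{\nu}(b)}\Lambda_{\nu}(c)=\Lambda_{\nu}(c)\otimesm\Lambda_{\nu}(b)$, the formula for $W$ from Proposition~\ref{proposition:pmu}, the adjoint formula \eqref{eq:rtp-legs-adjoints}, the inner-product formula \eqref{eq:bounded-inner-product}, and the description of $\hbeta$ in Lemma~\ref{lemma:pmu-module}, one is led to
\begin{align*}
\langle \Lambda^{\dag}_{\phi}(a)|_{1}\,W\,|\Lambda^{\dag}_{\phi}(b)\rangle_{2}\,\Lambda_{\nu}(c)
&=\sum \Lambda_{\nu}\bigl(b_{(2)}\,r(\phi(\bar D^{\frac12}(S^{-1}(b_{(1)}))\,c\,\theta(a^{*})))\bigr).
\end{align*}
Crucially, here the comultiplication is applied only to the fixed vector $b$ and not to the variable $c$, so this is an operator of ``finite rank'' type.

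Next I would recognise this operator as an element of $[\F\F^{*}]$. Using Proposition~\ref{proposition:modular-integrals}(i) to move $\bar D^{\frac12}(S^{-1}(b_{(1)}))$ past $c\,\theta(a^{*})$ inside $\phi$ (which replaces it by $\theta_{D}$ of that element and introduces a grading twist $\partial_{b_{(2)}}^{-1}$, using $\theta^{-1}\circ\theta_{D}=D^{-1}$ and the grading rules for $S$ and $\bar D^{\frac12}$), together with the matching-index relation $\bar\partial_{b_{(1)}}=\partial_{b_{(2)}}$ for the Sweedler components of $\Delta(b)$ and the identity $a'r(d)=r(\partial_{a'}(d))a'$, the twist is absorbed and the right-hand side becomes $\sum \Lambda_{\nu}(r(\phi(c\,\theta(y^{*})))\,b_{(2)})=\sum\Lambda^{\dag}_{\phi}(b_{(2)})\,\Lambda^{\dag}_{\phi}(y)^{*}\Lambda_{\nu}(c)$, where $y\in A$ is determined by $a$ and $b_{(1)}$ and where we used $\Lambda^{\dag}_{\phi}(y)^{*}\Lambda_{\nu}(c)=\Lambda_{\mu}(\phi(c\,\theta(y^{*})))$ and $\Lambda^{\dag}_{\phi}(x)\Lambda_{\mu}(d)=\Lambda_{\nu}(r(d)x)$ from Lemma~\ref{lemma:ksgns-phi-psi}. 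This displays $\langle\Lambda^{\dag}_{\phi}(a)|_{1}W|\Lambda^{\dag}_{\phi}(b)\rangle_{2}$ as a finite sum of generators of $[\F\F^{*}]$, so $[\langle\F|_{1}W|\F\rangle_{2}]\subseteq[\F\F^{*}]$. For the reverse inclusion I would apply strong invariance in the form $\sum x_{(1)}s(\phi(zx_{(2)}))=\sum S(z_{(1)})r(\phi(z_{(2)}x))$ (from the proof of Proposition~\ref{proposition:integral-strong-invariance}) to rewrite the operator as $\Lambda_{\nu}(c)\mapsto\sum\Lambda_{\nu}(S(\tilde b_{(1)})\,r(\phi(\tilde b_{(2)}\,c\,\theta(a^{*}))))$ with $\tilde b:=\bar D^{\frac12}(S^{-1}(b))$ ranging over $A$, and then invoke surjectivity of the Galois maps of Proposition~\ref{proposition:galois} (realising $\sum\tilde b_{(1)}\otimesB\tilde b_{(2)}$ through $T_{4}^{-1}$) together with $\phi(A)=B$ to conclude that the closed span of these operators, as $a,b$ vary, already exhausts $[\F\F^{*}]$.

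The step I expect to be the main obstacle is this last one. Both inclusions demand careful tracking of the grading twists produced by $D^{\pm1/2},\bar D^{\pm1/2}$ and $\theta_{D}$ and repeated use of strong invariance to reposition integrand factors; but the forward inclusion is essentially a single rewriting, whereas the reverse inclusion requires one to ``disentangle'' a Sweedler sum and to argue, via bijectivity of $T_{1},\dots,T_{4}$, that sufficiently many products $\Lambda^{\dag}_{\phi}(x)\Lambda^{\dag}_{\phi}(y)^{*}$ are obtained. Regularity of $V$ then follows from the same computation with $V$ in place of $W$, $\E$ in place of $\F$, Proposition~\ref{proposition:pmu-v} in place of Proposition~\ref{proposition:pmu}, and Lemma~\ref{lemma:pmu-v-slice} in place of Lemma~\ref{lemma:pmu-slice}, and presents no new difficulty.
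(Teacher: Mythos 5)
Your proposal is correct and is in substance the paper's own argument: compute the mixed matrix coefficients of the fundamental unitary on $\Lambda_{\nu}(A)$ from the explicit formula of Proposition~\ref{proposition:pmu}, rewrite them via \eqref{eq:rtp-legs-adjoints}, \eqref{eq:bounded-inner-product}, strong invariance and the modular relations as sums of operators $\Lambda_{\phi}^{\dag}(v)\Lambda_{\phi}^{\dag}(u)^{*}$, and conclude equality of closed spans from bijectivity of $\theta$, $D^{\pm\frac12}$, $S$ and the Galois maps of Proposition~\ref{proposition:galois}. The only organizational difference is which direction of the unitary you slice: the paper computes the adjoint family $\langle\F|_{2}W^{*}|\F\rangle_{1}$, where the comultiplication falls on the \emph{variable} vector, so a single application of Proposition~\ref{proposition:integral-strong-invariance} puts the result directly into the generator form $\sum r(\phi(y'z_{(2)}))S^{-1}(z_{(1)})x$ and both inclusions follow at once; your choice of $W$ puts $\Delta$ on the \emph{fixed} vector, which forces the detour through Proposition~\ref{proposition:modular-integrals} (and the grading bookkeeping) for the forward inclusion and a separate disentangling step for the reverse one --- harmless, just more bookkeeping, and your formula $\sum\Lambda_{\nu}(b_{(2)}r(\phi(\bar D^{\frac12}(S^{-1}(b_{(1)}))c\,\theta(a^{*}))))$ is indeed the adjoint of the paper's. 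One small caveat: since $V$ applies $\Delta$ to the first leg, the pairing $\langle\E|_{1}V|\E\rangle_{2}$ again has $\Delta$ landing on the variable vector, so the $V$-case is parallel to the paper's $W^{*}$ computation rather than to yours (one application of strong invariance for $\psi$ suffices), and Lemma~\ref{lemma:pmu-v-slice} and \eqref{eq:ksgns-inner-psi} are not quite the relevant citations; likewise the side remark about deducing the $V$-statement from the $W$-statement via the antipode would need justification, but it is not needed.
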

\begin{proof}
  Let $x,x',y \in A$. Then $
  \Lambda_{\phi}^{\dag}(y)\Lambda_{\phi}^{\dag}(x)^{*}
  \Lambda_{\nu}(y') =
  \Lambda_{\nu}(r(\phi(y'\theta(x^{*})))y)$ by Lemma
  \ref{lemma:ksgns-phi-psi} and
    \begin{align*}
      \langle
      \Lambda_{\phi}^{\dag}(y)|_{2}W^{*}|\Lambda_{\phi}^{\dag}(x)\rangle_{1}\Lambda_{\nu}(y')
      &=
      (\rho^{\beta,\alpha}_{\Lambda_{\nu}(y)})^{*}W^{*}(\Lambda_{\nu}(x)
      \otimesm \Lambda_{\nu}(y'))
      \\
      &= \sum \beta(\langle
      \Lambda_{\nu}(y)|\Lambda_{\nu}(D^{\frac{1}{2}}(y'_{(2)}))\rangle_{\alpha,\tilde
      \mu}) \Lambda_{\nu}(y'_{(1)}x) \\
    &= \sum
    \Lambda_{\nu}(s(\phi(D^{\frac{1}{2}}(y'_{(2)})\theta(y^{*})))y'_{(1)}x)
    && (\text{Equation } \eqref{eq:bounded-inner-product})
    \\
    &= \sum \Lambda_{\nu}(s(\phi(y'_{(2)}z))y'_{(1)}x) &&
    \text{with } z:=D^{-\frac{1}{2}}(\theta(y^{*})) \\
    &= \sum
    \Lambda_{\nu}(r(\phi(y'z_{(2)}))S^{-1}(z_{(1)})x). &&
    (\text{Proposition } \ref{proposition:integral-strong-invariance})
  \end{align*}  
  Since the maps $\theta,D^{-\frac{1}{2}},S$ and $T_{3}$ are
  bijections, we can conclude
  \begin{align*}
    [\{ \Lambda_{\phi}^{\dag}(y)\Lambda_{\phi}^{\dag}(x)^{*}
    : x,y\in A\}] &= [\{ \langle
    \Lambda_{\phi}^{\dag}(x)|_{2}W^{*}|\Lambda_{\phi}^{\dag}(y)\rangle_{1}:
    x,y \in A\}]. 
  \end{align*}
  The assertion on $V$ follows from a similar calculation.
\end{proof}

Recall from \cite{timmermann:cpmu} that a
\emph{Hopf $C^{*}$-bimodule over $\frakb$} consists of a
$C^{*}$-$(\frakb,\frakb)$-module $(L,E,F)$, a non-degenerate
$C^{*}$-algebra $C \subseteq \mathcal{L}(L)$  satisfying
$\rho_{E}(\pi_{\mu}(B)) \subseteq M(C)$ and
$\rho_{F}(\pi_{\mu}(B)) \subseteq M(C)$, and a
non-degenerate  $*$-homomorphism $\Delta_{C} \colon C \to C
\fibre{F}{\frakb}{E} C$ that is co-associative and compatible with
$E$ and $F$ in a suitable sense, where 
\begin{align*}
  C \fibre{F}{\frakb}{E} C = \{ T \in \mathcal{L}(L
  {_{F}\underset{\frakb}{\otimes}} {_{E}} L) :
  T|F\rangle_{1}+ T^{*}|F\rangle_{1} \subseteq
  [|F\rangle_{1}C]\text { and } T|E\rangle_{2}+T^{*}|E\rangle_{2} \subseteq
  [|E\rangle_{2}C] \}
\end{align*}
is the \emph{fiber product} of $C$ with itself relative to
$F$ and $E$.

\begin{theorem}
  $\left((H,E_{\phi}^{\dag},E_{\psi}^{\dag}),[\pi_{\nu}(A)],\Delta|_{[\pi_{\nu}(A)]}\right)$
  and
  $\left((H,E_{\psi},E_{\phi}^{\dag}),[\rho(\hA)],\hDelta|_{[\lambda(\hA)]}\right)$
  are Hopf $C^{*}$-bimodules over $\frakb$.
\end{theorem}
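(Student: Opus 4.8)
The plan is to obtain the statement as an instance of the general theory of regular $C^{*}$-pseudo-multiplicative unitaries developed in \cite{timmermann:cpmu}. By the preceding theorem, $W$ is a $C^{*}$-pseudo-multiplicative unitary from $H \fwsource H$ to $H \fwrange H$ with respect to the $C^{*}$-$(\frakb,\frakb)$-module structures on $H$ furnished by Lemma \ref{lemma:pmu-module}, and by Proposition \ref{proposition:pmu-regular} it is regular. The general theory then associates to $W$ two legs --- norm-closed, non-degenerate $C^{*}$-algebras on $H$, one spanned by the slices of the form $(\Id \ast \omega)(W)$ and one by those of the form $(\omega \ast \Id)(W)$ --- each of which is automatically a Hopf $C^{*}$-bimodule over $\frakb$: with the $C^{*}$-$(\frakb,\frakb)$-module structure $(H,E_{\phi}^{\dag},E_{\psi}^{\dag})$ and comultiplication $a \mapsto W^{*}(\Id \otimesm a)W$ in the first case, and with the structure $(H,E_{\psi},E_{\phi}^{\dag})$ and comultiplication $\widehat{a} \mapsto \Sigma W(\widehat{a}\otimesm \Id)W^{*}\Sigma$ in the second.

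First I would identify these two legs with the concrete algebras in the statement. By Remark \ref{remarks:pmu-slice} ii), the span of the slices $(\Id \ast \omega)(W)$ is $[\pi_{\nu}(A)]$ and the span of the slices $(\omega \ast \Id)(W)$ is $[\rho(\hA)]$. Under this identification the two comultiplications delivered by the general theory are precisely the restrictions of the maps $\Delta$ and $\hDelta$ defined before Theorem \ref{theorem:pmu-legs-vn}, since these are given by exactly the same formulas $a \mapsto W^{*}(\Id \otimesm a)W$ and $\widehat{a} \mapsto \Sigma W(\widehat{a}\otimesm\Id)W^{*}\Sigma$; the explicit formulas in the lemma preceding this theorem then show directly that $\Delta$ maps $[\pi_{\nu}(A)]$ into the fiber product $[\pi_{\nu}(A)] \fibre{E_{\psi}^{\dag}}{\frakb}{E_{\phi}^{\dag}} [\pi_{\nu}(A)]$ and that $\hDelta$ maps $[\rho(\hA)]$ into the analogous fiber product.

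All remaining conditions in the definition of a Hopf $C^{*}$-bimodule --- that $(H,E_{\phi}^{\dag},E_{\psi}^{\dag})$ and $(H,E_{\psi},E_{\phi}^{\dag})$ are $C^{*}$-$(\frakb,\frakb)$-modules, that $\pi_{\mu}(B)$ lands in the relevant multiplier algebras under the four representations $\alpha,\beta,\halpha,\hbeta$ of Lemma \ref{lemma:pmu-module}, and that $\Delta$ and $\hDelta$ are non-degenerate, coassociative, and compatible with the defining subspaces $E$ and $F$ --- are then automatic from the general theory; concretely, coassociativity rests on the pentagon equation (Lemma \ref{lemma:pmu-pentagon}) and the compatibility with $E$ and $F$ on the intertwining relations of Proposition \ref{proposition:pmu-intertwine-c} together with regularity, and the analogous Hopf $C^{*}$-bimodules associated to $V$ are obtained in the same way. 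I expect the only real work to be bookkeeping: keeping straight which of the four spaces $E_{\phi},E_{\psi},E_{\phi}^{\dag},E_{\psi}^{\dag}$ serves as $E$ and which as $F$ for each leg, so that Lemma \ref{lemma:pmu-module} and Proposition \ref{proposition:pmu-intertwine-c} line up with the orientation conventions of \cite{timmermann:cpmu}; there is no substantive obstacle beyond this matching.
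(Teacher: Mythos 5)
Your proposal is correct and follows essentially the same route as the paper: appeal to the general theory of regular $C^{*}$-pseudo-multiplicative unitaries from \cite{timmermann:cpmu} to obtain the two legs of $W$ as Hopf $C^{*}$-bimodules, and then identify those legs with $[\pi_{\nu}(A)]$ and $[\rho(\hA)]$ via the slice computations of Lemma \ref{lemma:pmu-slice} (equivalently Remark \ref{remarks:pmu-slice}~ii)). The bookkeeping of which of $E_{\phi},E_{\psi},E_{\phi}^{\dag},E_{\psi}^{\dag}$ plays the role of $E$ and $F$ for each leg, which you rightly flag, is indeed the only remaining point and is handled the same way in the paper.
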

\begin{proof}
  By \cite{timmermann:cpmu}, the regular
  $C^{*}$-pseudo-multiplicative unitary $W$ gives rise to
  two Hopf $C^{*}$-bimod\-ules
  $((H,E_{\phi}^{\dag},E_{\psi}^{\dag}), [\langle
  E_{\phi}|_{2}W\kF{2}],\Delta)$ and
  $((H,E_{\psi},E_{\psi}^{\dag}),[\bF{1} W
  \kE{1}],\hDelta)$, and by Lemma \ref{lemma:pmu-slice},
  $[\bF{1} W \kE{1}] = [\rho(\hA)]$ and $[\langle
  E_{\phi}|_{2}W\kF{2}]=[\lambda(\hA)]$.
\end{proof}

\subsection{The measured quantum groupoid}
\label{subsection:reduced-extension}
To obtain a measured quantum groupoid, we finally extend
$\nu,\phi,\psi$ to normal, semi-finite, faithful weights on
the level of von Neumann algebras.  We impose the
following simplifying assumptions:
\begin{itemize}
\item[\textbf{(A4)}] $(A,\Delta)$ is proper in the sense  that 
  $r(B)s(B) \subseteq A$.
\item[\textbf{(A5)}] There exists a net $(u_{i})_{i}$ in $B$
  such that $(\pi_{\mu}(u_{i}))_{i}$ is a net of positive
  elements in the unit ball of $\pi_{\mu}(B)$ that converges
  in $M([\pi_{\mu}(B)])$ strictly to $1$ and such that
  $(\pi_{\mu}(u_{i}^{2}))_{i}$ is increasing.
\end{itemize}
Note that a net $(u_{i})_{i}$ as in (A5)  exists always if
we drop the condition that   $(\pi_{\mu}(u_{i}^{2}))_{i}$
should be increasing.

Let us also note that in the bi-measured case where $\phi,\psi$
and $\nu$ arise from a bi-integral $h$ on $(A,\Delta)$, the
extensions of $\phi,\psi,\nu$ and the invariance of these
extensions can be proved quite easily, see Remark
\ref{remark:biintegral-extension} and
\ref{remark:biintegral-invariant}.

 For the extension of $\nu$,  we do not need the
assumptions (A4) and (A5), but use  the modular
automorphism $\theta$ for $\nu$ obtained in Theorem
\ref{theorem:modular}, the  theory of Hilbert
algebras \cite{takesaki:2}, and results of Kustermans and van
Daele \cite{kustermans:analytic-1}.  
\begin{lemma} \label{lemma:extension-hilbert-algebra}
  $\Lambda_{\nu}(A) \subseteq H$ is a Hilbert algebra with
  respect to the $*$-algebra structure inherited from $A$.
\end{lemma}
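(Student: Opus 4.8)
The plan is to transport the $*$-algebra structure of $A$ to $\mathfrak{A}:=\Lambda_{\nu}(A)$ and then verify, one by one, the defining properties of a (left) Hilbert algebra. The transport is legitimate because $\Lambda_{\nu}$ is injective: $\Lambda_{\nu}(a)=0$ forces $\nu(a^{*}a)=\|\Lambda_{\nu}(a)\|^{2}=0$, hence $a=0$ by faithfulness of $\nu$ (Definition \ref{definition:measured}). So $\Lambda_{\nu}(a)\Lambda_{\nu}(b):=\Lambda_{\nu}(ab)$ and $\Lambda_{\nu}(a)^{\sharp}:=\Lambda_{\nu}(a^{*})$ are well defined and turn $\mathfrak{A}$ into a $*$-algebra, and the inner product $\langle\Lambda_{\nu}(a)\,|\,\Lambda_{\nu}(a')\rangle=\nu(a^{*}a')$ is compatible with product and involution: $\langle\Lambda_{\nu}(a)\Lambda_{\nu}(b)\,|\,\Lambda_{\nu}(c)\rangle=\nu(b^{*}a^{*}c)=\langle\Lambda_{\nu}(b)\,|\,\Lambda_{\nu}(a)^{\sharp}\Lambda_{\nu}(c)\rangle$, directly from associativity in $A$.

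Boundedness of left multiplication and non-degeneracy of the associated representation are then essentially already available. By Theorem \ref{theorem:pmu-bounded}, the $*$-homomorphism $\pi_{\nu}\colon A\to\mathcal{L}(H)$ satisfies $\pi_{\nu}(a)\Lambda_{\nu}(b)=\Lambda_{\nu}(ab)=\Lambda_{\nu}(a)\Lambda_{\nu}(b)$, so $\pi_{\nu}(a)$ is the bounded extension of left multiplication by $\Lambda_{\nu}(a)$, and $\pi_{\nu}$ is exactly the left regular representation of $\mathfrak{A}$. It is a $*$-representation because $\pi_{\nu}$ is a $*$-homomorphism, and it is non-degenerate because $A$ has local units, so that $A^{2}=A$ and therefore $[\pi_{\nu}(A)\Lambda_{\nu}(A)]=[\Lambda_{\nu}(A^{2})]=[\Lambda_{\nu}(A)]=H$; the same identity shows $\mathfrak{A}^{2}$ is dense in $H$.

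The only remaining point is that the involution $S_{0}\colon\Lambda_{\nu}(a)\mapsto\Lambda_{\nu}(a^{*})$, with dense domain $\mathfrak{A}$, is pre-closed. For this I would exhibit a densely defined formal adjoint using the modular automorphism $\theta$ of $\nu$ from Theorem \ref{theorem:modular} together with the $*$-linearity of $\nu$ (which holds since $\nu$ is positive). For $a,b\in A$, using $\nu(xy)=\nu(y\theta(x))$,
\begin{align*}
  \langle S_{0}\Lambda_{\nu}(a)\,|\,\Lambda_{\nu}(b)\rangle &= \nu(ab) = \overline{\nu(b^{*}a^{*})} = \overline{\nu(a^{*}\theta(b^{*}))} \\
  &= \overline{\langle\Lambda_{\nu}(a)\,|\,\Lambda_{\nu}(\theta(b^{*}))\rangle} = \langle\Lambda_{\nu}(\theta(b^{*}))\,|\,\Lambda_{\nu}(a)\rangle.
\end{align*}
Hence the densely defined operator $F_{0}\colon\Lambda_{\nu}(b)\mapsto\Lambda_{\nu}(\theta(b^{*}))$ satisfies $F_{0}\subseteq S_{0}^{*}$, so $S_{0}^{*}$ is densely defined and $S_{0}$ is closable. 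Together with the previous paragraphs this verifies all the axioms, and $\mathfrak{A}=\Lambda_{\nu}(A)$ is a Hilbert algebra.

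I do not expect a genuine obstacle here: the two substantial ingredients --- boundedness of the left regular representation (which required the fundamental unitary) and the existence of the modular automorphism --- are already in place as Theorems \ref{theorem:pmu-bounded} and \ref{theorem:modular}. The only delicate point is the bookkeeping: placing the involution and $\theta$ correctly when producing the formal adjoint $F_{0}$, which is exactly what makes the otherwise formal ``involution is pre-closed'' clause work.
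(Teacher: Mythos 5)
Your proof is correct and follows essentially the same route as the paper: boundedness of left multiplication comes from Theorem \ref{theorem:pmu-bounded}, and pre-closedness of the involution comes from exhibiting the densely defined formal adjoint $\Lambda_{\nu}(b)\mapsto\Lambda_{\nu}(\theta(b^{*}))$ via the modular automorphism of Theorem \ref{theorem:modular}. You simply spell out the routine axioms (well-definedness, compatibility of the inner product, non-degeneracy via local units) that the paper leaves implicit.
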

\begin{proof}
  The multiplication $\Lambda_{\nu}(y) \mapsto
  \Lambda_{\nu}(xy)$ is bounded for each $x\in A$ by Theorem
  \ref{theorem:pmu-bounded}, and the involution
  $\Lambda_{\nu}(x) \mapsto \Lambda_{\nu}(x^{*})$ is
  pre-closed because
\begin{align*}
    \langle \Lambda_{\nu}(x)|\Lambda_{\nu}(y^{*})\rangle &=
    \nu(x^{*}y^{*}) = \nu(y^{*}\theta(x^{*})) = \langle
    \Lambda_{\nu}(y)| \Lambda_{\nu}(\theta(x^{*}))\rangle
    \quad \text{for all } x,y \in A. \qedhere
  \end{align*}  
\end{proof}

The general theory of Hilbert algebras now yields 
\begin{itemize}
\item  $M=\pi_{\nu}(A)'' \subseteq
  \mathcal{L}(H)$ as the associated von Neumann algebra,
\item a n.s.f.\ weight $\tilde \nu$ on $M$ such that $\tilde
  \nu(\pi_{\nu}(a^{*}a)) = \langle
  \Lambda_{\nu}(a)|\Lambda_{\nu}(a)\rangle = \nu(a^{*}a)$
  for all $a\in A$,
\item a left ideal $\frakN_{\tilde \nu} := \{ x\in M :
  \tilde \nu(x^{*}x) <\infty\} \subseteq M$ of
  square-integrable elements,
\item a closed map $\Lambda_{\tilde \nu} \colon
  \frakN_{\tilde \nu} \to H$ such that $(H,\Lambda_{\tilde
    \nu},\Id_{M})$ is a GNS-representation for $\tilde \nu$;
  this is the closure of the map $\pi_{\nu}(A) \to H$ given by
  $\pi_{\nu}(a) \to \Lambda_{\nu}(a)$;
\item the usual objects
  $J_{\tilde\nu},\Delta_{\tilde\nu},\sigma^{\tilde
    \nu},\mathcal{T}_{\tilde\nu},\ldots$ of Tomita-Takesaki theory.
\end{itemize}

The modular automorphism $\theta$ is related to the modular
automorphism group $\sigma^{\tilde \nu}$ as follows:
\begin{proposition} \label{proposition:weight-modular}
  $\pi_{\nu}(A) \subseteq \mathcal{T}_{\tilde \nu}$ and
  $\sigma^{\tilde
    \nu}_{ni}(\pi_{\nu}(a))=\pi_{\nu}(\theta^{-n}(a))$
  for all $a\in A$, $n \in \integers$.
\end{proposition}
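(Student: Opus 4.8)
The plan is to identify the algebraic modular automorphism $\theta$ with the restriction to the integer purely imaginary points of the modular automorphism group $\sigma^{\tilde\nu}$, using the theory of Hilbert algebras \cite{takesaki:2} together with the analytic structure results of Kustermans and van Daele \cite{kustermans:analytic-1}. I would first record the Tomita--Takesaki data attached to the left Hilbert algebra $\Lambda_{\nu}(A) \subseteq H$ of Lemma~\ref{lemma:extension-hilbert-algebra}: the closure $S$ of the antilinear map $\Lambda_{\nu}(a) \mapsto \Lambda_{\nu}(a^{*})$, its adjoint $F = S^{*}$, the modular operator $\Delta = FS$, and the modular conjugation $J$ with $S = J\Delta^{1/2}$; by construction $\sigma^{\tilde\nu}_{t} = \Ad(\Delta^{it})$ and $\Lambda_{\tilde\nu}(\pi_{\nu}(a)) = \Lambda_{\nu}(a)$ for $a \in A$.

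The key algebraic input is the computation already carried out in the proof of Lemma~\ref{lemma:extension-hilbert-algebra}, namely $\langle \Lambda_{\nu}(x)|\Lambda_{\nu}(y^{*})\rangle = \nu(x^{*}y^{*}) = \nu(y^{*}\theta(x^{*})) = \langle \Lambda_{\nu}(y)|\Lambda_{\nu}(\theta(x^{*}))\rangle$ for all $x,y \in A$, combined with the identities $\theta(a^{*}) = \theta^{-1}(a)^{*}$ and $\theta^{-1}(a^{*})^{*} = \theta(a)$ which follow from $\theta \circ {*} \circ \theta \circ {*} = \Id_{A}$ (see the discussion preceding Theorem~\ref{theorem:modular}). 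Using $\Lambda_{\nu}(y^{*}) = S\Lambda_{\nu}(y)$ and $\Lambda_{\nu}(\theta(x^{*})) = S\Lambda_{\nu}(\theta^{-1}(x))$, this becomes $\langle \Lambda_{\nu}(x)|S\Lambda_{\nu}(y)\rangle = \langle \Lambda_{\nu}(y)|S\Lambda_{\nu}(\theta^{-1}(x))\rangle$ for all $x,y \in A$. Taking complex conjugates and using that $\Lambda_{\nu}(A)$ is a core for $S$, this says precisely that $\Lambda_{\nu}(x) \in D(F)$ with $F\Lambda_{\nu}(x) = S\Lambda_{\nu}(\theta^{-1}(x))$; hence $\Lambda_{\nu}(x) \in D(\Delta)$ and
\[
\Delta\Lambda_{\nu}(x) = F S\Lambda_{\nu}(x) = F\Lambda_{\nu}(x^{*}) = S\Lambda_{\nu}(\theta^{-1}(x^{*})) = \Lambda_{\nu}(\theta^{-1}(x^{*})^{*}) = \Lambda_{\nu}(\theta(x)).
\]
Since $\theta$ is a bijection of $A$, $\Delta$ restricts to a bijection of the dense $*$-subalgebra $\Lambda_{\nu}(A)$; iterating, $\Lambda_{\nu}(A) \subseteq \bigcap_{n \in \integers} D(\Delta^{n})$ with $\Delta^{n}\Lambda_{\nu}(a) = \Lambda_{\nu}(\theta^{n}(a))$ for all $n \in \integers$.

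It then remains to invoke \cite{kustermans:analytic-1} (together with \cite{takesaki:2}): a dense $*$-subalgebra of a left Hilbert algebra that is invariant under $\Delta$ and $\Delta^{-1}$ is contained in $\bigcap_{z \in \complex} D(\Delta^{z})$ (which can also be seen directly, since $D(\Delta^{z})$ depends only on $\mathrm{Re}\,z$ and $\bigcap_{n \in \integers} D(\Delta^{n}) = \bigcap_{s \in \reals} D(\Delta^{s})$ by a spectral interpolation argument), the map $z \mapsto \Delta^{z}\Lambda_{\nu}(a)$ is entire, the associated operators are entire analytic for $\sigma^{\tilde\nu}$, and in particular $\pi_{\nu}(A) \subseteq \mathcal{T}_{\tilde\nu}$. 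The formula follows by evaluating on the dense subspace $\Lambda_{\nu}(A)$: for $a,b \in A$ and $n \in \integers$, using $\sigma^{\tilde\nu}_{ni} = \Ad(\Delta^{-n})$,
\[
\sigma^{\tilde\nu}_{ni}(\pi_{\nu}(a))\Lambda_{\nu}(b) = \Delta^{-n}\pi_{\nu}(a)\Delta^{n}\Lambda_{\nu}(b) = \Delta^{-n}\Lambda_{\nu}(a\theta^{n}(b)) = \Lambda_{\nu}(\theta^{-n}(a)b) = \pi_{\nu}(\theta^{-n}(a))\Lambda_{\nu}(b),
\]
so $\sigma^{\tilde\nu}_{ni}(\pi_{\nu}(a)) = \pi_{\nu}(\theta^{-n}(a))$. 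The main obstacle is precisely this last step: the algebraic data only furnish $\theta$ as a single automorphism, not an a priori one-parameter group, so one must upgrade the $\Delta^{\pm 1}$-invariance of $\Lambda_{\nu}(A)$ to genuine $\sigma^{\tilde\nu}$-analyticity of $\pi_{\nu}(a)$ and to membership in $\mathcal{T}_{\tilde\nu}$; this is exactly the bridge provided by the Kustermans--van Daele machinery together with the spectral interpolation argument for the domains $D(\Delta^{z})$.
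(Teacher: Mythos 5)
Your proposal is correct and follows essentially the same route as the paper, whose proof consists solely of the citation ``use the arguments in \cite[\S 3]{kustermans:algebraic}, from Lemma 3.16 till Proposition 3.22'': your explicit computation that $\Delta_{\tilde\nu}\Lambda_{\nu}(x)=\Lambda_{\nu}(\theta(x))$ via $S$, $F$ and the relation $\nu(x^{*}y^{*})=\nu(y^{*}\theta(x^{*}))$ is precisely the adaptation of those lemmas to the present setting. You also correctly identify that the only genuinely nontrivial step is upgrading the $\Delta_{\tilde\nu}^{\pm1}$-invariance of $\Lambda_{\nu}(A)$ to operator-level $\sigma^{\tilde\nu}$-analyticity and membership in $\mathcal{T}_{\tilde\nu}$, and you defer this to the same Kustermans--van Daele machinery that the paper itself invokes, so no gap remains beyond what the paper likewise leaves to the reference.
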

\begin{proof}
  Use the arguments in \cite[\S 3]{kustermans:algebraic}, in
  particular from Lemma 3.16 till Proposition 3.22.
\end{proof}
Let $A^{\theta} :=\{ a\in A: \theta(a)=a\} \subseteq A$.
Note that  this space is a $*$-subalgebra
and, by (A4), contains $r(B)s(B)$.
\begin{lemma} \label{lemma:weight-frakrfraks}
  \begin{enumerate}
  \item $\sigma^{\tilde \nu}$ acts trivially on
    $\pi_{\nu}(A^{\theta})''$, in particular on
    $\alpha(N)$ and $\beta(N)$.
  \item $J_{\tilde \nu}\alpha(x)^{*}J_{\tilde\nu} =
    \hbeta(x)$ and $J_{\tilde\nu}\beta(x)^{*}J_{\tilde\nu} =
    \halpha(x)$ for all $x \in N$.
  \end{enumerate}
\end{lemma}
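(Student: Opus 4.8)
The plan is to derive both statements from Proposition \ref{proposition:weight-modular}, which relates the algebraic modular automorphism $\theta$ to the modular automorphism group $\sigma^{\tilde\nu}$ of $\tilde\nu$, together with elementary Tomita--Takesaki theory for $\tilde\nu$ and the Hilbert algebra $\Lambda_\nu(A)$. For (i) I would first show that $\pi_\nu(a)$ lies in the centralizer of $\tilde\nu$ for every $a\in A^\theta$. By Proposition \ref{proposition:weight-modular}, $\sigma^{\tilde\nu}_{-i}(\pi_\nu(a)) = \pi_\nu(\theta(a)) = \pi_\nu(a)$; since $\pi_\nu(a)\in\mathcal{T}_{\tilde\nu}$, the orbit map $z\mapsto\sigma^{\tilde\nu}_z(\pi_\nu(a))$ is entire, and being $i$-periodic (by the preceding identity) and, as $\pi_\nu(a)$ lies in the Tomita algebra, bounded on the strip $\{0\le\Im z\le 1\}$, it is constant by Liouville's theorem, so $\sigma^{\tilde\nu}_t(\pi_\nu(a)) = \pi_\nu(a)$ for all $t\in\reals$. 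As each $\sigma^{\tilde\nu}_t$ is a normal automorphism and $\pi_\nu(A^\theta)$ is $\sigma$-weakly dense in $\pi_\nu(A^\theta)''$, it follows that $\sigma^{\tilde\nu}$ acts trivially on $\pi_\nu(A^\theta)''$. For the ``in particular'' part I would use that, after extending $\pi_\nu$ to $M(A)$, Lemma \ref{lemma:pmu-module} gives $\alpha(\pi_\mu(b)) = \pi_\nu(r(b))$ and $\beta(\pi_\mu(b)) = \pi_\nu(s(b))$; that $r(b)s(b')$, being the image of $b\otimes b'\in B\otimes B\subseteq M(A)$, is fixed by $\theta$ (unital and $B\otimes B$-linear on $M(A)$ by Theorem \ref{theorem:modular}) and lies in $A$ by (A4), hence in $A^\theta$; and that, for the net $(u_i)_i$ of (A5), $\pi_\nu(r(b)s(u_i)) = \alpha(\pi_\mu(b))\beta(\pi_\mu(u_i))\to\alpha(\pi_\mu(b))$ strongly because $\beta(\pi_\mu(u_i))\to 1$ strongly by non-degeneracy of $\beta$. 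Thus $\alpha(\pi_\mu(B))\subseteq\pi_\nu(A^\theta)''$, symmetrically $\beta(\pi_\mu(B))\subseteq\pi_\nu(A^\theta)''$, and passing to bicommutants gives $\alpha(N),\beta(N)\subseteq\pi_\nu(A^\theta)''$.

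For (ii), recall from Lemma \ref{lemma:extension-hilbert-algebra} that $\Lambda_\nu(A)$ is a Hilbert algebra with left von Neumann algebra $M=\pi_\nu(A)''$ and that $S_{\tilde\nu}$ is the closure of $\Lambda_\nu(a)\mapsto\Lambda_\nu(a^*)$. For $a\in A^\theta$, part (i) gives $\sigma^{\tilde\nu}_t(\pi_\nu(a)) = \pi_\nu(a)$, so $\Delta_{\tilde\nu}^{it}\Lambda_\nu(a) = \Lambda_{\tilde\nu}(\sigma^{\tilde\nu}_t(\pi_\nu(a))) = \Lambda_\nu(a)$ for all $t$, whence $\Delta_{\tilde\nu}^{1/2}\Lambda_\nu(a) = \Lambda_\nu(a)$ and $J_{\tilde\nu}\Lambda_\nu(a) = S_{\tilde\nu}\Lambda_\nu(a) = \Lambda_\nu(a^*)$. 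Tomita's theorem for $\Lambda_\nu(A)$ (the identity $J_{\tilde\nu}L_\xi J_{\tilde\nu} = R_{J_{\tilde\nu}\xi}$ relating left and right multiplication operators; see \cite{takesaki:2}) then yields $J_{\tilde\nu}\pi_\nu(a)J_{\tilde\nu}\Lambda_\nu(z) = \Lambda_\nu(za^*)$ for all $z\in A$ and $a\in A^\theta$. Taking $a=r(b)s(b')$ and re-expressing both sides via Lemma \ref{lemma:pmu-module} gives
\[
  J_{\tilde\nu}\,\alpha(\pi_\mu(b))\beta(\pi_\mu(b'))\,J_{\tilde\nu}
  \;=\; \halpha(\pi_\mu(b')^{*})\,\hbeta(\pi_\mu(b)^{*}).
\]

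Now I would let $b'$ run through the net $(u_i)_i$ of (A5): using that $T\mapsto J_{\tilde\nu}TJ_{\tilde\nu}$ is strongly continuous on bounded sets and that $\alpha(\pi_\mu(u_i)),\beta(\pi_\mu(u_i)),\halpha(\pi_\mu(u_i)),\hbeta(\pi_\mu(u_i))$ all converge strongly to $1$ (non-degeneracy of $\alpha,\beta,\halpha,\hbeta$, with $\pi_\mu(u_i^{*})=\pi_\mu(u_i)$ by positivity), the displayed identity passes to the limit and gives $J_{\tilde\nu}\alpha(\pi_\mu(b))J_{\tilde\nu} = \hbeta(\pi_\mu(b))^{*}$, i.e.\ $J_{\tilde\nu}\alpha(\pi_\mu(b))^{*}J_{\tilde\nu} = \hbeta(\pi_\mu(b))$. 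Since both sides of this last identity are $\sigma$-weakly continuous in $\pi_\mu(b)$, it extends from $\pi_\mu(B)$ to all of $N$, proving $J_{\tilde\nu}\alpha(x)^{*}J_{\tilde\nu}=\hbeta(x)$ for $x\in N$; the relation $J_{\tilde\nu}\beta(x)^{*}J_{\tilde\nu}=\halpha(x)$ follows in exactly the same way, starting from $a=s(b)r(b')\in A^\theta$. The only genuinely delicate point is the step in (i) upgrading $\sigma^{\tilde\nu}_{-i}(\pi_\nu(a))=\pi_\nu(a)$ to $\sigma^{\tilde\nu}_t(\pi_\nu(a))=\pi_\nu(a)$ for real $t$: this uses in an essential way the analyticity encoded in $\pi_\nu(A)\subseteq\mathcal{T}_{\tilde\nu}$ (established by the arguments of \cite{kustermans:algebraic} cited in Proposition \ref{proposition:weight-modular}) and the ensuing Liouville argument; everything else is routine manipulation with the approximate unit from (A5), properness (A4), and the Hilbert algebra structure.
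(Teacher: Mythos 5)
Your proof is correct and follows essentially the same route as the paper: part (i) is deduced from Proposition \ref{proposition:weight-modular} (where the paper passes from $\Delta_{\tilde\nu}^{-1}x\Delta_{\tilde\nu}=x$ to $\sigma^{\tilde\nu}_t$-invariance by commutation of $x$ with $\Delta_{\tilde\nu}$, while you use the equivalent periodicity-plus-Liouville argument), and part (ii) by combining (i) with Lemma \ref{lemma:pmu-module} via the Tomita-algebra identity $J_{\tilde\nu}\pi_\nu(a)J_{\tilde\nu}\Lambda_\nu(z)=\Lambda_\nu(za^{*})$ for $a\in A^{\theta}$ together with the approximate unit from (A5). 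The additional details you supply (that $r(B)s(B)\subseteq A^{\theta}$, the strong-limit and normality steps) are exactly what the paper's two-line proof leaves implicit, so there is no gap.
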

\begin{proof}
  i) The first assertion follows from the fact that
  $\sigma^{\tilde \nu}_{t}(x) = \Delta_{\tilde \nu}^{it} x
  \Delta_{\tilde \nu}^{-it}$ and $\Delta_{\tilde
    \nu}^{-1}x\Delta_{\tilde \nu} = x$ for each $x\in
  \pi_{\tilde \nu}(A^{\theta})$ by Proposition
  \ref{proposition:weight-modular}, and the second assertion
  follows from the fact that $\sigma^{\tilde \nu}_{t}$ is
  normal for all $t\in \reals$ and acts trivially on
  $\pi_{\nu}(r(B)s(B))$.

  ii) Combine i) and Lemma \ref{lemma:pmu-module}.
\end{proof}
\begin{proposition}
  There exist unique n.s.f.\ weights $T_{L}$ from $M$ to
  $\alpha(N)$ and $T_{R}$ from $M$ to $\beta(N)$ such that $
  \tilde \mu \circ \alpha^{-1} \circ T_{L} = \tilde \nu=
  \tilde \mu \circ \beta^{-1} \circ T_{R}$.
\end{proposition}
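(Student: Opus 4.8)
The plan is to obtain $T_{L}$ and $T_{R}$ from Haagerup's theory of operator-valued weights, applied to the von Neumann subalgebras $\alpha(N)$ and $\beta(N)$ of $M=\pi_{\nu}(A)''$. First I would record that these really are subalgebras of $M$: the formulas in Lemma \ref{lemma:pmu-module} give $\alpha(\pi_{\mu}(b))\Lambda_{\nu}(a)=\Lambda_{\nu}(r(b)a)=\pi_{\nu}(r(b))\Lambda_{\nu}(a)$ and $\beta(\pi_{\mu}(b))\Lambda_{\nu}(a)=\pi_{\nu}(s(b))\Lambda_{\nu}(a)$ for all $a\in A$, $b\in B$, so $\alpha(\pi_{\mu}(B))=\pi_{\nu}(r(B))\subseteq M$ and $\beta(\pi_{\mu}(B))=\pi_{\nu}(s(B))\subseteq M$; since $\alpha,\beta$ are normal and $N=\pi_{\mu}(B)''$, we get $\alpha(N)\subseteq M$ and $\beta(N)\subseteq M$. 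Transporting $\tilde\mu$ along the normal $*$-isomorphisms $\alpha,\beta\colon N\to\alpha(N),\beta(N)$ then yields n.s.f.\ weights $\tilde\mu\circ\alpha^{-1}$ on $\alpha(N)$ and $\tilde\mu\circ\beta^{-1}$ on $\beta(N)$, which are the candidates to be $\tilde\nu$ pushed down by the sought operator-valued weights.

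Next I would verify the modular compatibility that Haagerup's theorem requires, namely that $\sigma^{\tilde\nu}_{t}$ restricts on $\alpha(N)$ to the modular automorphism group of $\tilde\mu\circ\alpha^{-1}$, and likewise for $\beta$. On the one hand, $N$ is abelian because $B$ is commutative, so every n.s.f.\ weight on $N$ has trivial modular automorphism group; transporting, the modular automorphism groups of $\tilde\mu\circ\alpha^{-1}$ on $\alpha(N)$ and of $\tilde\mu\circ\beta^{-1}$ on $\beta(N)$ are both trivial. On the other hand, Lemma \ref{lemma:weight-frakrfraks} i) states that $\sigma^{\tilde\nu}$ acts trivially on $\alpha(N)$ and on $\beta(N)$. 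Hence both automorphism groups in question are the identity on the relevant subalgebra, and therefore coincide.

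Finally, by Haagerup's characterization of operator-valued weights, for the n.s.f.\ weight $\tilde\mu\circ\alpha^{-1}$ on the subalgebra $\alpha(N)\subseteq M$ there is a bijective correspondence between n.s.f.\ operator-valued weights $T\colon M\to\alpha(N)$ and n.s.f.\ weights $\omega$ on $M$ satisfying $\sigma^{\omega}_{t}|_{\alpha(N)}=\sigma^{\tilde\mu\circ\alpha^{-1}}_{t}$ for all $t\in\reals$, given by $\omega=\tilde\mu\circ\alpha^{-1}\circ T$; the previous step shows that $\tilde\nu$ is such an $\omega$, so there is a unique n.s.f.\ operator-valued weight $T_{L}\colon M\to\alpha(N)$ with $\tilde\mu\circ\alpha^{-1}\circ T_{L}=\tilde\nu$. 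Running the same argument with $\beta(N)$ in place of $\alpha(N)$ produces the unique $T_{R}\colon M\to\beta(N)$ with $\tilde\mu\circ\beta^{-1}\circ T_{R}=\tilde\nu$. The only point requiring care is invoking the operator-valued weight theorem with its precise hypotheses (and citing it correctly); the verification of those hypotheses is essentially immediate here because the base $N$ is abelian, which forces all the modular data on $\alpha(N)$ and $\beta(N)$ to be trivial.
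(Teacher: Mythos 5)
Your proposal is correct and follows essentially the same route as the paper: the paper's proof is precisely to combine Lemma \ref{lemma:weight-frakrfraks} i) with Haagerup's existence and uniqueness theorem for operator-valued weights (cited as \cite[10.1]{stratila} or \cite[IX Theorem 4.18]{takesaki:2}), and your verification that the modular condition holds because $N$ is abelian (so $\tilde\mu\circ\alpha^{-1}$ and $\tilde\mu\circ\beta^{-1}$ have trivial modular groups) while $\sigma^{\tilde\nu}$ acts trivially on $\alpha(N)$ and $\beta(N)$ is exactly the intended justification. The only cosmetic point is that $\alpha(N),\beta(N)\subseteq M$ is better quoted from the already established Hopf--von Neumann bimodule structure (Theorem \ref{theorem:pmu-legs-vn}) or from (A4) via $\pi_{\nu}(r(B)s(B))$, since $r(b)$ alone need not lie in $A$, so writing $\pi_{\nu}(r(b))$ requires a word of justification.
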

\begin{proof}
  This follows from Lemma \ref{lemma:weight-frakrfraks} i) and
  \cite[10.1]{stratila} or \cite[IX
  Theorem 4.18]{takesaki:2}.
\end{proof}

We thus obtain extensions $\tilde \phi := \alpha^{-1} \circ
T_{L}$ and $\tilde \psi:=\beta^{-1} \circ T_{R}$ of $\phi$
and $\psi$.
\begin{remark} \label{remark:biintegral-extension}
  Assume that $\phi=(\Id \otimes \mu) \circ h$ and $\psi =
  (\mu \otimes \Id) \circ h$ for a normalized bi-integral
  $h$ on $(A,\Delta)$. Then the map $\Lambda_{\mu}(B) \otimes
  \Lambda_{\mu}(B) \to \Lambda_{\nu}(A)$ given by
  $\Lambda_{\mu}(b) \otimes \Lambda_{\mu}(b') \mapsto
  \Lambda_{\nu}(r(b)s(b'))$ extends to an isometry $\iota
  \colon K \otimes K \to H$, and a short calculation shows
  that $\iota^{*}\pi_{\nu}(a)\iota = (\pi_{\mu} \otimes
  \pi_{\mu})(h(a))$ for all $a \in A$.  We therefore get a
  positive, normal, linear extension $\tilde h\colon M \to
  N$, $x \mapsto \iota^{*}x\iota$, of $h$, and thereby the
  desired extensions $\tilde \phi = (\Id \bar\otimes \tilde
  \mu) \circ \tilde h$, $\tilde \psi = (\tilde \mu
  \bar\otimes \tilde \mu) \circ \tilde h$ and $\tilde \nu =
  (\tilde \mu \bar\otimes \tilde \mu) \circ \tilde h$.
\end{remark}
 As usual, let $\frakN_{T_{L}}:=\{x\in
M:T_{L}(x^{*}x) \in N\}$ and similarly define $\frakN_{T_{R}}$.
\begin{theorem} \label{theorem:weight-invariance}
  $T_{L}$ and $T_{R}$ are left- and right-invariant with
  respect to $\Delta$ in the sense that
  \begin{align*}
    \tilde
    \phi((\lambda^{\beta,\alpha}_{\xi})^{*}\Delta(x^{*}x)\lambda^{\beta,\alpha}_{\xi})
    &= (R^{\beta,\tilde
      \mu}_{\xi})^{*}T_{L}(x^{*}x)R^{\beta,\tilde \mu}_{\xi}
    \quad \text{for all } x\in \frakN_{T_{L}}, \xi \in \Db,  \\
    \tilde
    \psi((\rho^{\beta,\alpha}_{\eta})^{*}\Delta(x^{*}x)\rho^{\beta,\alpha}_{\eta})
    &= (R^{\alpha,\tilde
      \mu}_{\eta})^{*}T_{R}(x^{*}x)R^{\alpha,\tilde
      \mu}_{\eta} 
    \quad \text{for all } x\in \frakN_{T_{R}}, \eta \in \Da.
  \end{align*}
\end{theorem}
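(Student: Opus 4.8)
The plan is to establish the stated identity first on the algebraic level, for $x=\pi_{\nu}(a)$ with $a\in A$ and $\xi=\Lambda_{\nu}(z)$ with $z\in A$, and then to pass to arbitrary $x\in\frakN_{T_{L}}$ and $\xi\in\Db$ by normality and density. I shall only treat $T_{L}$; the assertion for $T_{R}$ follows by the symmetric argument, with $W$ replaced by $V$, the slices $\rho^{\beta,\alpha}_{\eta}$ in place of $\lambda^{\beta,\alpha}_{\xi}$, and right-invariance of $\psi$ (Remark \ref{remarks:algebra-integrals} i)) in place of left-invariance of $\phi$.

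For the algebraic step I would compute both sides by hand. On the right-hand side, $R^{\beta,\tilde\mu}_{\Lambda_{\nu}(z)}=\Lambda^{\dag}_{\psi}(z)$ by Lemma \ref{lemma:module-bounded}, and $T_{L}$ acts on $\pi_{\nu}(A)$ through the $\alpha$-valued inner product, namely $T_{L}(\pi_{\nu}(a)^{*}\pi_{\nu}(a))=\alpha(\langle\Lambda_{\nu}(a)|\Lambda_{\nu}(a)\rangle_{\alpha,\tilde\mu})$; this description of $T_{L}$ on the algebraic level follows from its construction together with Proposition \ref{proposition:weight-modular} and the analyticity arguments of Kustermans and van Daele \cite{kustermans:analytic-1}, so that the right-hand side is evaluated directly from \eqref{eq:bounded-inner-product} and Lemma \ref{lemma:ksgns-phi-psi}. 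On the left-hand side, I would use the explicit formula for $\Delta(\pi_{\nu}(a))$ obtained in \S\ref{subsection:hopf-vn-bimodules}, the adjoint formula \eqref{eq:rtp-legs-adjoints} for $(\lambda^{\beta,\alpha}_{\xi})^{*}$, the inner products \eqref{eq:bounded-inner-product}, and the properties of $D^{\frac{1}{2}}$ from Lemma \ref{lemma:modular-d}, to rewrite $(\lambda^{\beta,\alpha}_{\Lambda_{\nu}(z)})^{*}\Delta(\pi_{\nu}(a)^{*}\pi_{\nu}(a))\lambda^{\beta,\alpha}_{\Lambda_{\nu}(z)}$ as a concrete multiplication-type operator on $H$ given by a Sweedler sum. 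Applying $\tilde\phi$ and then collapsing the sum by means of the algebraic left-invariance $\sum s(\phi(x_{(2)}))x_{(1)}=r(\phi(x))$ (Remark \ref{remarks:algebra-integrals} i)) and Lemma \ref{lemma:modular-bimodule} produces exactly the right-hand side.

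For the passage to general $x$ and $\xi$, I would fix $\xi\in\Db$ and observe that both $y\mapsto\tilde\phi\big((\lambda^{\beta,\alpha}_{\xi})^{*}\Delta(y)\lambda^{\beta,\alpha}_{\xi}\big)$ and $y\mapsto(R^{\beta,\tilde\mu}_{\xi})^{*}T_{L}(y)R^{\beta,\tilde\mu}_{\xi}$ are $N$-valued, normal, semifinite weights on $M^{+}$: for the first this uses that $\Delta$ is a normal $*$-homomorphism into $M\fibre{\beta}{\tilde\mu}{\alpha}M$ and that compression by $\lambda^{\beta,\alpha}_{\xi}$ maps this fiber product into $M$ and preserves normality and lower semicontinuity. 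By the algebraic step these two weights agree on the positive cone of the $\sigma$-weakly dense $*$-subalgebra $\pi_{\nu}(A)$. Using Proposition \ref{proposition:weight-modular}, which shows $\pi_{\nu}(A)\subseteq\mathcal{T}_{\tilde\nu}$ and makes $\Lambda_{\nu}(A)$ a core for $\Lambda_{\tilde\nu}$ and hence for the cores associated with $T_{L}$, and, if necessary, the approximate unit $(u_{i})_{i}$ of (A5) to control the increasing net defining $T_{L}$, one concludes that the two weights coincide on all of $M^{+}$, in particular on every $x^{*}x$ with $x\in\frakN_{T_{L}}$. Since $\Lambda_{\nu}(A)$ is dense in $\Db$, the restriction on $\xi$ then disappears.

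The main obstacle is precisely this last, functional-analytic step: pinning down the action of the abstractly constructed operator-valued weight $T_{L}$ on $\pi_{\nu}(A)$, and checking that the two sides are genuine normal semifinite operator-valued weights admitting $\pi_{\nu}(A)$ as a common core, so that agreement on the algebraic level propagates to $M^{+}$. The algebraic computation itself, though notationally heavy, is routine once the slice and inner-product formulas are assembled --- it is the operator-theoretic counterpart of the invariance of $\phi$. Finally, I would note that in the bi-measured case the whole argument collapses: by Remark \ref{remark:biintegral-extension} the extensions $\tilde\phi,\tilde\psi,\tilde\nu$ arise from the amplification $\tilde h\colon x\mapsto\iota^{*}x\iota$ of the bi-integral $h$, and invariance is then read off from coassociativity of $\Delta$ applied to $\tilde h$, as recorded in Remark \ref{remark:biintegral-invariant}.
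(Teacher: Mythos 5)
Your overall strategy --- prove the identity for $x=\pi_{\nu}(a)$, $\xi=\Lambda_{\nu}(z)$ by an algebraic Sweedler computation and then extend by ``density and normality'' --- is not the paper's argument, and as it stands it has two genuine gaps, both located exactly at the point you yourself flag as the main obstacle. First, the input $T_{L}(\pi_{\nu}(a)^{*}\pi_{\nu}(a))=\alpha(\langle\Lambda_{\nu}(a)|\Lambda_{\nu}(a)\rangle_{\alpha,\tilde\mu})$ is asserted, not proved: $T_{L}$ is produced abstractly (Haagerup/Takesaki) from the single condition $\tilde\mu\circ\alpha^{-1}\circ T_{L}=\tilde\nu$, and extracting its values on $\pi_{\nu}(A)$ is essentially as hard as the invariance statement itself; note also that $\langle\Lambda_{\nu}(a)|\Lambda_{\nu}(a)\rangle_{\alpha,\tilde\mu}=\pi_{\mu}(\phi(a\theta(a^{*})))$ by \eqref{eq:bounded-inner-product}, which differs from $\pi_{\mu}(\phi(a^{*}a))$ by modular and grading corrections, so even the formula you plan to verify needs care. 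Second, and more seriously, the extension step fails as stated: two normal semifinite (operator-valued) weights on $M$ are \emph{not} determined by agreement on the positive cone of a $\sigma$-weakly dense $*$-subalgebra; one needs core or modular-invariance hypotheses for the specific weights at hand, and neither the appeal to Proposition \ref{proposition:weight-modular} nor to (A5) supplies them (it is also not checked that $y\mapsto\tilde\phi((\lambda^{\beta,\alpha}_{\xi})^{*}\Delta(y)\lambda^{\beta,\alpha}_{\xi})$ is a normal semifinite weight, nor that the compression lands in $M$). So the proposal reduces the theorem to an extension principle that is false in general and is never established here.

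The paper avoids both problems by never restricting to $x\in\pi_{\nu}(A)$. Using properness (A4) and the approximate unit of (A5) it builds the explicit bounded compressions $\phi_{i,j}(y)=\Lambda_{\phi}(u_{i,j})^{*}y\Lambda_{\phi}(u_{i,j})$ and $\nu_{i,j}$, proves the monotone approximations $(\nu_{i,j})\nearrow\tilde\nu$ and $(\upsilon\circ\phi_{i,j})\nearrow\upsilon\circ\tilde\phi$ (Proposition \ref{proposition:weight-truncating}), and then, for arbitrary $x\in\frakN_{T_{L}}$, $\xi\in\Db$, $\zeta\in K$, computes $\langle\zeta|\tilde\phi((\lambda^{\beta,\alpha}_{\xi})^{*}\Delta(x^{*}x)\lambda^{\beta,\alpha}_{\xi})\zeta\rangle$ as a double monotone limit of the quantities $c_{i,j,k}$; the key mechanism transferring invariance is the commutation relation of Lemma \ref{lemma:pmu-flippy} for the fundamental unitary $W$ (and $V$ for $T_{R}$), which converts $(1\otimesm x)W\rho^{\beta,\alpha}_{\Lambda_{\nu}(u_{i,j})}\alpha(\pi_{\mu}(u_{k}))$ into an expression involving $\phi_{k,j}(x^{*}x)$ directly. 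Nothing playing the role of Lemma \ref{lemma:pmu-flippy} or of the truncations appears in your proposal; if you want to salvage your route, the missing work is precisely to prove a statement of the strength of Proposition \ref{proposition:weight-truncating} (or an equivalent core/invariance argument for the two weights), at which point you might as well follow the paper's direct computation. Your closing remark about the bi-measured case is consistent with Remarks \ref{remark:biintegral-extension} and \ref{remark:biintegral-invariant}, but it does not repair the general case.
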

\begin{corollary} \label{corollary:mqg}
  $(N,\tilde\mu,M,\alpha,\beta,\Delta,T_{L},T_{R},\tilde
  \nu)$ is an
  adapted measured quantum groupoid in the sense of
  \cite{lesieur}.
\end{corollary}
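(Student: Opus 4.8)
The plan is to read the assertion off by matching the data $(N,\tilde\mu,M,\alpha,\beta,\Delta,T_L,T_R,\tilde\nu)$ against the list of axioms defining an adapted measured quantum groupoid in \cite{lesieur}; every item on that list has in effect already been supplied in the preceding subsections, so the argument is essentially a matter of assembling them and checking that conventions agree.

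Concretely, I would recall that an adapted measured quantum groupoid consists of a Hopf--von Neumann bimodule $(N,M,\alpha,\beta,\Delta)$ over $(N,\tilde\mu)$ in the sense of \cite{vallin:1}, a left-invariant n.s.f.\ operator-valued weight $T_L$ from $M$ to $\alpha(N)$, a right-invariant n.s.f.\ operator-valued weight $T_R$ from $M$ to $\beta(N)$, and a n.s.f.\ weight on $N$ --- here $\tilde\mu$ --- subject to the \emph{adaptedness} condition that the modular automorphism groups of the n.s.f.\ weights $\Phi:=\tilde\mu\circ\alpha^{-1}\circ T_L$ and $\Psi:=\tilde\mu\circ\beta^{-1}\circ T_R$ on $M$ commute. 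Then I would point out that each ingredient is in place: $N=\pi_\mu(B)''$ with $\tilde\mu$ (\S\ref{subsection:reduced-prep}) and $M=\pi_\nu(A)''$ with $\tilde\nu$ (Lemma \ref{lemma:extension-hilbert-algebra}) are von Neumann algebras with n.s.f.\ weights; $(M,\alpha,\beta,\Delta)$ is a Hopf--von Neumann bimodule over $(N,\tilde\mu)$ by Theorem \ref{theorem:pmu-legs-vn}, so in particular $\alpha,\beta\colon N\to M$ are commuting, faithful, normal, non-degenerate representations (Lemma \ref{lemma:pmu-module}) and $\Delta\colon M\to M\fibre{\beta}{\tilde\mu}{\alpha}M$ is a normal, faithful, non-degenerate, coassociative $*$-homomorphism with $\Delta\circ\alpha=\alpha\otimesm\Id$ and $\Delta\circ\beta=\Id\otimesm\beta$; the operator-valued weights $T_L,T_R$ and their normality, semi-finiteness and faithfulness are furnished in \S\ref{subsection:reduced-extension} via \cite[IX, Theorem 4.18]{takesaki:2} and Lemma \ref{lemma:weight-frakrfraks} i); and their left- resp.\ right-invariance with respect to $\Delta$ is exactly Theorem \ref{theorem:weight-invariance}, whose formulation in terms of the operators $R^{\beta,\tilde\mu}_\xi$ and $R^{\alpha,\tilde\mu}_\eta$ is precisely the operator form of Lesieur's invariance condition.

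The only point that still needs a word is adaptedness, and here the situation collapses entirely: the construction of $T_L$ and $T_R$ in \S\ref{subsection:reduced-extension} was arranged so that $\tilde\mu\circ\alpha^{-1}\circ T_L=\tilde\nu=\tilde\mu\circ\beta^{-1}\circ T_R$, hence $\Phi=\tilde\nu=\Psi$ and $\sigma^{\Phi}=\sigma^{\tilde\nu}=\sigma^{\Psi}$, so the two modular groups do not merely commute but coincide. This is not an accident: it reflects the standing hypothesis $\nu=\mu\circ\phi=\mu\circ\psi$ built into Definition \ref{definition:measured} (compare Remark \ref{remarks:measured} iii)), i.e.\ the ``modular function'' of the groupoid at hand is trivial.

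I do not expect a genuine obstacle once Theorems \ref{theorem:pmu-legs-vn} and \ref{theorem:weight-invariance} are available; the work is bookkeeping. The one thing to be careful about is the reconciliation of conventions: that the notion of Hopf-bimodule from \cite{vallin:1} is the one underlying Lesieur's definition; that, under the identifications of \S\ref{subsection:modules}, the relative tensor product of \cite{timmermann:fiber} over $(N,\tilde\mu)$ agrees with the Connes--Sauvageot fibre product entering \cite{lesieur} (in particular that $\beta$ is read with the correct handedness on the left leg); and that ``left-invariant n.s.f.\ operator-valued weight onto $\alpha(N)$'' is being read the same way in both references. With those identifications spelled out, collecting the cited results yields the claim.
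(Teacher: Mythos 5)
Your proposal is correct and is essentially the paper's own (implicit) argument: the corollary carries no separate proof in the text and is exactly the assembly you describe, namely Theorem \ref{theorem:pmu-legs-vn} for the Hopf--von Neumann bimodule, the proposition furnishing the n.s.f.\ operator-valued weights $T_{L},T_{R}$ with $\tilde\mu\circ\alpha^{-1}\circ T_{L}=\tilde\nu=\tilde\mu\circ\beta^{-1}\circ T_{R}$, and Theorem \ref{theorem:weight-invariance} for left- and right-invariance.

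One caveat on your last step: identifying ``adaptedness'' with commutation of the modular groups of $\Phi=\tilde\mu\circ\alpha^{-1}\circ T_{L}$ and $\Psi=\tilde\mu\circ\beta^{-1}\circ T_{R}$ is really the axiom of the general (Enock--Lesieur) measured quantum groupoid; Lesieur's \emph{adapted} notion in \cite{lesieur} additionally ties the basis weight $\tilde\mu$ to the modular theory of the lifted weights (compatibility of $\sigma^{\Phi}$ on $\beta(N)$, resp.\ $\sigma^{\Psi}$ on $\alpha(N)$, with $\sigma^{\tilde\mu}$). This does not derail the argument in the present setting: since $\Phi=\Psi=\tilde\nu$, Lemma \ref{lemma:weight-frakrfraks} i) shows $\sigma^{\tilde\nu}$ acts trivially on $\alpha(N)$ and $\beta(N)$, and $\tilde\mu$ is a trace because $B$ is commutative, so the stronger condition holds as well --- but you should quote the precise axiom from \cite{lesieur} and check it in that form rather than only the commutation of $\sigma^{\Phi}$ and $\sigma^{\Psi}$.
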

To prove Theorem \ref{theorem:weight-invariance}, we 
construct increasing approximations of the weights $\tilde
\mu, \tilde \nu, \tilde \phi,\tilde \psi$ by bounded
positive maps, using an approximate unit $(u_{i})_{i}$ in
$B$ with the properties assumed in (A5).  Let
$u_{i,j}:=r(u_{i})s(u_{j}) \in A$, and define for all $i,j$
bounded, normal, positive, linear maps
\begin{align*}
  \mu_{i} &\colon N \to \complex, \ x\mapsto
  \langle\Lambda_{\mu}(u_{i}) |x\Lambda_{\mu}(u_{i})\rangle, &
  \nu_{i,j} &\colon M \to \complex, \ x \mapsto
  \langle
  \Lambda_{\nu}(u_{i,j})|x\Lambda_{\nu}(u_{i,j})\rangle, \\
  \phi_{i,j} &\colon M \to N, \ x\mapsto
  \Lambda_{\phi}(u_{i,j})^{*}x\Lambda_{\phi}(u_{i,j}), &
  \psi_{i,j} &\colon M \to N, \ x\mapsto
  \Lambda_{\psi}(u_{i,j})^{*} x\Lambda_{\psi}(u_{i,j}).
\end{align*}
Given a net $(\lambda_{\kappa})_{\kappa}$ of real numbers,
we write $(\lambda_{\kappa})_{\kappa} \nearrow \lambda$ if 
it is increasing and converges to $\lambda$. Likewise, given
a von Neumann algebra $C$ with a net
$(\omega_{\kappa})_{\kappa}$ in $C^{+}_{*}$ and a n.s.f.\
weight $\omega$, we write $(\omega_{\kappa})_{\kappa}
\nearrow \omega$ if  $\omega_{\kappa}(x^{*}x) \nearrow
\omega(x^{*}x)$ for all $x\in C$.
\begin{proposition} \label{proposition:weight-truncating}
The following relations hold:
\begin{align*}
  (\mu_{i})_{i} &\nearrow \tilde\mu, & (\nu_{i,j})_{i,j}
  &\nearrow \tilde\nu &&\text{and} & (\upsilon\circ \phi_{i,j})_{i,j}
  &\nearrow \upsilon \circ \tilde \phi, & (\upsilon\circ
  \psi_{i,j})_{i,j} &\nearrow \upsilon \circ \tilde \psi &&\text{for
  all }\upsilon\in N^{+}_{*}.
\end{align*}
\end{proposition}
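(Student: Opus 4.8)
The plan is to run, for each of $\tilde\mu,\tilde\nu,\tilde\phi,\tilde\psi$, the standard approximation argument from (operator-valued) weight theory: a normal semifinite faithful weight is the increasing limit of the bounded functionals obtained by truncating with a net of positive contractions lying in its centralizer whose squares increase $\sigma$-strongly to $1$, and hypotheses (A4) and (A5) are tailored to produce such nets. Concretely, I would first record: $e_{i}:=\pi_{\mu}(u_{i})\in N$ and $e_{i,j}:=\pi_{\nu}(u_{i,j})=\alpha(e_{i})\beta(e_{j})\in M$ (Lemma \ref{lemma:pmu-module}) are positive contractions; by (A5) the squares $e_{i}^{2}=\pi_{\mu}(u_{i}^{2})$ and $e_{i,j}^{2}=\alpha(\pi_{\mu}(u_{i}^{2}))\beta(\pi_{\mu}(u_{j}^{2}))$ are increasing nets of positive contractions converging $\sigma$-strongly to $1$ (using that $\alpha,\beta$ are normal, non-degenerate, with commuting ranges, and that $[\pi_{\mu}(B)K]=K$); since $N$ is abelian each $e_{i}$ is central and $\tilde\mu$ is a trace; and since $u_{i,j}=r(u_{i})s(u_{j})\in r(B)s(B)\subseteq A^{\theta}$ by (A4), $e_{i,j}\in\pi_{\nu}(A^{\theta})\subseteq M^{\sigma^{\tilde\nu}}$ by Lemma \ref{lemma:weight-frakrfraks}(i) and Proposition \ref{proposition:weight-modular}, so $e_{i,j}$ lies in the centralizer of $\tilde\nu$.

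For $\mu$ and $\nu$ this is then immediate. For $x\in N^{+}$ the usual vector-functional identity gives $\mu_{i}(x)=\tilde\mu(e_{i}xe_{i})=\tilde\mu(x^{1/2}e_{i}^{2}x^{1/2})$ by the trace property, an increasing net with supremum $\tilde\mu(x)$ by normality of $\tilde\mu$ and $e_{i}^{2}\nearrow 1$, so $(\mu_{i})_{i}\nearrow\tilde\mu$. Replacing $e_{i}$ by $e_{i,j}$ and the trace property by the centralizer relation $\tilde\nu(ey)=\tilde\nu(ye)$, one gets $\nu_{i,j}(x)=\tilde\nu(e_{i,j}xe_{i,j})=\tilde\nu(e_{i,j}^{2}x)\nearrow\tilde\nu(x)$, so $(\nu_{i,j})_{i,j}\nearrow\tilde\nu$; here the monotonicity and convergence of $\tilde\nu(e_{i,j}^{2}\,\cdot\,)$ follow, as usual, from additivity and normality of $\tilde\nu$ together with $e_{i,j}^{2}\nearrow 1$.

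For $\phi$ and $\psi$ I would reduce to the $\nu$-case by testing the positive maps $\phi_{i,j}$ against the dense set $\Lambda_{\mu}(B)$. From $\Lambda_{\phi}(u_{i,j})\Lambda_{\mu}(b)=\Lambda_{\nu}(u_{i,j}r(b))$, the fact that $u_{i,j}r(b)=r(u_{i}b)s(u_{j})$ again lies in $A^{\theta}$, and centralizer cyclicity, one obtains $\langle\Lambda_{\mu}(b)\,|\,\phi_{i,j}(x)\Lambda_{\mu}(b)\rangle=\nu_{i,j}\!\bigl(\pi_{\nu}(r(b^{*}b))^{1/2}x\,\pi_{\nu}(r(b^{*}b))^{1/2}\bigr)$ for $x\in M^{+}$, where (A4) is used so that $r(b^{*}b)\in A$; on the other side, since $\tilde\mu$ is a trace, $T_{L}$ is a module map over $\alpha(N)$, and $\pi_{\nu}(r(b))=\alpha(\pi_{\mu}(b))$ (again (A4)), one gets $\langle\Lambda_{\mu}(b)\,|\,\tilde\phi(x)\Lambda_{\mu}(b)\rangle=\tilde\nu(\pi_{\nu}(r(b^{*}b))x)$. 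By the $\nu$-case the left-hand sides increase to the right-hand sides. Since each $\phi_{i,j}(y)$, $y\in M^{+}$, is a bounded element of $N^{+}$ whose diagonal forms are increasing on the dense set $\Lambda_{\mu}(B)$, the net $(\phi_{i,j}(y))_{i,j}$ is increasing in $N^{+}$, hence $\upsilon\circ\phi_{i,j}\nearrow\upsilon\circ\Phi$ for each $\upsilon\in N^{+}_{*}$, where $\Phi(y):=\sup_{i,j}\phi_{i,j}(y)$ in the extended positive cone $\widehat{N}^{+}$; and because $N$ is abelian, an element of $\widehat{N}^{+}$ is determined by its quadratic-form values on the dense set $\Lambda_{\mu}(B)$, so the two computations above force $\Phi=\tilde\phi$ on $M^{+}$, i.e.\ $\upsilon\circ\phi_{i,j}\nearrow\upsilon\circ\tilde\phi$. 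The case of $\psi$ is identical with $T_{R}$, $\beta$ and $\Lambda_{\psi}$ in place of $T_{L}$, $\alpha$ and $\Lambda_{\phi}$; and in the bi-measured case everything follows at once from Remark \ref{remark:biintegral-extension}.

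The main obstacle is the identification $\Phi=\tilde\phi$ (resp.\ $\tilde\psi$): unlike $\mu$ and $\nu$, the maps $\tilde\phi,\tilde\psi$ are genuine operator-valued weights, so one must either invoke the approximation theory for such weights (e.g.\ \cite{stratila}, \cite{takesaki:2}) or carry out the reduction above, which hinges on the bounded-vector formulas \eqref{eq:bounded-inner-product} and \eqref{eq:ksgns-inner-phi}, on $\pi_{\nu}(r(b))=\alpha(\pi_{\mu}(b))$, and on $\sigma^{\tilde\nu}$ acting trivially on $\pi_{\nu}(A^{\theta})$. A recurring secondary difficulty is that suprema of weights are not continuous for decreasing nets and that the truncations $\phi_{i,j}$ need not satisfy $\phi_{i,j}\le\tilde\phi$ pointwise, so every limit must be arranged as an increasing one and justified through centralizer cyclicity, additivity and normality rather than through norm estimates.
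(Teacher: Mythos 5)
Your treatment of the scalar weights is fine and essentially equivalent to the paper's: the paper verifies $(\nu_{i,j})_{i,j}\nearrow\tilde\nu$ directly from the Hilbert-algebra description \eqref{eq:weight-extension} of $\tilde\nu$ as a supremum over right-contractive vectors, using that $u_{i,j}\in A^{\theta}$ makes $\Lambda_{\nu}(u_{i,j})$ right-bounded with $R_{i,j}=J_{\nu}\pi_{\nu}(u_{i,j})J_{\nu}$ (Lemma \ref{lemma:weight-rightbounded}); your centralizer argument ($e_{i,j}\in M^{\sigma^{\tilde\nu}}$ via Lemma \ref{lemma:weight-frakrfraks}, then $\tilde\nu(e_{i,j}xe_{i,j})=\tilde\nu(x^{1/2}e_{i,j}^{2}x^{1/2})\nearrow\tilde\nu(x)$) is a legitimate variant resting on the same ingredients.

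The genuine gap is in your identification $\Phi=\tilde\phi$ for the operator-valued part. You rest it on the claim that an element of $\widehat{N}^{+}$ is determined by its quadratic-form values on the dense subspace $\Lambda_{\mu}(B)$ because $N$ is abelian; this is false. For $N=L^{\infty}(X)$ an element of $\widehat{N}^{+}$ is an $[0,\infty]$-valued function $f$, and two such functions can satisfy $\int f|\xi|^{2}=\int g|\xi|^{2}$ for all $\xi$ in a dense subspace (for instance both values being $+\infty$ there, as happens for $f\equiv\infty$ and a finite $g$ whose integral over every nonempty open set is infinite, tested against continuous $\xi$) without being equal; and since $\tilde\phi$ is only semifinite, you cannot arrange finiteness of these forms on enough test vectors to run a difference argument. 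The paper avoids exactly this: it only composes the pointwise supremum $\omega=\sup_{i,j}\phi_{i,j}$ with the single faithful weight $\tilde\mu$, proves $\tilde\mu\circ\omega=\tilde\nu$ by an additional approximation in $K$ (a third index $k$, vectors $\Lambda_{\phi}(u_{i,j})\Lambda_{\mu}(u_{k})$ and Lemma \ref{lemma:weight-rightbounded}~iii)), and then concludes $\omega=\tilde\phi$ from the \emph{uniqueness} of the operator-valued weight $T_{L}$ with $\tilde\mu\circ\alpha^{-1}\circ T_{L}=\tilde\nu$ (\cite[IX Theorem 4.18]{takesaki:2}, \cite[10.1]{stratila}) -- i.e.\ precisely the uniqueness statement that defined $\tilde\phi$. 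Your closing remark that one could ``invoke the approximation theory for such weights'' points in this direction, but as written your main route does not close the argument, and your intermediate identity $\langle\Lambda_{\mu}(b)|\phi_{i,j}(x)\Lambda_{\mu}(b)\rangle=\nu_{i,j}\bigl(\pi_{\nu}(r(b^{*}b))^{1/2}x\pi_{\nu}(r(b^{*}b))^{1/2}\bigr)$ should in any case read $\nu_{i,j}\bigl(\alpha(\pi_{\mu}(b))^{*}x\,\alpha(\pi_{\mu}(b))\bigr)$, since replacing $\alpha(\pi_{\mu}(b))$ by its modulus changes the vector $\Lambda_{\nu}(u_{i,j}r(b))$ and is not justified by commutativity alone.
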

The proof requires some preparations. We shall focus on the
weights $\tilde \nu$ and $\tilde \phi$; the case $\tilde
\mu$ is quite simple and the case $\tilde \psi$ is similar
to the case $\tilde \phi$.  Recall that an element $\xi \in
H$ is \emph{right-bounded} with respect to the Hilbert
algebra $\Lambda_{\nu}(A)$ if there exists an operator
$R_{\xi} \in \mathcal{L}(H)$ such that
$\pi_{\nu}(a)\xi = R_{\xi} \Lambda_{\nu}(a)$ for all $a \in
A$. Note that then $R_{\xi} \in \frakA'$. Let us call $\xi
\in H$ \emph{right-contractive} if $\xi$ is
right-bounded and $\|R_{\xi}\|\leq 1$. Then $\tilde \nu$ is
given by
\begin{align} \label{eq:weight-extension}
  \tilde \nu(x^{*}x) = \sup \left\{ \|x\xi\|^{2} \,\middle|\, \xi
  \in H \text{ is right-contractive} \right\} \quad \text{for all }
  x\in M.
\end{align}
\begin{lemma} \label{lemma:weight-rightbounded} 
  \begin{enumerate}
  \item If $x \in A^{\theta}$, then the vector $ \Lambda_{\nu}(x) \in
    H$ is right-bounded, $R_{\Lambda_{\nu}(x)} =
    J_{\nu}\pi_{\nu}(x)^{*}J_{\nu}$ and
    $\|R_{\Lambda_{\nu}(x)}\| = \|\pi_{\nu}(x)\|$.
  \item If $x \in A^{\theta} \cap r(B)'$, then
    $\pi_{\nu}(a)\Lambda_{\phi}(x) = R_{\Lambda_{\nu}(x)}
    \Lambda_{\phi}(a)$  for all $a\in A$.
  \item If $a \in A$ and $\xi \in K$ is right-bounded with
    respect to $\Lambda_{\mu}(B)$, then
    $\Lambda_{\phi}(a)\xi =
    \hbeta(R_{\xi})\Lambda_{\nu}(a)$.
  \end{enumerate}
\end{lemma}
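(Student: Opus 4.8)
The plan is to deduce (i) from the general theory of left Hilbert algebras applied to $\Lambda_\nu(A)$ (Lemma~\ref{lemma:extension-hilbert-algebra}), the key input being that $\sigma^{\tilde\nu}$ acts trivially on $\pi_\nu(A^\theta)$ by Lemma~\ref{lemma:weight-frakrfraks}~i); statements (ii) and (iii) then follow from (i) together with Lemma~\ref{lemma:ksgns-phi-psi} and Lemma~\ref{lemma:module-bounded}. For (i), fix $x\in A^\theta$. By Lemma~\ref{lemma:weight-frakrfraks}~i), $\sigma^{\tilde\nu}_t(\pi_\nu(x))=\pi_\nu(x)$ for all $t\in\reals$, and since $\pi_\nu(x)\in\mathcal{T}_{\tilde\nu}$ by Proposition~\ref{proposition:weight-modular}, the $M$-valued entire function $z\mapsto\sigma^{\tilde\nu}_z(\pi_\nu(x))$ is constant, so $\sigma^{\tilde\nu}_z(\pi_\nu(x))=\pi_\nu(x)$ for all $z\in\complex$; moreover $\pi_\nu(x)\in\frakN_{\tilde\nu}\cap\frakN_{\tilde\nu}^{*}$ as $x,x^{*}\in A$. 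Applying the standard right-multiplication formula $\Lambda_{\tilde\nu}(mn)=J_\nu\,\sigma^{\tilde\nu}_{i/2}(n)^{*}\,J_\nu\,\Lambda_{\tilde\nu}(m)$, valid for $m\in\frakN_{\tilde\nu}$ and $n$ in the Tomita algebra, with $m=\pi_\nu(a)$ ($a\in A$ arbitrary) and $n=\pi_\nu(x)$, and using $\sigma^{\tilde\nu}_{i/2}(\pi_\nu(x))=\pi_\nu(x)$, gives $\Lambda_\nu(ax)=J_\nu\pi_\nu(x)^{*}J_\nu\Lambda_\nu(a)$ for all $a\in A$; hence $\Lambda_\nu(x)$ is right-bounded with $R_{\Lambda_\nu(x)}=J_\nu\pi_\nu(x)^{*}J_\nu$, and $\|R_{\Lambda_\nu(x)}\|=\|\pi_\nu(x)^{*}\|=\|\pi_\nu(x)\|$ by anti-unitarity of $J_\nu$.

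For (ii), let $x\in A^\theta\cap r(B)'$ and $a\in A$. By (i) both $\pi_\nu(a)\Lambda_\phi(x)$ and $R_{\Lambda_\nu(x)}\Lambda_\phi(a)$ lie in $\mathcal{L}(K,H)$, and for every $b\in B$, using $xr(b)=r(b)x$, the formula for $\Lambda_\phi$ in Lemma~\ref{lemma:ksgns-phi-psi}, and the defining relation $R_{\Lambda_\nu(x)}\Lambda_\nu(c)=\Lambda_\nu(cx)$,
\begin{align*}
  \pi_\nu(a)\Lambda_\phi(x)\Lambda_\mu(b)=\Lambda_\nu(axr(b))=\Lambda_\nu(ar(b)x)=R_{\Lambda_\nu(x)}\Lambda_\phi(a)\Lambda_\mu(b);
\end{align*}
as $\Lambda_\mu(B)$ is dense in $K$, the two operators agree.

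For (iii), observe that, $B$ being commutative, $J_\mu\pi_\mu(b)J_\mu=\pi_\mu(b^{*})$, so $N'=J_\mu N J_\mu=N$ is maximal abelian and $\tilde\mu$ is a trace; by the general theory of Hilbert algebras a vector $\xi\in K$ is right-bounded with respect to $\Lambda_\mu(B)$ exactly when $R_\xi\in\frakN_{\tilde\mu}$ with $\xi=\Lambda_{\tilde\mu}(R_\xi)$ --- indeed $R_\xi$ commutes with $\pi_\mu(B)$, hence $R_\xi\in N'=N$, and $R_\xi\in\frakN_{\tilde\mu}$ with $\Lambda_{\tilde\mu}(R_\xi)=\xi$ follows by approximating $\xi$ by $R_\xi\Lambda_\mu(u_i)=\pi_\mu(u_i)\xi$ for a bounded approximate unit $(u_i)$ of $B$ (cf.\ (A5)) together with closedness of $\Lambda_{\tilde\mu}$. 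Given such a $\xi$ and $a\in A$, Lemma~\ref{lemma:module-bounded} gives $\Lambda_\nu(a)\in\Dhb$ with $R_{\Lambda_\nu(a)}^{\hbeta,\tilde\mu}=\Lambda_\phi(a)$, so by the defining property of bounded vectors
\begin{align*}
  \Lambda_\phi(a)\xi=R_{\Lambda_\nu(a)}^{\hbeta,\tilde\mu}\Lambda_{\tilde\mu}(R_\xi)=\hbeta(R_\xi)\Lambda_\nu(a).
\end{align*}

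The main obstacle is (i): one must invoke, with the correct sign, the Tomita-algebra right-multiplication formula, and the role of the hypothesis $x\in A^\theta$ is precisely that it forces $\pi_\nu(x)$ to be $\sigma^{\tilde\nu}$-invariant, which both places $\Lambda_\nu(x)$ in the domain where that formula is available and makes it collapse to $J_\nu\pi_\nu(x)^{*}J_\nu$. A fully self-contained argument would instead require tracking the action of $J_\nu$ and $\Delta_\nu$ on $\Lambda_\nu(A)$ via Proposition~\ref{proposition:weight-modular} and the relations in Lemma~\ref{lemma:modular-d}, which is where the real work lies; (ii) and (iii) are then routine.
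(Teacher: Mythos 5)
Your proposal is correct and takes essentially the same route as the paper: part (ii) is the paper's computation verbatim, and part (i) is exactly the paper's one-line identity $\pi_{\nu}(a)\Lambda_{\nu}(x)=\Lambda_{\nu}(ax)=J_{\nu}\pi_{\nu}(x)^{*}J_{\nu}\Lambda_{\nu}(a)$, with you supplying the intended justification (constancy of $z\mapsto\sigma^{\tilde\nu}_{z}(\pi_{\nu}(x))$ from Proposition \ref{proposition:weight-modular} and Lemma \ref{lemma:weight-frakrfraks}, plus the standard Tomita-algebra right-multiplication formula). Part (iii) only reorganizes the paper's argument, which verifies the identity on $\Lambda_{\mu}(B)$ and extends via the core property and the identification of right-bounded vectors with $\Lambda_{\tilde\mu}(\frakN_{\tilde\mu})$ --- the same fact you invoke directly through Lemma \ref{lemma:module-bounded}.
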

\begin{proof}
i)   For all $x\in A^{\theta},a\in A$, we have $\pi_{\nu}(a)
  \Lambda_{\nu}(x) = \Lambda_{\nu}(ax) =
    J_{\nu}\pi_{\nu}(x)^{*}J_{\nu}\Lambda_{\nu}(a)$.

    ii) For all $x \in A^{\theta} \cap r(B)', a\in A,b\in
    B$,
    \begin{align*}
      \pi_{\nu}(a)\Lambda_{\phi}(x)\Lambda_{\mu}(b) =
    \Lambda_{\nu} (axr(b))  &= \Lambda_{\nu}(ar(b)x) \\ &=
    \pi_{\nu}(ar(b))\Lambda_{\nu}(x) = R_{\Lambda_{\nu}(x)}
    \Lambda_{\nu}(ar(b)) = R_{\Lambda_{\nu}(x)}
    \Lambda_{\phi}(a) \Lambda_{\mu}(b).
    \end{align*}

    iii) If $a \in A$ and $\xi=\Lambda_{\mu}(b)$ for some
    $b\in B$, then $R_{\xi}=\pi_{\mu}(b)$ and $\Lambda_{\phi}(a)\xi =
    \Lambda_{\nu}(ar(b)) =
    \hbeta(\pi_{\mu}(b))\Lambda_{\nu}(a)$. Now, the assertion
    follows  for all right-bounded $\xi$ because
    $\Lambda_{\mu}(B)$ is a core for $\Lambda_{\tilde\mu}$
    and the right-bounded elements coincide with $\Lambda_{\tilde\mu}(\frakN_{\tilde\mu})$.
\end{proof}
\begin{proof}[Proof of Proposition \ref{proposition:weight-truncating}]
  We only prove the assertions concerning
  $(\nu_{i,j})_{i,j}$ and $(\phi_{i,j})_{i,j}$; the others
  follow similarly.

  Let $\xi_{i,j}:=\Lambda_{\nu}(u_{i,j})$ and
  $R_{i,j}:=R_{\xi_{i,j}} =
  J_{\nu}\pi_{\nu}(u_{i,j})J_{\nu}$ for all $i,j$.  By Lemma
  \ref{lemma:weight-rightbounded}, each $\xi_{i,j}$ is
  right-contractive and hence $\nu_{i,j}(x^{*}x) =
  \|x\Lambda_{\nu}(u_{i,j})\|^{2} \leq \tilde \nu(x^{*}x)$
  for all $i,j$ and all $x\in M$.  The net
  $(\nu_{i,j})_{i,j}$ in $M_{*}^{+}$ is increasing because
  (i) $(R_{i,j}^{*}R_{i,j})_{i,j}$ is increasing by
  assumption on $(u_{i})_{i}$, (ii)
  $\nu_{i,j}(\pi_{\nu}(a^{*}a)) = \|R_{\xi_{i,j}}
  \Lambda_{\nu}(a)\|^{2}$ for all $a\in A$, and (iii)
  $\pi_{\nu}(A) \subseteq M$ is weakly dense.  For each
  right-contractive $\xi \in H$ and each $x\in M$,
  \begin{align*}
    \|x\xi\|^{2} &= \lim_{i,j} \|x\pi_{\nu}(u_{i,j})\xi\|^{2}
    = \lim_{i,j} \| x R_{\xi} \Lambda_{\nu}(u_{i,j})\|^{2}
    \leq \lim_{i,j} \|x \Lambda_{\nu}(u_{i,j})\|^{2} =
    \lim_{i,j} \nu_{i,j}(x^{*}x)
  \end{align*} 
  because $R_{\xi} \in M'$ and $R_{\xi}^{*}R_{\xi} \leq
  1$. Therefore, $\tilde \nu(x^{*}x) \leq \lim_{i,j}
  \nu_{i,j}(x^{*}x)$.

  A similar argument as above and Lemma
  \ref{lemma:weight-rightbounded} ii) show that for each
  $\upsilon\in N_{*}^{+}$, the net $(\upsilon \circ
  \phi_{i,j})_{i,j}$ is increasing.  Taking  pointwise
  limits, we obtain a normal semi-finite weight $\omega$
  from $M$ to $N$ such that for each
  $y\in M$, the element $\omega(y^{*}y)$ in the
  extended positive part $\hat{N}_{+}$ is
  defined by $\upsilon(\omega(y)) = \sup_{i,j}
  \upsilon(\phi_{i,j}(y^{*}y))$ for all $\upsilon \in
  N_{*}^{+}$. Then for all $y \in M$,
  \begin{align*}
    \tilde \mu(\omega(y^{*}y))\underset{i,j,k}{\nwarrow}
    \| y\Lambda_{\phi}(u_{i,j}) \Lambda_{\mu}(u_{k})\|^{2} =
    \|y \hbeta(\pi_{\mu}(u_{k}))\xi_{i,j}\|^{2}
    \xrightarrow{k\to\infty} \| y \xi_{i,j}\|^{2} = \nu_{i,j}(y^{*}y)
    \underset{i,j}{\nearrow} \tilde \nu(y^{*}y)
  \end{align*}
  and hence $\tilde \mu \circ \omega = \tilde \nu$.  By
  \cite[Theorem 4.18]{takesaki:2}, $\omega=\tilde \phi$.
\end{proof}
The next step towards the proof of Theorem
\ref{theorem:weight-invariance} is the following result:

\begin{lemma} \label{lemma:pmu-flippy}
  $ W^{*}\rho^{\alpha,\hbeta}_{\Lambda_{\nu}(r(b)s(b'))}
  \beta(\pi_{\mu}(b'')) =
  \rho^{\beta,\alpha}_{\Lambda_{\nu}(r(b'')s(b'))}
  \alpha(\pi_{\mu}(b))$ for all $b,b',b'' \in B$.
\end{lemma}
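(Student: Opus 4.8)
The plan is to test the claimed operator identity against the dense subspace $\Lambda_\nu(A)\subseteq H$, so that after fixing $a\in A$ it becomes an equality of two vectors in $H\swotimes H$. First I would unwind the left-hand side. By the formula for $\beta$ in Lemma~\ref{lemma:pmu-module}, $\beta(\pi_\mu(b''))\Lambda_\nu(a)=\Lambda_\nu(s(b'')a)$, and applying $\rho^{\alpha,\hbeta}_{\Lambda_\nu(r(b)s(b'))}$ (which is legitimate since $r(b)s(b')\in A$ by properness, hence $\Lambda_\nu(r(b)s(b'))\in\Dhb$ by Lemma~\ref{lemma:module-bounded}) produces $\Lambda_\nu(s(b'')a)\otimesm\Lambda_\nu(r(b)s(b'))\in H\rwotimes H$. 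Then I would apply $W^{*}$ using Proposition~\ref{proposition:pmu}. The two algebraic facts that make this explicit are: $\Delta(r(b)s(b'))=r(b)\todot s(b')$ --- which holds because $\Delta$ is a $B\otimes B$-linear morphism of $(B,\Gamma)^{\ev}$-algebras, so its extension to multipliers sends $r(b)\mapsto r(b)\todot 1$ and $s(b')\mapsto 1\todot s(b')$ --- together with the fact that $D^{\frac{1}{2}}$ and $\bar D^{\frac{1}{2}}$ restrict to the identity on $r(B)$ and $s(B)$ (since these elements are homogeneous of degree $e$ and $D^{\frac{1}{2}}_e=1$). Feeding this into Proposition~\ref{proposition:pmu}, after the usual bookkeeping with local units in $B$, shows that the left-hand side carries $\Lambda_\nu(a)$ to $\Lambda_\nu(r(b)s(b'')a)\otimesm\Lambda_\nu(s(b'))$.

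On the right-hand side, Lemma~\ref{lemma:pmu-module} gives $\alpha(\pi_\mu(b))\Lambda_\nu(a)=\Lambda_\nu(r(b)a)$, so the right-hand side carries $\Lambda_\nu(a)$ to $\Lambda_\nu(r(b)a)\otimesm\Lambda_\nu(r(b'')s(b'))\in H\swotimes H$. To identify the two resulting vectors I would use the formulas of Lemma~\ref{lemma:pmu-module} once more, together with the commutativity of $r(B)$ with $s(B)$, to rewrite $\Lambda_\nu(r(b)s(b'')a)=\beta(\pi_\mu(b''))\Lambda_\nu(r(b)a)$ and $\Lambda_\nu(r(b'')s(b'))=\alpha(\pi_\mu(b''))\Lambda_\nu(s(b'))$, and then invoke the $N$-balancing relation $\beta(x)\xi\otimesm\eta=\xi\otimesm\alpha(x)\eta$ in the relative tensor product $H\swotimes H$ for the central element $x=\pi_\mu(b'')$ (a standard property of Connes' relative tensor product, cf.\ \cite{takesaki:2,timmermann:buch}). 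Comparing the three expressions then settles the equality on $\Lambda_\nu(a)$, and density of $\Lambda_\nu(A)$ upgrades it to the asserted operator identity.

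The main obstacle here is bookkeeping rather than conceptual. One has to be careful that the Sweedler-type manipulation inside $W^{*}$ is legitimate for the possibly non-unital multipliers $r(b),s(b')$ --- which is exactly why I isolate $\Delta(r(b)s(b'))=r(b)\todot s(b')$ and the triviality of $D^{\frac{1}{2}},\bar D^{\frac{1}{2}}$ on the images of $r$ and $s$ --- and one has to match the two presentations of the relative tensor product ($H\rwotimes H$ versus $H\swotimes H$) correctly with the balancing relation in the target. Once these points are handled, the remainder is a routine application of the formulas in Lemmas~\ref{lemma:pmu-module} and~\ref{lemma:module-bounded} and in Proposition~\ref{proposition:pmu}.
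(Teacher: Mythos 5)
Your proposal is correct and follows essentially the same route as the paper: apply both sides to $\Lambda_{\nu}(a)$ for $a\in A$, reduce the left-hand side via $\Delta(r(b)s(b'))=r(b)\todot s(b')$ and the explicit formula for $W^{*}$, and identify the two resulting elementary tensors in $H\swotimes H$ using the balancing over $N$ (the paper compresses this last identification into ``which coincide''). Your additional bookkeeping --- boundedness of the relevant vectors via Lemma \ref{lemma:module-bounded}, triviality of $D^{\frac{1}{2}}$ on degree-$(e,e)$ elements, and the relation $\beta(x)\xi\otimesm\eta=\xi\otimesm\alpha(x)\eta$ --- is exactly what the paper's terse proof implicitly relies on.
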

\begin{proof}
  Applying both sides to $\Lambda_{\nu}(a)$, where $a\in A$
  is arbitrary, we obtain $W^{*}(\Lambda_{\nu}(s(b'')a)
  \otimesm \Lambda_{\nu}(r(b)s(b')))$ and $
  \Lambda_{\nu}(r(b) a) \otimesm \Lambda_{\nu}(r(b'')s(b'))
  $, respectively, which coincide.
\end{proof}
\begin{proof}[Proof of Theorem \ref{theorem:weight-invariance}]
To prove the assertion concerning $\tilde \phi$ and
  $T_{L}$, we show that 
  \begin{align} \label{eq:weight-right-invariant}
    \langle \zeta|
    \tilde\phi((\lambda^{\beta,\alpha}_{\xi})^{*}
    \Delta(x^{*}x)\lambda^{\beta,\alpha}_{\xi})\zeta\rangle =
    \|\alpha(\tilde \phi(x^{*}x))^{\frac{1}{2}}
    R^{\beta,\tilde\mu}_{\xi}\zeta\|^{2}
  \end{align}
  for all $x\in \frakN_{T_{L}}$, $\xi \in \Db$ and $\zeta
  \in K$.  Given such $x,\xi,\zeta$, let
  $\xi_{k}:=\alpha(\pi_{\mu}(u_{k}))\xi$ and
  \begin{align*}
    c_{i,j,k} &:= \langle \zeta|
    \phi_{i,j}((\lambda^{\beta,\alpha}_{\xi_{k}})^{*}
    \Delta(x^{*}x)
    \lambda^{\beta,\alpha}_{\xi_{k}}) \zeta\rangle \quad
    \text{for all } i,j,k.
  \end{align*}
  Then $R^{\beta,\tilde\mu}_{\xi_{k}} =
  \alpha(\pi_{\mu}(u_{k}))R^{\beta,\tilde\mu}_{\xi}$,
  $\lambda^{\beta,\alpha}_{\xi_{k}} =
  (\alpha(\pi_{\mu}(u_{k}))\otimesm\Id)\lambda^{\beta,\alpha}_{\xi}$,
  and by Proposition \ref{proposition:weight-truncating},
  \begin{align*}
    c_{i,j,k} \xrightarrow{k\to\infty} \langle \zeta|
    \phi_{i,j}((\lambda^{\beta,\alpha}_{\xi})^{*}
    \Delta(x^{*}x)\lambda^{\beta,\alpha}_{\xi}) \zeta\rangle
    \underset{i,j} \nearrow \langle \zeta|
    \tilde\phi((\lambda^{\beta,\alpha}_{\xi})^{*}
    \Delta(x^{*}x)\lambda^{\beta,\alpha}_{\xi})\zeta\rangle.
  \end{align*}
  On the other hand, using the relation
  $\Lambda_{\phi}(u_{i,j}) =
  \Lambda_{\phi}^{\dag}(u_{i,j})$, we find
  \begin{align*}
    c_{i,j,k} &= \| (1 \btensor x)W\lambda^{\beta,\alpha}_{\xi_{k}}\Lambda_{\phi}(u_{i,j})\zeta\|^{2}
    && \text{(Definition of $\Delta_{W}$ and $\phi_{i,j}$)}
    \\
    &= \| (1 \btensor
    x)W\rho^{\beta,\alpha}_{\Lambda_{\nu}(u_{i,j})}\alpha(\pi_{\mu}(u_k)) 
    R^{\beta,\tilde\mu}_{\xi}\zeta\|^{2} &&
    \text{(Definition of $H\rotimes H$)}\\
    &= \| (1 \btensor
    x)\rho^{\halpha,\beta}_{\Lambda_{\nu}(u_{k,j})}\beta(\pi_{\mu}(u_{i}))
    R^{\beta,\tilde \mu}_{\eta}\zeta\|^{2} && \text{(Lemma
      \ref{lemma:pmu-flippy})} \\ &= \|
    \halpha(\phi_{k,j}(x^{*}x))^{\frac{1}{2}}\beta(\pi_{\mu}(u_{i}))
    R^{\beta,\tilde \mu}_{\xi}\zeta\|^{2}
    \underset{i,j,k}{\nearrow} \|\halpha(\tilde
    \phi(x^{*}x))^{\frac{1}{2}}
    R^{\beta,\tilde\mu}_{\xi}\zeta\|^{2}.  &&
    \text{(Proposition \ref{proposition:weight-truncating})}
  \end{align*}
  Thus, \eqref{eq:weight-right-invariant} follows.  The
  assertion concerning $\tilde \psi$ and $T_{R}$ can be
  proven similarly, where $W$ has to be replaced by the
  unitary $V$.
\end{proof}
\begin{remark} \label{remark:biintegral-invariant} Assume
  that $\phi=(\Id \otimes \mu) \circ h$ for a normalized
  bi-integral $h$ on $(A,\Delta)$. Then for each $b\in B$,
  the map $\Lambda_{\mu}(B)\to \Lambda_{\nu}(A)$ given by
  $\Lambda_{\mu}(c) \mapsto \Lambda_{\nu}(s(b)r(c))$ is
  bounded with norm less than or equal to
  $\mu(b^{*}b)^{\frac{1}{2}}$, and therefore extends to an
  operator $\Lambda_{\phi}(s(b)) \in \mathcal{L}( K,
  H)$. One can then approximate $\tilde \phi$ monotonously
  by the maps $\phi_{i} \colon M\to N$, $x \mapsto
  \Lambda_{\phi}(s(u_{i}))^{*}x\Lambda_{\phi}(s(u_{i}))$,
  and a similar calculation as in Lemma
  \ref{lemma:pmu-flippy} shows that each $\phi_{i}$ is
  right-invariant.
\end{remark}

Associated to the measured quantum groupoid
$(N,\tilde\mu,M,\alpha,\beta,\Delta,T_{L},T_{R},\tilde
\nu)$ are two fundamental unitaries $U'_{H} \colon H
\sotimes H \to H\rotimes H$ and $U_{H} \colon H \rwotimes H
\to H \swotimes H$, characterized by
\begin{align*}
  (\lambda^{\beta,\alpha}_{w})^{*} U_{H} (v \otimesm
  \Lambda_{\tilde\nu}(a)) &=
  \Lambda_{\tilde\nu}((\omega_{w,v} \ast
  \Id)(\Delta(a))) && \text{for all } v,w\in
  D(H_{\beta},\tilde\mu), a \in \frakN_{\tilde \nu}\cap
  \frakN_{T_{L}}, \\
  (\rho^{\beta,\alpha}_{w'})^{*}U'_{H}(\Lambda_{\tilde\nu}(a')
  \otimesm v') &= \Lambda_{\tilde \nu}((\Id \ast
  \omega_{w',v'})(\Delta(a'))), && \text{for all }
  v',w'\in D(H_{\alpha},\tilde\mu), a' \in \frakN_{\tilde
    \nu}\cap \frakN_{T_{R}};
\end{align*}
see \cite[Proposition 3.17]{lesieur}.
\begin{proposition}
$W^{*}=U_{H}$ and   $V=U'_{H}$.
\end{proposition}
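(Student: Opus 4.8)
The plan is to exploit that, by \cite[Proposition 3.17]{lesieur}, $U_{H}$ and $U'_{H}$ are the \emph{unique} unitaries $H\rwotimes H\to H\swotimes H$ and $H\sotimes H\to H\rotimes H$ satisfying the two displayed intertwining relations with $\Lambda_{\tilde\nu}$, $T_{L}$ and $T_{R}$; so it suffices to check that $W^{*}$ and $V$ satisfy them. First I would assemble the data turning this into a purely algebraic verification. From the Hilbert-algebra construction of $\tilde\nu$ and Proposition \ref{proposition:weight-modular} one has $\pi_{\nu}(A)\subseteq \frakN_{\tilde\nu}$, $\Lambda_{\tilde\nu}(\pi_{\nu}(a))=\Lambda_{\nu}(a)$ for $a\in A$, and $\pi_{\nu}(A)$ is a core for $\Lambda_{\tilde\nu}$; moreover $\pi_{\nu}(a)\in \frakN_{T_{L}}\cap\frakN_{T_{R}}$ because $\tilde\phi(\pi_{\nu}(a^{*}a))=\pi_{\mu}(\phi(a^{*}a))\in N$ and likewise for $\tilde\psi$. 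By Lemma \ref{lemma:module-bounded} every $\Lambda_{\nu}(x)$ with $x\in A$ lies in $\Da\cap\Db\cap\Dha\cap\Dhb$, with $R$-operators as in Lemma \ref{lemma:module-bounded} and $N$-valued inner products as in \eqref{eq:bounded-inner-product}, and the vectors $\Lambda_{\nu}(y)\otimesm\Lambda_{\nu}(a)$ (resp.\ $\Lambda_{\nu}(a)\otimesm\Lambda_{\nu}(y)$) span a dense subspace of $H\rwotimes H$ (resp.\ $H\sotimes H$). Hence it is enough to verify the relations with $v,w$ in $\Lambda_{\nu}(A)$ and $a$ in $A$.

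For $W^{*}=U_{H}$, fix $a,x,y\in A$ and set $w=\Lambda_{\nu}(x)$, $v=\Lambda_{\nu}(y)$. On one side, the explicit formula $\Delta(\pi_{\nu}(a))(\Lambda_{\nu}(p)\otimesm\Lambda_{\nu}(q))=\sum\Lambda_{\nu}(a_{(1)}p)\otimesm\Lambda_{\nu}(D^{\frac{1}{2}}(a_{(2)})q)$ from \S\ref{subsection:hopf-vn-bimodules}, together with \eqref{eq:rtp-legs-adjoints}, \eqref{eq:bounded-inner-product} and Lemma \ref{lemma:modular-d}, yields $(\omega_{w,v}\ast\Id)(\Delta(\pi_{\nu}(a)))=(\lambda^{\beta,\alpha}_{w})^{*}\Delta(\pi_{\nu}(a))\lambda^{\beta,\alpha}_{v}=\pi_{\nu}(b)$ for an explicit $b\in A$ built from $\psi$, $D^{\frac{1}{2}}$ and $\theta$, whence its image under $\Lambda_{\tilde\nu}$ is $\Lambda_{\nu}(b)$. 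On the other side, the formula for $W^{*}$ in Proposition \ref{proposition:pmu} and the adjoint formula \eqref{eq:rtp-legs-adjoints} give $(\lambda^{\beta,\alpha}_{w})^{*}W^{*}(v\otimesm\Lambda_{\nu}(a))=\Lambda_{\nu}(b')$ for an explicit $b'$; this is essentially the computation already carried out in Lemma \ref{lemma:pmu-slice} and Remark \ref{remarks:pmu-slice}. It then remains to check $b=b'$, which is a straightforward identity in $A$ using the relations of Lemma \ref{lemma:modular-d} and Proposition \ref{proposition:integral-strong-invariance} between $S$, $D^{\frac{1}{2}}$, $\bar D^{\frac{1}{2}}$, $\theta$ and the convolution formulas \eqref{eq:convolution-alt}. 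Thus $W^{*}$ obeys the characterizing relation of $U_{H}$ on a dense set, and since both are unitaries, $W^{*}=U_{H}$. The proof of $V=U'_{H}$ is entirely parallel, using Proposition \ref{proposition:pmu-v}, the explicit slices of $V$ from Lemma \ref{lemma:pmu-v-slice}, and the right-hand slice $(\Id\ast\omega_{w',v'})(\Delta(\cdot))$ together with $T_{R}$ in place of $T_{L}$.

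I expect the main obstacle to be organizational rather than conceptual: one must line up Lesieur's analytic data --- the spaces $D(H_{\bullet},\tilde\mu)$, the domains $\frakN_{\tilde\nu}\cap\frakN_{T_{L}}$ and $\frakN_{\tilde\nu}\cap\frakN_{T_{R}}$, the slice maps $\omega_{w,v}\ast\Id$ and $\Id\ast\omega_{w',v'}$, and the identifications \eqref{eq:rtp-asymmetric}, \eqref{eq:rtp-identify} of the relative tensor products --- with the Sweedler-type formulas for $W$, $V$ and $\Delta$, all the while tracking the twisting maps $D^{\frac{1}{2}}$, $\bar D^{\frac{1}{2}}$ and the modular map $\theta$; in particular one has to reconcile the fact that the vector $v$ enters the left-hand side as an $\alpha$-leg vector of $H\rwotimes H$ but the right-hand side as a $\beta$-leg vector in the slice of $\Delta$, which is legitimate precisely because $\Lambda_{\nu}(A)$ sits simultaneously in all four bounded-vector spaces. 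Once the reduction of the previous paragraph is in place, the remaining work is a direct if somewhat lengthy calculation with no further input.
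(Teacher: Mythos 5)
Your proposal is correct and follows essentially the same route as the paper: both verify Lesieur's characterizing relations for $U_{H}$ and $U'_{H}$ on vectors coming from $\Lambda_{\nu}(A)$, using the explicit slice formulas for $W^{*}$ and $V$ (Lemmas \ref{lemma:pmu-slice}, \ref{lemma:pmu-v-slice}) and the slices of $\Delta$. The only difference is organizational: where you plan to compute both sides to explicit elements $b,b'\in A$ and then check $b=b'$, the paper avoids that last step by expanding both sides with respect to a common decomposition $\sum \bar D^{\frac{1}{2}}(y_{(1)})x'\otimesB y_{(2)}=\sum v_{i}\otimesB w_{i}$, so the two expressions coincide term by term.
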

\begin{proof}
  Let $x,y,y',z \in A$ and choose $v_{i},w_{i}\in A$ such
  that $\sum \bar D^{\frac{1}{2}}(y_{(1)})x' \otimesB
  y_{(2)} = \sum v_{i} \otimesB w_{i}$ in $\sA\otimesB\rA$. Then
    \begin{align*}
      (\omega_{\Lambda_{\nu}(x),\Lambda_{\nu}(x')} \ast \Id)
      (W^{*}) \Lambda_{\nu}(y) &= \sum_{i}
      (\lambda_{\Lambda_{\nu}(x)}^{\beta,\alpha})^{*}
      (\Lambda_{\nu}(v_{i})
      \otimesm \Lambda_{\nu}(w_{i})) = \sum_{i} \Lambda_{\nu}(r(\psi(v_{i}\theta(x^{*})))w_{i}), \\
      (\omega_{\Lambda_{\nu}(x),\Lambda_{\nu}(x')} \ast
      \Id)(\Delta(y)) \Lambda_{\nu}(z) &=\sum_{i}
      (\lambda_{\Lambda_{\nu}(x)}^{\beta,\alpha})^{*}
      (\Lambda_{\nu}(v_{i}) \otimesm \Lambda_{\nu}(w_{i}z))
      = \sum_{i}
      \pi_{\nu}(r(\psi(v_{i}\theta(x^{*}))))\Lambda_{\nu}(w_{i}z),
    \end{align*}
    and hence $ (\omega_{\Lambda_{\nu}(x),\Lambda_{\nu}(x')}
    \ast \Id) (W^{*})
    \Lambda_{\nu}(y)=\Lambda_{\tilde\nu}((\omega_{\Lambda_{\nu}(x),\Lambda_{\nu}(x')}
    \ast \Id)(\Delta(y)))$. Likewise,  with $v'_{i},w'_{i}
    \in A$ such that $\sum \bar
    D^{\frac{1}{2}}(x_{(1)}) \otimesB x_{(2)}y' = \sum v'_{i}
    \otimesB w'_{i} \in \sA\otimesB \rA$, we find
  \begin{align*}
    (\Id \ast
    \omega_{\Lambda_{\nu}(y),\Lambda_{\nu}(y')})(V)
    \Lambda_{\nu}(x) &= \sum_{i}
    (\rho^{\beta,\alpha}_{\Lambda_{\nu}(y)})^{*}
    (\Lambda_{\nu}(v'_{i}) \otimesm
    \Lambda_{\nu}(w'_{i})) 
    = \sum_{i} \Lambda_{\nu}(s(\phi(w'_{i}\theta(y^{*}))) v'_{i}), \\
    (\Id \ast
    \omega_{\Lambda_{\nu}(y'),\Lambda_{\nu}(y)})(\Delta(\pi_{\nu}(x)))
    \Lambda_{\nu}(z) &= \sum_{i}
    (\rho^{\beta,\alpha}_{\Lambda_{\nu}(y)})^{*}
    (\Lambda_{\nu}(v'_{i}z) \otimesm
    \Lambda_{\nu}(w'_{i}) 
    = \sum_{i} \pi_{\nu}(s(w'_{i}\theta(y^{*})))\Lambda_{\nu}(v'_{i}z)
  \end{align*}
and hence
$(\Id \ast
    \omega_{\Lambda_{\nu}(y),\Lambda_{\nu}(y')})(V)
    \Lambda_{\nu}(x) = \Lambda_{\tilde \nu}\left( (\Id \ast
      \omega_{\Lambda_{\nu}(y'),\Lambda_{\nu}(y)})(\Delta(\pi_{\nu}(x)))\right)$. 
\end{proof}
 The adapted measured quantum groupoid
$(N,\tilde\mu,M,\alpha,\beta,\Delta,T_{L},T_{R},\tilde
\nu)$ has an antipode $\tilde S$ which is characterized by
the following properties:
\begin{enumerate}
\item $\lspan \{(\Id \ast \omega_{v,w} \ast \Id)(V) : w,v \in \mathcal{T}_{\tilde\nu,T_{R}}\}$ is
  a core for $\tilde S$,
\item $ \tilde S((\omega_{w,v} \ast \Id)(V)) =
  (\omega_{w,v} \ast\Id)(V^{*})$ for all $w,v \in
  \mathcal{T}_{\tilde \nu,T_{R}}$,
\end{enumerate}
where $\mathcal{T}_{\tilde\nu,T_{R}}$ is the set of all
$x\in M$ that are analytic with respect to $\sigma^{\tilde \nu}$ and
satisfy $\sigma^{\tilde \nu}_{z} \in \frakN_{\tilde \nu}
\cap \frakN_{\tilde\nu}^{*} \cap \frakN_{T_{R}} \cap
\frakN_{T_{R}}^{*}$ for all $z\in \complex$. Likewise, one
defines $\mathcal{T}_{\tilde \nu,T_{L}}$.
\begin{lemma}
  $\pi_{\nu}(A) \subseteq \mathcal{T}_{\tilde\nu,T_{R}}
  \cap \mathcal{T}_{\tilde\nu,T_{L}}$.
\end{lemma}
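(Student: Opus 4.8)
The plan is to check the three defining conditions for $\pi_\nu(a)$ ($a\in A$) to lie in $\mathcal{T}_{\tilde\nu,T_R}\cap\mathcal{T}_{\tilde\nu,T_L}$: analyticity for $\sigma^{\tilde\nu}$; that every analytic continuation lies in $\frakN_{\tilde\nu}\cap\frakN_{\tilde\nu}^{*}$; and that it lies in $\frakN_{T_L}\cap\frakN_{T_L}^{*}$ and in $\frakN_{T_R}\cap\frakN_{T_R}^{*}$. The first two are Proposition~\ref{proposition:weight-modular} ($\pi_\nu(A)\subseteq\mathcal{T}_{\tilde\nu}$), which also records $\sigma^{\tilde\nu}_{ni}(\pi_\nu(a))=\pi_\nu(\theta^{-n}(a))$. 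Since $\sigma^{\tilde\nu}_{z}(x)^{*}=\sigma^{\tilde\nu}_{\bar z}(x^{*})$, passing from $(a,z)$ to $(a^{*},\bar z)$ reduces the starred conditions to the unstarred, so it remains to show that $\sigma^{\tilde\nu}_{z}(\pi_\nu(a))\in\frakN_{T_L}\cap\frakN_{T_R}$ for all $a\in A$, $z\in\complex$. By Proposition~\ref{proposition:weight-truncating} and the identity $\phi_{i,j}(x^{*}x)=(x\Lambda_{\phi}(u_{i,j}))^{*}(x\Lambda_{\phi}(u_{i,j}))$, this is equivalent to $\sup_{i,j}\|\sigma^{\tilde\nu}_{z}(\pi_\nu(a))\Lambda_{\phi}(u_{i,j})\|<\infty$ together with the analogue for $\Lambda_{\psi}$ and $T_{R}$.

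I would first settle $z\in i\integers$, which via $\sigma^{\tilde\nu}_{ni}(\pi_\nu(a))=\pi_\nu(\theta^{-n}(a))$ and $\theta(A)=A$ reduces to $z=0$, i.e.\ to the quantitative statement $\pi_\nu(A)\subseteq\frakN_{T_L}\cap\frakN_{T_R}$. Here $\pi_\nu(a)\Lambda_{\phi}(u_{i,j})=\Lambda_{\phi}(au_{i,j})$, and using Lemma~\ref{lemma:ksgns-phi-psi}, the identities $\Lambda_{\phi}(s(b')x)=\beta(\pi_{\mu}(b'))\Lambda_{\phi}(x)$ and $\Lambda_{\phi}(xr(b))=\Lambda_{\phi}(x)\pi_{\mu}(b)$, the $\Gamma$-homogeneity relations of \S\ref{subsection:monoidal}, assumption (A3) (so $\|\pi_{\mu}(\gamma(b))\|=\|\pi_{\mu}(b)\|$), the vanishing of $\phi$ off degree $(e,e)$ (Lemma~\ref{lemma:modular-bimodule}~i)), and $0\le\pi_{\mu}(u_{i})\le 1$ in the abelian algebra $N$, one gets term by term in the bigrading of $a$ that $\phi_{i,j}(\pi_\nu(a^{*}a))=\pi_{\mu}(\phi(u_{i,j}a^{*}au_{i,j}))\le\pi_{\mu}(\phi(a^{*}a))$. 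Hence $\tilde\phi(\pi_\nu(a^{*}a))\le\pi_{\mu}(\phi(a^{*}a))\in N$; the same argument with $\psi$ and $T_{R}$ (using $\Lambda_{\psi}(r(b')x)=\alpha(\pi_{\mu}(b'))\Lambda_{\psi}(x)$) gives $\tilde\psi(\pi_\nu(a^{*}a))\le\pi_{\mu}(\psi(a^{*}a))$. Applying this to $\theta^{-n}(a)$ yields the uniform bound $\|\tilde\phi(\sigma^{\tilde\nu}_{ni}(\pi_\nu(a))^{*}\sigma^{\tilde\nu}_{ni}(\pi_\nu(a)))\|\le\|\pi_{\mu}(\phi(\theta^{-n}(a)^{*}\theta^{-n}(a)))\|=:M_{n}^{2}$.

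For general $z$ I would interpolate. A preliminary point is that $\Delta_{\tilde\nu}^{it}$ commutes with $\alpha(N),\beta(N),\halpha(N),\hbeta(N)$: for $\alpha,\beta$ this is Lemma~\ref{lemma:weight-frakrfraks}~i), and for $\hbeta,\halpha$ it follows from $\Delta_{\tilde\nu}^{it}J_{\tilde\nu}=J_{\tilde\nu}\Delta_{\tilde\nu}^{it}$ and Lemma~\ref{lemma:weight-frakrfraks}~ii). Fix $a$ and put $F_{i,j}(z):=\sigma^{\tilde\nu}_{z}(\pi_\nu(a))\Lambda_{\phi}(u_{i,j})$. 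Because $\pi_\nu(a)$ is $\sigma^{\tilde\nu}$-analytic, $F_{i,j}$ is (weakly) entire, and on a horizontal strip $n\le\mathrm{Im}\,z\le n+1$ it is bounded since $\|\sigma^{\tilde\nu}_{z}(\pi_\nu(a))\|=\|\sigma^{\tilde\nu}_{i\,\mathrm{Im}\,z}(\pi_\nu(a))\|$ is bounded there. Using $\sigma^{\tilde\nu}_{z}(\pi_\nu(a))\in M\subseteq\hbeta(N)'$ (Remark~\ref{remarks:pmu-slice}~i)), $\Lambda_{\phi}(u_{i,j})=R^{\hbeta,\tilde\mu}_{\Lambda_{\nu}(u_{i,j})}$ (Lemma~\ref{lemma:module-bounded}) and \eqref{eq:bounded-vectors}, together with the fact that $\sigma^{\tilde\nu}$ fixes $\pi_\nu(u_{i,j})$, one finds $F_{i,j}(z)=R^{\hbeta,\tilde\mu}_{\Lambda_{\tilde\nu}(\sigma^{\tilde\nu}_{z}(\pi_\nu(au_{i,j})))}$; on the edge $z=t+ni$ this is $R^{\hbeta,\tilde\mu}_{\Delta_{\tilde\nu}^{it}\Lambda_{\nu}(\theta^{-n}(au_{i,j}))}=\Delta_{\tilde\nu}^{it}\Lambda_{\phi}(\theta^{-n}(au_{i,j}))$ by the commutation just noted, hence of norm $\le M_{n}$ by the previous step, uniformly in $i,j$ and $t$. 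The scalar three-lines lemma applied to the matrix coefficients of $F_{i,j}$ then gives $\|F_{i,j}(z)\|\le\max(M_{n},M_{n+1})$ on the strip, uniformly in $i,j$, so $\sigma^{\tilde\nu}_{z}(\pi_\nu(a))\in\frakN_{T_L}$. The case of $\frakN_{T_R}$ is identical with $V,\Lambda_{\psi},\halpha$ replacing $W,\Lambda_{\phi},\hbeta$ and using $M\subseteq\halpha(N)'$.

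The hard part will be the passage from the imaginary-integer data $\sigma^{\tilde\nu}_{ni}(\pi_\nu(a))=\pi_\nu(\theta^{-n}(a))$ to arbitrary complex $z$: identifying that $\Delta_{\tilde\nu}^{it}$ commutes with $\hbeta(N)$ and $\halpha(N)$, and arranging the Phragm\'en--Lindel\"of estimate so that the bounds on both edges of each strip are genuinely independent of the approximate unit $(u_{i,j})$. By contrast, the $z=0$ estimate is routine bookkeeping with the $\Gamma$-grading and the formulas of \S\ref{subsection:modules}, the analyticity comes for free from Proposition~\ref{proposition:weight-modular}, and the reduction to the truncations $\phi_{i,j},\psi_{i,j}$ is exactly Proposition~\ref{proposition:weight-truncating}.
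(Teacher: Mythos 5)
Your argument is correct, but it takes a genuinely different (and much longer) route than the paper. The paper's proof is two lines: after invoking Proposition \ref{proposition:weight-modular}, it factors each $a\in A$ as $a's(b)$, uses that $\sigma^{\tilde\nu}_{z}$ fixes $\beta(\pi_{\mu}(B))$ (Lemma \ref{lemma:weight-frakrfraks} i)) to get $\sigma^{\tilde\nu}_{z}(\pi_{\nu}(A))=\sigma^{\tilde\nu}_{z}(\pi_{\nu}(A))\beta(\pi_{\mu}(B))\subseteq \frakN_{\tilde\nu}\beta(\frakN_{\tilde\mu})$, and then appeals to the inclusion $\frakN_{\tilde\nu}\beta(\frakN_{\tilde\mu})\subseteq\frakN_{T_{R}}$ (and the analogue with $r(B)$, $\alpha$ and $T_{L}$). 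You instead argue quantitatively: the truncations of Proposition \ref{proposition:weight-truncating} yield the uniform bound $\tilde\phi(\pi_{\nu}(a)^{*}\pi_{\nu}(a))\leq\pi_{\mu}(\phi(a^{*}a))$ (likewise for $\tilde\psi$), and a three-lines interpolation between the lines $\mathrm{Im}\,z\in\integers$, where $\sigma^{\tilde\nu}_{ni}(\pi_{\nu}(a))=\pi_{\nu}(\theta^{-n}(a))$, transports this to all $\sigma^{\tilde\nu}_{z}(\pi_{\nu}(a))$. What the paper's route buys is brevity; what yours buys is explicit norm control and independence from the ideal-type inclusion $\frakN_{\tilde\nu}\beta(\frakN_{\tilde\mu})\subseteq\frakN_{T_{R}}$, which the paper uses without comment and which is not a general fact about operator-valued weights (for $x\in\frakN_{\tilde\nu}$ one only knows that $T_{R}(x^{*}x)$ is a $\tilde\mu\circ\beta^{-1}$-integrable element of the extended positive part, and compressing by $\beta(n)$ with $n\in\frakN_{\tilde\mu}$ need not give a bounded element), so your estimates in fact supply detail the paper elides. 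One simplification for your write-up: the $z=0$ bound needs no bigrading or (A3) bookkeeping, since Lemma \ref{lemma:ksgns-phi-psi} gives directly $\Lambda_{\phi}(a\,r(u_{i})s(u_{j}))=\halpha(\pi_{\mu}(u_{j}))\Lambda_{\phi}(a)\pi_{\mu}(u_{i})$, whence $\phi_{i,j}(\pi_{\nu}(a^{*}a))\leq\pi_{\mu}(u_{i})^{2}\pi_{\mu}(\phi(a^{*}a))\leq\pi_{\mu}(\phi(a^{*}a))$ because $N$ is commutative; the analogous identity for $\Lambda_{\psi}$ handles the $T_{R}$ case.
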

\begin{proof}
Recall that   $\pi_{\nu}(A) \subseteq \mathcal{T}_{\tilde\nu}$ by Proposition \ref{proposition:weight-modular}. Using
  Lemma \ref{lemma:weight-frakrfraks} i), we find 
  \begin{align*}
    \sigma^{\tilde\nu}_{z}(\pi_{\nu}(A)) =
    \sigma^{\tilde\nu}_{z}(\pi_{\nu}(As(B))) =
    \sigma^{\tilde \nu}_{z}(\pi_{\nu}(A))
    \beta(\pi_{\mu}(B)) \subseteq \frakN_{\tilde \nu}
    \beta(\frakN_{\tilde \mu}) \subseteq \frakN_{T_{R}}
  \end{align*}
  for all $z\in \complex$. Consequently, $\pi_{\nu}(A)
  \subseteq \mathcal{T}_{\tilde \nu,T_{R}}$. A similar
  argument shows that $\pi_{\nu}(A)
  \subseteq \mathcal{T}_{\tilde \nu,T_{L}}$.
\end{proof}
\begin{proposition}
  $\pi_{\nu}(A) \subseteq \mathrm{Dom}(\tilde S)$ and
  $\tilde S(\pi_{\nu}(a)) = \pi_{\nu}(D^{\frac{1}{2}}SD^{\frac{1}{2}}(a))$ for
  all $a\in A$.
\end{proposition}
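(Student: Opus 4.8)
The plan is to exploit the characterization of the antipode $\tilde S$ in terms of the slices of the fundamental unitary $V$, which by the preceding proposition coincides with $U'_{H}$, together with the explicit slice formulas already established. First I would prove $\pi_{\nu}(A) \subseteq \mathrm{Dom}(\tilde S)$. By Lemma~\ref{lemma:pmu-v-slice}, for $x,x'\in A$ one has
\begin{align*}
  (\omega_{\Lambda_{\nu}(x),\Lambda_{\nu}(x')} \ast \Id)(V) = \pi_{\nu}(a), \qquad a = \sum D^{-\frac{1}{2}}\bigl(x'_{(2)}r(\psi(x^{*}x'_{(1)}))\bigr)\in A.
\end{align*}
Since $\pi_{\nu}(A)\subseteq \mathcal{T}_{\tilde\nu,T_{R}}$ (proved just above) and $\Lambda_{\tilde\nu}(\pi_{\nu}(z)) = \Lambda_{\nu}(z)$ for all $z\in A$, these operators lie in the core for $\tilde S$ furnished by property i) of the characterization. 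The key point is that elements of the form like $a$ span $A$: writing $\sum x'_{(2)}r(\psi(x^{*}x'_{(1)})) = x' \ast \widehat{S^{-1}(x^{*})}$ via~\eqref{eq:convolution-1}, these range over $A \ast \hat A$ as $x,x'$ range over $A$ (bijectivity of $S$ and $*$), and $A\ast\hat A$ spans $A$ by surjectivity of $T_{2}$ (see the proof of Proposition~\ref{proposition:dual-algebra}); as $D^{-\frac{1}{2}}$ is bijective, the claim follows and hence $\pi_{\nu}(A) = \lspan\{(\omega_{\Lambda_{\nu}(x),\Lambda_{\nu}(x')}\ast\Id)(V):x,x'\in A\}\subseteq \mathrm{Dom}(\tilde S)$.

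Next I would compute $\tilde S$ on these generators. By property ii) of the characterization, $\tilde S(\pi_{\nu}(a)) = (\omega_{\Lambda_{\nu}(x),\Lambda_{\nu}(x')}\ast\Id)(V^{*})$, so it suffices to evaluate this slice of $V^{*}$. Repeating the computation of Lemma~\ref{lemma:pmu-v-slice} with the inversion formula for $V^{*}$ from Proposition~\ref{proposition:pmu-v}, and using~\eqref{eq:bounded-inner-product} and Lemma~\ref{lemma:pmu-module}, one obtains for all $y\in A$
\begin{align*}
  (\omega_{\Lambda_{\nu}(x),\Lambda_{\nu}(x')}\ast\Id)(V^{*})\Lambda_{\nu}(y) = \sum \Lambda_{\nu}\bigl(s(\psi(x^{*}x'_{(1)}))\,D^{\frac{1}{2}}(S(x'_{(2)}))\,y\bigr),
\end{align*}
so that $(\omega_{\Lambda_{\nu}(x),\Lambda_{\nu}(x')}\ast\Id)(V^{*}) = \pi_{\nu}(c)$ with $c = \sum s(\psi(x^{*}x'_{(1)}))\,D^{\frac{1}{2}}(S(x'_{(2)}))$. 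It then remains to identify $c$ with $D^{\frac{1}{2}}SD^{\frac{1}{2}}(a)$. This is a short algebraic check: $D^{\frac{1}{2}}\circ D^{-\frac{1}{2}} = \Id$ gives $D^{\frac{1}{2}}(a) = \sum x'_{(2)}r(\psi(x^{*}x'_{(1)}))$; anti-multiplicativity of $S$ together with $S(zr(b)) = s(b)S(z)$ gives $S(D^{\frac{1}{2}}(a)) = \sum s(\psi(x^{*}x'_{(1)}))S(x'_{(2)})$; and $D^{\frac{1}{2}}$ commutes with left multiplication by $s(B)$ (elements of $s(B)$ being of degree $(e,e)$ and $r(B),s(B)$ commuting), whence $D^{\frac{1}{2}}(S(D^{\frac{1}{2}}(a))) = c$. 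By linearity of $\tilde S$ on its domain, $\tilde S(\pi_{\nu}(a)) = \pi_{\nu}(D^{\frac{1}{2}}SD^{\frac{1}{2}}(a))$ for all $a\in A$.

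I expect the main obstacle to be organisational rather than conceptual: one must keep careful track of which of the representations $\alpha,\beta,\halpha,\hbeta$ sits on each leg of $H\sotimes H$ and $H\rotimes H$ when slicing $V^{*}$, exactly as in the proof of Lemma~\ref{lemma:pmu-v-slice}, and handle correctly the interaction of $S$, $D^{\pm\frac{1}{2}}$ and the base embeddings $r,s$ on $A$. A secondary point to verify is that the special elements $a$ above span $A$, but this reduces to the already recorded surjectivity of $T_{2}$.
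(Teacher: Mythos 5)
Your proposal is correct and follows essentially the same route as the paper: both rest on the characterization of $\tilde S$ through slices of $V$ and $V^{*}$ together with Lemma \ref{lemma:pmu-v-slice}, identifying these slices with $\pi_{\nu}$ of explicit elements of $A$. The only differences are minor: the paper obtains the slice of $V^{*}$ as the adjoint of a slice of $V$ and then applies strong invariance (Proposition \ref{proposition:integral-strong-invariance}), whereas you slice $V^{*}$ directly via the inversion formula of Proposition \ref{proposition:pmu-v} and identify the result with $D^{\frac{1}{2}}SD^{\frac{1}{2}}(a)$ by the elementary identities $S(zr(b))=s(b)S(z)$ and $D^{\frac{1}{2}}(s(b)z)=s(b)D^{\frac{1}{2}}(z)$; you also make explicit the spanning argument (elements $a$ exhaust $A$ because $A\ast\hat{A}$ spans $A$) that the paper leaves implicit.
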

\begin{proof}
Let $x,x' \in A$ and $a=\sum
D^{-\frac{1}{2}}(x'_{(2)}r(\psi(x^{*}x'_{(1)})))$. Then
\begin{align*}
  (\omega_{\Lambda_{\nu}(x),\Lambda_{\nu}(x')} \ast \Id)(V)
  &= \pi_{\nu}(a), && \text{(Lemma \ref{lemma:pmu-v-slice})} \\
  (\omega_{\Lambda_{\nu}(x),\Lambda_{\nu}(x')}\ast
  \Id)(V^{*}) &=
  \left((\lambda^{\beta,\alpha}_{\Lambda_{\nu}(x')})^{*}V\lambda^{\halpha,\beta}_{\Lambda_{\nu}(x)}\right)^{*}
  \\
  &= \sum
  \pi_{\nu}(D^{-\frac{1}{2}}(x_{(2)}r(\psi(x'{}^{*}x_{(1)}))))^{*}
  &&
  \text{(Lemma \ref{lemma:pmu-v-slice})} \\
  &= \sum
  \pi_{\nu}(D^{\frac{1}{2}}(r(\psi(x_{(1)}^{*}x'))x^{*}_{(2)}))
  \\
  &= \sum
  \pi_{\nu}(D^{\frac{1}{2}}(S(x'_{(2)}r(\psi(x^{*}x'_{(1)})))))
  && \text{(Proposition
    \ref{proposition:integral-strong-invariance})} \\
  &= \pi_{\nu}(D^{\frac{1}{2}}SD^{\frac{1}{2}}(a)). &&
  \qedhere
\end{align*}
\end{proof}


\bigskip

\emph{Acknowledgments.}
I thank Erik Koelink for introducing
me to dynamical quantum groups and for  stimulating
discussions.

\def\cprime{$'$}

\end{document}